\setlist[itemize]{itemindent=0ex,parsep=3pt,itemsep=0pt,leftmargin=\parindent,topsep=5pt,labelwidth=0.8em,labelsep=0.7em}
\setlist[enumerate]{label={\arabic*)},itemindent=0ex,parsep=3pt,itemsep=0pt,leftmargin=\parindent,topsep=5pt,labelwidth=0.9em,labelsep=0.6em}
\tikzset{>=latex}
\DeclarePairedDelimiterX\setv[2]{\{}{\}}{#1 \;\delimsize\vert\; #2}
\newtheorem{theorem}{Theorem}
\newtheorem{proposition}{Proposition}
\newtheorem{lemma}{Lemma}
\newtheorem{corollary}{Corollary}
\newtheorem{assumption}{Assumption}
\theoremstyle{definition}
\newtheorem{definition}{Definition}
\newtheorem{example}{Example}
\newtheorem{remark}{Remark}
\crefname{equation}{}{}
\crefname{theorem}{Theorem}{Theorems}
\crefname{corollary}{Corollary}{Corollaries}
\crefname{example}{Example}{Examples}
\crefname{assumption}{Assumption}{Assumptions}
\crefname{lemma}{Lemma}{Lemmas}
\crefname{proposition}{Proposition}{Propositions}
\crefname{figure}{Figure}{Figures}
\crefname{table}{Table}{Tables}
\crefname{fact}{Fact}{Facts}
\crefname{conjecture}{Conjecture}{Conjectures}
\crefname{section}{Section}{Sections}
\crefname{appendix}{Appendix}{Appendices}
\Crefname{equation}{}{}
\Crefname{theorem}{Theorem}{Theorems}
\Crefname{corollary}{Corollary}{Corollaries}
\Crefname{example}{Example}{Examples}
\Crefname{lemma}{Lemma}{Lemma}
\Crefname{proposition}{Proposition}{Proposition}
\Crefname{figure}{Figure}{Figures}
\Crefname{table}{Table}{Tables}
\Crefname{section}{Section}{Sections}
\Crefname{appendix}{Appendix}{Appendices}
\newcommand{\tr}{{{\mathsf T}}}
\newcommand{\her}{{{\mathsf H}}}
\newcommand{\ECL}{\mathtt{ECL}}
\newcommand{\argmin}{\mathop{\mathrm{arg\,min}}\limits}
\newcommand{\LQR}{\mathtt{LQR}}
\newcommand{\lft}{\mathrm{lft}}
\newcommand{\cvx}{\mathrm{cvx}}
\pgfplotsset{compat=newest}
\newcommand{\removelatexerror}{\let\@latex@error\@gobble}
\title{\bf \LARGE Revisiting Strong Duality, Hidden Convexity, 
and Gradient Dominance in the Linear Quadratic Regulator 
\thanks{This work is supported by NSF ECCS 2154650, NSF CMMI 2320697, and NSF CAREER 2340713. Emails: \texttt{y1watanabe@ucsd.edu; zhengy@ucsd.edu}.}
}
\author[1]{Yuto Watanabe} 
\author[1]{Yang Zheng}
\affil[1]{\small Department of Electrical and Computer Engineering, University of California San Diego \vspace{-2mm}}
\date{ \small \today \vspace{-3mm}} 
\begin{document}

\maketitle
\vspace{-5mm}

\begin{abstract}

\noindent The Linear Quadratic Regulator (LQR) is a cornerstone of optimal control theory, widely studied in both model-based and model-free approaches. Despite its well-established nature, certain foundational aspects remain subtle. In this paper, we revisit three key properties of policy optimization in LQR: (i) strong duality in the nonconvex policy optimization formulation, (ii) the gradient dominance property, examining when it holds and when it fails, and (iii) the global optimality of linear static policies. Using primal-dual analysis and convex reformulation, we refine and clarify existing results by leveraging Riccati equations/inequalities, semidefinite programming (SDP) duality, and a recent framework of Extended Convex Lifting (\texttt{ECL}). Our analysis confirms that LQR 1) behaves almost like a convex problem (e.g., strong duality) under the standard assumptions of stabilizability and detectability and 2) exhibits strong convexity-like properties (e.g., gradient dominance) under slightly stronger conditions. In particular, we establish a broader characterization under which gradient dominance holds using \texttt{ECL} and the notion of Cauchy directions. 
By clarifying and refining these theoretical insights, we hope this work contributes to a deeper understanding of LQR and may inspire further developments beyond LQR.

\end{abstract}

\section{Introduction}

    The Linear Quadratic Regulator (LQR) is one of the most fundamental problems in optimal control, with both theoretical significance and widespread practical applications. This classical problem seeks to design a linear control system by minimizing the integral of a quadratic cost function along the system trajectories. It is well established that, for linear time-invariant (LTI) systems in the infinite-horizon case, the optimal solution is a linear static constant state feedback controller \cite{zhou1996robust}. This provides a systematic approach for computing constant feedback gains based on a performance measure that offers a clear interpretation in terms of control effort and tracking error. 
    
    Since Kalman's seminal work \cite{kalman1960contributions}, LQR has been extensively studied and has served as a foundation for numerous advanced control methods, such as Linear Quadratic Gaussian (LQG) control \cite{doyle1988state}, $\mathcal{H}_\infty$ control \cite{green2012linear}, model predictive control (MPC) \cite{bemporad2002explicit}, and reinforcement learning (RL) \cite{recht2019tour}. 
    The standard and classical tools
    for analyzing LQR include  variational analysis \cite{liberzon2011calculus} and dynamic programming \cite{bellman1966dynamic}, along with the algebraic Riccati equation (ARE) \cite{willems1971least}.
   In addition, 
the \textit{completion of squares} technique \cite{zhou1996robust} can certify the globally optimal solution to the LQR problem.  
It is now~well understood that 
    linear matrix inequalities (LMIs) and semidefinite programs (SDPs) also play a central role in both the computation and characterizations of many control problems \cite{boyd1994linear}, including the classical LQR. 
    For discrete-time systems, the finite-horizon LQR can be conveniently expressed as a convex quadratic program, especially in MPC for online control \cite{rawlings2017model}.
    Given these theoretical and computational developments, LQR is widely considered a well-understood and nearly complete~problem when \textit{the system model is known}. 

    Over the past decade, there has been remarkable progress in \textit{model-free} RL, which seeks to control \textit{unknown} dynamical systems using only observed system trajectories \cite{recht2019tour}. Motivated by this, the LQR problem with unknown dynamics has re-attracted increasing attention  \cite{dean2020sample,zheng2021sample,fazel2018global,hu2023toward}. 
A key algorithmic framework driving the success of RL is policy optimization, which maps observations to control actions through appropriate parametrization. Policy optimization for LQR (as well as many other control problems) is known to be nonconvex (see \Cref{fig:lqr-landscape} for typical LQR landscapes). It is shown in   
   \begin{figure}
    \centering
    \setlength{\abovecaptionskip}{0pt}
  \begin{subfigure}{0.33\linewidth}
           \centering
\includegraphics[width=0.7\linewidth]{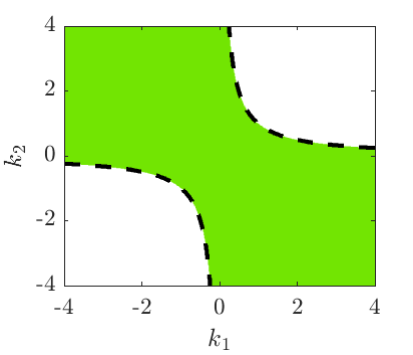}
            \caption{
            }
            \label{fig:non-convex-K}
  \end{subfigure}%
  \begin{subfigure}{0.33\linewidth}
          \centering
    \includegraphics[width=0.8\linewidth]{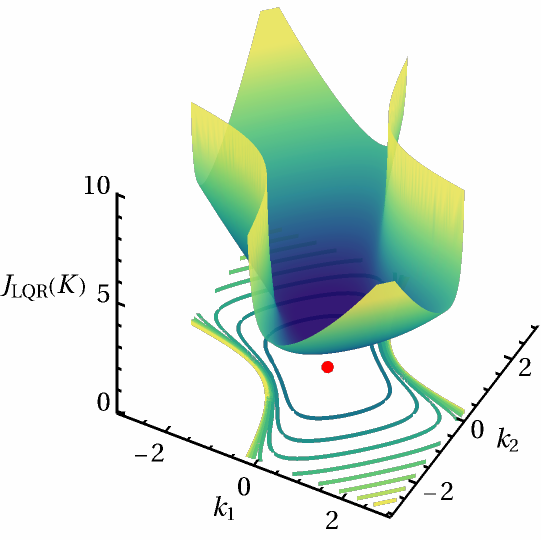}
       \caption{}
       \label{fig:non-convex-JLQR}
  \end{subfigure}
  \begin{subfigure}{0.33\linewidth}
  \centering
      \includegraphics[width=0.79\linewidth]{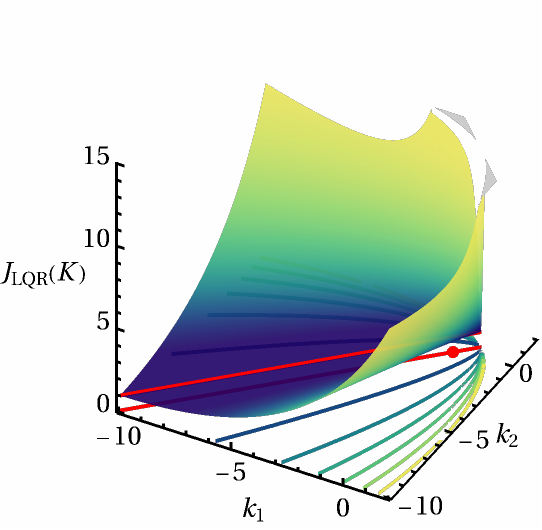}
      \caption{}
  \end{subfigure}
  \caption{Nonconvex landscapes in LQR:
    (a) a slice of the set of static feedback gains $K$
        such that 
            $A+BK$ is stable, where $A=0$ and $B=I_2$;
    (b) a ``well-behaved'' nonconvex and smooth landscape of the LQR cost from \cite[Appendix A.1]{zheng2023benign}; (c) Non-unique globally optimal LQR feedback gains in  \cref{example:W-singular}; sublevel sets of this instance are unbounded. 
    The red point $K^\star$ denotes the optimal feedback gain from solving the ARE.
    }
    \vspace{-2mm}
    \label{fig:lqr-landscape}
\end{figure}
    \cite{fazel2018global} that the LQR problem enjoys a favorable property of \textit{gradient dominance},  also known as
    the \textit{Polyak-Łojasiewicz (PL)} inequality \cite{polyak1963gradient}, which generalizes strong convexity to certain nonconvex problems. This property ensures that simple policy gradient algorithms achieve linear convergence to the global minimizer of the LQR problem \cite{fazel2018global}.
    Since this discovery, a growing body of research has focused on designing efficient \textit{model-free} algorithms \cite{mohammadi2019global,tu2019gap,malik2020derivative} and analyzing rich landscape properties of fundamental control problems, including LQR \cite{hu2023toward,talebi2024policy,tang2023analysis}. 

    As demonstrated above, 
LQR has been extensively studied from both classical model-based design and recent model-free approaches, making it a fundamental benchmark at the intersection of control and learning. Despite its well-established nature, certain fundamental aspects of LQR remain subtle and are often overlooked. These include the global optimality of linear policies, the uniqueness of the optimal feedback gain, and the nuanced, case-by-case analysis of gradient dominance. 

\subsection{Our results}
    In this paper, we revisit three fundamental properties of policy optimization in LQR: 
    1) strong duality in the nonconvex policy optimization, 2) the intriguing gradient dominance property (examining both \textit{when it holds and when it fails}), and 3) the global optimality of linear static policies.

Many results have been established in the literature through various techniques and settings. Our approach is rooted in \textit{primal-dual analysis} and \textit{convex reformulation}. 
    We
    revisit the strong duality and global optimality of linear static policies from a primal and dual lens
    and leverage the convex reformulation of LQR to establish the gradient dominance property under broader assumptions than the existing studies \cite{mohammadi2019global,bu2020policy,fatkhullin2021optimizing}. 
    Our technical results are summarized below. 
\begin{enumerate}
    \item \textbf{Strong duality in LQR.} While the policy optimization for LQR is known to be nonconvex, 
    we demonstrate the strong duality holds under stabilizability and detectability. 
    This result is not surprising, but
     a self-contained and explicit proof is not easily accessible in the literature, as existing discussions often rely on different formulations or advanced mathematical tools. 
   Early discussions in \cite{balakrishnan2003semidefinite} are informal and technical details are missing. The recent result in \cite{bamieh2024linear} only focuses on the finite horizon case and lacks the characterization on the uniqueness of the solution.
   Our approach follows the standard Lagrange duality theory, leveraging well-established Riccati equations/inequalities and careful KKT analysis. While these techniques are standard, we carefully fill~in~the missing~details. One subtle point is that the optimal LQR feedback gain may not be unique when the covariance matrix $W$
   is only positive semidefinite ($W \succeq 0$); see \Cref{fig:lqr-landscape}(c) and \cref{example:W-singular}. In this case, the standard solution derived from the Riccati equation is just one optimal solution. If $W$~is strictly positive definite ($W \succ 0$), the optimal LQR feedback gain is unique. In this setting, we~further reveal a favorable \textit{strict complementarity} property~\cite{alizadeh1997complementarity,ding2023strict,liao2024inexact} in the associated SDPs for LQR.
    
    \item \textbf{Gradient dominance via Extended Convex Lifting.} We revisit the elegant property of \textit{gradient dominance} in LQR using a recent framework of Extended Convex Lifting (\texttt{ECL}) \cite{zheng2023benign,zheng2024benign}.~Our analysis reveals two key insights: 1) gradient dominance requires stronger conditions than those for strong duality; with the usual stabilizability and detectability, gradient dominance may fail (see \Cref{example:unbounded-sublevel-set}); 2) gradient dominance does not hold globally for continuous-time LQR even with strong assumptions of controllability and observability (see \Cref{example:PL-globally-no}). Accordingly, we establish the most general known conditions for ensuring gradient dominance in LQR: (i) compact sublevel sets and (ii) a positive definite solution to a Lyapunov equation. This setting extends beyond the existing results in  \cite{fatkhullin2021optimizing,bu2019lqr,bu2020policy}. Our proof leverages the \texttt{ECL} framework while refining its analysis using the notion of Cauchy directions  \cite{lemarechal2012cauchy}. A key insight in our proof is that the existence of gradient dominance is closely tied to a convex reformulation of LQR
    that satisfies \textit{quadratic growth} \cite{karimi2016linear,liao2024error}, a similar but weaker condition than strong convexity.
    
    \item \textbf{Global optimality of static state feedback policies.} We also revisit the global optimality of static state feedback policies among all (potentially nonlinear) stabilizing policies.~Similar to~\cite{balakrishnan2003semidefinite,bamieh2024linear}, we adopt the primal and dual analysis for LQR. 
    A key ingredient in our analysis is the~Gramian (or covariance) representation \cite{hotz1987covariance}, which captures all relevant quadratic information of system trajectories. Leveraging this representation, we reformulate LQR over all stabilizing policies, including nonlinear ones, as a finite-dimensional optimization problem. We~then construct \textit{inner} and \textit{outer} approximations: the inner approximation restricts the policies to be static~state feedback, while the outer approximation relaxes the feasible domain to an SDP. A~careful~KKT analysis reveals that these approximations preserve the same optimal value~as~the original LQR, which confirms that the optimal LQR policy is necessarily linear static state~feedback.    
\end{enumerate}

\subsection{Related works}

\noindent \textbf{Primal and dual analysis in control.} 
Optimization has long been an indispensable tool in control, and the use of (convex) duality in control theory has a rich history. 
Early primal-dual~results can be traced back to the 1970s, when Rockafellar \cite{rockafellar1970conjugate} conducted a primal-dual analysis for optimal control with convex costs and constraints.
For $\ell_1$ optimal control, Dahleh and Pearson exploited a dual problem to compute an optimal closed-loop transfer matrix \cite{dahleh19871}. Later, Rantzer introduced a new duality-based proof for the Kalman-Yakubovich-Popov (KYP) lemma in \cite{rantzer1996kalman}. 
Since the 2000s, driven by advancements in LMIs and SDPs, duality has been increasingly utilized in key control problems. The seminal work \cite{balakrishnan2003semidefinite} provides an elegant interpretation of several fundamental control problems, including the KYP lemma and LQR, through duality analysis.
Additionally, Scherer \cite{scherer2006lmi} discovered that 
the relaxation process of robust LMIs by S-lemma was exactly the Lagrangian relaxation procedure, and the connection was further examined in \cite{ebihara2009robust}. Recent studies \cite{gattami2015simple,you2014h,you2015primal} have introduced alternative methods for computing $\mathcal{H}_\infty$ norm using the SDP duality and the \textit{Gramian/covariance representation}.
Very recently, Bamieh \cite{bamieh2024linear} revisited the \textit{finite-horizon} linear quadratic control and established the global optimality of linear static policies through KKT analysis and a \textit{differential Riccati equation} arising from a differential LMI in the dual.

It is important to distinguish the \textit{Lagrange duality} in optimization from the duality between \textit{control} and \textit{estimation}. 
Still, a fundamental similarity exists: \textit{adjoint operators} play a crucial role in both contexts.  A key connection between control and estimation lies in the $2$-norm equivalence between linear operators and their adjoints \cite[Chapter 3.3.2]{green2012linear}, where adjoint operators of dynamical systems have an interpretation of observers.   
Similarly, adjoint operators emerge naturally in Lagrange dual problems in optimization \cite{rockafellar1970convex}. 

\vspace{3mm}

\noindent 
\textbf{Benign nonconvex landscape of policy optimization in control.} 
Motivated by the remarkable success of RL,  
many recent studies have revisited control costs as functions of policy parameters, examining their analytical and geometrical properties. 
A key breakthrough is the discovery of gradient dominance for the discrete-time LQR in \cite{fazel2018global}. Since then, gradient dominance has been further explored in discrete-time variants   
\cite{bu2019lqr,hambly2021policy} 
and
their continuous-time counterparts \cite{mohammadi2019global,bu2020policy,fatkhullin2021optimizing,moghaddam2024sample,zhang2025convergence}. These studies often differ in their settings and involve nuanced, often case-by-case analyses of gradient dominance.   
Beyond the classical LQR, the landscape of other fundamental control problems has also been investigated, including 
filtering \cite{umenberger2022globally}, LQG \cite{tang2023analysis,zheng2023benign}, LQ differential game \cite{zhang2019policy}, $\mathcal{H}_\infty$ control \cite{guo2022global,tang2023global}, and distributed control \cite{furieri2020learning}; see two recent surveys  \cite{hu2023toward,talebi2024policy} for a broader overview.
Very recently, 
several works \cite{sun2021learning,umenberger2022globally,zheng2024benign,tang2023global,zheng2023benign} 
have sought to identify the underlying sources that enable benign nonconvex landscapes. A common idea is to connect the original nonconvex problems to their well-established convex reformulations. 
Among these, the Extended Convex Lifting ($\ECL$) \cite{zheng2024benign,zheng2023benign} is the most~generic framework, which naturally accommodates dynamic output feedback and nonsmooth cost functions.

In this paper, we utilize the \texttt{ECL} framework and refine its analysis using the notion of Cauchy directions~\cite{lemarechal2012cauchy} for the LQR problem. This provides an alternative streamlined approach to establish gradient dominance for LQR under the most general known setting.   

\vspace{3mm}

\noindent 
\textbf{Gramian/covariance representation in control.} 
The covariance representation 
is a powerful technique that reformulates standard control problems for linear systems into SDPs by replacing 
the state, input, and disturbances with their covariance matrices.
Its deterministic counterpart is known as Gramian representation.
The use of covariance matrices in control was pioneered~by Hotz and Skelton \cite{hotz1987covariance} and later extensively discussed in the book \cite{skelton1997unified}. 
 This approach has been useful for handling power constraints in controller synthesis in a convex manner. 
In \cite{rantzer2006linear}, Rantzer applied this idea to a distributed control problem, and Gattami proposed an SDP-based controller synthesis method for linear quadratic control with power constraints \cite{gattami2009generalized}. 
Gattami and Bamieh proposed an alternative computation method of $\mathcal{H}_\infty$ norm via the covariance representation and duality  \cite{gattami2015simple}.
The covariance technique was also applied to an online learning scheme for LQR with unknown performance weights in \cite{cohen2018online}. 
Very recently, Bamieh utilized this covariance representation technique to derive the optimal controller for finite-horizon linear quadratic control   \cite{bamieh2024linear}. 

In this paper, we leverage the Gramian representation to revisit the global optimality of static state feedback policies among all (potentially nonlinear) stabilizing policies for the infinite-time LQR problem.

\subsection{Paper outline}
The rest of this paper is organized as follows. \Cref{section:LQR-formulation} presents a classical formulation of the linear quadratic regulator (LQR). Strong duality and hidden convexity of LQR are discussed in \Cref{section:strong-duality}. In \Cref{section:GD}, we revisit an elegant property of gradient dominance in LQR. We then investigate the global optimality of static policies in LQR from a primal and dual optimization perspective in \cref{section:global-optimality_static-linear-policies}. We finally conclude the paper in \Cref{section:conclusion}. Some auxiliary results, additional discussions,
and technical proofs are provided in the appendix.

\vspace{6pt}
\noindent \textbf{Notations.}  
We use $\mathbb{S}^n$ to denote the set of
$n\times n$ symmetric matrices. In the space $\mathbb{S}^n$, the sets of positive (semi)definite matrices are denoted by $\mathbb{S}^n_{++}$ ($\mathbb{S}^n_+$).
For $M_1,M_2\in\mathbb{S}^n$, we use $M_1\prec (\preceq) M_2$ and $M_1\succ (\succeq) M_2$ when
$M_1-M_2$ is negative (semi)definite and positive (semi)definite, respectively.
We use $\det (M)$ to denote the determinant of a square matrix $M$, and its trace is $\mathrm{tr}(M)$.
For $M \in \mathbb{S}^n$, $\lambda_\mathrm{max}(M)$ and $\lambda_\mathrm{min}(M)$ denote the maximal and minimum eigenvalues.
Given a complex matrix $N$, we use $N^\her$ to denote its conjugate transpose.
The Frobenius norm of $M$ is denoted by $\|M\|_F$, and
the maximal and minimum singular values are denoted by
$\sigma_\mathrm{max}(M)$ and
$\sigma_\mathrm{min}(M)$.

\section{Preliminaries} \label{section:LQR-formulation}

In this section, we introduce the linear quadratic regulator (LQR) formulation with a stochastic initial state and present our problem statement.

\subsection{Linear quadratic regulator (LQR)}
Consider a continuous-time linear time-invariant (LTI) dynamical system
\begin{equation}\label{eq:Dynamic}
\begin{aligned}
\dot{x}(t) &= Ax(t)+Bu(t), \qquad x(0)=x_0,
\end{aligned}
\end{equation}
where $x(t) \in \mathbb{R}^n$ is the vector of state variables and $u(t)\in \mathbb{R}^m$ the vector of control inputs, and $x_0 \in \mathbb{R}^n$ is the initial state. We assume $x_0$ is a random variable with covariance  $W = \mathbb{E}\left[x_0x_0^\tr \right] \in \mathbb{S}_+^n$.
We consider the following performance signal 
\begin{equation} \label{eq:performance-signal}
    z(t) = \begin{bmatrix}
        Q^{1/2} \\ 0
    \end{bmatrix}x(t) + \begin{bmatrix}
        0 \\ R^{1/2}
    \end{bmatrix}u(t),
\end{equation}
where $Q \succeq 0$ and $R\succ 0$ are performance weight matrices. 
A standard assumption is: 
\begin{assumption} \label{assumption:controllability}
    The performance weights satisfy $Q\in\mathbb{S}_{+}^n\setminus\{0\}$
    and
    $R\in\mathbb{S}^m_{++}$.
    Additionally, we have $(A, B)$ is stabilizable and $(Q^{1/2}, A)$ is detectable. 
\end{assumption}

We consider a performance cost functional as 
\begin{equation*}
\mathfrak{J}_{\LQR}:= \mathbb{E}
\int_0^\infty \left[z(t)^\tr z(t)\right]dt  =
\mathbb{E}
\int_0^\infty \left[x(t)^\tr Qx(t) + u(t)^\tr Ru(t)\right]dt,  
\end{equation*}
where the expectation is taken with respect to the random initial state. 
The classical problem of linear quadratic regulator (LQR) is formulated as
\begin{equation} \label{eq:LQR}
    \begin{aligned}
        \min_{u(t)} \quad & \mathfrak{J}_{\LQR} \\
        \text{subject to} \quad & ~\cref{eq:Dynamic},
    \end{aligned}
\end{equation}
where the input $u(t)$ can use all past state observation $x(\tau)$ with $\tau \leq t$. 
Note that the current form \cref{eq:LQR} is infinitely dimensional. It is well-known that the class of static state feedback policies of the form $u(t) = Kx(t)$ with $K\in \mathbb{R}^{m \times n}$
can achieve the global optimality for \cref{eq:LQR} \cite[Theorem 14.2]{zhou1996robust}. We will revisit the global optimality from a primal and dual perspective in \cref{section:global-optimality_static-linear-policies}. 

With the class of static state feedback policies, the policy optimization for \cref{eq:LQR} then reads as
\begin{equation} \label{eq:LQR-stochastic-noise}
\begin{aligned}
\min_{K \in \mathbb{R}^{m \times n}}\ \ &
J_{\mathtt{LQR}}(K):=\mathbb{E}\int_0^{\infty}
\left[x^\tr(t) Q x(t) + u^\tr(t) R u(t)\right]dt
\\
\text{subject to}\ \ & ~\cref{eq:Dynamic}, \quad u(t) = Kx(t), \quad
\text{$A+BK$ is stable.}
\end{aligned}
\end{equation}
We can equivalently reformulate \eqref{eq:LQR-stochastic-noise} as 
\begin{equation} \label{eq:LQR-H2}
\begin{aligned}
\min_{K \in \mathbb{R}^{m \times n}}\ \ & J_{\mathtt{LQR}}(K)   \\
\text{subject to}\ \ &
K\in\mathcal{K},
\end{aligned}
\end{equation}
where the set $\mathcal{K}$ represents the set of all the stabilizing gains, i.e.,
 $   \mathcal{K}:=
    \left\{
    K\in\mathbb{R}^{m\times n}
    \middle |
    A+BK \text{ is stable}
    \right\}$, and $J_{\mathtt{LQR}}(K)$ denotes the LQR cost under the policy $K \in \mathcal{K}$.

\subsection{Three fundamental aspects in LQR} 

The LQR problem \cref{eq:LQR-stochastic-noise}, as well as its stochastic process noise variant, has been extensively studied since Kalman's seminal work \cite{kalman1960contributions}. The policy optimization \cref{eq:LQR-H2} with a linear static policy parametrization has no loss of optimality (a fundamental result that, while well known, is not entirely trivial to prove). This form \cref{eq:LQR-H2} is very amenable for model-free design, and it has thus re-attracted increasing attention as a fundamental benchmark problem at the intersection of control and learning \cite{hu2023toward,recht2019tour,talebi2024policy}.   

Theoretical guarantees of policy optimization in LQR often rely on proper characterizations of its nonconvex geometry. In recent years, many favorable properties have been revealed for LQR~\cite{hu2023toward,talebi2024policy}. In this paper, we revisit three fundamental aspects of the LQR problem, including 1) the dual problem of the non-convex problem \eqref{eq:LQR-H2} and its strong duality (\cref{section:strong-duality}); 2) the nuanced gradient dominance property in \eqref{eq:LQR-H2}, examining when it holds and when it fails (\cref{section:GD}); and 3) the global optimality of linear static policies (\cref{section:global-optimality_static-linear-policies}). 
While many of these results have been established with various techniques and settings, we here emphasize two important messages. 
\begin{enumerate}
    \item The LQR problem behaves almost as a convex problem: \textit{strong duality} holds under the usual \cref{assumption:controllability} (which serves as an analog to the Slater's condition).
    \item The LQR problem even exhibits properties of a strongly convex problem: \textit{gradient dominance} holds under a slightly stronger assumption (see \Cref{assumption:compactness} later).
\end{enumerate}
Establishing these results requires careful analyses of Riccati equations/inequalities and SDP duality. We also leverage the recent \texttt{ECL} framework \cite{zheng2023benign,zheng2024benign} to provide a streamlined proof of gradient dominance under the most general known setting. We finally remark that \cref{eq:LQR-H2} essentially involves a quadratic function and a quadratic matrix constraint in $K$, making its SDP relaxations closely resemble the classical S-lemma \cite{polik2007survey} (see  \cref{appendix:S-lemma}).


\section{Strong Duality and Hidden Convexity in LQR}\label{section:strong-duality}

In this section, we investigate the strong duality and hidden convexity in LQR. In particular, we first derive a Lagrange dual formulation of \cref{eq:LQR-H2} and establish strong duality between the primal and dual formulations. The celebrated algebraic Riccati equation for LQR naturally comes from solving the dual SDP and facilitates the KKT optimality analysis.

\subsection{A Lagrange dual formulation} \label{subsection:Lagrangian-dual}

Deriving a Lagrange dual formulation requires an algebraic characterization of both the cost and the stability constraint. It is well-known that for any $K \in \mathcal{K}$, the LQR cost $J_\LQR(K)$ can be evaluated as 
\begin{subequations} \label{eq:LQR-Lyapunov-H2}
\begin{equation} \label{eq:LQR-Lyapunov}
J_{\mathtt{LQR}}(K) = \mathrm{tr}(Q+K^\tr RK)X,
\end{equation}
where $X$ is the unique positive semidefinite solution to the following Lyapunov equation
\begin{equation} \label{eq:Lyapunov-H2}
(A+BK)X + X(A+BK)^\tr + W = 0.
\end{equation}
\end{subequations}
If $W\succ 0$,
we know that $K \in \mathcal{K}$ if and only if the solution $X$ to \cref{eq:Lyapunov-H2} is positive semidefinite\footnote{We can further refine the solution $X$ to be positive definite.}. 
In this case, it is now clear that 
\cref{eq:LQR-H2} is equivalent to the following nonconvex problem
\begin{equation} \label{eq:LQR-H2-SDP-1}
\begin{aligned}
p^* = \min_{K\in \mathbb{R}^{m \times n},X \in \mathbb{S}^n}\ \ & \mathrm{tr}(Q+K^\tr RK)X   \\
\text{subject to}\ \ & (A+BK)X + X(A+BK)^\tr + W = 0, X \succeq 0. 
\end{aligned}
\end{equation}
Note that $W \succ 0$ is a sufficient condition to ensure that the feasible set of \cref{eq:LQR-H2-SDP-1} is the same as $K \in \mathcal{K}$. However, even when $W \succeq 0$, \cref{eq:LQR-H2} and \cref{eq:LQR-H2-SDP-1} share the same optimal value. In this sense, these two problems are also equivalent. In the rest of this section, we allow the general case $W \succeq 0$. 

We next derive the Lagrange dual of \cref{eq:LQR-H2-SDP-1}, which can be explicitly formulated as an LMI.
We introduce a dual variable $P \in \mathbb{S}^n$ for the equality constraint in \cref{eq:LQR-H2-SDP-1}, and define a Lagrange function as
$$
L(K,X,P) = \langle Q+K^\tr RK, X \rangle + \langle (A+BK)X + X(A+BK)^\tr + W, P\rangle, \quad \forall X \succeq 0.   
$$
The dual function is then given by 
\begin{equation} \label{eq:dual-function}
g(P) := \min_{K \in \mathbb{R}^{m \times n},X \succeq 0} L(K,X,P). 
\end{equation}
While not obvious in its current form, the dual function can be explicitly computed.

\begin{lemma} \label{lemma:Lagrangian-explicit}
    If $R \succ 0$, the dual function \cref{eq:dual-function} can be computed as 
    \begin{equation} \label{eq:dual-function-explicit-form}
    g(P) = \begin{cases}
    \langle W,P \rangle, & \text{if}\; \displaystyle \begin{bmatrix}
    A^\tr P + PA + Q & PB \\ B^\tr P & R
\end{bmatrix}\succeq 0,  \\
-\infty, & \text{otherwise}.
\end{cases}
    \end{equation}
\end{lemma}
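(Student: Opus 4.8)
The plan is to compute the joint infimum $g(P) = \inf_{K \in \mathbb{R}^{m\times n},\, X\succeq 0} L(K,X,P)$ by first rewriting the Lagrangian so that its dependence on $X$ becomes affine, and then resolving the two nested minimizations in turn. First I would collect all terms in $L$. Using the cyclic property of the trace together with the symmetry of $X$ and $P$, the two cross terms $\mathrm{tr}(P(A+BK)X)$ and $\mathrm{tr}(PX(A+BK)^\tr)$ combine into $\mathrm{tr}\big([P(A+BK)+(A+BK)^\tr P]X\big)$, so that the Lagrangian takes the compact form $L(K,X,P) = \langle H(K,P), X\rangle + \langle W, P\rangle$, where $H(K,P) := Q + K^\tr R K + P(A+BK) + (A+BK)^\tr P \in \mathbb{S}^n$. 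The essential gain is that $L$ is now linear in $X$, with all of the $X$-dependence concentrated in a single inner product.

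Next, for fixed $K$ and $P$ I would minimize the affine function $\langle H(K,P), X\rangle$ over the cone $X\succeq 0$: its infimum is $0$, attained at $X=0$, when $H(K,P)\succeq 0$, and it is $-\infty$ otherwise, as seen by taking $X = t\,vv^\tr$ along an eigenvector $v$ of a negative eigenvalue of $H(K,P)$ and letting $t\to\infty$. This reduces the dual function to $g(P) = \langle W, P\rangle + \inf_{K}\phi(K)$, where $\phi(K)=0$ if $H(K,P)\succeq 0$ and $\phi(K)=-\infty$ otherwise. Because the outer operation is a minimization over $K$, the dual value is finite and equal to $\langle W, P\rangle$ precisely when $H(K,P)\succeq 0$ holds for \emph{every} $K$, and it drops to $-\infty$ the moment a single $K$ violates positive semidefiniteness. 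The final step is to show that the condition ``$H(K,P)\succeq 0$ for all $K$'' is equivalent to the single LMI in \cref{eq:dual-function-explicit-form}. Letting $M(P)$ denote the $2\times 2$ block matrix appearing there, the key algebraic identity is $\xi^\tr H(K,P)\xi = \begin{bmatrix}\xi \\ K\xi\end{bmatrix}^\tr M(P) \begin{bmatrix}\xi \\ K\xi\end{bmatrix}$ for every $\xi\in\mathbb{R}^n$, which recasts the problem as one about a quadratic form. The direction $M(P)\succeq 0 \Rightarrow H(K,P)\succeq 0\ \forall K$ is immediate. For the converse, given any pair $(\xi,u)$ with $\xi\neq 0$ I would select $K = u\xi^\tr/\|\xi\|^2$ so that $K\xi = u$, obtaining $\begin{bmatrix}\xi \\ u\end{bmatrix}^\tr M(P)\begin{bmatrix}\xi \\ u\end{bmatrix} = \xi^\tr H(K,P)\xi \ge 0$, while the degenerate case $\xi=0$ reduces to $u^\tr R u\ge 0$, which holds since $R\succ 0$; together these give $M(P)\succeq 0$.

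I expect the main obstacle to be the careful handling of the quantifier order in the second step: it is tempting to minimize over $K$ first (which leads to the feedback $K=-R^{-1}B^\tr P$ and hence to the Riccati structure, but runs into degeneracy when $X$ is singular), whereas minimizing over $X$ first yields the clean reduction above and correctly exposes that finiteness of $g(P)$ demands $H(K,P)\succeq 0$ \emph{uniformly} in $K$. The only genuinely nonroutine point is then the forward direction of the LMI equivalence, where the parametrization $K=u\xi^\tr/\|\xi\|^2$ is used to realize an arbitrary pair $(\xi,u)$ and thereby recover the full block inequality from the family of constraints indexed by $K$.
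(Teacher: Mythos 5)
Your proof is correct, and it shares the paper's overall skeleton --- rewrite the Lagrangian so that it is linear in $X$, with coefficient $H(K,P)=Q+K^\tr RK+P(A+BK)+(A+BK)^\tr P$, which is the congruence of the block matrix $M(P)$ in \cref{eq:dual-function-explicit-form} by $\begin{bmatrix} I \\ K\end{bmatrix}$, then minimize over $X\succeq 0$ first --- but you close the argument by a different mechanism. The paper invokes the Schur complement (valid since $R\succ 0$) to identify $M(P)\succeq 0$ with the Riccati inequality $A^\tr P+PA+Q-PBR^{-1}B^\tr P\succeq 0$, and when this fails it exhibits unboundedness using the specific feedback $K=-R^{-1}B^\tr P$ together with $X=\eta vv^\tr$ along a negative eigenvector. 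You instead prove directly that the family of conditions ``$H(K,P)\succeq 0$ for all $K$'' is equivalent to the single LMI $M(P)\succeq 0$, via the parametrization $K=u\xi^\tr/\|\xi\|^2$ that realizes an arbitrary pair $(\xi,u)$, with the degenerate case $\xi=0$ handled by $u^\tr Ru\geq 0$. Your route is more elementary --- it needs neither Schur complements nor $R^{-1}$ --- and it in fact only uses $R\succeq 0$, so it establishes the lemma under a weaker hypothesis than the paper's argument, which genuinely requires invertibility of $R$. What you give up is the explicit appearance of the Riccati expression: in the paper's version, the choice $K=-R^{-1}B^\tr P$ and the Riccati form of the constraint foreshadow the role of the ARE \cref{eq:Riccati-eq} in solving the dual SDP \cref{eq:Lagrange-dual} later (\cref{theorem:strong-duality-2}), whereas your equivalence argument keeps the dual feasible set as an abstract LMI.
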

The proof is not difficult but requires explicit calculations; we present the details in \cref{appendix:proof-of-dual-function}. 
With \cref{lemma:Lagrangian-explicit}, the Lagrange dual problem for the non-convex LQR \cref{eq:LQR-H2-SDP-1} is of the form 
\begin{equation} \label{eq:Lagrange-dual}
    \begin{aligned}
        d^* = \max_{P \in \mathbb{S}^n} \;\;  &\langle W,P \rangle \\
        \text{subject to} \;\; & \begin{bmatrix}
    A^\tr P + PA + Q & PB \\ B^\tr P & R
\end{bmatrix}\succeq 0.
    \end{aligned}
\end{equation}
As expected, \cref{eq:Lagrange-dual} is a convex problem, since the Lagrange dual of any optimization problem~is~convex.
In particular, \cref{eq:Lagrange-dual}
takes the form of a convex semidefinite program (SDP). This type of SDPs has been extensively studied, tracing back to Willems’s seminal work  
\cite{willems1971least}.
As highlighted in many classical results (e.g., \cite{willems1971least}),
the constraint of \eqref{eq:Lagrange-dual} is equivalent to a \textit{Riccati inequality}, which has a strong connection to the standard algebraic Riccati equation (ARE) for LQR.

It is clear that any feasible solution to \cref{eq:Lagrange-dual} gives a lower bound to $p^*$ in the primal \cref{eq:LQR-H2-SDP-1}, meaning that
$d^* \leq p^*$.
This is standard by weak duality. We thus get an SDP relaxation \cref{eq:Lagrange-dual} for the~non-convex LQR problem \cref{eq:LQR-H2-SDP-1} via the standard Lagrange dual process. An interesting result is that strong duality holds, i.e., $p^* = d^*$ under standard assumptions. We detail this result in the next~subsection.   

\begin{remark}[Positive (semi)definiteness of $W$]
Since $W$ represents the covariance matrix of  
initial state 
in \cref{eq:Dynamic}, it is always positive semidefinite. Classical LQR results also only require $W$ to be positive semidefinite. 
If $W$ is further positive definite,  any feasible point to \cref{eq:LQR-H2-SDP-1} naturally ensures the policy $K$ is stabilizing and vice versa. This is sufficient~for the equivalence between
\cref{eq:LQR-H2} and
\cref{eq:LQR-H2-SDP-1} and facilitates the Lagrange dual process in \cref{lemma:Lagrangian-explicit}.~The condition that $W\succ 0$ also guarantees other favorable properties, such as the uniqueness of the optimal solution to \Cref{eq:LQR-H2-SDP-1} (\cref{theorem:strong-duality-2}), strict complementarity in SDPs for LQR (\cref{theorem:strict-complementarity}), and gradient dominance for the non-convex LQR policy formulation 
(\cref{theorem:J_LQR-gradient_dominance-general})
with explicit estimates of the parameters (\Cref{remark:PL-Q-W>0}).
\hfill $\square$
\end{remark}

\subsection{Strong duality in LQR} \label{subsection:strong-duality}

We now show the strong duality between the nonconvex LQR \cref{eq:LQR-H2-SDP-1} and its Lagrange dual \cref{eq:Lagrange-dual}.

\begin{theorem}[Strong duality]\label{theorem:strong-duality-1}
    With \Cref{assumption:controllability}, 
     the strong duality holds between the nonconvex LQR \cref{eq:LQR-H2-SDP-1} and its Lagrange dual \cref{eq:Lagrange-dual}, i.e., $p^* = d^*$.
\end{theorem}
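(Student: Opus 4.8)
The plan is to sidestep the nonconvexity of \cref{eq:LQR-H2-SDP-1} altogether by exhibiting an explicit primal--dual pair whose objective values coincide. Since weak duality $d^* \le p^*$ is already in hand, it suffices to produce a dual-feasible $P^\star$ and a primal-feasible $(K^\star,X^\star)$ with equal objectives, so that the chain $p^* \le \langle Q+K^{\star\tr}RK^\star,\,X^\star\rangle = \langle W,P^\star\rangle \le d^* \le p^*$ collapses to equality. The central object is the stabilizing solution $P^\star$ of the algebraic Riccati equation (ARE)
\begin{equation*}
A^\tr P + PA + Q - PBR^{-1}B^\tr P = 0,
\end{equation*}
whose existence, positive semidefiniteness, and the Hurwitz property of $A - BR^{-1}B^\tr P^\star$ are guaranteed under \cref{assumption:controllability} by classical Riccati theory \cite{willems1971least,zhou1996robust}. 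I would begin by recording these facts as the only external input to the argument.

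Next I would verify that $P^\star$ is feasible for the dual \cref{eq:Lagrange-dual}. Because $R \succ 0$, a Schur complement converts the block LMI constraint into the Riccati \emph{inequality} $A^\tr P + PA + Q - PBR^{-1}B^\tr P \succeq 0$, which $P^\star$ satisfies with equality; hence $P^\star$ is dual feasible and $d^* \ge \langle W,P^\star\rangle$. For the primal side I would set $K^\star = -R^{-1}B^\tr P^\star$, so that $A+BK^\star$ is Hurwitz, and let $X^\star \succeq 0$ be the unique solution of the closed-loop Lyapunov equation \cref{eq:Lyapunov-H2}; then $(K^\star,X^\star)$ is primal feasible, giving $p^* \le \langle Q+K^{\star\tr}RK^\star,\,X^\star\rangle$. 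The crux is a completion-of-squares / trace identity: substituting the Lyapunov equation to eliminate $W$, then using the ARE together with the relation $B^\tr P^\star = -RK^\star$, one computes $(A+BK^\star)^\tr P^\star + P^\star(A+BK^\star) = -(Q+K^{\star\tr}RK^\star)$, which yields $\langle W,P^\star\rangle = \langle Q+K^{\star\tr}RK^\star,\,X^\star\rangle$ after one cyclic-trace manipulation. This equates the primal and dual objectives and closes the chain, forcing $p^* = d^*$ (and, as a by-product, $P^\star$ dual optimal and $(K^\star,X^\star)$ primal optimal).

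I expect the main obstacle to be not the algebra, which is routine, but the careful invocation of Riccati theory under the \emph{weak} hypotheses of \cref{assumption:controllability}: the existence of a stabilizing $P^\star \succeq 0$ rests on stabilizability of $(A,B)$ and detectability of $(Q^{1/2},A)$ rather than the stronger controllability/observability, and I would need the precise statement guaranteeing $A+BK^\star$ Hurwitz so that $X^\star$ exists and is positive semidefinite. A secondary subtlety worth flagging is that $W$ is only assumed $W \succeq 0$; this causes no difficulty here, since the primal point is constructed directly from the Hurwitz closed loop and the argument never requires $W \succ 0$, but it is precisely at the equivalence of \cref{eq:LQR-H2} and \cref{eq:LQR-H2-SDP-1} that the distinction between $W \succeq 0$ and $W \succ 0$ would otherwise enter.
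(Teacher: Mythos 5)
Your proposal is correct: the algebra goes through exactly as you sketch. From the ARE and $K^\star=-R^{-1}B^\tr P^\star$ one gets $(A+BK^\star)^\tr P^\star+P^\star(A+BK^\star)=-(Q+K^{\star\tr}RK^\star)$, pairing this with the Lyapunov equation \cref{eq:Lyapunov-H2} under the trace gives $\langle Q+K^{\star\tr}RK^\star,X^\star\rangle=\langle W,P^\star\rangle$, and the sandwich $p^*\le\langle Q+K^{\star\tr}RK^\star,X^\star\rangle=\langle W,P^\star\rangle\le d^*\le p^*$ closes. The external inputs you invoke are available under exactly the stated hypotheses: the paper's \cref{lemma:Riccati-equation} (stabilizability plus detectability) supplies the stabilizing $P^\star\succeq 0$, the Schur complement handles dual feasibility, and $X^\star\succeq 0$ exists because $A+BK^\star$ is Hurwitz and $W\succeq 0$; your observation that $W\succ 0$ is never needed is also right.

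However, your route is genuinely different from the paper's. The paper does not exhibit a matching primal--dual pair directly; instead it passes through the Shor-type SDP relaxation \cref{eq:SDP-primal}, proves strong duality for the convex SDP pair \cref{eq:SDP-primal,eq:Lagrange-dual} via strict feasibility of the dual (\cref{lemma:primal-dual-SDPs-LQR}), and then uses the complementarity/rank condition of \cref{theorem:strict-complementarity} to show that an optimal $Z^\star$ of the relaxation automatically satisfies $\mathrm{rank}(Z^\star)\le n$, hence is feasible for the rank-constrained reformulation \cref{eq:SDP-primal-rank}; this collapses $p^*\le p_1^*=d^*\le p^*$. Your certificate argument is shorter and more elementary---it needs only \cref{lemma:Riccati-equation}, weak duality, and a trace identity, and it yields optimality of $P^\star$ and $(K^\star,X^\star)$ as a by-product (part of \cref{theorem:strong-duality-2}). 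What it does not deliver is what the paper's heavier machinery is really for: tightness of the SDP relaxation itself ($p_1^*=p^*$), the strict complementarity statement, and the KKT-based uniqueness of $Z^\star$ (and of $K^\star$ when $W\succ 0$), all of which the paper reuses later (e.g., in \cref{section:global-optimality_static-linear-policies}). So your proof is a clean standalone proof of \cref{theorem:strong-duality-1}, while the paper's detour buys the structural results that accompany it.
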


This result only requires \Cref{assumption:controllability} and the matrix $W$ can be positive semidefinite. \Cref{theorem:strong-duality-1} is not entirely surprising, as it is well-known that the LQR problem can be addressed through a convex reformulation. This further underscores the benign nature of the nonconvexity inherent in the LQR problem \cite{zheng2023benign,zheng2024benign}.
Our proof of \Cref{theorem:strong-duality-1} 
 leverages an equivalent nonconvex reformulation of \eqref{eq:LQR-H2-SDP-1}, which involves a non-convex rank constraint.
By utilizing the complementarity slackness for primal and dual SDPs (see \Cref{appendix:SDP-review} for a quick review), we observe that the non-convex rank constraint is redundant, which leads to the strong duality between \eqref{eq:LQR-H2-SDP-1} and \eqref{eq:Lagrange-dual}. The proof details are presented in \Cref{subsubsection:proof-strong-duality-1}. Since the nonconvex LQR \cref{eq:LQR-H2-SDP-1} involves a quadratic function and a single quadratic constraint in matrix $K$, our SDP formulation and its proof process are analogous to the classical S-lemma \cite{polik2007survey} (we present some details in \Cref{appendix:S-lemma}).

In addition to establishing strong duality, the primal-dual perspective allows us to explicitly construct the optimal solutions to both 
the LQR problem \eqref{eq:LQR-H2-SDP-1} and Lagrange dual \eqref{eq:Lagrange-dual}.
\begin{theorem}[Primal and dual optimal solutions]\label{theorem:strong-duality-2}
    With \Cref{assumption:controllability},  
     the following statements hold. 
     \begin{enumerate}
         \item An optimal solution to the SDP \eqref{eq:Lagrange-dual}
        is the unique positive semidefinite solution $P^\star$ to the following algebraic Riccati equation:      \begin{equation}\label{eq:Riccati-eq}
            A^\tr P+PA  - PBR^{-1}B^\tr P + Q = 0.
        \end{equation}
        \item An optimal solution to the nonconvex LQR \eqref{eq:LQR-H2-SDP-1} is $K^\star = -R^{-1}B^\tr P^\star$, where $P^\star$ is the positive semidefinite solution to \cref{eq:Riccati-eq}.  If $W \succ 0$, then the optimal feedback gain to \eqref{eq:LQR-H2-SDP-1} is unique. 
     \end{enumerate}
\end{theorem}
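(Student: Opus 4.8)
The plan is to exhibit an explicit primal–dual pair that is feasible for \eqref{eq:LQR-H2-SDP-1} and \eqref{eq:Lagrange-dual}, respectively, and attains a common objective value, and then invoke strong duality (\Cref{theorem:strong-duality-1}) to certify optimality of both. I would begin by recalling the classical algebraic Riccati theory: under \Cref{assumption:controllability}, the ARE \eqref{eq:Riccati-eq} admits a unique positive semidefinite solution $P^\star$, and this solution is \emph{stabilizing}, i.e., $A_{\mathrm{cl}} := A - BR^{-1}B^\tr P^\star$ is Hurwitz. This supplies the uniqueness claim in item 1 at once, and it guarantees that the candidate gain $K^\star = -R^{-1}B^\tr P^\star$ is stabilizing, so the associated Lyapunov equation \eqref{eq:Lyapunov-H2} has a unique solution $X^\star \succeq 0$ and $(K^\star, X^\star)$ is primal feasible. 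Dual feasibility of $P^\star$ is immediate: writing the Riccati residual $\mathcal R(P) := A^\tr P + PA + Q - PBR^{-1}B^\tr P$, the equation $\mathcal R(P^\star) = 0 \succeq 0$ combined with a Schur-complement argument (using $R \succ 0$) yields the LMI in \eqref{eq:Lagrange-dual}.

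The technical core is a completion-of-squares identity. Using only $\mathcal R(P^\star)=0$ together with the constraint $(A+BK)X + X(A+BK)^\tr = -W$ of \eqref{eq:LQR-H2-SDP-1}, I would show that for every primal-feasible $(K,X)$,
\begin{equation*}
\langle Q + K^\tr R K, X\rangle = \langle W, P^\star\rangle + \mathrm{tr}\!\big(R(K-K^\star)X(K-K^\star)^\tr\big).
\end{equation*}
The derivation rewrites $Q + K^\tr R K$ via the algebraic identity $\mathcal R(P^\star) = (A+BK)^\tr P^\star + P^\star(A+BK) + Q + K^\tr R K - (K-K^\star)^\tr R (K-K^\star)$, valid for any $K$, and then eliminates the first two terms using the Lyapunov constraint and the cyclicity of $\langle\cdot,\cdot\rangle$. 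Since $R \succ 0$ and $X \succeq 0$, the trace term is nonnegative, so $\langle Q+K^\tr RK, X\rangle \ge \langle W, P^\star\rangle$ with equality at $(K^\star, X^\star)$. Hence $p^* = \langle W, P^\star\rangle$ and $(K^\star, X^\star)$ is primal optimal.

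Optimality of $P^\star$ for the dual then follows by sandwiching: $P^\star$ is dual feasible with value $\langle W, P^\star\rangle$, so $\langle W, P^\star\rangle \le d^*$; combined with weak duality $d^* \le p^*$ and the just-established $p^* = \langle W, P^\star\rangle$, all three coincide, so $P^\star$ attains $d^*$. \Cref{theorem:strong-duality-1} can be cited to close this chain, though the computation above in fact re-derives $p^*=d^*$ directly. This settles items 1 and 2 apart from uniqueness.

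For uniqueness under $W \succ 0$, suppose $(K, X)$ is primal optimal; the identity forces $\mathrm{tr}(R(K-K^\star)X(K-K^\star)^\tr) = 0$. I would first argue $X \succ 0$: if $Xv = 0$ for some $v \neq 0$, testing the Lyapunov equation against $v$ gives $0 = v^\tr\big((A+BK)X+X(A+BK)^\tr\big)v = -v^\tr W v < 0$, a contradiction. With $X \succ 0$ and $R \succ 0$, the vanishing trace term forces $K = K^\star$, establishing uniqueness of the optimal gain. The step I expect to require the most care is precisely this uniqueness argument: when $W$ is only positive semidefinite the solution $X$ may be singular, the trace term can vanish without $K=K^\star$, and optimal gains need not be unique (cf. \cref{example:W-singular}). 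The argument must therefore isolate exactly where strict positivity of $W$ enters, namely in forcing $X \succ 0$.
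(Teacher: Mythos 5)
Your proof is correct, but it follows a genuinely different route from the paper's. The paper proves statement 1 via the maximality property of the ARE solution (\cref{lemma:Riccati-equation}): every dual-feasible $P$ satisfies $P \preceq P^\star$ by the comparison theorem, so $P^\star$ maximizes $\langle W, P\rangle$ directly, without ever touching the primal. For statement 2, the paper works through the SDP machinery: strict feasibility of \eqref{eq:Lagrange-dual} (via \cref{lemma:Riccati-inequality}) guarantees solvability of the Shor relaxation \eqref{eq:SDP-primal}, and then a KKT/complementary-slackness analysis pins down the structure of the optimal $Z^\star$, shows $Z^\star$ is unique, and recovers $K^\star = -R^{-1}B^\tr P^\star$ from it; uniqueness of the gain under $W \succ 0$ is obtained by contradiction against the uniqueness of $Z^\star$. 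You instead bypass all of the SDP duality apparatus (\cref{lemma:primal-dual-SDPs-LQR}, strict feasibility, KKT) with a single matrix-level completion-of-squares identity,
\begin{equation*}
\langle Q + K^\tr R K, X\rangle = \langle W, P^\star\rangle + \mathrm{tr}\bigl(R(K-K^\star)X(K-K^\star)^\tr\bigr),
\end{equation*}
valid for every feasible pair of \eqref{eq:LQR-H2-SDP-1} (including pairs with non-stabilizing $K$ that can arise when $W$ is singular, so there is no gap there), and your uniqueness argument ($W \succ 0 \Rightarrow X \succ 0 \Rightarrow$ the vanishing trace forces $K = K^\star$) is more direct than the paper's contradiction via $Z^\star$. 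Two observations on what each approach buys. Your argument is more elementary and self-contained, and it actually re-derives strong duality $p^* = d^*$ as a byproduct, so your citation of \Cref{theorem:strong-duality-1} is unnecessary (indeed, in the paper \Cref{theorem:strong-duality-1} is itself a corollary of \cref{theorem:strict-complementarity}, so leaning on it would be circular in the paper's logical ordering, whereas your sandwich argument avoids this entirely). What your route does not deliver is the explicit form and uniqueness of the optimal SDP matrix $Z^\star$ and the rank/strict-complementarity conclusion of \cref{theorem:strict-complementarity}; the paper's heavier KKT analysis is designed to prove \Cref{theorem:strong-duality-2,theorem:strict-complementarity} simultaneously. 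Note also that your identity is essentially the Gramian-space analogue of the classical trajectory-based completion of squares that the paper reviews in \cref{appendix:LQR} and deliberately departs from, so in spirit you have reconstructed the textbook proof in the coordinates of \eqref{eq:LQR-H2-SDP-1} rather than the paper's primal--dual proof.
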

 
{In \cref{theorem:strong-duality-2},
the first statement
shows that 
the Riccati equation \cref{eq:Riccati-eq} indeed solves the Lagrange dual problem \cref{eq:Lagrange-dual}. This result is well-known since \cite{willems1971least}, which directly comes from the Schur complement and a celebrated \textit{comparison theorem}. 
The second statement is also well-known, which shows that we can recover an optimal solution to the primal nonconvex LQR problem from an optimal dual solution (i.e., $P^\star$ from \cref{eq:Riccati-eq}). This result requires some careful analysis of the Karush–Kuhn–Tucker (KKT) optimality condition for  
the SDP \cref{eq:SDP-primal} and its dual together with \cref{eq:Lagrange-dual}. We present the details
in \Cref{subsubsection:proofs-strong-duality-2}. 

Classical proof techniques for LQR are typically based on a completion of squares technique \cite[Theorem 14.2]{zhou1996robust}  or variational analysis \cite[Theorem 6.1]{liberzon2011calculus}. We provide a quick review of the completion-of-squares technique in \cref{appendix:LQR}. Our proof technique in \Cref{subsubsection:proofs-strong-duality-2} is based on a primal and dual perspective, inspired from \cite{balakrishnan2003semidefinite}.  

\begin{remark}[Uniqueness of the optimal feedback gain]
The optimal feedback gain to \eqref{eq:LQR-H2-SDP-1} is guaranteed to be unique when $W \succ 0$. If $W \succeq 0$, then $K^\star = -R^{-1}B^\tr P^\star$ is one optimal solution, and there may exist many other optimal solutions to 
    \eqref{eq:LQR-H2-SDP-1}. 
    We discuss the non-uniqueness in \cref{appendix:non-uniqueness}.
    We note that with \Cref{assumption:controllability}, the optimal control input $u(t)$ to \cref{eq:LQR} is always unique \cite[Theorem 14.2]{zhou1996robust}; also see \cref{appendix:LQR}. We provide a simple LQR instance in \Cref{example:W-singular} to illustrate that the optimal LQR feedback gain can be non-unique when $W$ is singular. 
    \hfill $\square$
\end{remark}

\begin{example}[Non-unique optimal feedback gains]
\label{example:W-singular}
     Consider an LQR instance with 
     \begin{equation*}
        A=\begin{bmatrix*}
            -1 & a \\
            a & -1
        \end{bmatrix*},\quad
        B= \begin{bmatrix}
            1\\ 1
        \end{bmatrix},\quad
         W = \begin{bmatrix}
         1 &-1\\ -1 &1
     \end{bmatrix}\succeq 0,\quad
     Q =  I \succ 0, \quad R=1.
     \end{equation*}
     We can choose a parameter $a > 0$ such that $A$ is stable.
     Then, \cref{assumption:controllability} is satisfied. 
     Consider the feedback gain $K=[k,k] $ with $k < -(a-1)/2$ that stabilizes the system as
     \begin{equation*}
         A+BK =
         \begin{bmatrix}
             -(1-k) & a+k\\
             a+k & -(1-k)
         \end{bmatrix}
         =\frac{1}{2}(a+2k-1)
         \begin{bmatrix}
             1 \\ 1
         \end{bmatrix}
         \begin{bmatrix}
             1 & 1
         \end{bmatrix}
         -\frac{1}{2}(1+a)
         \begin{bmatrix}
             -1 \\ 1
         \end{bmatrix}
         \begin{bmatrix}
             -1 & 1
         \end{bmatrix}.
     \end{equation*}
     For $K$,
     we can verify that the unique solution $X$ to \cref{eq:Lyapunov-H2} is independent of $k$, expressed as 
     \begin{align*}
     X=\int_0^\infty e^{(A+BK)t}We^{(A+BK)^\tr t} dt
        = \frac{1}{2(1+a)}\begin{bmatrix}
            1 & -1\\
            -1 & 1
        \end{bmatrix}.
     \end{align*}
     Thus, $J_\LQR(K)=\mathrm{tr}  (Q + K^\tr RK)X = 1/(1+a), \forall k < -(a-1)/2. $ 
     From the symmetry of the data matrices,
    it is straightforward to verify that~the optimal gain $K^\star = - B^\tr R^{-1}P^\star$ from the~ARE \cref{eq:Riccati-eq} must be of the form $K^\star = [k^\star, k^\star]$. 
      Thus, all feedback gains of the form $K=[k,k] $ with $k < -(a-1)/2$ are globally optimal to 
    \eqref{eq:LQR-H2-SDP-1}.~We illustrate the cost with $a=0.1$~in \cref{fig:lqr-landscape}~(c).
    \hfill $\square$
\end{example}

\subsection{Primal and dual SDPs in LQR, and strict complementarity}\label{subsection:strict-complementarity}

We here derive an alternative SDP relaxation for \cref{eq:LQR-H2-SDP-1} from a primal Shor's relaxation \cite{shor1987quadratic}.
This primal SDP and \cref{eq:Lagrange-dual}, obtained from the Lagrange dual process, form a primal and dual pair of SDPs. This pair of SDPs not only satisfies strong duality but also enjoys a favorable property of \textit{strict complementarity}.  

For any feasible solution in \cref{eq:LQR-H2-SDP-1}, we introduce a new set of variables 
\begin{equation} \label{eq:construction-Z}
    Z_{11}=X, \quad Z_{12}=XK^\tr, \quad Z_{22}=K X K^\tr, \quad Z = \begin{bmatrix}
        Z_{11} & Z_{12} \\ Z_{12}^\tr & Z_{22}
    \end{bmatrix}, \qquad \forall X \succeq 0, K \in \mathbb{R}^{m \times n}.
\end{equation}
It is not difficult to verify that the original LQR problem \cref{eq:LQR-H2-SDP-1} can be equivalently written as a rank-constrained SDP of the form (see \cref{section:equivalence} for details) 
\begin{equation} \label{eq:SDP-primal-rank}
    \begin{aligned}
    p^*=\min_{Z\in \mathbb{S}^{n + m}} \quad & \langle Q, Z_{11} \rangle +\langle R,Z_{22}\rangle \\
    \mathrm{subject~to} \quad & AZ_{11}+BZ_{12}^\tr +Z_{11}A^\tr +Z_{12}B^\tr + W= 0, \\ & Z=\begin{bmatrix}
        Z_{11} & Z_{12} \\ Z_{12}^\tr & Z_{22}
    \end{bmatrix}\succeq  0, \quad \mathrm{rank}(Z) \leq n.
\end{aligned}
\end{equation}
Dropping the rank constraint $\mathrm{rank}(Z) \leq n$, we get an alternative SDP relaxation 
\begin{equation} \label{eq:SDP-primal}
    \begin{aligned}
 p^*\geq  p_1^* = \min_{Z \in \mathbb{S}^{n + m}} \quad & \langle Q, Z_{11} \rangle +\langle R,Z_{22}\rangle \\    \mathrm{subject~to} \quad & AZ_{11}+BZ_{12}^\tr +Z_{11}A^\tr +Z_{12}B^\tr + W= 0, \\ & Z=\begin{bmatrix}
        Z_{11} & Z_{12} \\ Z_{12}^\tr & Z_{22}
    \end{bmatrix}\succeq  0.
\end{aligned}
\end{equation}
This primal SDP \cref{eq:SDP-primal} is also standard (known as Shor's relaxation). It is closely related to a classical change of variables  \cite{boyd1994linear}. 
We can verify that \cref{eq:SDP-primal,eq:Lagrange-dual} form a pair of primal and dual SDPs, which enjoy strong duality under standard assumptions. 

\begin{lemma} \label{lemma:primal-dual-SDPs-LQR}
Consider the SDPs \cref{eq:SDP-primal,eq:Lagrange-dual}. 
\begin{enumerate}
    \item They are primal and dual to each other for any $W \in \mathbb{S}^n$. 
    \item With \Cref{assumption:controllability} and $W \in \mathbb{S}^n_+$, strong duality holds between \cref{eq:SDP-primal,eq:Lagrange-dual}, i.e., $p_1^* = d^*$.
\end{enumerate}
\end{lemma}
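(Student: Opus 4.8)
The plan is to handle the two parts by different routes: Part 1 via an explicit Lagrange dual computation (requiring nothing about $W$), and Part 2 via a sandwiching argument that invokes the strong duality already proved in \Cref{theorem:strong-duality-1}, rather than a direct constraint qualification.

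For Part 1, I would introduce a multiplier $P \in \mathbb{S}^n$ for the equality constraint of \cref{eq:SDP-primal} and form the Lagrangian $L(Z,P) = \langle Q, Z_{11}\rangle + \langle R, Z_{22}\rangle + \langle P,\, AZ_{11} + BZ_{12}^\tr + Z_{11}A^\tr + Z_{12}B^\tr + W\rangle$. The key step is to regroup the coupling terms by the blocks $Z_{11}, Z_{12}, Z_{22}$. Using the cyclic property of the trace together with the symmetry of $P$ and $Z_{11}$, the terms pairing $P$ with $Z_{11}$ collapse to $\langle A^\tr P + PA,\, Z_{11}\rangle$, while the two cross terms each reduce to $\langle PB,\, Z_{12}\rangle$. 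Collecting everything yields the compact form $L(Z,P) = \langle S(P),\, Z\rangle + \langle W, P\rangle$, where $S(P)$ is exactly the block matrix appearing in the LMI of \cref{eq:Lagrange-dual}. Minimizing over $Z \succeq 0$ is then immediate: the linear functional $\langle S(P),\, Z\rangle$ is bounded below on the positive semidefinite cone if and only if $S(P) \succeq 0$, in which case its infimum is $0$ (attained at $Z=0$), and is $-\infty$ otherwise. This recovers the dual function $g(P) = \langle W, P\rangle$ on $\{P : S(P)\succeq 0\}$ and $-\infty$ elsewhere, so the Lagrange dual of \cref{eq:SDP-primal} is precisely \cref{eq:Lagrange-dual}. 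No hypothesis on $W$ is used, establishing the claim for all $W \in \mathbb{S}^n$; this computation mirrors that of \Cref{lemma:Lagrangian-explicit}.

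For Part 2, rather than verifying Slater's condition, I would sandwich $p_1^*$ between $d^*$ and $p^*$. Weak duality for the primal--dual pair from Part 1 gives $d^* \leq p_1^*$. On the other side, \cref{eq:SDP-primal} is obtained from the rank-constrained reformulation \cref{eq:SDP-primal-rank} of the LQR problem \cref{eq:LQR-H2-SDP-1} by dropping the constraint $\mathrm{rank}(Z) \leq n$; since this enlarges the feasible set, $p_1^* \leq p^*$. Finally, \Cref{theorem:strong-duality-1} asserts $p^* = d^*$ under \Cref{assumption:controllability}. Chaining these, $d^* \leq p_1^* \leq p^* = d^*$, which forces $p_1^* = d^* = p^*$ and hence strong duality.

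The main subtlety to flag is why Part 2 is routed through \Cref{theorem:strong-duality-1} instead of a direct Slater argument. When $W$ is only positive semidefinite (and possibly singular), the primal \cref{eq:SDP-primal} need not be strictly feasible: the natural feasible points built from a stabilizing $K$ have $Z_{11} = X$ equal to the (possibly rank-deficient) Lyapunov solution, as in \Cref{example:W-singular}. Dually, strict feasibility of \cref{eq:Lagrange-dual} would require a \emph{strict} Riccati inequality, which is not guaranteed when $Q$ is merely positive semidefinite. Thus neither Slater condition holds in general, and the sandwich argument---leveraging the more delicate complementary-slackness analysis already carried out for \Cref{theorem:strong-duality-1}---is what closes the gap in the general $W \succeq 0$ case.
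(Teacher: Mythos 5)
Your Part 1 is correct and essentially identical to the paper's argument: the paper casts \cref{eq:SDP-primal} in the standard primal form \cref{eq:sdp-primal-standard}, computes the adjoint $\mathcal{A}^*(P)$, and reads off \cref{eq:Lagrange-dual} as the dual, which is the same computation you perform via the Lagrangian; no condition on $W$ is needed, as you say.

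Part 2, however, has a genuine gap: it is circular within this paper. You prove $p_1^* = d^*$ by chaining $d^* \leq p_1^* \leq p^* = d^*$, where the last equality invokes \Cref{theorem:strong-duality-1}. But the paper's proof of \Cref{theorem:strong-duality-1} (\Cref{subsubsection:proof-strong-duality-1}) \emph{opens} with ``By \Cref{lemma:primal-dual-SDPs-LQR} and weak duality, we know that $p_1^* = d^* \leq p^*$'' --- that is, it uses precisely the statement you are trying to prove. Moreover, the other ingredient of that proof, \Cref{theorem:strict-complementarity}, also relies on \Cref{lemma:primal-dual-SDPs-LQR} to assert that the dual of \cref{eq:Lagrange-dual} is \cref{eq:SDP-primal} and that the latter is solvable. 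So your sandwich cannot be used to establish the lemma unless you supply an independent proof of $p^* = d^*$, which you do not.

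The subtlety you flag to justify bypassing Slater's condition is also factually wrong, and this is exactly where the paper's actual proof lives. You claim strict feasibility of \cref{eq:Lagrange-dual} ``would require a strict Riccati inequality, which is not guaranteed when $Q$ is merely positive semidefinite.'' In fact, \Cref{lemma:Riccati-inequality} (Scherer) states that, for $R \succ 0$, any symmetric $Q$, and $(A,B)$ stabilizable, solvability of the strict Riccati inequality $\mathcal{R}(P) \succ 0$ is \emph{equivalent} to the existence of a stabilizing solution of the ARE $\mathcal{R}(P)=0$; and \Cref{lemma:Riccati-equation} guarantees such a stabilizing solution under \Cref{assumption:controllability}. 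Hence the dual SDP \cref{eq:Lagrange-dual} \emph{is} strictly feasible under the standing assumptions, with no positive definiteness of $Q$ needed. The paper's proof then applies the one-sided Slater argument of \cref{theorem:SDP-duality}: $d^*$ is finite because the maximal solution $P^\star$ of the ARE together with $W \succeq 0$ yields $\langle W, P\rangle \leq \langle W, P^\star\rangle$ for every dual-feasible $P$, and strict dual feasibility then gives zero gap and solvability of \cref{eq:SDP-primal}. You are right that the \emph{primal} \cref{eq:SDP-primal} need not be strictly feasible when $W$ is singular, but that is harmless --- Slater's condition is only needed on one side of the pair.
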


The first result in \Cref{lemma:primal-dual-SDPs-LQR} follows from standard Lagrange dual arguments. As for the second result, we show that the dual SDP \cref{eq:Lagrange-dual} is strictly feasible under \Cref{assumption:controllability} (note that we cannot guarantee \cref{eq:SDP-primal} is strictly feasible).
On the other hand,
assuming $W \in \mathbb{S}^n_+$
guarantees that the dual optimal value $d^*$ is bounded.
By Slater's condition, we know that strong duality holds between \cref{eq:Lagrange-dual,eq:SDP-primal}. We provide some details in \Cref{appendix:primal-dual-SDPs-LQR}.

In addition to strong duality, a useful rank constraint is always true for any optimal solution to \cref{eq:SDP-primal,eq:Lagrange-dual}. If we further have $W \succ 0$, the primal and dual SDPs \cref{eq:SDP-primal,eq:Lagrange-dual} enjoys a favorable notion of strict complementarity \cite{alizadeh1997complementarity,ding2023strict,liao2024inexact} in the following sense.
\begin{theorem} \label{theorem:strict-complementarity}
    With \Cref{assumption:controllability}, there exists a pair of primal and dual optimal solutions $P^\star$ and $Z^\star$ to \cref{eq:SDP-primal,eq:Lagrange-dual}, such that 
    \begin{equation} \label{eq:rank-condition}
    \mathrm{rank} \left(\begin{bmatrix}
    A^\tr P^\star + P^\star A + Q & P^\star B \\ B^\tr P^\star & R
\end{bmatrix}\right) + \mathrm{rank}(Z^\star) \leq n + m.
    \end{equation}
    If  
    we further have $W \succ 0$, condition \cref{eq:rank-condition} holds with equality, i.e., strict complementarity~holds. 
\end{theorem}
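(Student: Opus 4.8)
The plan is to exhibit the primal–dual optimal pair arising from the algebraic Riccati equation and to compute the two ranks in \eqref{eq:rank-condition} explicitly. By \Cref{theorem:strong-duality-2}, the unique positive semidefinite solution $P^\star$ to the ARE \eqref{eq:Riccati-eq} is optimal for the dual SDP \eqref{eq:Lagrange-dual}. Setting $K^\star = -R^{-1}B^\tr P^\star$ and letting $X^\star \succeq 0$ be the Lyapunov solution of \eqref{eq:Lyapunov-H2} under $K^\star$, the matrix $Z^\star = \begin{bmatrix} I \\ K^\star \end{bmatrix} X^\star \begin{bmatrix} I & K^{\star\tr} \end{bmatrix}$ obtained from \eqref{eq:construction-Z} is feasible for \eqref{eq:SDP-primal} with objective value $\trace((Q + K^{\star\tr} R K^\star)X^\star) = p^*$. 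Combining \Cref{theorem:strong-duality-1} ($p^* = d^*$) with \Cref{lemma:primal-dual-SDPs-LQR} ($p_1^* = d^*$) gives $p_1^* = p^*$, so $Z^\star$ is primal optimal; thus $(Z^\star, P^\star)$ is an optimal primal–dual pair with zero duality gap.

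Next I would compute the rank of the dual slack $S^\star := \begin{bmatrix} A^\tr P^\star + P^\star A + Q & P^\star B \\ B^\tr P^\star & R \end{bmatrix}$. Substituting the ARE \eqref{eq:Riccati-eq} into the $(1,1)$ block replaces it by $P^\star B R^{-1} B^\tr P^\star$, which yields the factorization $S^\star = F F^\tr$ with $F = \begin{bmatrix} P^\star B R^{-1/2} \\ R^{1/2} \end{bmatrix}$. Since the bottom block $R^{1/2}$ is invertible, $F$ has full column rank $m$, so $\operatorname{rank}(S^\star) = m$, independent of $W$. For the primal side, because $\begin{bmatrix} I \\ K^\star \end{bmatrix}$ has full column rank $n$, we obtain $\operatorname{rank}(Z^\star) = \operatorname{rank}(X^\star) \leq n$. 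The inequality \eqref{eq:rank-condition} then follows immediately, either directly from $\operatorname{rank}(X^\star) + m \leq n + m$, or more conceptually from complementary slackness: zero duality gap forces $\langle Z^\star, S^\star \rangle = 0$, hence $Z^\star S^\star = 0$ and $\operatorname{range}(Z^\star) \subseteq \ker(S^\star)$.

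Finally, for the strict complementarity claim it remains to upgrade the inequality to equality when $W \succ 0$, which amounts to showing $\operatorname{rank}(X^\star) = n$, i.e.\ $X^\star \succ 0$. This follows from the explicit representation $X^\star = \int_0^\infty e^{(A+BK^\star)t}\, W\, e^{(A+BK^\star)^\tr t}\, dt$, valid since $A + BK^\star$ is stable; when $W \succ 0$ the integrand is positive definite for every $t$, forcing $X^\star \succ 0$. Hence $\operatorname{rank}(Z^\star) = n$ and $\operatorname{rank}(Z^\star) + \operatorname{rank}(S^\star) = n + m$, which is exactly strict complementarity.

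I expect the main obstacle to be the exact rank computation of the dual slack: recognizing that the Riccati equation collapses the $(1,1)$ block into $P^\star B R^{-1} B^\tr P^\star$ and produces the full-column-rank factorization $S^\star = F F^\tr$, so that $\operatorname{rank}(S^\star)$ is pinned to \emph{exactly} $m$ rather than merely bounded above. Once this is established, pairing it with $\operatorname{rank}(Z^\star) = \operatorname{rank}(X^\star)$ and the positive definiteness of $X^\star$ under $W \succ 0$ makes both the inequality and the equality fall out cleanly.
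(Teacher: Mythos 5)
Your core computations are all correct, and they coincide with the paper's key algebraic steps: the factorization of the dual slack $S^\star = FF^\tr$ with $F = \begin{bmatrix} P^\star B R^{-1/2} \\ R^{1/2} \end{bmatrix}$ is exactly \cref{eq:KKT-factorization} and pins $\mathrm{rank}(S^\star)$ to exactly $m$; the identity $\mathrm{rank}(Z^\star) = \mathrm{rank}(X^\star)$ and the positive definiteness of $X^\star$ under $W \succ 0$ via the integral representation are also right. The genuine gap is upstream, in how you certify that your constructed pair is optimal. You invoke \Cref{theorem:strong-duality-1} ($p^* = d^*$) and the second statement of \Cref{theorem:strong-duality-2} (optimality of $K^\star$ for \cref{eq:LQR-H2-SDP-1}). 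But in this paper both results sit \emph{downstream} of \Cref{theorem:strict-complementarity}: the proof in \Cref{subsubsection:proof-strong-duality-1} derives \Cref{theorem:strong-duality-1} as a corollary of \Cref{theorem:strict-complementarity} (the rank condition is what shows the rank constraint in \cref{eq:SDP-primal-rank} is inactive), and the second statement of \Cref{theorem:strong-duality-2} is proved by the very same KKT analysis that proves \Cref{theorem:strict-complementarity}. As written, your argument is therefore circular within the paper's logical structure.

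The gap is easily repaired while keeping your constructive architecture intact. Instead of citing \Cref{theorem:strong-duality-1,theorem:strong-duality-2}, verify optimality directly by weak duality: the ARE \cref{eq:Riccati-eq} gives $Q + K^{\star\tr} R K^\star = -\left[(A+BK^\star)^\tr P^\star + P^\star (A+BK^\star)\right]$, hence
\begin{equation*}
\trace\left((Q+K^{\star\tr}RK^\star)X^\star\right) = -\trace\left(P^\star\left[(A+BK^\star)X^\star + X^\star (A+BK^\star)^\tr\right]\right) = \trace(P^\star W) = \langle W, P^\star\rangle
\end{equation*}
by the Lyapunov equation \cref{eq:Lyapunov-H2}. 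So your feasible $Z^\star$ attains the value of the feasible dual point $P^\star$, and weak duality (the first part of \Cref{lemma:primal-dual-SDPs-LQR}) forces both to be optimal, with complementary slackness following for free. With that substitution your proof is correct and self-contained, and it is genuinely different in direction from the paper's: the paper obtains an optimal $Z^\star$ abstractly (strict feasibility of \cref{eq:Lagrange-dual} via \cref{lemma:Riccati-equation,lemma:Riccati-inequality}, then SDP duality), and only afterwards uses complementary slackness to discover its form. That heavier route buys the uniqueness of $Z^\star$, which the paper needs for the uniqueness claim in \Cref{theorem:strong-duality-2}; your patched route is a shorter, more elementary proof of \Cref{theorem:strict-complementarity} alone.
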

The inequality \cref{eq:rank-condition} is due to the complementarity slackness. The notion of strict complementarity is the key to ensuring many nice properties in SDPs, including \textit{quadratic growth} and fast linear convergences when applying standard algorithms \cite{ding2023strict,liao2024inexact}. \Cref{theorem:strict-complementarity} confirms that the SDPs arising in LQR also enjoy this favorable property when $W\succ 0$. Furthermore, we will see that 
the strong duality in \Cref{theorem:strong-duality-1} is a direct corollary of \cref{lemma:primal-dual-SDPs-LQR,theorem:strict-complementarity}. 

\subsection{Technical proofs}

We here complete proof details for \Cref{theorem:strong-duality-1,theorem:strong-duality-2,theorem:strict-complementarity}. The result in \Cref{theorem:strong-duality-1} follows as a corollary of  \cref{theorem:strict-complementarity}. The proofs of  \Cref{theorem:strong-duality-2,theorem:strict-complementarity}, however, are more technically intricate, which rely on non-trivial yet now standard results from Riccati equations/inequalities and KKT analysis. 

\subsubsection{Proof of \Cref{theorem:strong-duality-1}}\label{subsubsection:proof-strong-duality-1}

By \Cref{lemma:primal-dual-SDPs-LQR} and weak duality, we know that 
\begin{equation} \label{eq:sandwich-1}
    p_1^* = d^* \leq p^*,
\end{equation}
where $p_1^*$ is the optimal value of \cref{eq:SDP-primal}.
\Cref{theorem:strict-complementarity} confirms that there exists optimal solutions $P^\star$ and $Z^\star$ such that \cref{eq:rank-condition} holds. Since $R \succ 0$ and $\mathrm{rank}(R) = m$, we must have 
$$
\mathrm{rank}(Z^\star) \leq n + m - \mathrm{rank} \left(\begin{bmatrix}
    A^\tr P^\star + P^\star A + Q & P^\star B \\ B^\tr P^\star & R
\end{bmatrix}\right) \leq n.
$$
This indicates that $Z^\star$ is also feasible to \cref{eq:SDP-primal-rank}. 
Since \cref{eq:LQR-H2-SDP-1} and 
\cref{eq:SDP-primal-rank}
are equivalent and have the same optimal value, we thus have 
\begin{equation}\label{eq:sandwich-2}
p^* \leq p_1^*. 
\end{equation}
Combining \cref{eq:sandwich-1,eq:sandwich-2} confirms the strong duality that $p^* = d^*$. \hfill  $\square$

\subsubsection{Proofs of \Cref{theorem:strong-duality-2,theorem:strict-complementarity}}\label{subsubsection:proofs-strong-duality-2}

The proofs of \Cref{theorem:strong-duality-2,theorem:strict-complementarity} depend on the relationship between Riccati equations and inequalities. Given problem data $A, B, Q$, and $R$ with compatible dimensions, let us define a quadratic matrix
\begin{equation} \label{eq:Ricatti-inequality}
         \mathcal{R}(P):= A^\tr P+PA-PBR^{-1}B^\tr P +Q.
\end{equation}
Then, $\mathcal{R}(P) = 0$ defines the same algebraic Riccati equation as \cref{eq:Riccati-eq}, and $\mathcal{R}(P) \succeq 0$ gives a nonstrict Riccati inequality. 
We first review a well-established result. 

\begin{lemma}{\cite[Corollary 12.5]{zhou1996robust},\cite[Lemma 3]{willems1971least}}\label{lemma:Riccati-equation}
    With \Cref{assumption:controllability}, the Riccati equation $\mathcal{R}(P) = 0$ has a unique positive semidefinite solution $P \succeq 0$. This solution further satisfies the properties below.  
    \begin{enumerate}
        \item It is stabilizing, i.e., $A - BR^{-1}B^\tr P$ is stable.
        \item It is maximal in the sense that $P \succeq \hat{P}$ for all $\hat{P} \in \mathbb{S}^n$ satisfying the Riccati inequality $\mathcal{R}(\hat{P}) \succeq 0$. 
    \end{enumerate}
\end{lemma}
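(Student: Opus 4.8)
The plan is to first establish existence of a \emph{stabilizing} positive semidefinite solution and then derive both maximality and uniqueness from a single completion-of-squares comparison identity. Throughout I write $S := BR^{-1}B^\tr \succeq 0$, so that $\mathcal{R}(P) = A^\tr P + PA - PSP + Q$ and the closed-loop matrix associated with a symmetric $P$ is $A_P := A - SP$.

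For existence I would work with the Hamiltonian matrix $H = \begin{bmatrix} A & -S \\ -Q & -A^\tr \end{bmatrix}$. First, a Popov--Belevitch--Hautus (PBH) argument shows that \cref{assumption:controllability} forces $H$ to have no eigenvalue on the imaginary axis: if $H\begin{bmatrix} v \\ w \end{bmatrix} = j\omega \begin{bmatrix} v \\ w \end{bmatrix}$, then pairing the two block equations and using $Q \succeq 0$, $R \succ 0$ yields $Q^{1/2}v = 0$ and $B^\tr w = 0$, which the detectability of $(Q^{1/2},A)$ and stabilizability of $(A,B)$ rule out unless $v = w = 0$. Since $H$ is Hamiltonian, its spectrum is symmetric about the imaginary axis, so exactly $n$ eigenvalues lie in the open left half-plane. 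Writing the associated $n$-dimensional stable invariant subspace as the range of $\begin{bmatrix} X_1 \\ X_2 \end{bmatrix}$, I would show $X_1$ is nonsingular (any $v \in \ker X_1$ would generate an undamped mode, contradicting stabilizability), set $P := X_2 X_1^{-1}$, and verify directly that $P$ is symmetric, that $\mathcal{R}(P) = 0$, and that $A_P$ satisfies $A_P X_1 = X_1 \Lambda$ with $\Lambda$ the stable block, hence $A_P$ is stable. Positive semidefiniteness of $P$ then follows from the optimal-control reading $x_0^\tr P x_0 = \min \int_0^\infty (x^\tr Q x + u^\tr R u)\,dt \ge 0$, or equivalently by integrating the Lyapunov equation $A_P^\tr P + P A_P + (Q + PSP) = 0$ against the stable $A_P$.

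Maximality and uniqueness both reduce to one identity. For any $\hat P \in \mathbb{S}^n$ with $\mathcal{R}(\hat P) \succeq 0$, setting $\Delta := P - \hat P$ and expanding, the cross terms assemble so that $A_P^\tr \Delta + \Delta A_P = -\bigl(\mathcal{R}(\hat P) + \Delta S \Delta\bigr) \preceq 0$, since both $\mathcal{R}(\hat P) \succeq 0$ and $\Delta S \Delta \succeq 0$. As $A_P$ is stable, the Lyapunov solution $\Delta = \int_0^\infty e^{A_P^\tr t}\bigl(\mathcal{R}(\hat P) + \Delta S \Delta\bigr) e^{A_P t}\,dt \succeq 0$, which gives $P \succeq \hat P$ and proves maximality. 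For uniqueness I would first show every positive semidefinite solution $\hat P$ is itself stabilizing: rewriting the equation as $A_{\hat P}^\tr \hat P + \hat P A_{\hat P} + (Q + \hat K^\tr R \hat K) = 0$ with $\hat K = -R^{-1}B^\tr \hat P$, a PBH check (using $R \succ 0$ and detectability of $(Q^{1/2},A)$) shows $\bigl((Q + \hat K^\tr R \hat K)^{1/2}, A_{\hat P}\bigr)$ is detectable, so the Lyapunov equation forces $A_{\hat P}$ to be stable. Applying the comparison identity with the roles of $P$ and $\hat P$ interchanged then gives $\hat P \succeq P$, and combined with $P \succeq \hat P$ we conclude $\hat P = P$.

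The main obstacle is the pair of spectral/PBH arguments in the existence step — ruling out imaginary-axis eigenvalues of $H$ and proving $X_1$ nonsingular — because this is exactly where the \emph{joint} use of stabilizability and detectability (rather than the stronger controllability and observability) must be handled carefully. A fully self-contained treatment could instead replace the Hamiltonian construction by a monotone Kleinman--Newton iteration: start from a stabilizing $K_0$ (available by stabilizability), solve $(A+BK_k)^\tr P_k + P_k(A+BK_k) + Q + K_k^\tr R K_k = 0$, and update $K_{k+1} = -R^{-1}B^\tr P_k$; there the delicate part is showing the iterates stay stabilizing and that $\{P_k\}$ is monotonically nonincreasing and bounded below, hence convergent to the stabilizing solution. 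Either route funnels into the same comparison identity for maximality and uniqueness, so I would present that identity as the technical core and defer the existence step to \cite{zhou1996robust,willems1971least}.
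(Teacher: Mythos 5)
The paper does not actually prove \cref{lemma:Riccati-equation}: it is quoted directly from \cite[Corollary 12.5]{zhou1996robust} and \cite[Lemma 3]{willems1971least}, and the surrounding text treats it as known background. Your proposal is a correct reconstruction of the classical argument underlying those citations, so there is nothing to compare against inside the paper itself. The technical core checks out: the comparison identity $A_P^\tr \Delta + \Delta A_P = -\bigl(\mathcal{R}(\hat P) + \Delta S \Delta\bigr)$ with $\Delta = P - \hat P$ is algebraically exact and, as you note, needs only symmetry of $\hat P$ (not $\hat P \succeq 0$), which is precisely what the maximality claim requires; the detectability-transfer step (from $(Q^{1/2},A)$ detectable and $R \succ 0$ to $\bigl((Q+\hat K^\tr R \hat K)^{1/2}, A+B\hat K\bigr)$ detectable) combined with the Lyapunov-implies-stability argument correctly shows every positive semidefinite solution is stabilizing, and applying the identity in both directions then gives uniqueness. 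The Hamiltonian existence step is the standard route and your PBH argument for excluding imaginary-axis eigenvalues is right; the one place the sketch is thinner than a complete proof is the nonsingularity of $X_1$, which additionally needs the $\Lambda$-invariance of $\ker X_1$ and the Hermitian property of $X_2^\her X_1$ before the stabilizability contradiction can be invoked — but you explicitly flag this as the delicate step and propose deferring it to the references, which is consistent with (indeed, slightly more than) what the paper does.
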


\Cref{lemma:Riccati-equation} is central to the LQR problem, and it requires stabilizability and detectability. We also have the following equivalency connecting the stabilizing solution of the Riccati equation with the feasibility of the strict Riccati inequality, which does not explicitly require  detectability. 
\begin{lemma}{\cite[Theorem 2.23]{scherer1990riccati}} \label{lemma:Riccati-inequality}
    Suppose $R \succ 0$, $Q \in \mathbb{S}^n$, and $(A, B)$ is stablizable. The following two statements are equivalent: 
    \begin{enumerate}
        \item There exists a symmetric solution to the strict  Riccati inequality $\mathcal{R}({P}) \succ 0$.
        \item There exists a symmetric solution $P$ to $\mathcal{R}(P) = 0$ such that $A- BR^{-1}B^\tr P$ is stable. 
    \end{enumerate}
\end{lemma}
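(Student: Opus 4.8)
The plan is to prove the two implications separately, with the reverse direction (2) $\Rightarrow$ (1) being an easy perturbation argument and the forward direction (1) $\Rightarrow$ (2) being reduced to the standard existence result \Cref{lemma:Riccati-equation} via a completion-of-squares shift. The single algebraic fact driving both directions is the identity
\[
\mathcal{R}(P_0 + S) = \mathcal{R}(P_0) + A_{\mathrm{cl}}^\tr S + S A_{\mathrm{cl}} - S BR^{-1}B^\tr S, \qquad A_{\mathrm{cl}} := A - BR^{-1}B^\tr P_0,
\]
valid for any symmetric $P_0, S$, which I would verify by direct expansion of \cref{eq:Ricatti-inequality}.

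For (2) $\Rightarrow$ (1), let $P_0$ be a stabilizing solution, so $\mathcal{R}(P_0)=0$ and $A_{\mathrm{cl}}$ is stable. Since $A_{\mathrm{cl}}$ is stable, the Lyapunov equation $A_{\mathrm{cl}}^\tr E + E A_{\mathrm{cl}} = -I$ has a unique solution $E = \int_0^\infty e^{A_{\mathrm{cl}}^\tr t}e^{A_{\mathrm{cl}} t}\,dt \succ 0$. Substituting $S = -tE$ into the identity yields
\[
\mathcal{R}(P_0 - tE) = t\left(I - t\, E BR^{-1}B^\tr E\right),
\]
which is positive definite for every sufficiently small $t>0$ (namely $t < \|E BR^{-1}B^\tr E\|^{-1}$). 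Hence $P := P_0 - tE$ solves the strict Riccati inequality. Note this direction uses only stability of $A_{\mathrm{cl}}$.

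For (1) $\Rightarrow$ (2) --- the crux --- let $\hat P$ satisfy $\mathcal{R}(\hat P) \succ 0$ and set $\hat A_{\mathrm{cl}} := A - BR^{-1}B^\tr \hat P$ and $\tilde Q := \mathcal{R}(\hat P) \succ 0$. By the identity, $\mathcal{R}(\hat P + S) = 0$ is equivalent to the auxiliary algebraic Riccati equation
\[
\hat A_{\mathrm{cl}}^\tr S + S \hat A_{\mathrm{cl}} - S BR^{-1}B^\tr S + \tilde Q = 0.
\]
I would then check that this auxiliary problem meets all hypotheses of \Cref{lemma:Riccati-equation}: (i) $(\hat A_{\mathrm{cl}}, B)$ is stabilizable because $\hat A_{\mathrm{cl}} = A + BK'$ with $K' = -R^{-1}B^\tr \hat P$, and state feedback preserves stabilizability; (ii) $R \succ 0$; and (iii) since $\tilde Q \succ 0$ is strictly positive definite, $\tilde Q^{1/2}$ is invertible, so $(\tilde Q^{1/2}, \hat A_{\mathrm{cl}})$ is in fact observable, hence detectable. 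This last point is precisely where the strictness of the Riccati inequality is used, and it removes any need to assume detectability of the original data $(Q^{1/2}, A)$ or sign-definiteness of $Q$. \Cref{lemma:Riccati-equation} then supplies a stabilizing solution $S \succeq 0$ with $\hat A_{\mathrm{cl}} - BR^{-1}B^\tr S$ stable. Setting $P := \hat P + S$ gives $\mathcal{R}(P) = 0$ and $A - BR^{-1}B^\tr P = \hat A_{\mathrm{cl}} - BR^{-1}B^\tr S$, which is stable, establishing (2).

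The main obstacle is the forward direction: the conceptual step is recognizing that the strict inequality $\mathcal{R}(\hat P)\succ 0$ can be absorbed as a positive-definite cost of a shifted LQR problem, so that the standard existence theorem applies even though the original $Q$ is merely symmetric. The remaining work --- verifying the completion identity and the stabilizability/observability of the shifted data --- is routine.
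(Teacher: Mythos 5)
Your proposal is correct. Note, however, that the paper does not prove \Cref{lemma:Riccati-inequality} at all: it is imported verbatim from Scherer's thesis (\cite[Theorem 2.23]{scherer1990riccati}), so there is no in-paper argument to compare against. What you have produced is a self-contained replacement for that citation, and a rather economical one: your completion-of-squares identity
\[
\mathcal{R}(P_0 + S) \;=\; \mathcal{R}(P_0) + A_{\mathrm{cl}}^\tr S + S A_{\mathrm{cl}} - S BR^{-1}B^\tr S,
\qquad A_{\mathrm{cl}} = A - BR^{-1}B^\tr P_0,
\]
is verified by direct expansion, the direction (2) $\Rightarrow$ (1) is a clean Lyapunov perturbation $P_0 - tE$ with small $t>0$, and the direction (1) $\Rightarrow$ (2) reduces the claim to \Cref{lemma:Riccati-equation} applied to the shifted data $(\hat A_{\mathrm{cl}}, B, \tilde Q, R)$ with $\tilde Q = \mathcal{R}(\hat P) \succ 0$. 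All three hypothesis checks are sound: state feedback preserves stabilizability, and $\tilde Q \succ 0$ makes $(\tilde Q^{1/2}, \hat A_{\mathrm{cl}})$ observable, which is exactly how the strictness of the inequality substitutes for any detectability or sign-definiteness assumption on the original $Q$ --- matching the paper's remark that \Cref{lemma:Riccati-inequality} ``does not explicitly require detectability.'' A pleasant side effect of your route is that both Riccati facts the paper uses (\Cref{lemma:Riccati-equation} and \Cref{lemma:Riccati-inequality}) now rest on a single external result rather than two, whereas Scherer's original proof proceeds through a more general theory of indefinite Riccati inequalities that the paper never needs.
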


We are now ready to complete the proof  \cref{theorem:strong-duality-2,theorem:strict-complementarity}. The first statement of \cref{theorem:strong-duality-2} is a direct corollary of \cref{lemma:Riccati-equation}. Since $R\succ 0$, the Schur complement gives the following equivalency 
    \begin{equation*}
         \begin{bmatrix}
    A^\tr P + PA + Q & PB \\ B^\tr P & R
        \end{bmatrix}\succeq 0
        \quad \Leftrightarrow \quad
        A^\tr P+PA+Q-PBR^{-1}B^\tr P \succeq 0.
    \end{equation*}
Then, \cref{lemma:Riccati-equation} ensures that the unique positive semidefinite solution $P$ to the Riccati equation \cref{eq:Riccati-eq} is maximal for all feasible solutions to the SDP \eqref{eq:Lagrange-dual}. Thus, this positive semidefinite solution $P$ is an optimal solution to \eqref{eq:Lagrange-dual} (which may not be the unique one).  

The second statement of \cref{theorem:strong-duality-2} and \cref{theorem:strict-complementarity} rely on some careful reasoning on the KKT analysis. First, \cref{lemma:Riccati-equation,lemma:Riccati-inequality} indicates that there exists a symmetric matrix $P \in \mathbb{S}^n$ such that 
$$
\begin{bmatrix}
    A^\tr P + PA + Q & PB \\ B^\tr P & R
        \end{bmatrix}\succ 0$$
i.e., the SDP \cref{eq:Lagrange-dual} is strictly feasible. From standard results in SDPs (see \cref{theorem:SDP-duality} in \cref{appendix:SDP-review} and \cite[Theorem 3.1]{vandenberghe1996semidefinite}), the dual SDP of \cref{eq:Lagrange-dual}, which is \cref{eq:SDP-primal} by \Cref{lemma:primal-dual-SDPs-LQR}, is solvable. In other words, the SDP \cref{eq:SDP-primal} has at least one optimal solution $Z^\star$ which achieves $ p_1^* =  \langle Q, Z_{11}^\star  \rangle +\langle R,Z_{22}^\star \rangle$. 

Next, we establish that the SDP \cref{eq:SDP-primal} only has one and at most one optimal solution. Let $Z^\star$ be any optimal solution to  \cref{eq:SDP-primal}.\textit{ The calculation below essentially recovers the primal solution $Z^\star$ from the dual solution $P^\star$ through KKT analysis}. 
Furthermore, we show that $Z^\star$ is unique. 
Let $P^\star$ be the positive semidefinite solution to the Riccati equation \eqref{eq:Riccati-eq}, which is an optimal solution to \eqref{eq:Lagrange-dual}. 
    Then, $(Z^\star,P^\star)$ is a pair of primal and dual solutions, and they satisfy the following KKT condition:
    \begin{subequations} \label{eq:KKT}
    \begin{align}
        \label{eq:KKT-LQR-H2-SDP-1}
        AZ_{11}^\star + B(Z_{12}^\star)^\tr + Z_{11}^\star A^\tr     + Z_{12}^\star B^\tr + W &= 0,\quad
        Z^\star\in\mathbb{S}_+^{n+m},
        \\
        \label{eq:KKT-Lagrange_dual}
        \begin{bmatrix}
        A^\tr P^\star + P^\star A + Q & P^\star B \\ B^\tr P^\star  & R
        \end{bmatrix} &\succeq 0,
        \\
        \label{eq:KKT-complementary_slackness}
        \left\langle
        Z^\star,\begin{bmatrix}
        A^\tr P^\star + P^\star A + Q & P^\star B \\ B^\tr P^\star  & R
        \end{bmatrix}
        \right\rangle&=0.
        \end{align}
    \end{subequations}
    It is clear that \cref{eq:KKT-LQR-H2-SDP-1} is primal feasibility,  \cref{eq:KKT-Lagrange_dual} is dual feasibility, and  \eqref{eq:KKT-complementary_slackness} is the complementary slackness. From \eqref{eq:KKT-complementary_slackness}, we already see condition \cref{eq:rank-condition} in \cref{theorem:strict-complementarity} must hold. 
    
    Since $R \succ 0$ (which has rank $m$), the rank of $Z^\star$ is at most $n$. Without loss of generality, we let  
    \begin{equation} \label{eq:Z-star-low-rank}
        Z^\star = 
        \begin{bmatrix}
        U_1\\
        U_2
        \end{bmatrix}
        \begin{bmatrix}
        U_1\\
        U_2
        \end{bmatrix}^\tr.
    \end{equation}
    with some $U_1\in\mathbb{R}^{n\times n}$ and $U_2\in\mathbb{R}^{m\times n}$. 
Since $P^\star$ comes from \eqref{eq:Riccati-eq}, we have 
    \begin{equation} \label{eq:KKT-factorization}
        \begin{bmatrix}
            A^\tr P^\star + P^\star A + Q & P^\star B\\
            B^\tr P^\star & R 
        \end{bmatrix}
        = 
        \begin{bmatrix}
            P^\star BR^{-1}B^\tr P^\star & P^\star B\\
            B^\tr P^\star & R
        \end{bmatrix}
        =
        \begin{bmatrix}
            P^\star B R^{-1/2}\\
            R^{1/2}
        \end{bmatrix}
        \begin{bmatrix}
            P^\star B R^{-1/2}\\
            R^{1/2}
        \end{bmatrix}^\tr.
    \end{equation}
    Combining the complementarity slackness \eqref{eq:KKT-complementary_slackness} with \cref{eq:KKT-factorization,eq:Z-star-low-rank}, we obtain
    \begin{equation*}
       \begin{bmatrix}
            P^\star B R^{-1/2}\\
            R^{1/2}
        \end{bmatrix}^\tr
        \begin{bmatrix}
            U_1\\
            U_2
        \end{bmatrix}=0
        \quad\Rightarrow\quad
        R^{-1/2}B^\tr P^\star U_1 + R^{1/2}U_2 =0
        \quad\Rightarrow\quad
        U_2 = - R^{-1}B^\tr P^\star U_1.
    \end{equation*}
    Substituting this back to \cref{eq:Z-star-low-rank}, the minimizer $Z^\star$ must be in the form of 
    \begin{align}\label{eq:Z_star}
        Z^\star=
        \begin{bmatrix}
        U_1\\
        - R^{-1}B^\tr P^\star U_1
        \end{bmatrix}
        \begin{bmatrix}
        U_1\\
        - R^{-1}B^\tr P^\star U_1
        \end{bmatrix}^\tr
        &=
        \begin{bmatrix}
            Z_{11}^\star & - Z_{11}^\star P^\star B R^{-1}\\
            -R^{-1}B^\tr P^\star Z_{11}^\star &
            R^{-1}B^\tr P^\star Z_{11}^\star P^\star B R^{-1}
        \end{bmatrix}.
    \end{align}
    
    Now, the primal feasibility \eqref{eq:KKT-LQR-H2-SDP-1} indicates that 
    \begin{align}\label{eq:primal-feasibility-w/Zstar}
    &AZ_{11}^\star + B\left(- Z_{11}^\star P^\star B R^{-1}\right)^\tr + Z_{11}^\star A^\tr     + \left(- Z_{11}^\star P^\star B R^{-1}\right) B^\tr + W \nonumber \\
    =\,&(A-BR^{-1}B^\tr P^\star)Z_{11}^\star + Z_{11}^\star(A-BR^{-1}B^\tr P^\star)^\tr + W\\
    =\,&0. \nonumber
    \end{align}
    Since $(A-BR^{-1}B^\tr P^\star)$ is stable by \cref{lemma:Riccati-equation} and $W \succeq 0$, we know that the solution $Z_{11}^\star$ from the Lyapunov equitation \eqref{eq:primal-feasibility-w/Zstar} is unique and positive semidefinite. 
    Therefore, $Z^\star$ in \eqref{eq:Z_star} from the KKT condition must be unique.
If 
$W \succ 0$, 
the solution $Z_{11}^\star$ from  \eqref{eq:primal-feasibility-w/Zstar} must be positive definite. In this case, we have $\mathrm{rank}(Z^\star) = n$, and thus the second statement in \Cref{theorem:strict-complementarity} holds. 

    We define $X=Z_{11}^\star$ and $K=-R^{-1}B^\tr P^\star$. From \cref{eq:primal-feasibility-w/Zstar}, it is clear that $(X, K)$ is feasible to \eqref{eq:LQR-H2-SDP-1}.  Since $Z^\star$ is optimal to \cref{eq:SDP-primal}, this solution $K=-R^{-1}B^\tr P^\star$ is also one optimal solution to  \eqref{eq:LQR-H2-SDP-1}. At this stage, we only know $Z^\star$ is unique but cannot infer the uniqueness of $K^\star$.
    
   However, if we further have $W \succ 0$, then $K^\star = -R^{-1}B^\tr P^\star$ is the unique optimal solution.   
This uniqueness is a direct consequence of two facts: 1) the optimal $Z^\star$ to \cref{eq:SDP-primal} is unique, and 2) $X=Z_{11}^\star $ from \cref{eq:primal-feasibility-w/Zstar} is positive definite. {Suppose there exists another optimal solution $\hat{K} \neq -R^{-1}B^\tr P^\star$. Since $W \succ 0$, this optimal solution must be associated with a positive definite $\hat{X} \succ 0$ in \cref{eq:Lyapunov-H2}}. From \cref{eq:construction-Z}, we can construct another optimal solution $\hat{Z}$ to \cref{eq:SDP-primal}. This contradicts the uniqueness of $Z^\star$ from the KKT analysis. This completes the proof of \Cref{theorem:strong-duality-2}. 

\begin{remark}[Riccati equation and complementary slackness]
Our proof from the primal and dual perspective above heavily relies on Riccati equations/inequalities and KKT analysis. The results from Riccati equations/inequalities  \cref{lemma:Riccati-equation,lemma:Riccati-inequality} confirm that 1) we have at least one explicit optimal solution to the dual relaxation \cref{eq:Lagrange-dual}, and 2) \cref{eq:Lagrange-dual} is strictly feasible. Strictly feasibility of \cref{eq:Lagrange-dual} not only guarantees strong duality holds but also ensures that \cref{eq:SDP-primal} is solvable \cite{alizadeh1997complementarity}. We can then formulate the KKT equation \cref{eq:KKT}. The complementary slackness \cref{eq:KKT-complementary_slackness} is one key condition to reveal the structure of the optimal solution \cref{eq:Z_star} to the primal relaxation \cref{eq:SDP-primal}, which can be interpreted as a static state feedback $K=-R^{-1}B^\tr P^\star$. The optimal solution to the nonconvex LQR \eqref{eq:LQR-H2-SDP-1} is unique when $W \succ 0$. This proof strategy is largely different from those from classical textbooks such as \cite[Theorem 14.2]{zhou1996robust} and \cite[Theorem 21.1]{hespanha2018linear}.
\hfill $\square$
\end{remark}


\section{Gradient Dominance via Extended Convex Lifting}\label{section:GD}

In \Cref{section:strong-duality}, we have seen that the nonconvex LQR \cref{eq:LQR-H2} 
enjoys strong duality and is equivalent to convex SDPs \cref{eq:SDP-primal,eq:Lagrange-dual}. In this section, we revisit another elegant property of \cref{eq:LQR-H2}, called \textit{gradient dominance}, from the perspective of hidden convexity. 
This gradient dominance property was first discussed in \cite{fazel2018global,mohammadi2019global}.
Our proof strategy is based on the convex reformulation \cref{eq:SDP-primal} and the notion of Cauchy directions under a recent framework of Extended Convex Lifting (\texttt{ECL}) \cite{zheng2024benign}.

\subsection{Gradient dominance in LQR}

Consider a differentiable function $f: \mathcal{D} \to \mathbb{R}$ with an open domain $\mathcal{D} \subseteq \mathbb{R}^n$, and denote its sublevel set as $\mathcal{D}_\nu:= \{x \in \mathcal{D} \mid f(x) \leq \nu\}$. We call $f$ to be \textit{gradient dominated} over $\mathcal{D}_\nu$ if there exists a constant $\mu > 0$ such that 
\begin{equation} \label{eq:PL-f}
    \mu\left(f(x) - f(x^\star) \right)\leq \frac{1}{2} \|\nabla f(x)\|^2, \qquad \forall x \in \mathcal{D}_\nu, 
\end{equation}
where $x^\star$ is an optimal solution of $f$ over $\mathcal{D}$. Note that \cref{eq:PL-f}  is a local version, and the constant $\mu$ in \cref{eq:PL-f} may depend on the sublevel set $\mathcal{D}_\nu$. If \cref{eq:PL-f} holds for any feasible point in $\mathcal{D}$, we say that $f$ satisfies gradient dominance globally. Functions satisfying the gradient dominance \cref{eq:PL-f}, also known as the PL inequality, can go beyond convexity; see \cite{liao2024error,karimi2016linear} for different regularity conditions.   

    For functions satisfying gradient dominance, it is well-known that basic gradient descent achieves linear convergence.
We summarize a version of this result below. 
Recall that a function $f:\mathcal{D}\to\mathbb{R}$ is \textit{coercive} if 
 $   f(x)\to \infty
    \text{ whenever }
    \|x\|\to \infty \text{ or }
    x \to x_\infty \in\partial \mathcal{D},$
where $\partial \mathcal{D}$ represents the boundary~of~$\mathcal{D}$.

\begin{lemma}[{\cite[Theorem 4.2]{fatkhullin2021optimizing}}]
\label{lemma:convergence-gradient-descent-GD}
    Consider $f^* = \min_{x\in\mathcal{D}} f (x)$, where f is continuously differentiable and coercive, and $\mathcal{D} \subseteq \mathbb{R}^n$ is open.
    Suppose that
    the gradient dominance~\cref{eq:PL-constant} holds with $\mu > 0$ over a sublevel set $\mathcal{D}_\nu$. 
    Then, $f$ is $L$-smooth over $\mathcal{D}_\nu$,
 and the gradient descent algorithm $x_{k+1} = x_k  - \alpha \nabla f(x_k)$ with a constant stepsize $0 <\alpha < 2/L$ and an initialization $x_0 \in \mathcal{D}_\nu$ satisfies   
    \begin{equation}
        f(x_{k+1}) - f^* \leq \gamma \times (f (x_k) - f^*),
    \end{equation}
    where $\gamma = 1-\mu\alpha(2-L\alpha) < 1$. If $\alpha = 1/L$, we have $\gamma = 1 - \mu/L$.
\end{lemma}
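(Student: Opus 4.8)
The plan is to establish the result in three stages: first derive local $L$-smoothness from coercivity together with compactness of the sublevel set, then prove that the gradient-descent iterates remain inside $\mathcal{D}_\nu$, and finally combine the descent (smoothness) inequality with the gradient dominance inequality to obtain the one-step linear contraction.

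First I would use coercivity to show that $\mathcal{D}_\nu = \{x \in \mathcal{D} : f(x) \leq \nu\}$ is a compact subset of the open domain $\mathcal{D}$ that is bounded away from $\partial\mathcal{D}$. Coercivity forbids both escape to infinity ($\|x\| \to \infty$) and approach to the boundary along any sequence with $f \leq \nu$, so $\mathcal{D}_\nu$ is closed and bounded, hence compact, with $r := \mathrm{dist}(\mathcal{D}_\nu, \partial\mathcal{D}) > 0$. Fattening it into the compact set $\mathcal{C} := \{x : \mathrm{dist}(x, \mathcal{D}_\nu) \leq r/2\} \subset \mathcal{D}$, the local Lipschitzness of $\nabla f$ (which holds, e.g., for the real-analytic LQR cost, or whenever $f \in C^2$) yields a finite $L := \sup_{x \in \mathcal{C}}\|\nabla^2 f(x)\|$, so that $\nabla f$ is $L$-Lipschitz on $\mathcal{C}$; in particular $f$ is $L$-smooth over $\mathcal{D}_\nu$, which is the first conclusion.

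Second I would prove sublevel-set invariance of the iterates. The descent lemma for $L$-smooth functions gives, along any segment contained in $\mathcal{C}$,
\[
f(x_{k+1}) \leq f(x_k) - \alpha\Bigl(1 - \tfrac{L\alpha}{2}\Bigr)\|\nabla f(x_k)\|^2 .
\]
For $0 < \alpha < 2/L$ the coefficient $\alpha(1 - L\alpha/2)$ is strictly positive, so $f$ strictly decreases unless $\nabla f(x_k) = 0$; hence $f(x_{k+1}) \leq f(x_k) \leq \nu$ and $x_{k+1} \in \mathcal{D}_\nu$. By induction the whole trajectory started at $x_0 \in \mathcal{D}_\nu$ stays in $\mathcal{D}_\nu \subset \mathcal{C}$, so both the smoothness inequality and the gradient dominance \cref{eq:PL-f} are available at every iterate. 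Substituting the bound $\|\nabla f(x_k)\|^2 \geq 2\mu\bigl(f(x_k) - f^*\bigr)$ into the descent inequality and subtracting $f^*$ yields
\[
f(x_{k+1}) - f^* \leq \Bigl(1 - 2\mu\alpha\bigl(1 - \tfrac{L\alpha}{2}\bigr)\Bigr)\bigl(f(x_k) - f^*\bigr) = \gamma\,\bigl(f(x_k) - f^*\bigr),
\]
with $\gamma = 1 - \mu\alpha(2 - L\alpha)$, which is strictly less than $1$ precisely because $0 < \alpha < 2/L$; the choice $\alpha = 1/L$ gives $\gamma = 1 - \mu/L$.

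The main obstacle lies in reconciling smoothness with invariance in the second stage: the descent lemma needs $\nabla f$ to be Lipschitz along the entire segment $[x_k, x_{k+1}]$, yet the nonconvex LQR sublevel set $\mathcal{D}_\nu$ need not be convex, so this segment may exit $\mathcal{D}_\nu$. The fix is to run the Lipschitz estimate on the fattened compact set $\mathcal{C}$ rather than on $\mathcal{D}_\nu$ itself, and to verify that the step $x_{k+1} - x_k = -\alpha\nabla f(x_k)$ is short enough — using the uniform bound $G := \sup_{x \in \mathcal{D}_\nu}\|\nabla f(x)\|$ afforded by compactness — that the connecting segment remains in $\mathcal{C}$ (which may require $\alpha \leq \min\{2/L,\, r/(2G)\}$); only then can the descent inequality be invoked to close the induction back into $\mathcal{D}_\nu$. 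A secondary subtlety is that mere continuous differentiability does not by itself furnish a Lipschitz gradient on a compact set, so one should either strengthen the hypothesis to a locally Lipschitz gradient (equivalently $C^2$) or appeal to the specific analytic regularity of the LQR cost $J_{\mathtt{LQR}}$.
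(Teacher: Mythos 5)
Your overall architecture (smoothness from compactness, invariance of the sublevel set, then descent lemma combined with \cref{eq:PL-f}) matches the paper's, and your third stage is identical to the paper's computation. The genuine gap is in your second stage: the lemma claims linear convergence for \emph{every} stepsize $0<\alpha<2/L$, but your patched invariance argument only delivers it for $\alpha\leq\min\{2/L,\,r/(2G)\}$, where $r$ is the distance from $\mathcal{D}_\nu$ to $\partial\mathcal{D}$ and $G=\sup_{x\in\mathcal{D}_\nu}\|\nabla f(x)\|$. This restriction is not cosmetic: the descent lemma needs the whole segment $[x_k,x_{k+1}]$ to lie in the region where $\nabla f$ is $L$-Lipschitz, your fattened set $\mathcal{C}$ is the only such region you control, and nothing in your argument removes the requirement $\alpha G\leq r/2$ that keeps the segment inside $\mathcal{C}$. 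For an instance with $r/(2G)<2/L$ your proof simply does not cover the full claimed range; you prove a strictly weaker statement. (The paper explicitly contrasts its proof with this kind of argument, citing \cite[Theorem 1]{hu2023toward} as deriving ``a slightly conservative range of step sizes.'')

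The paper closes this hole with a different device that is worth internalizing: instead of fattening $\mathcal{D}_\nu$, it restricts $f$ to the ray, $\varphi(\tau)=f(x-\tau\nabla f(x))$, defined for $\tau\in[0,T]$ with $T=\max\{s\geq 0 \mid [x,\,x-s\nabla f(x)]\subset\mathcal{D}_\nu\}$, which exists by compactness. On $[0,T]$ the segment lies in $\mathcal{D}_\nu$ \emph{by construction}, so $\varphi'$ is Lipschitz there with constant $L\|\nabla f(x)\|^2$, and the one-dimensional quadratic bound gives $\varphi(\alpha)\leq\varphi(0)-\|\nabla f(x)\|^2\alpha\left(1-\tfrac{L}{2}\alpha\right)\leq\nu$ for all $\alpha\leq\min\{T,2/L\}$. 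A maximality/continuation argument then forces $T\geq 2/L$: if $T<2/L$, the endpoint would satisfy $f<\nu$ strictly (when $\nabla f(x)\neq 0$), and since $\mathcal{D}$ is open and $f$ continuous, the segment could be extended inside $\mathcal{D}_\nu$ beyond $T$, contradicting the definition of $T$. This yields invariance on the entire range $(0,2/L)$ with no auxiliary set and no extra stepsize condition; that is the step your proposal is missing. Two smaller remarks: your observation that $C^1$ on a compact set does not imply a Lipschitz gradient is correct and flags a real looseness in the paper's own write-up (harmless for LQR, where $J_\LQR$ is real-analytic); and note that your $L$ is a supremum over $\mathcal{C}\supset\mathcal{D}_\nu$, so even on your restricted range the constant is potentially worse than the one appearing in the statement.
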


This lemma guarantees the feasibility of all iterates, as well as the linear convergence of the cost value to the global minimum $f^*$. 
We provide a proof in \cref{appendix:proof-GD-2} for self-completeness. 
Our proof is inspired by \cite[Theorem 4.2]{fatkhullin2021optimizing}.
The range of step size $\alpha$ ensuring the feasibility of all iterates is derived from 
the $L$-smoothness of $f$ and
a line search procedure over $\mathcal{D}_\nu$.

\vspace{-1mm}

\begin{remark}[Feasibility of iterates] 
The problem in \cref{lemma:convergence-gradient-descent-GD} is a constrained optimization problem. Thus, it is important to ensure that all the iterates from the gradient descent are still feasible and in the initial sublevel set. We can then bypass the testing of feasibility and computation of a projection.  
The feasibility guarantee of all iterates $x_k$ may not be obvious and requires some technical analysis; see \cref{appendix:proof-GD-2}.
A different approach showing the feasibility can be found in \cite[Theorem 1]{hu2023toward}, in which a slightly conservative range of step sizes was derived. 
    \hfill $\square$
\end{remark}

\vspace{-1mm}

Before presenting the gradient dominance for the LQR \cref{eq:LQR-H2} with the cost $J_\LQR$ evaluated as 
 \cref{eq:LQR-Lyapunov-H2}, we need another technical condition. \vspace{-1mm}
\begin{assumption} \label{assumption:compactness}
Consider the LQR \cref{eq:LQR-H2} with the cost $J_\LQR$  
 \cref{eq:LQR-Lyapunov-H2}.  The following conditions hold:
\begin{enumerate}
    \item 
    The matrix $X$ in \cref{eq:Lyapunov-H2} is positive definite 
    if and only if  $K\in\mathcal{K}$.
    \item For any finite $\nu\geq \min_{K\in\mathcal{K}}J_\LQR(K)$, the sublevel set $\mathcal{K}_\nu := \{K \in \mathcal{K} \mid J_\LQR(K) \leq \nu\}$  is compact.
\end{enumerate}
\end{assumption}

This assumption is certainly satisfied when 
\cref{assumption:controllability} and $W \succ 0$ are true, as it is widely used in \cite{fatkhullin2021optimizing,bu2019lqr,bu2020policy,zhang2025convergence}. We provide further discussions on \cref{assumption:compactness} in the next subsection.~With \cref{assumption:compactness,assumption:controllability}, we can establish a general version of gradient dominance for the LQR \cref{eq:LQR-H2}.

\begin{theorem}[Gradient dominance]\label{theorem:J_LQR-gradient_dominance-general} 
    Suppose \Cref{assumption:controllability,assumption:compactness} hold. Then, the optimal feedback gain $K^\star$ to \cref{eq:LQR-H2} is unique, and for any $\nu > J_\LQR(K^\star)$, there exists a constant $\mu > 0$ such that
    \begin{subequations} \label{eq:PL-LQR-condition}
    \begin{equation} \label{eq:PL-LQR}
        \mu\left(J_\LQR(K)-J_\LQR(K^\star)\right) 
        \leq \frac{1}{2} \|\nabla J_\LQR(K)\|_F^2, \qquad \forall K \in \mathcal{K}_\nu. 
    \end{equation}
    Moreover, we can choose the constant 
     $   \mu = \mu_\mathrm{qg} c_\LQR^2$ 
    with
        \begin{align} \label{eq:PL-constant}
\mu_{\mathrm{qg}}=        \min\left\{
       \frac{\lambda_\mathrm{min}(R)}{\overline{\kappa}}
        ,
       \frac{\lambda_\mathrm{min}(R)}{\overline{\kappa}
         \|
    \mathcal{A}_\star ^{-1}\circ\mathcal{B}
    \|_2^2}
    \right\}, \qquad c_\LQR=
        \frac{{\underline{\kappa}}\sqrt{n}}{2}
        \left(1 + \sqrt{\frac{\nu-J_\LQR(K^\star)}{\underline{\kappa}\lambda_\mathrm{min}(R)}}\right)^{-1},
    \end{align}
    where $\underline{\kappa}
    =\min_{K \in \mathcal{K}_\nu}  \lambda_\mathrm{min}(X)$, $\overline{\kappa}=
        \max_{K \in \mathcal{K}_\nu}  
    \lambda_\mathrm{max}(X)$, 
     $X$ is from  \cref{eq:Lyapunov-H2}, $\|\cdot\|_2$ denotes the operator $2$-norm, 
    and the affine maps 
    $\mathcal{A}_\star:\mathbb{S}^{n}\to\mathbb{S}^{n}$ 
    and $\mathcal{B}:\mathbb{R}^{m\times n}\to\mathbb{R}^{n\times n}$ 
    are defined by
    $\mathcal{A}_\star(X)= (A+BK^\star)X+X(A+BK^\star)^\tr $ and
$\mathcal{B}(Y)= BY+Y^\tr B^\tr$, respectively.
    \end{subequations}
    \end{theorem}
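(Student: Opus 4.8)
The plan is to prove the gradient dominance \cref{eq:PL-LQR} by pulling the hidden convexity of LQR back to the nonconvex cost $J_\LQR$ along an explicit feasible descent curve (a Cauchy direction), splitting the constant $\mu = \mu_\mathrm{qg} c_\LQR^2$ into a quadratic-growth factor $\mu_\mathrm{qg}$ of the convex reformulation and a geometric factor $c_\LQR$ relating the nonconvex gradient to that curve. I would use the lifting $K \mapsto (X, Y)$ with $X$ the Gramian from \cref{eq:Lyapunov-H2} and $Y = KX$, under which $J_\LQR(K) = \tilde J(X,Y) := \trace(QX) + \trace(R Y X^{-1} Y^\tr)$ and the constraint $AX + BY + XA^\tr + Y^\tr B^\tr + W = 0$ is affine; this is a jointly convex problem equivalent to \cref{eq:SDP-primal} whose unique minimizer is $(X^\star, Y^\star)$ with $Y^\star = K^\star X^\star$ and $\tilde J^\star = J_\LQR(K^\star)$. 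The backbone is the exact cost-difference identity
\[
J_\LQR(K) - J_\LQR(K^\star) = \trace\!\big((K - K^\star)^\tr R (K - K^\star) X\big),
\]
which holds because $K^\star = -R^{-1}B^\tr P^\star$ from \cref{theorem:strong-duality-2} kills the first-order cross term. Since \cref{assumption:compactness} forces $X \succ 0$ for every $K \in \mathcal{K}$ and $R \succ 0$, this identity gives uniqueness of $K^\star$ at once: any competing minimizer $\hat K$ would satisfy $R^{1/2}(\hat K - K^\star)\hat X^{1/2} = 0$, hence $\hat K = K^\star$.

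Next I would establish the quadratic growth that produces $\mu_\mathrm{qg}$. Set $\Delta X = X - X^\star$ and $\Delta\tilde Y := (K - K^\star)X = (Y - Y^\star) - K^\star(X - X^\star)$. Substituting $(K - K^\star) = \Delta\tilde Y X^{-1}$ into the cost-difference identity gives
\[
\tilde J(X,Y) - \tilde J^\star = \trace\!\big(\Delta\tilde Y^\tr R\, \Delta\tilde Y X^{-1}\big) \ge \frac{\lambda_\mathrm{min}(R)}{\overline\kappa}\,\|\Delta\tilde Y\|_F^2 .
\]
Differencing the affine constraint yields $\mathcal{A}_\star(\Delta X) + \mathcal{B}(\Delta\tilde Y) = 0$, so $\Delta X = -(\mathcal{A}_\star^{-1}\circ\mathcal{B})(\Delta\tilde Y)$, where $\mathcal{A}_\star$ is invertible because $A + BK^\star$ is stable by \cref{lemma:Riccati-equation}. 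Hence $\|\Delta X\|_F \le \|\mathcal{A}_\star^{-1}\circ\mathcal{B}\|_2\,\|\Delta\tilde Y\|_F$, which combined with the previous bound gives $\tilde J - \tilde J^\star \ge (\lambda_\mathrm{min}(R)/(\overline\kappa\|\mathcal{A}_\star^{-1}\circ\mathcal{B}\|_2^2))\|\Delta X\|_F^2$. Taking $\mu_\mathrm{qg}$ to be the minimum of the two prefactors, exactly \cref{eq:PL-constant}, delivers $\tilde J(X,Y) - \tilde J^\star \ge \mu_\mathrm{qg}\max\{\|\Delta X\|_F^2, \|\Delta\tilde Y\|_F^2\}$.

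The geometric factor $c_\LQR$ comes from the Cauchy direction. For $K \in \mathcal{K}_\nu$ with lift $(X,Y)$, I take the segment $(X_t, Y_t) = (1-t)(X,Y) + t(X^\star, Y^\star)$, which is feasible with $X_t \succ 0$ by convexity, so $K_t := Y_t X_t^{-1} \in \mathcal{K}$ and $J_\LQR(K_t) = \tilde J(X_t, Y_t) \le J_\LQR(K)$. Differentiating at $t=0$ gives $\dot K_0 = (K^\star - K)X^\star X^{-1} = \big((K - K^\star)\Delta X - \Delta\tilde Y\big)X^{-1}$, and convexity of $\tilde J$ gives
\[
\langle \nabla J_\LQR(K), \dot K_0\rangle = \tfrac{d}{dt}\tilde J(X_t,Y_t)\big|_{t=0} \le \tilde J^\star - \tilde J(X,Y) = -\big(J_\LQR(K) - J_\LQR(K^\star)\big).
\]
Cauchy--Schwarz then yields $J_\LQR(K) - J_\LQR(K^\star) \le \|\nabla J_\LQR(K)\|_F\,\|\dot K_0\|_F$. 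Using $\|X^{-1}\|_2 \le 1/\underline\kappa$ and the sublevel-set bound $\|K - K^\star\|_F \le \sqrt{(\nu - J_\LQR(K^\star))/(\underline\kappa\lambda_\mathrm{min}(R))}$ (again from the cost-difference identity) bounds $\|\dot K_0\|_F \le c_\LQR^{-1}\big(\|\Delta X\|_F + \|\Delta\tilde Y\|_F\big)$ with $c_\LQR$ as in \cref{eq:PL-constant}; compactness in \cref{assumption:compactness} is what keeps $\underline\kappa$, $\overline\kappa$, and $\|K\|$ uniformly finite and positive here.

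Combining the two pieces closes the proof. The quadratic-growth bound gives $\|\Delta X\|_F + \|\Delta\tilde Y\|_F \le 2\mu_\mathrm{qg}^{-1/2}\sqrt{J_\LQR(K) - J_\LQR(K^\star)}$, and substituting into the Cauchy--Schwarz inequality leaves $\sqrt{J_\LQR(K) - J_\LQR(K^\star)}$ bounded by a constant multiple of $(c_\LQR\sqrt{\mu_\mathrm{qg}})^{-1}\|\nabla J_\LQR(K)\|_F$, which squares to \cref{eq:PL-LQR} with $\mu = \mu_\mathrm{qg} c_\LQR^2$ after tracking the numerical constants. I expect the main obstacle to be the quadratic-growth step: the identity gives a clean bound only in the combined coordinate $\Delta\tilde Y$, and converting it into a genuine growth estimate in $(\Delta X, \Delta\tilde Y)$ requires exploiting the constraint coupling $\Delta X = -(\mathcal{A}_\star^{-1}\circ\mathcal{B})(\Delta\tilde Y)$ and, crucially, making every constant ($\overline\kappa$, $\underline\kappa$, $\|K - K^\star\|_F$, $\|\mathcal{A}_\star^{-1}\circ\mathcal{B}\|_2$) uniform over the entire sublevel set $\mathcal{K}_\nu$ rather than merely infinitesimally at $K^\star$ --- which is precisely where \cref{assumption:compactness} is indispensable.
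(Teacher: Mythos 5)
Your proposal is correct in substance, and at its core it is the same argument as the paper's, instantiated concretely rather than through the general framework: the paper also (i) lifts $K$ to Gramian coordinates (it uses $Y=(K-K^\star)X$ instead of $Y=KX$, so the cross terms drop out by the first-order optimality inequality exactly as they cancel identically in your cost-difference identity), (ii) proves the same quadratic-growth estimate with the same two prefactors via the coupling $\Delta X = -\left(\mathcal{A}_\star^{-1}\circ\mathcal{B}\right)(\Delta \tilde Y)$ (\cref{lemma:f_cvx-QG}), and (iii) transfers this to $\|\nabla J_\LQR(K)\|_F$ along a curve pointing toward the optimizer. You differ only in how step (iii) is formalized: the paper goes through the abstract \texttt{ECL} machinery --- Cauchy directions \cref{eq:Cauchy-direction-construction} of $f_\cvx$ and a curve built from the diffeomorphism (\cref{lemma:gradient-Cauchy_direction}, \cref{theorem:gradient-dominance-ECL}) --- whereas you take the straight segment $(X_t,Y_t)$ toward $(X^\star,Y^\star)$, pull it back to $K_t = Y_t X_t^{-1}$, and use joint convexity of $\tilde J$ plus Cauchy--Schwarz. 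Your route is more elementary and self-contained; the paper's buys generality (the same lemmas apply to any problem admitting an \texttt{ECL}, including nonsmooth ones) and a cleaner constant chain. One point you should make explicit: the assertion ``$K_t \in \mathcal{K}$ by convexity'' silently uses the converse direction of \cref{assumption:compactness}(1) --- a positive definite solution of the Lyapunov-type equation certifies that $K_t$ is stabilizing --- which is precisely where the paper invokes \cref{proposition:ECL-1}; without it, $J_\LQR(K_t)=\tilde J(X_t,Y_t)$ and the chain rule at $t=0$ are not justified.

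The one place the write-up falls short of the theorem as stated is the constant. Chaining $J_\LQR(K)-J_\LQR(K^\star)\le\|\nabla J_\LQR(K)\|_F\,\|\dot K_0\|_F$ with your bound $\|\Delta X\|_F+\|\Delta\tilde Y\|_F\le 2\mu_\mathrm{qg}^{-1/2}\sqrt{J_\LQR(K)-J_\LQR(K^\star)}$ yields $\mu=\mu_\mathrm{qg}/(8C^2)$, where $C$ is your constant in $\|\dot K_0\|_F\le C\left(\|\Delta X\|_F+\|\Delta\tilde Y\|_F\right)$; with $C=c_\LQR^{-1}$ this is $\mu_\mathrm{qg}c_\LQR^2/8$, not $\mu_\mathrm{qg}c_\LQR^2$. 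Moreover, your (correct) operator-norm estimate $\|X^{-1}\|_2\le 1/\underline{\kappa}$ produces $C=\underline{\kappa}^{-1}\bigl(1+\sqrt{(\nu-J_\LQR(K^\star))/(\underline{\kappa}\lambda_\mathrm{min}(R))}\bigr)$, which does not match $1/(2c_\LQR)$ from \cref{eq:PL-constant} (the $\sqrt{n}$ there originates from a Frobenius-norm estimate of $\|X^{-1}\|_F$ in the paper's appendix, so the $\sqrt n$-dependence of the two accountings genuinely disagrees). So your argument does prove the existence claim --- \cref{eq:PL-LQR} with some $\mu>0$ of the form $\mu_\mathrm{qg}\times(\text{geometric factor})^2$ --- but to recover the two-factor constant exactly you should tighten the bookkeeping: measure the pair in the Euclidean norm, so quadratic growth gives $\bigl(\|\Delta X\|_F^2+\|\Delta\tilde Y\|_F^2\bigr)^{1/2}\le\sqrt{2\left(J_\LQR(K)-J_\LQR(K^\star)\right)/\mu_\mathrm{qg}}$, and normalize the curve bound as $\|\dot K_0\|_F\le \frac{1}{2c}\bigl(\|\Delta X\|_F^2+\|\Delta\tilde Y\|_F^2\bigr)^{1/2}$; the same two-step chain then gives exactly $\mu=\mu_\mathrm{qg}c^2$ for whichever constant $c$ your norm estimates support.
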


The uniqueness of the optimal LQR gain is ensured by the positive definiteness of $X$ in \Cref{assumption:compactness}. We will establish \cref{eq:PL-LQR-condition} as a special case in the recent $\ECL$ framework \cite{zheng2024benign}. When $X$ from \cref{eq:Lyapunov-H2} is always positive definite for $K\in\mathcal{K}$, we can construct an equivalent convex reformation $f_\mathrm{cvx}$ for $J_\LQR$ using the $\ECL$ process. We show that this convex reformulation $f_\mathrm{cvx}$ further satisfies \textit{quadratic growth} and thus gradient dominance 
(see \cref{appendix:GD-review}). We refine the existing $\ECL$ framework to incorporate quadratic growth, and leverage the concept of Cauchy directions \cite{lemarechal2012cauchy}
to confirm the gradient dominance for $J_\LQR$.  
This proof strategy is different from the existing results  \cite{fazel2018global,fatkhullin2021optimizing,bu2019lqr,bu2020policy,zhang2025convergence} because of our use of \texttt{ECL} and Cauchy directions. We remark that the convex reformulation in LQR satisfies quadratic growth under \cref{assumption:compactness}, which seems to have been overlooked before. 
We provide the \texttt{ECL} constructions and complete the proof in \Cref{subsubsection:LQR-PL-proof}.

The gradient dominance condition \cref{eq:PL-LQR} has a strong geometric interpretation. It~not only~ensures that any stationary point is globally optimal but also quantifies the global suboptimal gap from local gradient information. 
With the same setup in \Cref{theorem:J_LQR-gradient_dominance-general}, there exists a constant $\mu_{\mathrm{q}} > 0$ such that 
    \begin{equation*}
        J_\LQR(K)-J_\LQR(K^\star)  \geq \frac{\mu_{\mathrm{q}}}{2}\|K - K^\star\|_F^2, \qquad \forall K \in \mathcal{K}_\nu, 
    \end{equation*}
which gives a quadratic lower bound for $J_\LQR(K)$. Similarly, since $J_\LQR$ is $L$-smooth over $\mathcal{K}_\nu$, we have 
 \begin{equation*} 
        J_\LQR(K)-J_\LQR(K^\star)  \leq \frac{L}{2}\|K - K^\star\|_F^2, \qquad \forall K \in \mathcal{K}_\nu,
    \end{equation*}
    which gives a quadratic upper bound.

\begin{remark}[Gradient dominance in LQR] \label{remark:PL-LQR}
    {The gradient dominance property in LQR was first revealed in \cite{fazel2018global} for the discrete-time setting. The continuous-time counterpart was established in~\cite{mohammadi2019global}. This elegant property leads to many favorable results in model-free policy gradient algorithms, as seen in \Cref{lemma:convergence-gradient-descent-GD}; see \cite{hu2023toward,talebi2024policy} for recent surveys. Gradient dominance has been further discussed in later works  \cite{fatkhullin2021optimizing,bu2019lqr,bu2020policy,zhang2025convergence}. All these results require $W\succ 0$, which is a sufficient condition for \cref{assumption:compactness}. In the continuous-time setting, the works \cite{mohammadi2019global,fatkhullin2021optimizing,zhang2025convergence} further require the performance weight matrix $Q$ to be strictly positive definite.
    The result in \cite{bu2020policy} allows a positive semidefinite $Q \succeq 0$. Our gradient dominance result in \Cref{theorem:J_LQR-gradient_dominance-general} arguably provides the most general setup as our \cref{assumption:compactness} allows for $W\succeq 0$ and $Q\succeq 0$ in certain cases; see \cref{subsection:conditions-on-compactness}. Our proof for \Cref{theorem:J_LQR-gradient_dominance-general} is based on a recent \texttt{ECL} framework, introduced in \cite{zheng2024benign}, and we refine the \texttt{ECL} analysis by utilizing the notion of Cauchy directions \cite{lemarechal2012cauchy}.
    Note that when both $Q$ and $W$ are positive definite, we have more explicit estimates of the constants in \cref{theorem:J_LQR-gradient_dominance-general},
    as detailed in \Cref{remark:PL-Q-W>0}.  
    \hfill $\square$}
\end{remark}
{
\begin{remark}[Gradient dominance when both $Q$ and $W$ are positive definite]\label{remark:PL-Q-W>0}
When the matrices $Q$ and $W$ are positive definite,
\cref{assumption:compactness} is always satisfied. In this case, we can derive
more explicit estimates of $\overline{\kappa}$ and $\underline{\kappa}$ 
on $\mathcal{K}_\nu$
as
$
0<\underline{\kappa} \leq \frac{\kappa}{\nu},\, 
0<\underline{\kappa}\leq \overline{\kappa} \leq \frac{\nu}{\lambda_\mathrm{min}(Q)}
$ 
with
\begin{equation*}
        \kappa=\frac{\lambda_{\min }^2(W)}{4}\left(\frac{\|A\|_2}{\sqrt{\lambda_{\min }(Q)}}+\frac{\|B\|_2}{\sqrt{\lambda_{\min }(R)}}\right)^{-2}.
\end{equation*}
The bounds of $\underline{\kappa}$ and $\kappa$ are from \cite{zare2019proximal}, and
the bound of $\overline{\kappa}$ follows from a simple calculation. \hfill $\square$
\end{remark}}

\subsection{Sufficient conditions for \cref{assumption:compactness}} \label{subsection:conditions-on-compactness}

In addition to the standard \Cref{assumption:controllability}, \cref{theorem:J_LQR-gradient_dominance-general} requires another two conditions in \cref{assumption:compactness}: the compactness of the sublevel set and the positive definiteness of $X$ from \cref{eq:Lyapunov-H2}. 
These two conditions are important in establishing \cref{eq:PL-LQR-condition}. We here show examples to show that these two conditions cannot be relaxed in general, and also provide sufficient conditions to ensure \cref{assumption:compactness}.

We first continue the LQR instance in \cref{example:W-singular} to show that the compactness of $\mathcal{K}_\nu$ cannot be relaxed in general. 

\begin{figure}
\setlength{\abovecaptionskip}{0pt}
    \centering
    \includegraphics[width=\linewidth]{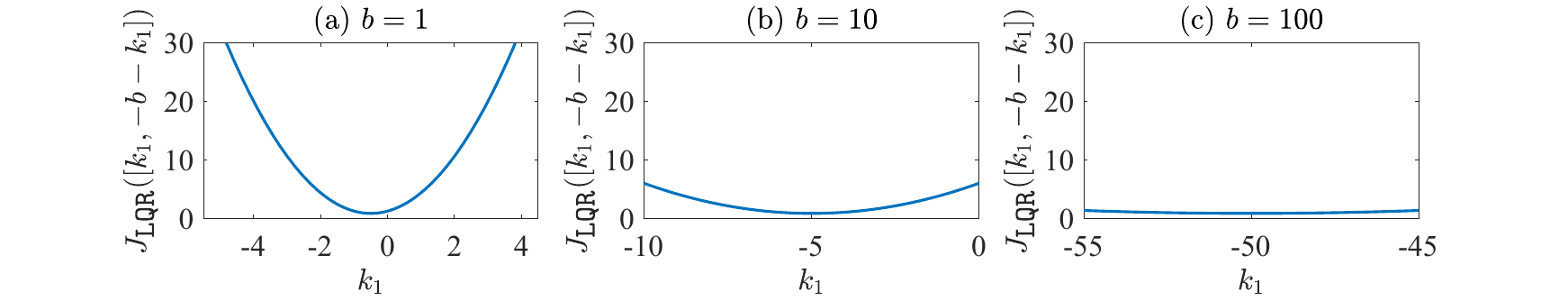}
    \caption{The slices of $J_\LQR([k_1,k_2])$
    with $k_2=-b-k_1$
    in \cref{example:W-singular} for     
    $b=1$, $10$, and $100$.
    We set the range of $k_1$ as $k_1\in\left[-5-{b}/{2},5-{b}/{2}\right]$ for each $b$.
    As $b$ increases,
    the local shape becomes flatter.}
    \label{fig:J_LQR_for_three-c}
\end{figure}

\begin{example}[No guarantee of gradient dominance over an unbounded sublevel set $\mathcal{K}
_\nu$] \label{example:unbounded-sublevel-set}
Here, we show that gradient dominance fails over $\mathcal{K}
_\nu$ for the LQR instance in \cref{example:W-singular}. From \Cref{fig:lqr-landscape}(c), it is clear that its sublevel set  $\mathcal{K}
_\nu$ is unbounded (as the optimal feedback gains are unbounded). 
\cref{fig:J_LQR_for_three-c}  
shows the slices of $J_\LQR(K)$ for
$K=[k,-b-k]$ with $b=1$, $10$, and $100$. We can see that the larger $b$ is, the flatter
$J_\LQR$ becomes. This implies that $J_{\mathrm{LQR}}$ cannot be lower bounded by a quadratic function over $\mathcal{K}_\nu$, i.e., quadratic growth fails. It is known that quadratic growth is a weaker property to gradient dominance (see \cref{appendix:GD-review}). Thus, there exists no uniform constant $\nu>0$ satisfying \cref{eq:PL-LQR} over $\mathcal{K}_\nu$.
Indeed, we can explicitly compute the Hessian for
$K=[k,-b-k]$ below 
\begin{equation*}
    \lim_{b \to \infty}\frac{\partial^2}{\partial k^2}J_\LQR([k,-b-k])
    =
    \lim_{b \to \infty} \frac{5(304+84b)}{
    11(10b^2+29b+18)
    } = 0.
\end{equation*} 
Thus, $J_\LQR$ cannot be lower bounded by a quadratic function on $\mathcal{K}_\nu$, and gradient dominance fails. \hfill $\square$
\end{example}

The LQR instance in \Cref{example:W-singular} satisfies \cref{assumption:controllability}, and we also have $W \succeq 0$ and $Q \succ 0$. Thus, the discussions in \Cref{example:W-singular,example:unbounded-sublevel-set} lead to the following corollary. 

\begin{corollary}
Suppose \cref{assumption:controllability} holds and $W \succeq 0$. The LQR cost function \cref{eq:LQR-Lyapunov-H2} may be neither coercive nor gradient dominated even if $Q \succ 0$.
\end{corollary}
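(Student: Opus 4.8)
The statement is existential (``may be''), so the plan is to exhibit a single LQR instance satisfying \cref{assumption:controllability} together with $W \succeq 0$ and $Q \succ 0$ that fails \emph{both} coercivity and gradient dominance; I would reuse the running instance of \cref{example:W-singular} rather than construct a new one. First I would fix $a > 0$ small enough that $A$ is Hurwitz (e.g.\ $a = 0.1$). Then $(A,B)$ is trivially stabilizable, $(Q^{1/2},A)$ is detectable because $Q = I \succ 0$, and $R = 1 \succ 0$, so \cref{assumption:controllability} holds; meanwhile $W$ is positive semidefinite of rank one and $Q \succ 0$. This pins down exactly the hypotheses of the corollary, leaving only the two failure claims.

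For non-coercivity I would use the one-parameter family $K = [k,k]$ with $k < -(a-1)/2$, which \cref{example:W-singular} already certifies are all globally optimal with $J_\LQR(K) = 1/(1+a)$. Along this ray $\|K\|_F = \sqrt{2}\,|k| \to \infty$ as $k \to -\infty$, while $J_\LQR$ stays pinned at its minimum value. By the definition of coercivity recalled before \cref{lemma:convergence-gradient-descent-GD}, this ray is precisely a witness that $J_\LQR(K) \not\to \infty$ as $\|K\| \to \infty$, so $J_\LQR$ is not coercive.

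For the failure of gradient dominance I would invoke \cref{example:unbounded-sublevel-set}. The optimal set is the unbounded ray $k_1 = k_2$, so every sublevel set $\mathcal{K}_\nu$ is unbounded. The key is the curvature transverse to this ray: the slice $K = [k, -b-k]$ crosses the optimal ray at $(-b/2,-b/2)$ (for $b > a-1$) and moves along the perpendicular direction $(1,-1)$, and the computation in \cref{example:unbounded-sublevel-set} gives $\tfrac{\partial^2}{\partial k^2} J_\LQR([k,-b-k]) \to 0$ as $b \to \infty$. Hence the transverse curvature degenerates to zero as one recedes along the optimal ray, so no single constant $\mu_{\mathrm{q}} > 0$ can sustain a quadratic lower bound $J_\LQR(K) - J_\LQR(K^\star) \geq \tfrac{\mu_{\mathrm{q}}}{2}\|K - K^\star\|_F^2$ over $\mathcal{K}_\nu$; that is, quadratic growth fails. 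Since gradient dominance implies quadratic growth (see \cref{appendix:GD-review}), gradient dominance fails as well.

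The main obstacle is this last step: one must rule out gradient dominance for \emph{every} constant $\mu > 0$, not merely observe flatness for one choice. The clean route is through the weaker quadratic-growth property, because a degenerating transverse curvature directly contradicts a uniform quadratic lower bound, and the implication (gradient dominance $\Rightarrow$ quadratic growth) then closes the argument. The only mildly delicate point is selecting the correct slice: one must travel perpendicular to the unbounded optimal ray while tracking how the optimal point recedes to infinity as $b \to \infty$, which is exactly what the parametrization $K = [k,-b-k]$ accomplishes.
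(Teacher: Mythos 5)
Your proposal is correct and follows essentially the same route as the paper: the corollary is obtained exactly by combining \cref{example:W-singular} (the unbounded ray of global minimizers $K=[k,k]$, which kills coercivity) with \cref{example:unbounded-sublevel-set} (the vanishing transverse Hessian along $K=[k,-b-k]$, which kills quadratic growth and hence gradient dominance). Your only addition—making explicit that gradient dominance implies quadratic growth for the $L$-smooth cost, so failure of the latter rules out the former—is precisely the implication the paper invokes via \cref{appendix:GD-review}.
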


Recall from \cref{example:W-singular} that if $W \succeq 0$, the optimal LQR feedback gains may not be unique.  
The positive definiteness of $X$ in \Cref{assumption:compactness} is also very important. {Especially, 
to derive a universal positive constant $\mu>0$,
we require that the minimum eigenvalue of $X$ to be bounded away from zero over $\mathcal{K}_\nu$.}
For the continuous-time LQR, the minimum eigenvalue of $X$ may approach zero over the entire domain $\mathcal{K}$ even if $(A, B)$ is controllable and $W \succ 0$. Thus, we may not have global gradient dominance over $\mathcal{K}$. We provide another example from \cite[Example 3.1]{fatkhullin2021optimizing} to illustrate this point.     

\begin{example}[No gradient dominance globally] \label{example:PL-globally-no}
    Consider the one-dimensional single integrator 
    with $A=0,\,Q=R=W=2B=1$. We consider a state feedback gain $K=-k\in\mathbb{R}$. The stabilizing region is $\mathcal{K}=(0, \infty)$. 
    It is straightforward to compute from the Lyapunov equation  \cref{eq:Lyapunov-H2} that $X = 1/k > 0, \forall k > 0$. However, $X$ is not bounded away from zero on the entire domain since we have $\lim_{k \to \infty} X = 0$. Thus, gradient dominance fails to hold globally.  
    Indeed, it is not difficult to see 
 that  
   $
        J_\LQR(K) = k+1/k,
        \nabla J_\LQR(K)
        = 1-1/{k^2}, 
        J(K^\star) = 2.
   $ 
    It is now clear that 
    $$
    \lim_{k \to \infty} \frac{J_\LQR(K)-J_\LQR(K^\star)}{\|\nabla J_\LQR(K)\|^2} =  \lim_{k \to \infty} \frac{k + 1/k - 2}{(1 -1/k^2)^2} = \infty. 
    $$
    Thus, the gradient dominance \cref{eq:PL-LQR} does not hold globally over $\mathcal{K}$. Note that $J_\LQR(K)$ becomes more like a linear function as $k$ increases, which cannot be lower bounded by a quadratic function. \hfill $\square$
\end{example}

From the discussions in \cref{example:PL-globally-no}, we have the following corollary. 
\begin{corollary}
    For continuous-time LQR, there may exist no positive constant $\mu$ such that \Cref{eq:PL-LQR} holds globally for $K \in \mathcal{K}$ even when $(A,B)$ is controllable, $(Q^{1/2}, A)$ is observable, $W \succ 0$, and $R \succ 0$.
\end{corollary}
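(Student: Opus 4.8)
The statement is purely existential, so a single counterexample suffices; the plan is to exhibit one LQR instance that meets all four strengthened hypotheses yet violates \cref{eq:PL-LQR} for \emph{every} uniform constant $\mu > 0$. The natural candidate is precisely the scalar single integrator already analyzed in \cref{example:PL-globally-no}, namely $A = 0$, $B = 1/2$, and $Q = R = W = 1$, so I would anchor the proof on this instance rather than search for a new one. First I would verify admissibility: in the scalar setting $(A,B)$ is controllable iff $B \neq 0$ and $(Q^{1/2}, A)$ is observable iff $Q^{1/2} \neq 0$, both of which hold here, while $W = 1 \succ 0$ and $R = 1 \succ 0$ are immediate. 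Hence controllability, observability, $W \succ 0$, and $R \succ 0$ are all in force, and the counterexample is legitimate under the corollary's hypotheses.

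Next I would reproduce the explicit computations over $\mathcal{K} = (0,\infty)$, parametrizing $K = -k$ with $k > 0$. Solving the Lyapunov equation \cref{eq:Lyapunov-H2} gives $X = 1/k$, whence $J_\LQR(K) = k + 1/k$, $\nabla J_\LQR(K) = 1 - 1/k^2$, and $J_\LQR(K^\star) = 2$ at $k = 1$. The crux is then the divergent limit
\[
\lim_{k\to\infty} \frac{J_\LQR(K) - J_\LQR(K^\star)}{\|\nabla J_\LQR(K)\|^2} = \lim_{k\to\infty} \frac{k + 1/k - 2}{(1 - 1/k^2)^2} = \infty.
\]
Because this ratio diverges, for any proposed $\mu > 0$ the inequality $\mu\,(J_\LQR(K) - J_\LQR(K^\star)) \le \tfrac{1}{2}\|\nabla J_\LQR(K)\|^2$ must fail once $k$ is taken large enough. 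Therefore no global constant exists and \cref{eq:PL-LQR} cannot hold on all of $\mathcal{K}$, which is exactly the assertion.

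I do not anticipate any genuine obstacle: all the effort lies in identifying the \emph{failure mechanism} rather than in any delicate estimate. The mechanism is that $J_\LQR$ grows only linearly in $k$ as $k \to \infty$ while its gradient remains bounded (tending to $1$), so the cost cannot be lower-bounded by a quadratic — quadratic growth, and hence gradient dominance, breaks down. This is the scalar shadow of the general principle underlying \cref{assumption:compactness}: the Lyapunov solution $X = 1/k$ has its minimum eigenvalue decaying to zero along the unbounded ray $k \to \infty$, which is precisely the degeneracy that the positive-definiteness condition (with $\underline{\kappa}$ bounded away from zero) is designed to exclude.
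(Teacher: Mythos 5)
Your proof is correct and coincides with the paper's own argument: the corollary is stated there as a direct consequence of \cref{example:PL-globally-no}, which is precisely the scalar single-integrator instance ($A=0$, $2B=Q=R=W=1$) you analyze, with the same computations $X=1/k$, $J_\LQR = k+1/k$, and the divergent ratio as $k\to\infty$. Your added verification of controllability/observability and the closing remark on the failure mechanism ($\lambda_{\min}(X)\to 0$ along the unbounded ray) match the paper's surrounding discussion, so there is nothing to add.
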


\begin{remark}[Global gradient dominance in discrete-time LQR
\cite{fazel2018global,bu2019lqr}]\label{remark:discrete-time-gradient-dominance}
The discrete-time LQR satisfies gradient dominance \textit{globally} when $W \succ 0$. 
This is because the discrete-time Lyapunov equation directly implies a lower bound for its solution. 
For a discrete-time system
$x_{k+1}=Ax_k+Bu_k$ with $u_k=Kx_k$,  its associated Lyapunov equation is $(A+BK)X(A+BK)^\tr - X + W=0$. 
For the solution $X$, it always holds that
\begin{equation*}
    X = \sum_{k=0}^\infty 
    (A+BK)^k W \left((A+BK)^\tr\right)^k 
    \succeq W \qquad \Rightarrow \qquad \lambda_\mathrm{min}(X)\geq \lambda_\mathrm{min}(W).
\end{equation*}
This may not be true 
in the continuous-time case.
Indeed, we have 
$W\succeq X$ in
\cref{example:W-singular} and
$X= 1/k$ in \cref{example:PL-globally-no}.
We thus need the compactness of $\mathcal{K}_\nu$
to ensure a positive lower bound of $\lambda_\mathrm{min}(X)$ for the continuous-time LQR. This is also related to an implicit sampling time in discrete-time systems, and the sampling time is always strictly positive. 
Recently, this discrepancy has been discussed in \cite{cui2024small}. A related difference for the coercivity of $\mathcal{H}_\infty$ cost in continuous-time and discrete-time are discussed in \cite[Fact~4.1]{zheng2024benign}.
\hfill $\square$
\end{remark}

The two conditions in \Cref{assumption:compactness} are thus important to establish gradient dominance for LQR. We next provide sufficient conditions to ensure \Cref{assumption:compactness}; see \cref{appedix:proof-proposition-sufficient-conditions-compactness} for a proof.

\begin{proposition} \label{proposition:sufficient-conditions}
    Suppose \cref{assumption:controllability} holds, and one of the following conditions is true.
    \begin{enumerate}
        \item[a)] The covariance matrix $W$ is positive definite;
        \item[b)] 
        The weight matrix $Q$ is positive definite, $(A, B)$ is controllable, $W = B_1 B_1^\tr$, and $\operatorname{Im} B \subseteq \operatorname{Im} B_1$;
        \item[c)] We have $(A, B)$ is controllable, $W = B_1 B_1^\tr$, and
        $\operatorname{Im} B \subseteq \operatorname{Im} B_1$. Further,  we have 
        $$\lim_{K\in \mathcal{K}, \|K\|_F\to\infty}\left\|P^{1/2}B_1\right\|_F{\|K\|_F^{-1/2}}>0$$
        for 
        $P$ satisfying
        \begin{equation}\label{eq:dual-Lyapunov-eq}
            (A+BK)^\tr P+P(A+BK)^\tr+K^\tr RK+Q =0.
        \end{equation}
    \end{enumerate}
    Then, we have that 1) the sublevel set $\mathcal{K}_\nu := \{K \in \mathcal{K} \mid J_\LQR(K) \leq \nu\}$ with any finite $\nu \geq J_\LQR(K^\star)$ is compact and 2) $X$ from  \cref{eq:Lyapunov-H2} is positive definite if and only if $K \in \mathcal{K}$. 
\end{proposition}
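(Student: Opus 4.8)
The plan is to prove the two conclusions separately, both resting on a single algebraic observation: under any of (a)--(c), the pair $(A+BK,W^{1/2})$ is controllable for \emph{every} $K\in\mathbb{R}^{m\times n}$. For (a) this is immediate, since $W\succ0$ makes $W^{1/2}$ invertible. For (b) and (c), write $W=B_1B_1^\tr$ so that $\operatorname{Im}W^{1/2}=\operatorname{Im}B_1$, and let $\mathcal{R}$ be the smallest $(A+BK)$-invariant subspace containing $\operatorname{Im}B_1$. The inclusion $\operatorname{Im}B\subseteq\operatorname{Im}B_1$ gives $BKv\in\operatorname{Im}B\subseteq\mathcal{R}$ for every $v\in\mathcal{R}$, so $Av=(A+BK)v-BKv\in\mathcal{R}$; hence $\mathcal{R}$ is also $A$-invariant and contains $\operatorname{Im}B$, which forces $\mathcal{R}=\mathbb{R}^n$ by controllability of $(A,B)$. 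Thus $(A+BK,B_1)$, equivalently $(A+BK,W^{1/2})$, is controllable for all $K$.

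For the second conclusion, the forward direction $K\in\mathcal{K}\Rightarrow X\succ0$ follows from $X=\int_0^\infty e^{(A+BK)t}We^{(A+BK)^\tr t}\,dt$ and $v^\tr Xv=\int_0^\infty\|W^{1/2}e^{(A+BK)^\tr t}v\|^2\,dt$, which is strictly positive for all $v\neq0$ exactly when $(A+BK,W^{1/2})$ is controllable. The converse $X\succ0\Rightarrow K\in\mathcal{K}$ is a standard Lyapunov/PBH argument: from $(A+BK)X+X(A+BK)^\tr=-W\preceq0$ with $X\succ0$, every eigenvalue $\mu$ of $(A+BK)^\tr$, with eigenvector $v$, satisfies $2\operatorname{Re}(\mu)\,v^\her Xv=-v^\her Wv\le0$, so $\operatorname{Re}(\mu)\le0$; an eigenvalue with $\operatorname{Re}(\mu)=0$ would force $W^{1/2}v=0$, i.e.\ a left eigenvector of $A+BK$ orthogonal to $\operatorname{Im}W^{1/2}$, contradicting the controllability just established. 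Hence $A+BK$ is stable, and the \emph{iff} of the second conclusion holds under each of (a)--(c).

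For the first conclusion I would show that $J_\LQR$ is coercive on the open set $\mathcal{K}$, so that its sublevel sets, being closed and bounded, are compact, treating the boundary and radial behaviours separately. \emph{Boundary coercivity} is uniform across (a)--(c): along any sequence $K_i\to K_\infty\in\partial\mathcal{K}$ the closed loop $A+BK_\infty$ acquires an eigenvalue on the imaginary axis, and this critical mode is controllable from $W^{1/2}$ (by the lemma above) and observable from the performance output $\begin{bmatrix}Q^{1/2}\\R^{1/2}K_\infty\end{bmatrix}$. The latter uses a detectability propagation: if $(A+BK_\infty)v=\lambda v$ with $\operatorname{Re}\lambda\ge0$ and this output annihilates $v$, then $R\succ0$ gives $Kv=0$, whence $Av=\lambda v$ and $Q^{1/2}v=0$, contradicting detectability of $(Q^{1/2},A)$. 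Since the critical pole approaches the axis while being both controllable and observable, the residue of the closed-loop transfer function at that pole is nonzero, so the $\mathcal{H}_2$ cost $J_\LQR(K_i)=\operatorname{tr}(WP_i)$ diverges. \emph{Radial coercivity} ($J_\LQR(K)\to\infty$ as $\|K\|_F\to\infty$) is where the three cases differ. Under (c) it is essentially built into the hypothesis: since $\|P^{1/2}B_1\|_F^2=\operatorname{tr}(B_1^\tr PB_1)=\operatorname{tr}(WP)=J_\LQR(K)$, the assumed limit $\lim_{\|K\|_F\to\infty}\|P^{1/2}B_1\|_F\|K\|_F^{-1/2}>0$ reads precisely as $\liminf_{\|K\|_F\to\infty}J_\LQR(K)/\|K\|_F>0$, giving $J_\LQR(K)\to\infty$. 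Under (a) this is classical for $W\succ0$ \cite{bu2019lqr,fatkhullin2021optimizing}. Under (b), with $Q\succ0$ one bounds $\operatorname{tr}(X)\le J_\LQR(K)/\lambda_{\min}(Q)$, so $X$ stays bounded on $\mathcal{K}_\nu$, and then a contradiction argument on \cref{eq:Lyapunov-H2}, using $\|R^{1/2}KX^{1/2}\|_F^2\le J_\LQR(K)$ together with the controllability lemma, rules out $\|K\|_F\to\infty$ at bounded cost.

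The main obstacle is the boundary coercivity: showing $J_\LQR$ blows up as $A+BK$ approaches marginal stability. The subtlety is that the critical imaginary-axis mode must be simultaneously \emph{excited} by the noise covariance $W$ and \emph{penalized} by the cost; the controllability lemma handles the former and the detectability propagation handles the latter, but these conditions act on the left and right eigenvectors, respectively, so the clean link is the transfer-function residue at the critical pole, whose $\mathcal{H}_2$ contribution scales like the reciprocal of the distance of the pole to the imaginary axis. A secondary difficulty is the radial coercivity under (b), which, unlike (c), is not assumed and must be extracted from $Q\succ0$ and controllability through a careful analysis of \cref{eq:Lyapunov-H2} as $X$ degenerates.
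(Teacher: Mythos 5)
Your proposal is correct in its overall architecture and, where it departs from the paper, the departures are legitimate alternatives rather than errors; but one step is left at the level of a plausibility claim and deserves comment. On the parts that agree: your unified lemma (controllability of $(A+BK,W^{1/2})$ for \emph{every} $K$ under (a)--(c)) is exactly the paper's PBH observation, just proved self-contained via invariant subspaces; your two-sided proof of conclusion 2 matches the standard Lyapunov facts the paper cites; and your case (c) is identical to the paper's, since the hypothesis literally reads $\liminf J_\LQR(K)/\|K\|_F>0$ once one notes $J_\LQR(K)=\mathrm{tr}(WP)=\|P^{1/2}B_1\|_F^2$. The genuine difference is case (b): the paper (\cref{appendix:PL-W>=0-Q>0}) passes to the convex $(Y,X)$ coordinates of the \texttt{ECL} from \cref{subsection:ECL-GD}, proves compactness of the sublevel set there, and pushes it forward through $Y\mapsto K$; you stay in the primal variables, and your sketch does complete: if $\|K_i\|_F\to\infty$ with $J_\LQR(K_i)\le\nu$, then $\mathrm{tr}(X_i)\le \nu/\lambda_{\min}(Q)$ and boundedness of $K_iX_i^{1/2}$ force $\lambda_{\min}(X_i)\to 0$; along a subsequence $X_i\to X_\infty$ (singular) and $K_iX_i\to Y_\infty$, and for $v\in\ker X_\infty$ the limiting form of \cref{eq:Lyapunov-H2} gives $Y_\infty v=0$, $B_1^\tr v=0$ (hence $B^\tr v=0$ by $\operatorname{Im}B\subseteq\operatorname{Im}B_1$), and then $X_\infty A^\tr v=0$; so $\ker X_\infty$ is a nonzero $A^\tr$-invariant subspace inside $\ker B^\tr$, contradicting controllability of $(A,B)$. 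This is more elementary than the paper's \texttt{ECL} detour and is a perfectly good substitute.

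The step you still owe is the boundary coercivity, which you correctly flag as the main obstacle. ``The critical pole is controllable and observable, hence its residue is nonzero, hence the $\mathcal{H}_2$ cost diverges'' is a true fact, but as stated it is heuristic: a rigorous version must handle non-simple critical eigenvalues (the residue $C_\infty u\, w^\her W^{1/2}/(w^\her u)$ degenerates when $w^\her u=0$), several poles reaching the axis simultaneously, and the left/right eigenvector mismatch you yourself identify. For context, the paper's own execution of this step is also delicate: its lower bound $\mathrm{tr}\bigl((Q+K_i^\tr RK_i)X_i\bigr)\ge (v_i^\her X_iv_i)\,|v^\her v_i|^2\, v^\her(Q+K_i^\tr RK_i)v$ is not a valid inequality for general positive semidefinite matrices, so the underlying limiting argument has to carry the proof in either approach. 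A clean way to close your gap without residues: suppose $J_\LQR(K_i)\le\nu$ along $K_i\to K_\infty\in\partial\mathcal{K}$. If $\|X_i\|$ stays bounded, a limit point $X_\infty$ satisfies the Lyapunov equation with $W$, and evaluating it on a left eigenvector $w$ of $A+BK_\infty$ for a critical eigenvalue gives $w^\her Ww=0$, contradicting your controllability lemma; so $\|X_i\|\to\infty$. Then normalize $\tilde X_i=X_i/\|X_i\|$ and pass to a limit $\tilde X_\infty\neq 0$ satisfying $(A+BK_\infty)\tilde X_\infty+\tilde X_\infty(A+BK_\infty)^\tr=0$ and $(Q+K_\infty^\tr RK_\infty)\tilde X_\infty=0$. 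The first identity implies $e^{(A+BK_\infty)t}\tilde X_\infty e^{(A+BK_\infty)^\tr t}=\tilde X_\infty$ for all $t\in\mathbb{R}$, so $\operatorname{Im}\tilde X_\infty$ is $(A+BK_\infty)$-invariant and carries only purely imaginary eigenvalues; picking an eigenvector $v\in\operatorname{Im}\tilde X_\infty$, the second identity gives $Q^{1/2}v=0$ and $K_\infty v=0$, and your detectability-propagation argument yields the contradiction. With this lemma inserted, your proof is complete for all three cases.
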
 

As discussed in \Cref{remark:PL-LQR}, all existing results in \cite{mohammadi2019global,fatkhullin2021optimizing,bu2020policy,zhang2025convergence} require $W \succ 0$, which is the same as the first condition in \cref{proposition:sufficient-conditions}. The recent results \cite{mohammadi2019global,fatkhullin2021optimizing,zhang2025convergence} further require $Q$ to be strictly positive definite. The positive definiteness of $Q$ is not vital to establishing gradient dominance.
In the second condition of \cref{proposition:sufficient-conditions}, the part of $W = B_1 B_1^\tr \succeq 0$, $\operatorname{Im} B \subseteq \operatorname{Im} B_1$, $(A, B)$ being controllable is to guarantee the positive definiteness of $X$, and
the positive definiteness of $Q$ is to ensure the compactness of $\mathcal{K}_\nu$.  
We here provide an LQR instance with $W \succeq 0$ which still satisfies the gradient dominance property.

\begin{figure}
    \centering
    \includegraphics[width=0.35\linewidth]{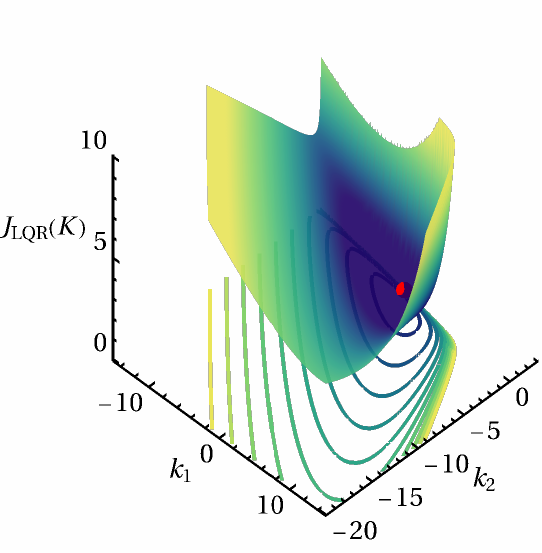}
    \caption{A ``well-behaved'' nonconvex LQR landscape for \cref{example:PL-W>=0} with $a=0.1$ that satisfies \Cref{assumption:compactness}. By \cref{theorem:J_LQR-gradient_dominance-general}, the optimal feedback gain is unique, and gradient dominance is satisfied over $\mathcal{K}_\nu$.
    }
    \label{fig:PL-W>=0}
\end{figure}

\begin{example}[Gradient dominance when $W\succeq0$]
\label{example:PL-W>=0}
Consider an LQR instance with problem data 
\begin{equation*}
    A =\begin{bmatrix}
        -10 &a\\
        a& -1
    \end{bmatrix},\quad
    B = \begin{bmatrix}
        1\\ 1
    \end{bmatrix},
    \quad
    W = BB^\tr, \quad
    Q=I,\quad R=1
\end{equation*}
Notice that $B$, $Q$, and $R$ are the same as \cref{example:W-singular}. One main difference is that $W = BB^\tr$  and $(A, B)$ is controllable in this LQR instance. 
We thus verify that this LQR instance satisfies the second condition in \cref{proposition:sufficient-conditions}.
The LQR cost $J_\LQR(K)$ for $K=[k_1,k_2]$ is shown in \cref{fig:PL-W>=0}.
As expected from \Cref{theorem:J_LQR-gradient_dominance-general}, the optimal solution is unique, and gradient dominance is satisfied in this case. This LQR instance with $W\succeq0$ goes beyond the existing results on gradient dominance. \hfill $\square$
\end{example}

\begin{remark}[Strong duality versus gradient dominance in LQR]
    Both strong duality and gradient dominance reflect hidden convexity in LQR. However, strong duality in \cref{theorem:strong-duality-1} only requires the usual assumption of stabilizability and detectability, while gradient dominance in \cref{theorem:J_LQR-gradient_dominance-general} requires slightly stronger assumptions.    
    This difference is unsurprising as the inequality in gradient dominance \cref{eq:PL-LQR} applies to all the points in a sublevel set and strong duality essentially only relates to the optimal values between the primal and dual problems. Another way to see the difference is that strong duality is a property in \textit{usual convex optimization}, while gradient dominance is more like a property in \textit{strongly convex optimization} (indeed, we require at least {quadratic growth}).  \hfill $\square$
\end{remark}

\subsection{Extended Convex Lifting ($\ECL$)
and gradient dominance} \label{subsection:ECL-GD}

In this section, we introduce the $\ECL$ framework \cite{zheng2024benign} 
and present a general result of gradient dominance (\cref{theorem:gradient-dominance-ECL}) for differentiable functions. The LQR result in \cref{theorem:J_LQR-gradient_dominance-general} will be a special~case.  

\subsubsection{Gradient dominance in $\ECL$}

The idea of \texttt{ECL} is intuitive and closely related to convex re-parameterization. Let $f$ be a potentially
nonconvex function, and suppose there exists a smooth bijection $y = \Phi(x)$ such that $h(y) := f(\Phi^{-1}(y))$ is convex. In other words, $g$ is a convex parameterization of $f$ under the new coordinates $y = \Phi(x)$. Then, it is clear that 1) minimizing $f (x)$ is equivalent to minimizing $h(y)$ which is a convex problem; 2) all stationary points of $f$ are globally optimal. If 
both $f$ and $h$ are differentiable and
$h$ further
satisfies gradient dominance
     $ \mu \left(h(y) - h(y^\star) \right)
     \leq\frac{1}{2} \|
     \nabla h(y)\|^2,$ 
we have 
\begin{equation} \label{eq:PL-convex-reparameterization}
\begin{aligned}
f(x) - f(x^\star) &= f(\Phi^{-1}(y)) - f(\Phi^{-1}(y^\star))\\
&= h(y) - h(y^\star) \\
&\leq \frac{1}{2\mu} \|\nabla h(y)\|^2 \\
& = \frac{1}{2\mu} \|\nabla \Phi^{-1}(y) \nabla f(\Phi^{-1}(y))\|^2 
 \leq \frac{1}{2\mu} \|\nabla \Phi^{-1}(y)\|_2^2 \|\nabla f(x)\|^2,
\end{aligned}
\end{equation} 
where the third and fourth lines follow from the gradient dominance of $h$ and chain rule, respectively.  
If the quantity $\|\nabla \Phi^{-1}(y)\|_2$ is bounded, then we have established the gradient dominance of $f(x)$ despite its nonconvexity. In particular, the parameter $\mu$ in \cref{eq:PL-convex-reparameterization} plays the same role of $\mu_\mathrm{qg}$ in \cref{eq:PL-constant}, and the bound on $\|\nabla \Phi^{-1}(y)\|_2$ is the same as $1/c_\LQR$ in \cref{eq:PL-constant}.

The analysis strategy in \cref{eq:PL-convex-reparameterization} has good intuition and simplicity.
 However, we cannot directly apply it {to general control problems} for two main reasons: 1) the bijection $\Phi$ may not exist due to the natural appearance of Lyapunov variables; 2) gradient may not be a good tool to connect the original function and its convex re-parameterization when their domains are not \textit{full dimensional} (e.g., constraints are involved). The \texttt{ECL} \cite{zheng2024benign} is a general framework to resolve the first difficulty, and we will refine \texttt{ECL} via the notion of Cauchy direction \cite{lemarechal2012cauchy} to address the second difficulty. 

For convenience, we introduce a simplified \texttt{ECL} that is sufficient for the LQR; see \cite{zheng2024benign} for a more general setup. Recall that the epigraph of a function $f: \mathcal{D} \to \mathbb{R}$ is defined as follows 
$$
\operatorname{epi}_{\geq}(f) : = \{(x,\gamma) \mid x \in \mathcal{D}, f(x) \leq \gamma\}.
$$

\begin{definition}[Extended Convex Lifting]
Let $f: \mathcal{D} \rightarrow \mathbb{R}$ be a continuous function, where $\mathcal{D} \subseteq \mathbb{R}^d$ is an open set. We say that the tuple $\left(\mathcal{L}_{\mathrm{lft}}, \mathcal{F}_{\mathrm{cvx}}, \Phi\right)$ is an $\ECL$ of $f$, if the following conditions hold:
\begin{enumerate}
    \item  $\mathcal{L}_{\mathrm{lft}} \subseteq \mathbb{R}^d \times \mathbb{R} \times \mathbb{R}^{d_{\xi}}$ is a lifted set with an extra variable $\xi \in \mathbb{R}^{d_{\xi}}$, such that the canonical projection of $\mathcal{L}_{\mathrm{lft}}$ onto the first $d+1$ coordinates 
    satisfies $\operatorname{epi}_{\geq}(f) = \pi_{x, \gamma}\left(\mathcal{L}_{\mathrm{lft}}\right)$\footnote{
    Let $\mathcal{S} \! \subseteq \! \mathbb{R}^{n_1 \! +\! n_2}$.
    The canonical projection onto the first $n_1$ coordinates is 
$\pi_x(\mathcal{S})
 \!=\!\{x \in \mathbb{R}^{n_1}|\exists y \!\in\! \mathbb{R}^{n_2} \text{ s.t. }(x,y)\!\in\!\mathcal{S}\}.$}. 
    \item $\mathcal{F}_{\mathrm{cvx}} \subseteq \mathbb{R} \times \mathbb{R}^{d+d_e}$ is a convex set and $\Phi$ is a $C^2$ diffeomorphism from $\mathcal{L}_{\mathrm{lft}}$ to $\mathcal{F}_{\mathrm{cvx}}$.
    \item For any $(x, \gamma, \xi) \in \mathcal{L}_{\mathrm{lft}}$, we have $\Phi(x, \gamma, \xi)=\left(\gamma, \zeta\right)$ for some $\zeta \in \mathbb{R}^{d+d_e}$ and $\left(\gamma, \zeta\right) \in \mathcal{F}_{\mathrm{cvx}}$. In other words, the mapping $\Phi$ directly outputs $\gamma$ in the first component.
\end{enumerate}
\end{definition}
Compared to a direct convex parameterization $y = \Phi(x)$, the \texttt{ECL} deals with epi-graphs and allows for an extra lifting variable $\xi$ that corresponds to Lyapunov variables. We remark that the requirement $\operatorname{epi}_{\geq}(f) = \pi_{x, \gamma}\left(\mathcal{L}_{\mathrm{lft}}\right)$ is sufficient for state-feedback control such as LQR, but may be too restrictive for other control applications; see \cite{zheng2024benign} for more details. 
For a function with an $\ECL$, we immediately obtain the following guarantee for the global optimality of stationary points.
\begin{proposition}{\cite[Theorem 3.2]{zheng2024benign}}\label{proposition:global-optimality-ECL}
    Let $f: \mathcal{D} \rightarrow \mathbb{R}$ be a differentiable function, equipped with an $\ECL$   $\left(\mathcal{L}_{\mathrm{lft}}, \mathcal{F}_{\mathrm{cvx}}, \Phi\right)$.
    Then, any stationary point $x^\star \in \mathcal{D}$ (i.e., $\nabla f\left(x^\star \right)=0$) is a global minimizer over $\mathcal{D}$. 
\end{proposition}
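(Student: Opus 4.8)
The strategy is to convert the first-order stationarity of the nonconvex $f$ into the \emph{global} optimality of the \emph{linear} objective $\gamma$ over the convex set $\mathcal{F}_{\mathrm{cvx}}$, using crucially that $\Phi$ leaves the $\gamma$-coordinate untouched. First I would fix a stationary point $x^\star$, put $\gamma^\star := f(x^\star)$, and invoke the epigraph identity $\operatorname{epi}_{\geq}(f) = \pi_{x,\gamma}(\mathcal{L}_{\mathrm{lft}})$ to produce a lifting coordinate $\xi^\star$ with $(x^\star,\gamma^\star,\xi^\star) \in \mathcal{L}_{\mathrm{lft}}$; then set $(\gamma^\star,\zeta^\star) := \Phi(x^\star,\gamma^\star,\xi^\star) \in \mathcal{F}_{\mathrm{cvx}}$, whose first component is indeed $\gamma^\star$ by the third $\ECL$ condition. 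The key bookkeeping observation is that the three infima
$$\inf_{x \in \mathcal{D}} f(x), \quad \inf_{(x,\gamma,\xi)\in\mathcal{L}_{\mathrm{lft}}}\gamma, \quad \inf_{(\gamma,\zeta)\in\mathcal{F}_{\mathrm{cvx}}}\gamma$$
all coincide: the first equals the second by projecting the epigraph (the projection only discards $\xi$), and the second equals the third because $\Phi$ is a bijection preserving $\gamma$. Hence it suffices to prove that $(\gamma^\star,\zeta^\star)$ minimizes the first coordinate over $\mathcal{F}_{\mathrm{cvx}}$, since then $f(x^\star)=\gamma^\star=\inf_x f(x)$.

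The heart of the argument is a one-sided first-order computation. Suppose, for contradiction, that some $(\gamma',\zeta') \in \mathcal{F}_{\mathrm{cvx}}$ has $\gamma' < \gamma^\star$. By convexity the segment $(\gamma(t),\zeta(t)) := (1-t)(\gamma^\star,\zeta^\star) + t(\gamma',\zeta')$ lies in $\mathcal{F}_{\mathrm{cvx}}$ for $t \in [0,1]$ and has $\dot\gamma(0) = \gamma'-\gamma^\star < 0$. Pulling this segment back through the $C^2$ diffeomorphism $\Phi^{-1}$ yields a smooth curve $(x(t),\gamma(t),\xi(t)) \in \mathcal{L}_{\mathrm{lft}}$ whose middle component is the \emph{same} $\gamma(t)$ (again by the $\gamma$-preserving property). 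Projecting into $\operatorname{epi}_{\geq}(f)$ gives $x(t)\in\mathcal{D}$ and $\gamma(t) \geq f(x(t))$, with equality at $t=0$. Thus $g(t) := \gamma(t) - f(x(t))$ satisfies $g(t)\geq 0$ and $g(0)=0$, so its right derivative is nonnegative; by the chain rule this reads $\dot\gamma(0) - \nabla f(x^\star)^\tr \dot x(0) \geq 0$. Since $\nabla f(x^\star)=0$, this forces $\dot\gamma(0) \geq 0$, contradicting $\dot\gamma(0) < 0$. Therefore no such $(\gamma',\zeta')$ exists, $(\gamma^\star,\zeta^\star)$ is globally optimal for $\gamma$ over $\mathcal{F}_{\mathrm{cvx}}$, and the chain of equal infima shows $x^\star$ is a global minimizer of $f$.

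I expect the main obstacle to be the one-sidedness forced by the geometry of $\mathcal{F}_{\mathrm{cvx}}$: this convex set need not be full-dimensional or open, so at $(\gamma^\star,\zeta^\star)$ one can only travel \emph{into} the set (the curve is defined only for $t \geq 0$), and the whole argument must stay first-order and one-sided rather than appealing to a two-sided stationarity condition on $\mathcal{L}_{\mathrm{lft}}$. The reason this one-sided information suffices is the conjunction of two facts that must be used together: the objective $\gamma$ is \emph{literally the same coordinate} before and after $\Phi$, and the epigraph inequality $\gamma(t)\geq f(x(t))$ pins the $\gamma$-derivative to the $f$-derivative at $t=0$, which $\nabla f(x^\star)=0$ then annihilates. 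A secondary point requiring care is confirming that the pulled-back curve is differentiable at $t=0$ with $x(t)\in\mathcal{D}$, but this is immediate from $\Phi^{-1}\in C^2$ and differentiability of $f$ at $x^\star$.
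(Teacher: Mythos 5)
Your proof is correct. Note, however, that this paper does not prove the proposition at all: it is imported verbatim from \cite[Theorem 3.2]{zheng2024benign}, so there is no in-paper proof to compare against line by line. Your argument --- lift the stationary point via $\operatorname{epi}_{\geq}(f)=\pi_{x,\gamma}(\mathcal{L}_{\mathrm{lft}})$, push it through $\Phi$, connect to a putative better point by a segment inside $\mathcal{F}_{\mathrm{cvx}}$, pull the segment back through the $C^2$ diffeomorphism, and kill the negative $\gamma$-derivative with the one-sided derivative of $g(t)=\gamma(t)-f(x(t))\geq 0$ together with $\nabla f(x^\star)=0$ --- is exactly the mechanism of the \texttt{ECL} framework, and it is the same curve-pullback device this paper deploys (in a quantitative, $\varepsilon$-perturbed form) in \Cref{proposition:curve} and the proof of \Cref{lemma:gradient-Cauchy_direction}. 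The only differences are cosmetic: the paper's Appendix C version must perturb the $\gamma$-coordinate strictly above $f_{\mathrm{cvx}}$ and track norms because it needs a quantitative bound ($\|\nabla f\|$ versus the Cauchy direction) for gradient dominance, whereas for bare global optimality your unperturbed pullback of the segment suffices, and your identification of the three equal infima is the clean way to finish. Your two flagged subtleties (one-sidedness of the curve, and differentiability of the pullback at $t=0$) are indeed the right points to be careful about, and your handling of both is sound under the standard reading of a $C^2$ diffeomorphism between non-open sets (smoothness via local extensions), which is how \cite{zheng2024benign} defines it.
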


The original $\ECL$ only requires $\mathcal{F}_{\mathrm{cvx}}$ to be convex and does not impose further geometric properties. It can thus only guarantee the global optimality of stationary points in \cref{proposition:global-optimality-ECL}. If $\mathcal{F}_{\mathrm{cvx}}$ satisfies further strong convexity-like properties, then the original nonconvex function $f$ would satisfy gradient dominance. To quantify this, we assume the set $\mathcal{F}_{\mathrm{cvx}}$ is closed and  
define a convex function 
\begin{equation} \label{eq:ECL-convex-function-general}
f_{\mathrm{cvx}}(\zeta)  
=\min_{(\gamma,\zeta)\in\mathcal{F}_\cvx}\ \gamma 
\end{equation}
with $\mathrm{dom}(f_{\mathrm{cvx}})=\pi_\zeta(\mathcal{F}_\cvx)$. 
We make the following regularity assumption of $\mathcal{F}_\mathrm{cvx}$ and $f_\mathrm{cvx}$.
\begin{assumption} \label{assumption:convex-set}
    The following conditions hold.
    \begin{enumerate}
        \item The convex set $\mathcal{F}_\mathrm{cvx}$ is closed and satisfies an epi-graph property, i.e., 
$$(\gamma+\epsilon,\zeta)\in\mathcal{F}_\cvx
, \qquad \forall (\gamma,\zeta)\in\mathcal{F}_\cvx, \epsilon>0.$$
\item The sublevel set  $\mathcal{X}_\nu := \{\zeta \in \mathrm{dom}(f_{\mathrm{cvx}}) \mid  f_\cvx(\zeta) \leq \nu\}$ is compact and $f_\cvx$ satisfies $\mu_\mathrm{qg}$-quadratic growth 
over $\mathcal{X}_\nu$.
    \end{enumerate}
\end{assumption}

We are now ready to state the main gradient dominance result in the \texttt{ECL} framework.
\begin{theorem}\label{theorem:gradient-dominance-ECL}
 Let $f: \mathcal{D} \rightarrow \mathbb{R}$ be a differentiable function, equipped with an $\ECL$   $\left(\mathcal{L}_{\mathrm{lft}}, \mathcal{F}_{\mathrm{cvx}}, \Phi\right)$ that satisfies \cref{assumption:convex-set}. 
Let $\nu > f^* = min_{x\in\mathcal{D}} f(x)$. 
The following statements are true.
\begin{enumerate}
    \item The sublevel set of $f$: $\mathcal{D}_\nu = \{x \in \mathcal{D} \mid f(x) \leq \nu\}$ is compact, and 
     there exists 
    a positive constant $\mu\in(0,\mu_\mathrm{qg}
 c_\Phi^2]$ with some $c_\Phi>0$ such that
    \begin{equation}\label{eq:PL-fx-ECL}
    \mu\times \left( f(x) -f^*\right)
    \leq 
    \frac{1}{2}
    \|\nabla f(x)\|^2, \qquad \forall x \in \mathcal{D}_\nu. 
\end{equation}
    \item
    If the diffeomorphism $\Phi$ is further of the form
$\Phi(x,\gamma,\xi) = (\gamma,\Psi(x,\xi))$ (i.e., the second output of $\Phi$ does not depend on $\gamma$),
    \cref{eq:PL-fx-ECL} holds, and
    the constant $c_\Phi$ can be chosen as 
\begin{equation}\label{eq:C_Phi}
    c_\Phi =
    \frac{1}{2}
    \min_{\zeta
    \in \mathcal{X}_\nu 
    }
    \left\| \nabla\Psi^{-1}(\zeta) \right\|_2^{-1} > 0. 
\end{equation}
The constant $c_\Phi$ is strictly positive due to the compactness of $\mathcal{X}_\nu$.
\end{enumerate}
\end{theorem}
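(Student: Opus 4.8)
The plan is to treat the convex model $f_{\mathrm{cvx}}$ of \cref{eq:ECL-convex-function-general} as a surrogate for $f$: I would establish gradient dominance for $f_{\mathrm{cvx}}$ from its quadratic growth and then transport the inequality back to $f$ through $\Phi$, using Cauchy directions \cite{lemarechal2012cauchy} to bypass the naive chain rule of \cref{eq:PL-convex-reparameterization}. For the compactness claim I would first use the epigraph identity $\operatorname{epi}_{\geq}(f) = \pi_{x,\gamma}(\mathcal{L}_{\mathrm{lft}})$ and the defining property that $\Phi$ preserves the $\gamma$-coordinate: every $x \in \mathcal{D}_\nu$ lifts to some $(x,f(x),\xi)\in\mathcal{L}_{\mathrm{lft}}$ whose image $\Phi(x,f(x),\xi)=(f(x),\zeta)$ obeys $f_{\mathrm{cvx}}(\zeta)\leq f(x)\leq \nu$, so $\zeta\in\mathcal{X}_\nu$. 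Hence $\mathcal{D}_\nu \subseteq \pi_x\big(\Phi^{-1}(\mathcal{C})\big)$ with $\mathcal{C}=\{(\gamma,\zeta)\in\mathcal{F}_{\mathrm{cvx}} : f^*\leq\gamma\leq\nu,\ \zeta\in\mathcal{X}_\nu\}$; since $\mathcal{X}_\nu$ is compact (\cref{assumption:convex-set}) and $\mathcal{F}_{\mathrm{cvx}}$ is closed, $\mathcal{C}$ is compact, and continuity of $\Phi^{-1}$ and of the projection $\pi_x$ then forces $\mathcal{D}_\nu$ to be compact. The openness of $\mathcal{D}$ is exactly what makes this nontrivial — compactness encodes a coercivity of $f$ toward $\partial\mathcal{D}$ that the diffeomorphism inherits from the bounded convex model.

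Next I would establish gradient dominance for $f_{\mathrm{cvx}}$ itself. Because $f_{\mathrm{cvx}}$ is convex and enjoys $\mu_{\mathrm{qg}}$-quadratic growth over $\mathcal{X}_\nu$, the error-bound equivalence \cite{karimi2016linear,liao2024error} yields a gradient-dominance estimate. Since $f_{\mathrm{cvx}}$ arises from a constrained minimization and need not be smooth, I would phrase the right-hand side through the slope of the \emph{Cauchy direction} — the steepest feasible-descent direction of $f_{\mathrm{cvx}}$ at $\zeta$. Letting $\bar\zeta$ be the projection of $\zeta$ onto the minimizer set $\mathcal{Z}^\star$ and $s$ the Cauchy slope, convexity gives $f_{\mathrm{cvx}}(\zeta) - f^* \leq \|s\|\,\operatorname{dist}(\zeta,\mathcal{Z}^\star)$, while quadratic growth gives $\operatorname{dist}(\zeta,\mathcal{Z}^\star)^2 \leq \tfrac{2}{\mu_{\mathrm{qg}}}(f_{\mathrm{cvx}}(\zeta) - f^*)$; eliminating the distance yields $f_{\mathrm{cvx}}(\zeta) - f^* \leq \tfrac{1}{2\mu_{\mathrm{qg}}}\|s\|^2$.

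The crux is transporting this bound to $f$, and here lies the main obstacle: $\Phi$ is not a bijection of $x$ alone — the lifting (Lyapunov) variable $\xi$ is present and $\operatorname{dom}(f_{\mathrm{cvx}})$ is not full-dimensional — so \cref{eq:PL-convex-reparameterization} cannot be invoked directly. My plan is to pull the Cauchy direction at $\zeta$ back through $\Phi^{-1}$ to a feasible direction on $\mathcal{L}_{\mathrm{lft}}$ and read off its $x$-component as an admissible perturbation of $x$; the directional derivative of $f$ along the normalized pulled-back direction is then at least $c_\Phi\|s\|$, where $c_\Phi>0$ controls the distortion of $\Phi^{-1}$ on the compact $\mathcal{X}_\nu$. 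Since $\|\nabla f(x)\|$ dominates every directional derivative along a unit vector, this gives $\|\nabla f(x)\| \geq c_\Phi\|s\|$, and combining with the previous stage together with $f(x)=f_{\mathrm{cvx}}(\zeta)$ yields \cref{eq:PL-fx-ECL} with $\mu=\mu_{\mathrm{qg}}c_\Phi^2$. For the general statement~1 the dependence of $\zeta$ on $\gamma$ complicates the pullback, so I would only claim the existence of \emph{some} $c_\Phi>0$, again supplied by compactness of $\mathcal{X}_\nu$. Under the decoupled form $\Phi(x,\gamma,\xi)=(\gamma,\Psi(x,\xi))$ of statement~2 the $\gamma$-coordinate separates cleanly, the pullback reduces to multiplication by $\nabla\Psi^{-1}$, and one obtains the explicit value in \cref{eq:C_Phi}. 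The two delicate points I expect to spend most care on are verifying that the pulled-back direction remains feasible for $f$ and that the free $\xi$-component does not inflate the $x$-norm — which is precisely where the compactness of $\mathcal{X}_\nu$ and the $C^2$-diffeomorphism property of $\Psi$ are essential.
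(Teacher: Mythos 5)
Your proposal follows essentially the same route as the paper's proof: the two-stage argument (quadratic growth of $f_{\mathrm{cvx}}$ combined with the Cauchy-direction inequality of \cref{proposition:Umenberger-lemma_H.1}, then a pullback of the Cauchy direction through the diffeomorphism, which is the paper's \cref{lemma:gradient-Cauchy_direction} proved via the curve construction of \cref{proposition:curve}), together with compactness of $\mathcal{D}_\nu$ obtained from the epigraph identity and the $\gamma$-preserving property of $\Phi$, are exactly the paper's ingredients. Two slips should be repaired. First, eliminating the distance from $f_{\mathrm{cvx}}(\zeta)-f^*\leq\|s\|\,\mathrm{dist}(\zeta,\mathcal{Z}^\star)$ and $\mathrm{dist}^2(\zeta,\mathcal{Z}^\star)\leq\tfrac{2}{\mu_{\mathrm{qg}}}\left(f_{\mathrm{cvx}}(\zeta)-f^*\right)$ yields $f_{\mathrm{cvx}}(\zeta)-f^*\leq\tfrac{2}{\mu_{\mathrm{qg}}}\|s\|^2$ (the paper's \cref{eq:strong-convexity-PL}), not $\tfrac{1}{2\mu_{\mathrm{qg}}}\|s\|^2$; the missing factor of $4$ is recovered only because the sharp pullback bound is $\|\nabla f(x)\|\geq 2c_\Phi\|s\|$ with $c_\Phi$ as in \cref{eq:C_Phi} (the prefactor $\tfrac12$ there exists precisely for this bookkeeping), so your final constant $\mu_{\mathrm{qg}}c_\Phi^2$ is correct even though the intermediate inequalities as stated do not support it. Second, your compactness argument concludes from the inclusion $\mathcal{D}_\nu\subseteq\pi_x\bigl(\Phi^{-1}(\mathcal{C})\bigr)$ alone, but a subset of a compact set need not be compact; you need the (immediate) reverse inclusion — any $(\gamma,\zeta)\in\mathcal{C}$ pulls back to a point of $\mathcal{L}_{\mathrm{lft}}$ whose $(x,\gamma)$-projection lies in $\operatorname{epi}_{\geq}(f)$, so $f(x)\leq\gamma\leq\nu$ — which gives equality of the two sets and hence compactness. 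Finally, the feasibility/differentiability issue you rightly flag for the pulled-back curve is resolved in the paper by taking the $\gamma$-coordinate of the curve to be the strict linear majorant $f_{\mathrm{cvx}}(\zeta)+\tau\varepsilon-\tau\|g_{\mathrm{c}}\|^2$ rather than $f_{\mathrm{cvx}}(\zeta-\tau g_{\mathrm{c}})$ itself: this keeps the curve inside $\mathcal{L}_{\mathrm{lft}}$ (via the epigraph property of $\mathcal{F}_{\mathrm{cvx}}$) and makes it linear, hence semi-differentiable, in $\tau$, with $\varepsilon\downarrow 0$ taken at the end.
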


We remark that the constant $\mu=2\mu_\mathrm{qg}
 c_\Phi^2$ in \Cref{theorem:gradient-dominance-ECL} resembles the structure of the constant in \cref{eq:PL-convex-reparameterization}, where $c_\Phi^2$ plays the same role as $1/\|\nabla \Phi^{-1}(y)\|_2^2$.  
Notice that we do not require strong convexity of $f_\cvx$, which is stronger than quadratic growth \cite{liao2024error}. Certain convex reformulations of $J_\LQR$ may only satisfy quadratic growth but not strong convexity.  
The proof \Cref{theorem:gradient-dominance-ECL} is based on a notion of Cauchy directions, motivated by \cite{umenberger2022globally}. 

\subsubsection{Proof of \Cref{theorem:gradient-dominance-ECL} via Cauchy directions
}\label{subsection:proof-LQR-GD}
  
\begin{definition}
Let $h:\mathbb{R}^{d}\to\mathbb{R}\cup\{\infty\}$ be a proper function. We say $g_\mathrm{c}\in\mathbb{R}^{d}$ is a \textit{Cauchy direction} of $h$ at $x\in\mathrm{dom}(h)$ if there exists a constant $\tau_0>0$ such that 
$x-\tau g_\mathrm{c} \in \mathrm{dom}(h), \forall \tau\in[0,\tau_0]$ and
\begin{equation*}
    \lim_{\tau \downarrow 0}
    \frac{h(x-\tau g_\mathrm{c}) - h(x)}{\tau} \leq - \|g_\mathrm{c}\|^2.
\end{equation*}
\end{definition}

\begin{figure}
    \centering
  \begin{subfigure}{0.42\linewidth}
           \centering
        \includegraphics[width=0.7\linewidth]{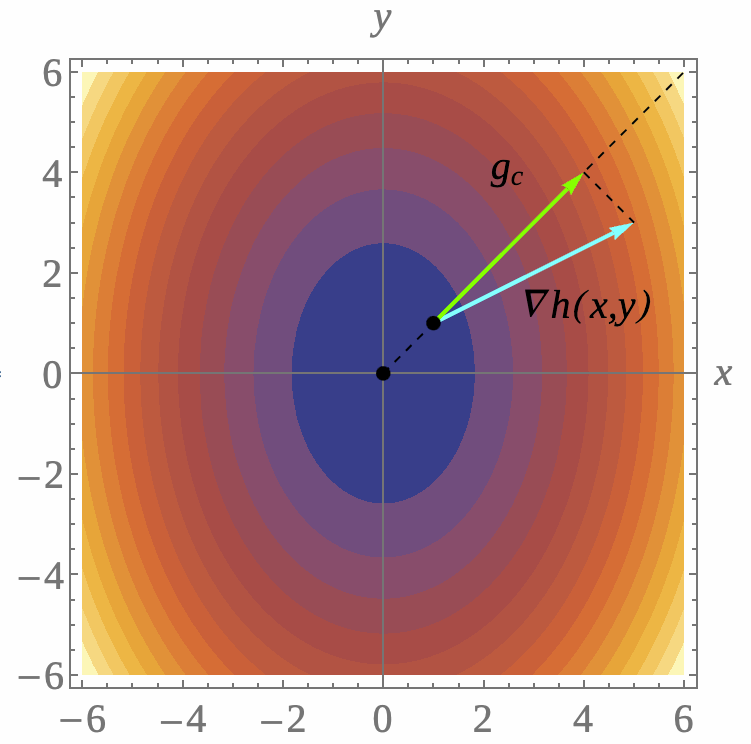}
            \caption{$h_1(x,y)=2x^2+y^2$}
            \label{fig:Cauchy-direction-2d}
  \end{subfigure}%
  \begin{subfigure}{0.42\linewidth}
          \centering
          \resizebox{0.8\columnwidth}{!}{
\begin{tikzpicture}
  \begin{axis}[
      domain=-4.2:5.2,
      xlabel=$x$,
      ylabel=$y$,
      smooth,thick,
      axis lines=center,
      every tick/.style={thick},
      legend style={cells={anchor=west}},
      legend pos=north west]

    \addplot[color=black]{x*(x-1)-3};
    \draw[->,color=red] (2,-1) -- (2+9/10^0.5,-1+27/10^0.5);
    \node[color=red] at (2,7) {Gradient};
    \addplot[color=black, dashed]{3*0.5*x-1-3};
    \addplot[color=black, dashed]{3*0.5*x-1-3};
    \draw[->,color=blue] (2,-1) -- (5.25,1.5*5.25-4);
    \node[color=blue] at (3,-5) {Cauchy direction};
    \draw[fill=black] (0.5,-1/4-3) circle (2pt);
    \draw[fill=black] (2,-1) circle (2pt);
  \end{axis}
\end{tikzpicture}}
    \caption{$y=h_2(x)=x^2-x-3$}
    \label{fig:Cauchy-direction-1d}
  \end{subfigure}
  \caption{ Illustration of Cauchy directions. (a) Case 1: $h_1(x,y)=2x^2+y^2$.
            The blue and green~arrows denote $\nabla h_1(x,y)=[4x,2y]^\tr$ and the Cauchy direction $g_\mathrm{c}$ in \cref{eq:Cauchy-direction-differentiable-h}
            at $(1,1)$, respectively;
            (b) Case 2: $h_2(x)=x^2-x-3$. The red arrow is
    $[\nabla h(2),-1]\begin{bmatrix}
        x-2\\
        y+1
    \end{bmatrix}=0$ with $\nabla h_2(2)=3$, and blue arrow is
    $[g_\mathrm{c},-1]\begin{bmatrix}
        x-2\\
        y+1
    \end{bmatrix}=0$.
            }
    \label{fig:Cauchy-direction}
\end{figure}

A Cauchy direction generalizes the gradient for smooth functions and subgradient for convex functions. Indeed, if $h$ is continuously differentiable at $x$, we have 
$
\lim_{\tau \downarrow 0}
    \frac{h(x-\tau g_\mathrm{c}) - h(x)}{\tau} = -\langle \nabla h(x), g_\mathrm{c} \rangle.
$
Then, the gradient $g_\mathrm{c} = \nabla h(x)$ is a Cauchy direction. If $h$ is convex, we can use its subgradient $g \in \mathbb{R}^{d}$ as a Cauchy direction since 
$
 \lim_{\tau \downarrow 0}
    \frac{h(x-\tau g) - h(x)}{\tau} \leq \lim_{\tau \downarrow 0}
    \frac{\langle g, - \tau g\rangle}{\tau} = -\langle g,g \rangle.
$
Moreover, for convex functions, we can always construct a Cauchy direction that bounds the suboptimality. 
We have the following result, adapted from \cite[Lemma H.1]{umenberger2022globally}.

\begin{lemma}\label{proposition:Umenberger-lemma_H.1}
Let $h: \mathbb{R}^{d} \to \mathbb{R}\cup\{\infty\}$ be a proper convex function. Suppose $x^\star\in\argmin_x \,h(x)$ is attained.
Then, $\forall x\in\mathrm{dom}(h)$ and $x \neq x^\star$, there exists a Cauchy direction $g_\mathrm{c}$ at $x$ which satisfies   
\begin{subequations}
  \begin{equation} \label{eq:Cauchy-direction}
        \|g_{\mathrm{c}}\| \geq \frac{h(x)-h(x^\star)}{\|x - x^\star\|}.
    \end{equation}
Moreover, we can choose this Cauchy direction as 
    \begin{equation}\label{eq:Cauchy-direction-construction}
        g_{\mathrm{c}} = \frac{\eta}{\|x-x^\star\|^2} \times (x-x^\star)
    \end{equation}
    with $\eta = 1$ if $\alpha := \lim_{t\downarrow 0}
            {(h\left(x-t(x-x^\star)\right) - h(x))}/{t} = - \infty$, otherwise $\eta = |\alpha|$. 
\end{subequations}
\end{lemma}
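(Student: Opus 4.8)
The plan is to reduce everything to the one–dimensional behavior of $h$ along the segment joining $x$ and the minimizer $x^\star$. Writing $v := x - x^\star$, I would first record the standard convexity fact that the difference quotient $t \mapsto \tfrac{h(x - tv) - h(x)}{t}$ is nondecreasing on $t > 0$; consequently the one–sided derivative $\alpha = \lim_{t\downarrow 0}\tfrac{h(x-tv)-h(x)}{t}$ exists in $[-\infty,+\infty)$ and is bounded above by its value at $t=1$. Since $x - v = x^\star$, that value is $h(x^\star)-h(x)$, which gives $\alpha \le -(h(x)-h(x^\star)) \le 0$ and hence $|\alpha| \ge h(x)-h(x^\star)$. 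This single inequality is what ultimately delivers the norm bound \eqref{eq:Cauchy-direction}.

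Next I would take the candidate \eqref{eq:Cauchy-direction-construction}, $g_\mathrm{c} = \tfrac{\eta}{\|v\|^2}v$, which is collinear with $v$ so that $-g_\mathrm{c}$ points from $x$ toward $x^\star$. Feasibility along the ray is immediate: for $\tau \in [0,\|v\|^2/\eta]$ the point $x - \tau g_\mathrm{c}$ is a convex combination of $x,x^\star \in \mathrm{dom}(h)$ and hence lies in $\mathrm{dom}(h)$, so the domain requirement in the definition of a Cauchy direction holds with $\tau_0 = \|v\|^2/\eta$. The substitution $t = \tau\eta/\|v\|^2$ then converts the defining limit into the identity $\lim_{\tau\downarrow 0}\tfrac{h(x-\tau g_\mathrm{c})-h(x)}{\tau} = \tfrac{\eta}{\|v\|^2}\,\alpha$, which is the workhorse of the argument.

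For the case of finite $\alpha$ I would set $\eta = |\alpha| = -\alpha$. The identity above then yields limit $= -\alpha^2/\|v\|^2 = -\|g_\mathrm{c}\|^2$, so the Cauchy inequality holds (in fact with equality), while $\|g_\mathrm{c}\| = |\alpha|/\|v\| \ge (h(x)-h(x^\star))/\|v\|$ is exactly \eqref{eq:Cauchy-direction}. For the case $\alpha = -\infty$, the same identity forces the limit to $-\infty$, which is trivially $\le -\|g_\mathrm{c}\|^2$ for \emph{every} $\eta > 0$; thus $g_\mathrm{c}$ is a valid Cauchy direction for all such $\eta$, and $\eta = 1$ in particular exhibits one, with the freedom to enlarge $\eta$ available to secure the norm bound.

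I expect the only genuinely delicate point to be the case $\alpha = -\infty$, which is precisely where $x$ lies on the boundary of $\mathrm{dom}(h)$ and $h$ has infinite inward slope. There I must confirm that the monotone–quotient argument still licenses passing to the limit and that the collinear $g_\mathrm{c}$ remains feasible, and I must be careful that the explicit constant $\eta=1$ produces a bona fide Cauchy direction (it does, because the descent slope is $-\infty$) while the quantitative bound \eqref{eq:Cauchy-direction} is obtained by scaling $\eta$ upward whenever $h(x)-h(x^\star)$ exceeds the value realized at $\eta=1$. The finite case is essentially bookkeeping once monotonicity of difference quotients is in hand; the infinite–slope boundary case is where the care is needed.
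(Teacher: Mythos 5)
Your proof is correct. There is nothing in the paper to compare it against: the lemma is stated as adapted from \cite[Lemma H.1]{umenberger2022globally} and is invoked without an in-paper proof, so your argument supplies the missing self-contained justification, and it follows the standard route one would expect — monotonicity of the difference quotient $t \mapsto \bigl(h(x-tv)-h(x)\bigr)/t$ along $v = x-x^\star$, the bound $\alpha \leq h(x^\star)-h(x) \leq 0$ from evaluating that quotient at $t=1$, convexity of $\mathrm{dom}(h)$ for feasibility of the ray, and the rescaling identity $\lim_{\tau\downarrow 0}\bigl(h(x-\tau g_\mathrm{c})-h(x)\bigr)/\tau = \eta\alpha/\|v\|^2$, which with $\eta=|\alpha|$ gives the Cauchy inequality with equality and \eqref{eq:Cauchy-direction} via $|\alpha|\geq h(x)-h(x^\star)$. (The only degenerate case you leave implicit, $\alpha=0$, forces $h(x)=h(x^\star)$, so $g_\mathrm{c}=0$ satisfies everything trivially.)

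The one substantive point is the case $\alpha=-\infty$, and you handle it the right way: for any $\eta>0$ the constructed $g_\mathrm{c}$ is a Cauchy direction since the limit is $-\infty$, but \eqref{eq:Cauchy-direction} for this $g_\mathrm{c}$ is exactly the requirement $\eta \geq h(x)-h(x^\star)$, so the specific choice $\eta=1$ prescribed in the lemma need not satisfy the norm bound (take $h(y)=-c\sqrt{y}$ near a boundary point $x=0$ of the domain, with $c$ large enough that the optimality gap exceeds $1$). Your fix of enlarging $\eta$ to at least $h(x)-h(x^\star)$ — still a Cauchy direction because the limit remains $-\infty$ — is exactly what the existence claim needs, and is how the ``moreover'' clause must be read for the lemma to be internally consistent; this is an imprecision in the statement itself, not a gap in your proof. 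It is also harmless downstream: where the paper invokes the construction (the gradient dominance argument), $f_\mathrm{cvx}$ is differentiable at the points where the Cauchy direction is formed, so $\alpha$ is finite and the clean case $\eta=|\alpha|$ applies.
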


We note that if $h$ is differentiable at $x \in \mathrm{dom}(h)$, the Cauchy direction in \cref{eq:Cauchy-direction-construction} can be chosen as
\begin{equation}\label{eq:Cauchy-direction-differentiable-h}
    g_\mathrm{c}
    =  
        \left \langle
        \nabla h(x),\frac{x-x^\star}{\|x-x^\star\|}
        \right\rangle \times 
        \frac{x-x^\star}{\|x-x^\star\|}.
\end{equation}
If there exists a finite subgradient $g$ at $x$, then we can also choose this subgradient, which naturally satisfies \cref{eq:Cauchy-direction} since  
$
 h(x) - h(x^\star)\leq \langle g,  x- x^\star \rangle \leq \|g\| \|x - x^\star\|. 
$
The construction in \cref{eq:Cauchy-direction-construction} goes beyond subgradient, and is always finite.
 Note also that the Cauchy directions in \cref{eq:Cauchy-direction-construction} can be defined with any $x^\star\in\arg\min_{x\in\operatorname{dom}(h)}h(x)$;
see \cite[Appendix H]{umenberger2022globally} for further detail. \Cref{fig:Cauchy-direction} illustrates the construction of the Cauchy direction in \cref{eq:Cauchy-direction-construction}.

\vspace{6pt}

\noindent \textbf{Proof of \Cref{theorem:gradient-dominance-ECL}.}
\label{subsubsection:proof-GD-ECL} 
Our proof idea is to employ Cauchy directions to connect the original nonconvex function $f$ and its lifted convex parameterization $f_\mathrm{cvx}$. It turns out that Cauchy directions can be preserved in the \texttt{ECL} procedure. 
In particular,  the quadratic growth in $f_\cvx$ is inherited to the original nonconvex function $f$ in the form of gradient dominance.

Given any point $\zeta \in \mathcal{X}_\nu$,
let $\hat{\zeta}^\star$ denote 
\begin{equation}\label{eq:zeta-hat}
    \hat{\zeta}^\star
    = {\arg\min}_{\hat \zeta \in \mathcal{S}} \ \|\zeta -\hat\zeta\|
\end{equation}
with $\mathcal{S} :=  \arg\min_{\zeta \in \pi_\zeta(\mathcal{F}_\cvx)} \ f_\cvx(\zeta)$.
Note that 
the set $\mathcal{S}$ is closed and convex
from \cite[Theorems 4.6 and 7.6]{rockafellar1970convex}
because $f_\cvx$ is proper, closed, and convex from the definition and $f^*>-\infty$.
Then, the construction of the Cauchy direction $g_{\mathrm{c}}$ in \cref{eq:Cauchy-direction-construction} satisfies 
\begin{equation}
    \|g_{\mathrm{c}}\|\|\zeta - \hat\zeta^\star\| \geq f_\mathrm{cvx}(\zeta) - f_\mathrm{cvx}(\hat{\zeta}^\star)
    =f_\mathrm{cvx}(\zeta) - f_\mathrm{cvx}(\zeta^\star)
    \geq \frac{\mu_\mathrm{qg}}{2}\|\zeta - \hat\zeta^\star\|^2,
\end{equation}
where 
$\zeta^\star$ represents any optimal solution in $\mathcal{S}$.
Here,
the first inequality is due to \cref{eq:Cauchy-direction} and the second inequality is due to the assumption of quadratic growth. Thus, this Cauchy direction satisfies 
\begin{equation} \label{eq:strong-convexity-PL}
     \|\zeta- \hat\zeta^\star\| \leq \frac{2}{\mu_\mathrm{qg}} \|g_{\mathrm{c}}\|  \qquad \Rightarrow \qquad f_\mathrm{cvx}(\zeta) - f_\mathrm{cvx}(\zeta^\star)\leq \frac{2}{\mu_\mathrm{qg}} \|g_{\mathrm{c}}\|^2.
\end{equation}
This inequality also confirms that any strong convex function satisfies gradient dominance.

Our next key result connects the Cauchy direction $g_{\mathrm{c}}$ in \cref{eq:Cauchy-direction-construction} with the gradient  $\nabla f$.
\begin{lemma}\label{lemma:gradient-Cauchy_direction}
With the same setup in \cref{theorem:gradient-dominance-ECL},
the following statements hold.  
\begin{enumerate}
\item The Cauchy direction $g_{\mathrm{c}}$ in \cref{eq:Cauchy-direction-construction} of $f_\mathrm{cvx}$ at $\zeta$
satisfies
\begin{equation} \label{eq:gradient-Cauchy-direction}
    2c_\Phi \|g_{\mathrm{c}}\| \leq \|\nabla f(x)\|, 
\end{equation}
with some $c_\Phi>0$.  
\item If the diffeomorphism $\Phi$ is further of the form
$\Phi(x,\gamma,\xi) = (\gamma,\Psi(x,\xi))$ (i.e., the second output of $\Phi$ does not depend on $\gamma$),
    the constant $c_\Phi$ can be chosen as \cref{eq:C_Phi}.  
\end{enumerate}\end{lemma}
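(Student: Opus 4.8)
The plan is to transport the Cauchy direction $g_\mathrm{c}$ of $f_\mathrm{cvx}$ at $\zeta$ back through the diffeomorphism $\Phi^{-1}$ into a descent curve for $f$ in the original coordinates, and then extract \cref{eq:gradient-Cauchy-direction} from a first-order (right-derivative) comparison. First I would fix $\zeta\in\mathcal{X}_\nu$ with $\zeta\neq\hat\zeta^\star$ (the optimal case is trivial, since then the corresponding $x$ is a minimizer of $f$ and $\nabla f(x)=0$). I then recall the correspondence induced by the $\ECL$: since $\Phi$ preserves the $\gamma$-coordinate and $\mathcal{F}_\cvx$ is the closed epigraph of $f_\mathrm{cvx}$ (by \cref{assumption:convex-set}), the lower-boundary point $(f_\mathrm{cvx}(\zeta),\zeta)$ pulls back under $\Phi^{-1}$ to a point $(x,\gamma,\xi)\in\mathcal{L}_\mathrm{lft}$ with $\gamma=f_\mathrm{cvx}(\zeta)$ and $f(x)=\gamma$ (lower boundary maps to lower boundary because $\gamma$ is preserved). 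In the special form of part~2 this is immediate, since $f_\mathrm{cvx}(\zeta)=f(\Psi^{-1}_x(\zeta))$ and $x=\Psi^{-1}_x(\zeta)$ is exactly the point in \cref{eq:gradient-Cauchy-direction}.

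Next I would build the curve. Set $\eta(\tau)=\zeta-\tau g_\mathrm{c}$ and $\gamma(\tau)=f_\mathrm{cvx}(\eta(\tau))$; by the definition of a Cauchy direction, $\eta(\tau)\in\dom(f_\mathrm{cvx})$ and $(\gamma(\tau),\eta(\tau))\in\mathcal{F}_\cvx$ for $\tau\in[0,\tau_0]$. Pulling back through the $C^2$ map $\Phi^{-1}$ produces a curve $(x(\tau),\gamma(\tau),\xi(\tau))=\Phi^{-1}(\gamma(\tau),\eta(\tau))\in\mathcal{L}_\mathrm{lft}$, so $(x(\tau),\gamma(\tau))\in\operatorname{epi}_{\geq}(f)$, i.e.\ $f(x(\tau))\le\gamma(\tau)$ for small $\tau\ge 0$, with equality at $\tau=0$. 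Hence $\psi(\tau):=\gamma(\tau)-f(x(\tau))$ is nonnegative on $[0,\tau_0]$ and vanishes at its left endpoint, so $\psi'(0^+)\ge 0$, which rearranges to $\langle\nabla f(x),x'(0^+)\rangle\le\gamma'(0^+)$ (the right derivatives exist because $\gamma(\tau)$ is convex in $\tau$ and $\Phi^{-1}$ is $C^2$). Combining with the Cauchy-direction bound $\gamma'(0^+)\le-\|g_\mathrm{c}\|^2$ and Cauchy--Schwarz gives
\begin{equation*}
\|g_\mathrm{c}\|^2\le -\langle\nabla f(x),x'(0^+)\rangle\le\|\nabla f(x)\|\,\|x'(0^+)\|.
\end{equation*}

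It then remains to bound $\|x'(0^+)\|$ by a multiple of $\|g_\mathrm{c}\|$. By the chain rule, $x'(0^+)=D_\zeta\Phi^{-1}_x(\gamma,\zeta)(-g_\mathrm{c})+D_\gamma\Phi^{-1}_x(\gamma,\zeta)\,\gamma'(0^+)$, where $\Phi^{-1}_x$ is the $x$-block of $\Phi^{-1}$. In the special form $\Phi(x,\gamma,\xi)=(\gamma,\Psi(x,\xi))$, the block $\Phi^{-1}_x$ depends only on $\zeta$, so the second term vanishes and $\|x'(0^+)\|\le\|\nabla\Psi^{-1}(\zeta)\|_2\|g_\mathrm{c}\|$, using that the operator norm of the $x$-subblock is at most that of the full Jacobian $\nabla\Psi^{-1}$. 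Substituting and cancelling one factor of $\|g_\mathrm{c}\|$ yields $\|g_\mathrm{c}\|\le\|\nabla\Psi^{-1}(\zeta)\|_2\|\nabla f(x)\|$; choosing $c_\Phi$ as in \cref{eq:C_Phi} and using that $\mathcal{X}_\nu$ is compact and $\Psi^{-1}$ is a $C^2$ diffeomorphism (so $\|\nabla\Psi^{-1}\|_2^{-1}$ is continuous and strictly positive, and its minimum is attained) gives exactly $2c_\Phi\|g_\mathrm{c}\|\le\|\nabla f(x)\|$, proving part~2.

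For the general $\Phi$ in part~1 the same chain of inequalities applies, but now $x'(0^+)$ keeps the term in $\gamma'(0^+)$. I would control it by noting that a finite convex $f_\mathrm{cvx}$ is locally Lipschitz, so $|\gamma'(0^+)|=|f_\mathrm{cvx}'(\zeta;-g_\mathrm{c})|\le L_\nu\|g_\mathrm{c}\|$ for a uniform $L_\nu$ over the compact $\mathcal{X}_\nu$, and that $\|D\Phi^{-1}\|_2$ is bounded on the corresponding compact image; this yields $\|x'(0^+)\|\le C\|g_\mathrm{c}\|$ and hence some $c_\Phi:=1/(2C)>0$. The \emph{main obstacle} is precisely this last step in the general case: obtaining a uniform directional-derivative (Lipschitz) bound on $f_\mathrm{cvx}$ along Cauchy directions, which requires the compactness in \cref{assumption:convex-set} and care at points where $\zeta$ may lie on the boundary of $\dom(f_\mathrm{cvx})$. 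The special structure in part~2 sidesteps this entirely, because decoupling the $\gamma$-direction removes the need to bound $\gamma'(0^+)$ at all.
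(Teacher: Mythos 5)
Your argument for the second statement is correct and is essentially the paper's argument: the paper also pulls a descent ray $\zeta-\tau g_\mathrm{c}$ back through the diffeomorphism to a curve in $\mathcal{L}_\mathrm{lft}$ (its \cref{proposition:curve}), compares one-sided derivatives of the resulting epigraph inequality $f(\phi_1(\tau))\le\gamma(\tau)$, and bounds the $x$-velocity by $\|\nabla\Psi^{-1}(\zeta)\|_2\|g_\mathrm{c}\|$ to land on \cref{eq:C_Phi}. When $\Phi(x,\gamma,\xi)=(\gamma,\Psi(x,\xi))$, your version is even slightly cleaner: since the $x$-block of $\Phi^{-1}$ does not see $\gamma$, you may follow the exact graph $\gamma(\tau)=f_\cvx(\zeta-\tau g_\mathrm{c})$, and the constraint $\psi(\tau)=\gamma(\tau)-f(x(\tau))\ge 0$, $\psi(0)=0$ automatically forces the one-sided derivative $\gamma'(0^+)$ to be finite, so no regularization is needed. (Your identification $f(x)=f_\cvx(\zeta)$ at $\tau=0$ is the same fact the paper records as \cref{eq:J_LQR-f_cvx-equality}, so you are on equal footing there.)

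For the first statement (general $\Phi$), however, there is a genuine gap, and it sits exactly where you flagged "the main obstacle." Your fix invokes local Lipschitz continuity of the finite convex function $f_\cvx$ to get a uniform bound $|\gamma'(0^+)|\le L_\nu\|g_\mathrm{c}\|$ over $\mathcal{X}_\nu$. But a convex function is locally Lipschitz only on the (relative) interior of its domain; points $\zeta\in\mathcal{X}_\nu$ may lie on the relative boundary of $\dom(f_\cvx)=\pi_\zeta(\mathcal{F}_\cvx)$ (this is the typical situation in control reformulations, where the feasible set is an affine slice of a cone), and there the directional derivative $f_\cvx'(\zeta;-g_\mathrm{c})$ can equal $-\infty$; even at interior points the Lipschitz modulus blows up near the boundary, so compactness of $\mathcal{X}_\nu$ alone does not produce a uniform $L_\nu$. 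Since in the general case $D_\gamma\Phi^{-1}_x\neq 0$, this unbounded term enters $x'(0^+)$ through your chain rule and the argument collapses. The paper's proof avoids this with one additional idea you are missing: instead of tracking the graph $\tau\mapsto f_\cvx(\zeta-\tau g_\mathrm{c})$, it tracks the \emph{linear strict overestimate} $\tau\mapsto f_\cvx(\zeta)+\tau\varepsilon-\tau\|g_\mathrm{c}\|^2$, which for any $\varepsilon>0$ dominates $f_\cvx(\zeta-\tau g_\mathrm{c})$ for small $\tau>0$ by the Cauchy-direction inequality (\cref{eq:upperbound-f_cvx}) and stays inside $\mathcal{F}_\cvx$ by the epigraph property in \cref{assumption:convex-set}. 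The pulled-back curve is then one-sidedly differentiable with the finite direction $(\varepsilon-\|g_\mathrm{c}\|^2,-g_\mathrm{c})$ regardless of any steepness of $f_\cvx$, and the resulting extra error $\varepsilon'$ in \cref{eq:phi_1-upperbound} is sent to zero at the end. Incorporating this $\varepsilon$-regularized curve is what your proposal needs to make part~1 go through.
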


The proof of \cref{lemma:gradient-Cauchy_direction} utilizes the diffeomorphism in the \texttt{ECL} construction. The details are technically involved, and we postpone them to \cref{appendix:proof-lemma:gradient-Cauchy_direction}.
It is now clear that \Cref{theorem:gradient-dominance-ECL} is a direct consequence of 
the $\mu_\mathrm{qg}$-quadratic growth of $f_\cvx$ and
\Cref{lemma:gradient-Cauchy_direction}:  
\begin{align*}
    f(x)-f^* \leq 
    f_\mathrm{cvx}(\zeta )-f_\mathrm{cvx}(\zeta^\star)
     \overset{\cref{eq:strong-convexity-PL}}{\leq} \frac{2}{\mu_\mathrm{qg}}\|g_\mathrm{c}\|^2
     \overset{\cref{eq:gradient-Cauchy-direction}}{\leq}  \frac{1}{2\mu_\mathrm{qg}c_\Phi^2}\|\nabla f(x)\|^2,
\end{align*}
where the first inequality follows from $f(x)\leq f_\mathrm{cvx}(\zeta)$ and $f^*=f_\mathrm{cvx}(\zeta^\star)$. We finally remark that \cref{lemma:gradient-Cauchy_direction} itself does not require the quadratic growth of $f_\cvx$.

\subsection{Proof of \cref{theorem:J_LQR-gradient_dominance-general}
via $\ECL$ constructions}\label{subsubsection:LQR-PL-proof}

Here, we show that
the gradient dominance of the LQR cost $J_\LQR$ in
\cref{theorem:J_LQR-gradient_dominance-general} 
as a special case of
\cref{theorem:gradient-dominance-ECL}.
 We emphasize that
 under \cref{assumption:controllability,assumption:compactness}, the optimal LQR gain $K^\star$ is unique (see \cref{proposition:uniqueness-X>0}). 
It is well-known that the change of variable $KX = Y$ allows us to construct an $\ECL$ \cite[Section 4.2.1]{zheng2024benign}. Here, we provide another $\ECL$ that is more convenient for analysis.

\begin{subequations} \label{eq:ECL-LQR-constructions}
From  \cref{eq:LQR-Lyapunov-H2}, it is natural to 
introduce the following lifted set 
\begin{equation} \label{eq:ECL-LQR-constructions-a}
    \mathcal{L}_{\lft}=\left\{(K, \gamma, X) \middle|
    \substack{\displaystyle X \succ 0,(A+B K) X+X(A+B K)^\tr+W=0, \\
    \displaystyle \gamma \geq \operatorname{tr}\left(\left(Q+K^\tr R K\right) X\right)}\right\},
\end{equation}
where $X$ is a lifting Lyapunov variable, and the correspondence 
to the original definition of $\ECL$ is
\begin{equation*}
    x \equiv K,\quad \xi \equiv X,\quad
    \mathcal{D} \equiv\mathcal{K}.
\end{equation*}
This set $\mathcal{L}_{\lft}$ is nonconvex because of the coupling between $X$ and $K$. 
Using the change of variable $Y=(K-K^\star)X$ and setting $\zeta
= (Y,X)$,
we define a convex set 
\begin{equation} \label{eq:ECL-LQR-constructions-b}
    \mathcal{F}_{\cvx}=
    \left\{(\gamma, Y,X) \left\lvert\, \begin{array}{c}
X \succ 0, \,A^\star X+BY+(A^\star X+BY)^\tr +W=0 \\
\quad \text { and } \gamma \geq \operatorname{tr}\left(Q^\star X +
X^{-1}Y^\tr RY + (K^\star)^\tr 
RY + Y^\tr RK^\star 
\right)
\end{array}\right.\right\},
\end{equation}
where $A^\star := A+BK^\star$ and $Q^\star = Q+(K^\star)^\tr RK^\star $. This convex set is different from the standard result in \cite[Section 4.2.1]{zheng2024benign}, and we provide further details in \cref{appendix:ECL-construction-LQR}.
Finally, we define  
\begin{equation} \label{eq:LQR-map}
\Phi(K, \gamma, X)=
(\gamma, \Psi({K, X}))=
(\gamma, { (K-K^\star) X, X}),
\quad\forall(K, \gamma, X) \in \mathcal{L}_{\lft}. 
\end{equation}
\end{subequations}
{We can show it is a diffeomorphism between $\mathcal{F}_{\cvx}$ and $\mathcal{L}_{\lft}$}.
Then, we have the following result. 

\begin{proposition}\label{proposition:ECL-1}
   Suppose \Cref{assumption:controllability} and the first condition in \cref{assumption:compactness} hold. Then, 
    the tuple $(\mathcal{L}_{\lft},\mathcal{F}_{\cvx},\Phi)$ defined in \cref{eq:ECL-LQR-constructions} is an $\ECL$ for the LQR cost $J_\LQR$ in \cref{eq:LQR-H2}. 
\end{proposition}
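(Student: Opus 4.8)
The plan is to verify the three defining conditions of an $\ECL$ directly for the tuple $(\mathcal{L}_{\lft},\mathcal{F}_{\cvx},\Phi)$ in \cref{eq:ECL-LQR-constructions}, treating $K^\star$ as the unique optimal gain that exists under \cref{assumption:controllability} and the first condition in \cref{assumption:compactness} (see \cref{proposition:uniqueness-X>0}). The third $\ECL$ condition, namely that $\Phi$ places $\gamma$ in its first slot, is immediate from the definition \cref{eq:LQR-map}. So the real work is the projection identity $\operatorname{epi}_\geq(J_\LQR)=\pi_{K,\gamma}(\mathcal{L}_{\lft})$, together with showing that $\Phi$ is a $C^2$ diffeomorphism from $\mathcal{L}_{\lft}$ onto the convex set $\mathcal{F}_{\cvx}$.

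First I would establish the projection identity, where the first condition in \cref{assumption:compactness} (i.e., $X\succ0 \iff K\in\mathcal{K}$) is essential. For the forward inclusion, any $(K,\gamma,X)\in\mathcal{L}_{\lft}$ has $X\succ0$ solving the Lyapunov equation, so $K\in\mathcal{K}$ by the assumption; since $A+BK$ is then stable, this $X$ is the unique Lyapunov solution, and $J_\LQR(K)=\operatorname{tr}((Q+K^\tr RK)X)\leq\gamma$ by \cref{eq:LQR-Lyapunov-H2}, giving $(K,\gamma)\in\operatorname{epi}_\geq(J_\LQR)$. For the reverse inclusion, any $(K,\gamma)\in\operatorname{epi}_\geq(J_\LQR)$ has $K\in\mathcal{K}$, so the unique positive semidefinite Lyapunov solution $X$ is in fact positive definite by the assumption, whence $(K,\gamma,X)\in\mathcal{L}_{\lft}$ projects onto $(K,\gamma)$.

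Next I would check that $\Phi(K,\gamma,X)=(\gamma,(K-K^\star)X,X)$ is a bijection $\mathcal{L}_{\lft}\to\mathcal{F}_{\cvx}$ with inverse $\Phi^{-1}(\gamma,Y,X)=(K^\star+YX^{-1},\gamma,X)$. The core is the change of variable $Y=(K-K^\star)X$: with $A^\star=A+BK^\star$ one has $A^\star X+BY=(A+BK)X$, so the affine Lyapunov equality in $\mathcal{F}_{\cvx}$ coincides exactly with the nonconvex one in $\mathcal{L}_{\lft}$; and substituting $Y^\tr=X(K-K^\star)^\tr$ into the cost expression of $\mathcal{F}_{\cvx}$, a direct trace expansion shows the cross terms cancel and leaves $\operatorname{tr}(Q^\star X+X^{-1}Y^\tr RY+(K^\star)^\tr RY+Y^\tr RK^\star)=\operatorname{tr}((Q+K^\tr RK)X)$, so the two cost inequalities match. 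Hence $\Phi$ carries $\mathcal{L}_{\lft}$ onto $\mathcal{F}_{\cvx}$ bijectively. Since $X\succ0$ throughout, both $\Phi$ (polynomial) and $\Phi^{-1}$ (involving the $C^\infty$ map $X\mapsto X^{-1}$ on the open positive definite cone) are restrictions of globally smooth maps, so the diffeomorphism property follows.

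Finally I would prove $\mathcal{F}_{\cvx}$ is convex. The constraint $X\succ0$ is convex and the Lyapunov equality is affine in $(Y,X)$, so only the $\gamma$-epigraph inequality needs attention; there the terms $\operatorname{tr}(Q^\star X)$, $\operatorname{tr}((K^\star)^\tr RY)$, and $\operatorname{tr}(Y^\tr RK^\star)$ are linear, while rewriting $\operatorname{tr}(X^{-1}Y^\tr RY)=\operatorname{tr}((R^{1/2}Y)X^{-1}(R^{1/2}Y)^\tr)$ exhibits it as a matrix-fractional function, which is jointly convex in $(Y,X)$ over $X\succ0$ (a sum of quadratic-over-linear terms). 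The right-hand side is thus a convex function of $(Y,X)$ and its epigraph is convex, so $\mathcal{F}_{\cvx}$ is convex. I expect the main technical obstacle to be twofold: the exact trace bookkeeping that makes the change of variables cost-preserving, and invoking joint convexity of the matrix-fractional term; the projection identity and smoothness are then routine consequences of the first condition in \cref{assumption:compactness} and the invertibility of $X$.
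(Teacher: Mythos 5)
Your proposal is correct and takes essentially the same approach as the paper: the same change of variables $Y=(K-K^\star)X$, the same trace bookkeeping showing that the affine Lyapunov constraint and the cost inequality in $\mathcal{F}_{\cvx}$ coincide with those in $\mathcal{L}_{\lft}$, and the diffeomorphism property from the invertibility of $X \succ 0$. The only difference is that the paper outsources the routine ECL axiom checks to \cite[Proposition 4.1]{zheng2024benign}, whereas you verify them directly (the projection identity via the first condition of \cref{assumption:compactness}, and convexity of $\mathcal{F}_{\cvx}$ via joint convexity of the matrix-fractional term), which simply fills in details the paper leaves to the citation.
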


The proof of \Cref{proposition:ECL-1} is straightforward from the invertibility of $X$ and \cite[Proposition 4.1]{zheng2024benign}.
We only remark that the first condition in \cref{assumption:compactness} is necessary since it guarantees 1) $X$ from \cref{eq:Lyapunov-H2} is positive definite and 2) $\operatorname{epi}_{\geq}(J_{\LQR}) = \pi_{x, \gamma}\left(\mathcal{L}_{\lft}\right)$. 
Note that this $\ECL$ is convenient for analysis but not for computation (as $K^\star$ only exists but is unknown). 
For this $\ECL$,  we always have a direct linear mapping from $Y$ to $X$ as $
    X  = - \mathcal{A}_\star^{-1}
    (BY+Y^\tr B^\tr +W ) $
with $\mathcal{A}_\star(\cdot)$ in \cref{theorem:J_LQR-gradient_dominance-general}
because $\mathcal{A}_\star(\cdot)$ is invertible from the stability of $A+BK^\star$.

Next, consider the convex function from this $\ECL$ as
\begin{equation}\label{eq:ECL-convex-function}
\begin{aligned}
    f_\mathrm{cvx}(Y,X)
    = &\min_{(\gamma,Y,X)\in\mathcal{F}_\cvx}\ \gamma \\
    = &
     \operatorname{tr}\left(Q^\star X +
X^{-1}Y^\tr RY + (K^\star)^\tr 
RY + Y^\tr RK^\star 
\right) 
\end{aligned}
\end{equation}
where its domain is $\mathrm{dom}(f_\mathrm{cvx})=\{ (Y,X) \in \mathbb{R}^{m \times n} \times \mathbb{S}^n \mid X \succ 0, \,A^\star X+BY+(A^\star X+BY)^\tr +W=0\}$. 
The sublevel set $\mathcal{X}_{\nu}$ in this case is 
$$
\mathcal{X}_{\nu} = \{(Y,X) \mid (Y,X) \in \mathrm{dom}(f_\mathrm{cvx}), \; f_\mathrm{cvx}(Y,X) \leq \nu \}. 
$$
The compactness of $\mathcal{X}_{\nu}$ 
follows from that of $\mathcal{K}_\nu$ in \cref{assumption:compactness}.
\begin{lemma} \label{lemma:compact-LQR-convex-set}
With \cref{assumption:controllability,assumption:compactness},
the set $\mathcal{X}_\nu
$ is compact for any $\nu\geq J_\LQR(K^\star)$.
\end{lemma}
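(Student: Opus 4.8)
The plan is to transfer the compactness of $\mathcal{K}_\nu$ (granted by \cref{assumption:compactness}) to $\mathcal{X}_\nu$ through the explicit change of variables underlying the \texttt{ECL} construction \cref{eq:ECL-LQR-constructions}. Concretely, for each $K \in \mathcal{K}$ let $X(K) \succ 0$ denote the unique positive definite solution of the Lyapunov equation \cref{eq:Lyapunov-H2}, whose existence, uniqueness, and positive definiteness are guaranteed by the first condition in \cref{assumption:compactness}. I define the map $\psi : \mathcal{K} \to \mathbb{R}^{m\times n}\times\mathbb{S}^n$ by $\psi(K) = ((K-K^\star)X(K),\, X(K))$, which is exactly the second component of $\Phi$ in \cref{eq:LQR-map} evaluated along the Lyapunov graph. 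The goal is to show that $\psi$ is a continuous bijection onto $\dom(f_\mathrm{cvx})$ that carries $\mathcal{K}_\nu$ precisely onto $\mathcal{X}_\nu$; compactness of $\mathcal{X}_\nu$ then follows because a continuous image of a compact set is compact.

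The first step is the level-set identity $f_\mathrm{cvx}(\psi(K)) = J_\LQR(K)$ for all $K \in \mathcal{K}$. Substituting $Y = (K-K^\star)X$ into the trace expression \cref{eq:ECL-convex-function} and expanding $K^\tr RK = (K^\star + (K-K^\star))^\tr R (K^\star + (K-K^\star))$ under the trace, using cyclicity to move the factor $X$, the four terms $Q^\star X$, $X^{-1}Y^\tr RY$, $(K^\star)^\tr RY$, and $Y^\tr RK^\star$ recombine into $\trace((Q + K^\tr RK)X) = J_\LQR(K)$; this is a short algebraic check I would carry out explicitly. The second step is continuity: $X(K)$ depends continuously (indeed smoothly) on $K$ over $\mathcal{K}$ since it solves the Lyapunov equation with Hurwitz $A + BK$, so $\psi$ is continuous, while on $\dom(f_\mathrm{cvx})$ the constraint $X \succ 0$ makes the inverse formula $K = K^\star + YX^{-1}$ well-defined and continuous.

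The third step, and the place where \cref{assumption:compactness} is truly needed, is to verify that $\psi$ is onto $\dom(f_\mathrm{cvx})$ and that the two sublevel sets correspond. Given any $(Y,X) \in \dom(f_\mathrm{cvx})$, set $K := K^\star + YX^{-1}$, so that $Y = (K-K^\star)X$. A direct substitution shows the affine constraint $A^\star X + BY + (A^\star X + BY)^\tr + W = 0$ becomes exactly the Lyapunov equation $(A+BK)X + X(A+BK)^\tr + W = 0$; since $X \succ 0$, the first condition in \cref{assumption:compactness} forces $K \in \mathcal{K}$, and by uniqueness $X = X(K)$, hence $\psi(K) = (Y,X)$. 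Combined with the identity of the first step, this yields $\psi(\mathcal{K}_\nu) = \mathcal{X}_\nu$, since $K \in \mathcal{K}_\nu \iff f_\mathrm{cvx}(\psi(K)) = J_\LQR(K) \leq \nu \iff \psi(K) \in \mathcal{X}_\nu$.

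I anticipate the main obstacle to be precisely this surjectivity argument, namely ruling out spurious points of $\dom(f_\mathrm{cvx})$ at which $X \succ 0$ solves the affine constraint yet $A+BK$ fails to be Hurwitz. This is exactly what \cref{assumption:compactness}(1) excludes; note that for merely $W \succeq 0$ a positive definite Lyapunov solution need not certify stability, so the assumption cannot be dropped here. With $\psi(\mathcal{K}_\nu) = \mathcal{X}_\nu$ established, the continuity of $\psi$ together with the compactness of $\mathcal{K}_\nu$ from \cref{assumption:compactness} finishes the proof.
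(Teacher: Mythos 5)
Your proof is correct and takes essentially the same route as the paper's: both realize $\mathcal{X}_\nu$ as the image of the compact set $\mathcal{K}_\nu$ under the continuous map $K \mapsto \left((K-K^\star)X(K),\, X(K)\right)$ coming from the \texttt{ECL} construction, and conclude by compactness of continuous images. The only difference is one of detail: you explicitly verify the level-set identity $f_{\cvx}(\psi(K)) = J_{\LQR}(K)$ and the surjectivity step (inverting via $K = K^\star + YX^{-1}$ and invoking the first condition of \cref{assumption:compactness} to certify $K \in \mathcal{K}$), which the paper compresses into the phrase ``$\mathcal{X}_\nu = \Psi(\mathcal{Y}_\nu)$ by the construction in \cref{eq:LQR-map}.''
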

\begin{proof}
Since $X$ from \cref{eq:Lyapunov-H2} continuously depends on $K\in\mathcal{K}$,
the set $\mathcal{Y}_\nu=\{(K,X)\in\pi_{K,X}(\mathcal{L}_\lft)|\mathrm{tr}(Q+K^\tr RK)X \leq \nu\}$ is compact due to \Cref{assumption:compactness}.
Thus, $\mathcal{X}_\nu$ is also compact since  $\mathcal{X}_\nu = \Psi(\mathcal{Y}_\nu)$ by the construction in \cref{eq:LQR-map}.
\end{proof}

We can show that
$f_\mathrm{cvx}$ satisfies quadratic growth over  $\mathcal{X}_\nu$. We provide the proof in \cref{appendix:subsubsec-proof-f_cvx-QG}.

\begin{lemma}\label{lemma:f_cvx-QG}
With \cref{assumption:controllability,assumption:compactness},
the function $f_\cvx$ satisfies $\mu_\mathrm{qg}$-quadratic growth over $\mathcal{X}_\nu$ 
with 
$\mu_\mathrm{qg}$
in \cref{eq:PL-constant}. In other words, \Cref{assumption:convex-set} holds for the \texttt{ECL} \cref{eq:ECL-LQR-constructions-a,eq:ECL-LQR-constructions-b,eq:LQR-map} of the LQR. 
\end{lemma}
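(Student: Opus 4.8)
The plan is to reduce the quadratic-growth inequality for $f_\cvx$ to a single explicit lower bound on its cost gap, and then to split that bound across the two coordinates $Y$ and $X$ so that the constant $\mu_\mathrm{qg}$ of \cref{eq:PL-constant} emerges from the $\min$ of the two pieces. First I would record that under \cref{assumption:controllability,assumption:compactness} the optimal gain $K^\star$ is unique (\cref{proposition:uniqueness-X>0}); through the diffeomorphism \cref{eq:LQR-map} this makes the convex function $f_\cvx$ in \cref{eq:ECL-convex-function} have a \emph{unique} minimizer $\zeta^\star=(0,X^\star)$, where $X^\star\succ0$ solves $\mathcal{A}_\star(X^\star)+W=0$. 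Consequently $\hat\zeta^\star=\zeta^\star$ in the quadratic-growth definition, and the distance to the minimizer set is simply $\|\zeta-\zeta^\star\|^2=\|Y\|_F^2+\|X-X^\star\|_F^2$.

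The crucial step is an exact formula for the cost gap. Substituting $Y=(K-K^\star)X$ recovers $f_\cvx(Y,X)=J_\LQR(K)$ with $K=K^\star+YX^{-1}$, and I would show that the affine-in-$Y$ contributions cancel, leaving
\[
f_\cvx(Y,X)-f_\cvx(0,X^\star)=\mathrm{tr}\!\left(X^{-1}Y^\tr RY\right).
\]
This is exactly the LQR cost-difference identity $J_\LQR(K)-J_\LQR(K^\star)=\mathrm{tr}\big((K-K^\star)^\tr R(K-K^\star)X\big)$ rewritten via $Y=(K-K^\star)X$. The cancellation follows from the optimality relation $RK^\star+B^\tr P^\star=0$ together with the dual Lyapunov identity $(A^\star)^\tr P^\star+P^\star A^\star+Q^\star=0$ satisfied by the ARE solution $P^\star$ of \cref{eq:Riccati-eq}: the former kills the linear term $2\,\mathrm{tr}(Y^\tr RK^\star)$, while the latter, which says the adjoint of $\mathcal{A}_\star^{-1}$ sends $Q^\star$ to $-P^\star$, shows $\mathrm{tr}\big(Q^\star(X-X^\star)\big)=2\,\mathrm{tr}(Y^\tr B^\tr P^\star)$, cancelling against the first term.

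With this identity the lower bound is routine. Over $\mathcal{X}_\nu$ one has $\lambda_\mathrm{max}(X)\le\overline\kappa$ — this is where \cref{lemma:compact-LQR-convex-set} and the correspondence $\mathcal{X}_\nu=\Psi(\mathcal{Y}_\nu)$ enter — so $X^{-1}\succeq\overline\kappa^{-1}I$, and with $R\succeq\lambda_\mathrm{min}(R)I$ I obtain $\mathrm{tr}(X^{-1}Y^\tr RY)\ge\tfrac{\lambda_\mathrm{min}(R)}{\overline\kappa}\|Y\|_F^2$. To bring in the $X$-coordinate I use the affine dependence $X-X^\star=-\mathcal{A}_\star^{-1}\mathcal{B}(Y)$, which gives $\|X-X^\star\|_F\le\|\mathcal{A}_\star^{-1}\circ\mathcal{B}\|_2\,\|Y\|_F$ and hence the companion bound $\mathrm{tr}(X^{-1}Y^\tr RY)\ge\tfrac{\lambda_\mathrm{min}(R)}{\overline\kappa\|\mathcal{A}_\star^{-1}\circ\mathcal{B}\|_2^2}\|X-X^\star\|_F^2$. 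Adding the two bounds and invoking $\mu_\mathrm{qg}=\min\{\cdot,\cdot\}$ from \cref{eq:PL-constant},
\[
f_\cvx(Y,X)-f_\cvx(\zeta^\star)\ge\frac{\mu_\mathrm{qg}}{2}\big(\|Y\|_F^2+\|X-X^\star\|_F^2\big)=\frac{\mu_\mathrm{qg}}{2}\|\zeta-\zeta^\star\|^2,
\]
which is precisely $\mu_\mathrm{qg}$-quadratic growth; combined with the compactness from \cref{lemma:compact-LQR-convex-set}, this verifies \cref{assumption:convex-set}.

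I expect the main obstacle to be the exact cost-gap identity: one must carefully verify that the two affine terms cancel, which hinges on identifying the adjoint of $\mathcal{A}_\star^{-1}$ evaluated at $Q^\star$ with $-P^\star$ and on $K^\star=-R^{-1}B^\tr P^\star$. The remaining ingredients — the eigenvalue estimate and the operator-norm bound — are elementary, the only subtlety being the uniform bound $\lambda_\mathrm{max}(X)\le\overline\kappa$ on $\mathcal{X}_\nu$, which I would justify through the image correspondence $\mathcal{X}_\nu=\Psi(\mathcal{Y}_\nu)$ and the definition of $\overline\kappa$ on $\mathcal{K}_\nu$.
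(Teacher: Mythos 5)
Your proposal is correct, and its overall skeleton---lower-bounding the cost gap by $\mathrm{tr}(X^{-1}Y^\tr RY)$, converting this into a $\|Y\|_F^2$ bound via $X^{-1}\succeq \overline{\kappa}^{-1}I$ over $\mathcal{X}_\nu$, transferring to the $X$-coordinate through $X-X^\star=-(\mathcal{A}_\star^{-1}\circ\mathcal{B})(Y)$, and taking the minimum of the two resulting constants---is exactly the paper's proof. The one place you genuinely diverge is the disposal of the affine terms $\mathrm{tr}\bigl(Q^\star(X-X^\star)\bigr) + 2\,\mathrm{tr}(Y^\tr RK^\star)$: the paper simply lower-bounds this sum by zero using the first-order optimality inequality $\langle \nabla f_\cvx(0,X^\star),\,(Y,\,X-X^\star)\rangle \geq 0$ of the convex function at its minimizer, whereas you prove it vanishes identically, using $RK^\star=-B^\tr P^\star$ together with the closed-loop Lyapunov identity $(A^\star)^\tr P^\star+P^\star A^\star+Q^\star=0$ (equivalently, the adjoint relation sending $Q^\star$ to $-P^\star$). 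Your computation checks out: $\mathrm{tr}\bigl(Q^\star(X-X^\star)\bigr) = \langle P^\star,\, BY+Y^\tr B^\tr\rangle = 2\,\mathrm{tr}(Y^\tr B^\tr P^\star)$, which cancels $2\,\mathrm{tr}(Y^\tr RK^\star) = -2\,\mathrm{tr}(Y^\tr B^\tr P^\star)$. This buys slightly more than the paper's argument: the exact identity $f_\cvx(Y,X)-f_\cvx(0,X^\star)=\mathrm{tr}(X^{-1}Y^\tr RY)$, i.e., the known cost-difference formula $J_\LQR(K)-J_\LQR(K^\star)=\mathrm{tr}\bigl((K-K^\star)^\tr R(K-K^\star)X\bigr)$ of \cite[Corollary 3.5.1]{bu2020policy}, which the paper itself invokes elsewhere. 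The paper's route is softer (one line, no ARE manipulation), yours is sharper but requires the extra adjoint verification; both yield the same constant $\mu_\mathrm{qg}$ in \cref{eq:PL-constant}, and your justification of the uniform bound $\lambda_\mathrm{max}(X)\leq\overline{\kappa}$ on $\mathcal{X}_\nu$ through the correspondence $\mathcal{X}_\nu=\Psi(\mathcal{Y}_\nu)$ matches the paper's use of \cref{lemma:compact-LQR-convex-set}.
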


With \cref{proposition:ECL-1,lemma:f_cvx-QG,lemma:compact-LQR-convex-set}, the gradient dominance in \cref{theorem:J_LQR-gradient_dominance-general} directly follows from the second statement of \cref{theorem:gradient-dominance-ECL}. The constant $c_\LQR$ in
\cref{theorem:J_LQR-gradient_dominance-general} corresponding to $c_\Psi$ in \cref{theorem:gradient-dominance-ECL}
requires some further discussion; we provide the details in \cref{appendix:derivation-c_LQR}.


\section{Global Optimality of Static Linear Policies}\label{section:global-optimality_static-linear-policies}

It is well known that the globally optimal solution to the LQR \cref{eq:LQR} is 
achieved by \textit{linear static~policies}. This classical result has been established through various approaches, such as variational analysis \cite[Chapter 3.4 and Theorem 6.1]{liberzon2011calculus}, and the completion-of-squares technique \cite[Theorem 14.2]{zhou1996robust}.
Here, we present alternative proof from a primal-dual perspective. 
This proof is an extension of that in \cref{theorem:strong-duality-1}.
While the primal-dual perspective is not new and it appears in several prior works~\cite{balakrishnan2003semidefinite,bamieh2024linear}, we provide extra details and additional insights.

\subsection{Gramian representation}
Let $\mathcal{L}_2^k[0,\infty)$ be the set of square-integrable (bounded energy) signals of dimension $k$, i.e.,
\begin{equation*}
    \mathcal{L}_2^m[0,\infty):=
    \left\{
    u:[0,+\infty)\to \mathbb{R}^{m}
    \middle|
    \|u\|_2:= \left(\int_0^\infty u(t)^\tr u(t) dt\right)^{1/2} <\infty
    \right\}.
\end{equation*}
We consider the following deterministic LQR formulation 
\begin{equation}\label{eq:LQR-w/o-state-fb}
    \begin{aligned}
J^* = \min_{u\in\mathcal{L}_2^m[0,\infty)}&\quad 
\int_0^\infty \left(x(t)^\tr Qx(t) + u(t)^\tr Ru(t)\right)dt   \\
\text{subject to}&\quad\dot{x}(t)=Ax(t)+Bu(t), \quad x(t)=x_0,
\end{aligned}
\end{equation}
where $R\succ 0$ and $Q\succeq 0$ are cost weight matrices.
We make the standard  \Cref{assumption:controllability}.

With a close examination, we see that the cost in \cref{eq:LQR-w/o-state-fb} only depends on the quadratic information of the trajectory $x(t)$ and $u(t)$. Similar to \Cref{section:strong-duality}, 
we consider a concept of \textit{Gramian} representation \cite{hotz1987covariance}.
For a signal $v\in \mathcal{L}_2^k[0,\infty)$, its Gramian representation $Z_v$ is defined by $Z_v = \int_{0}^\infty v(t)v(t)^\top dt \succeq 0$.
Now, 
we
define
\begin{align} \label{eq:setV}
     \mathcal{V}
    = \left\{
    Z\in \mathbb{S}_+^{n+m} \middle|
    \substack{
    \displaystyle Z =\int_0^\infty 
    \begin{bmatrix}x(t)\\u(t)\end{bmatrix}
    \begin{bmatrix}x(t)\\u(t)\end{bmatrix}^\tr dt
    \text{ for some $u \in \mathcal{L}_2^m[0,\infty)$}\\
    \displaystyle 
    \text{ to } \dot{x}(t)=Ax(t)+Bu(t) \text{ with $x(0)=x_0$ such that $x\in\mathcal{L}_2^n[0,\infty)
    $}
    }
    \right\}, 
\end{align}
which contains the full quadratic information of all the stable trajectories starting from $x(0)=x_0$.
Note that
\cite[Lemma 14.1]{zhou1996robust}
guarantees
$x\in\mathcal{L}_2^n[0,\infty) \Leftrightarrow
z\in\mathcal{L}_2^{n+m}[0,\infty)
$ for $u\in\mathcal{L}_2^m[0,\infty)$
under \cref{assumption:controllability}.
Thus, using this set,
the problem \eqref{eq:LQR-w/o-state-fb} can be equivalently rewritten as
\begin{equation}\label{eq:LQR-w/o-state-fb-gram}
    J^* = \min_{Z\in\mathcal{V}} \ \ 
    \langle \text{diag}(Q,R), Z\rangle.
\end{equation}

\subsection{Bounds of $J^*$ via inner and outer approximations}

Despite finitely dimensional, \cref{eq:LQR-w/o-state-fb-gram} is not immediately solvable in its current form due to the lack of an explicit characterization of $\mathcal{V}$. We here present inner and outer approximations for $\mathcal{V}$. As we will show in \Cref{subsection:global-optimality}, these two approximations do not sacrifice the optimality as they always contain the optimal solution in \cref{eq:LQR-w/o-state-fb-gram}. 

We first define an inner approximation of $\mathcal{V}$, which corresponds to the set of linear static policies

\begin{align} \label{eq:setV-inner}
    \mathcal{V}_\mathrm{static}
    = \left\{
    Z\in \mathbb{S}_+^{n+m} \middle|
    \substack{\displaystyle Z =\int_0^\infty 
    \begin{bmatrix}x(t)\\Kx(t)\end{bmatrix}
    \begin{bmatrix}x(t)\\Kx(t)\end{bmatrix}^\tr dt \\
    \displaystyle  \text{ for $\dot{x}(t)=(A+BK)x(t)$ with
    $x(0)=x_0$, } K \in \mathcal{K} }
    \right\}.
\end{align}
We recall that $\mathcal{K}$ represents the set of all the stabilizing gains, i.e.,
 $   \mathcal{K}:=
    \left\{
    K\in\mathbb{R}^{n\times m}
    \middle |
    A+BK \text{ is stable}
    \right\}$.
We also introduce an outer approximation of $\mathcal{V}$ below 
\begin{equation}  \label{eq:setV-outer}
\mathcal{V}_\mathrm{sdp}
    = \left\{
    Z = 
    \begin{bmatrix}Z_{11} & Z_{12}\\ Z_{12}^\tr & Z_{22}\end{bmatrix}\in \mathbb{S}_+^{n+m} \middle|
    AZ_{11}+BZ_{12}^\tr + Z_{11}A^\tr + Z_{12}B^\tr + x_0x_0^\tr = 0
    \right\}.
\end{equation}

The following lemma confirms that $\mathcal{V}_\mathrm{static}$ serves as an inner approximation of $\mathcal{V}$ while  $\mathcal{V}_\mathrm{sdp}$ provides an outer approximation of $\mathcal{V}$.

\begin{lemma}\label{lemma:V_static-V-V_sdp}
    Suppose $(A,B)$ is stabilizable. Consider $\mathcal{V}$ in \cref{eq:setV}, $\mathcal{V}_\mathrm{static}$ in \cref{eq:setV-inner}, and $\mathcal{V}_\mathrm{sdp}$ in \cref{eq:setV-outer}. We have $
    \mathcal{V}_\mathrm{static}\subset \mathcal{V}\subseteq \mathcal{V}_\mathrm{sdp}.
    $
\end{lemma}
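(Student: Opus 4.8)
The plan is to verify the two inclusions separately: the inner one $\mathcal{V}_\mathrm{static}\subseteq\mathcal{V}$ is essentially a matter of unpacking the definitions, while the outer one $\mathcal{V}\subseteq\mathcal{V}_\mathrm{sdp}$ reduces to an integration (Lyapunov) identity. First I would handle the inner inclusion. Given any $K\in\mathcal{K}$, stability of $A+BK$ makes the closed-loop trajectory $x(t)=e^{(A+BK)t}x_0$ decay exponentially, so $x\in\mathcal{L}_2^n[0,\infty)$ and $u=Kx\in\mathcal{L}_2^m[0,\infty)$; the associated Gramian is then a legitimate element of $\mathcal{V}$ realized by the admissible input $u=Kx$, giving $\mathcal{V}_\mathrm{static}\subseteq\mathcal{V}$. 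For the strictness, I would record that every $Z\in\mathcal{V}_\mathrm{static}$ has the rank-structured form
\begin{equation*}
Z=\begin{bmatrix} I\\ K\end{bmatrix} Z_{11} \begin{bmatrix} I\\ K\end{bmatrix}^\tr,
\end{equation*}
so that $Z_{12}=Z_{11}K^\tr$, $Z_{22}=KZ_{11}K^\tr$, and in particular $\mathrm{rank}(Z)=\mathrm{rank}(Z_{11})\le n$. It then suffices to produce one $Z\in\mathcal{V}$ with $\mathrm{rank}(Z)>n$: fixing any stabilizing gain and superposing a generic nonzero decaying open-loop input renders $x$ and $u$ linearly independent as $\mathcal{L}_2$ signals, so the Gramian has rank exceeding $n$ and lies in $\mathcal{V}\setminus\mathcal{V}_\mathrm{static}$.

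For the outer inclusion, fix $Z\in\mathcal{V}$ arising from some admissible pair $(x,u)$. Positive semidefiniteness is immediate, as $Z$ is an integral of rank-one positive semidefinite matrices. For the equality constraint, I would differentiate $x(t)x(t)^\tr$ along the dynamics,
\begin{equation*}
\tfrac{d}{dt}\big(x x^\tr\big)=(Ax+Bu)x^\tr+x(Ax+Bu)^\tr,
\end{equation*}
and integrate over $[0,\infty)$. Recognizing $Z_{11}=\int_0^\infty xx^\tr\,dt$ and $Z_{12}=\int_0^\infty xu^\tr\,dt$, the right-hand side integrates to $AZ_{11}+BZ_{12}^\tr+Z_{11}A^\tr+Z_{12}B^\tr$, while the left-hand side telescopes to $\lim_{t\to\infty}x(t)x(t)^\tr-x_0x_0^\tr$. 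Provided the boundary term at infinity vanishes, this yields exactly the defining equation $AZ_{11}+BZ_{12}^\tr+Z_{11}A^\tr+Z_{12}B^\tr+x_0x_0^\tr=0$ of $\mathcal{V}_\mathrm{sdp}$, completing $\mathcal{V}\subseteq\mathcal{V}_\mathrm{sdp}$.

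The hard part will be precisely establishing $\lim_{t\to\infty}x(t)=0$ to kill this boundary term, since membership in $\mathcal{V}$ only grants $x\in\mathcal{L}_2^n[0,\infty)$, which by itself does not force the state to converge. I would resolve this by observing that $\dot x=Ax+Bu$ is itself square-integrable (as $Ax,Bu\in\mathcal{L}_2$), so $\tfrac{d}{dt}\|x\|^2=2x^\tr\dot x\in\mathcal{L}_1[0,\infty)$ by the Cauchy--Schwarz inequality; hence $\|x(t)\|^2$ converges as $t\to\infty$, and because $\|x\|^2\in\mathcal{L}_1[0,\infty)$ this limit must be $0$. This vanishing boundary term is the only genuinely analytic step; the remainder is bookkeeping with the block structure of $Z$.
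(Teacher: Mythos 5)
Your two inclusions follow the same route as the paper's proof: the inner inclusion is definitional, and the outer one integrates $\frac{d}{dt}\bigl(x(t)x(t)^\tr\bigr)$ along the dynamics to produce the affine constraint defining $\mathcal{V}_\mathrm{sdp}$. Where you differ is that the paper simply invokes ``the fact that $x(\infty)=0$,'' while membership in $\mathcal{V}$ only grants $x\in\mathcal{L}_2^n[0,\infty)$, which by itself does not force the state to converge; your closing argument --- $\dot x=Ax+Bu\in\mathcal{L}_2$, hence $\frac{d}{dt}\|x\|^2=2x^\tr\dot x\in\mathcal{L}_1$ by Cauchy--Schwarz, hence $\|x(t)\|^2$ converges, and the limit must be $0$ because $\|x\|^2\in\mathcal{L}_1$ --- is exactly the justification needed to make that step rigorous. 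So on everything the paper's proof actually establishes, your proposal is correct and, if anything, more complete.

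The one genuine flaw is in your strictness argument for $\mathcal{V}_\mathrm{static}\subset\mathcal{V}$ (a point the paper does not prove either; its proof only yields the non-strict inclusion). A witness $Z\in\mathcal{V}$ with $\mathrm{rank}(Z)>n$ need not exist: take $A=-I_3$, $B=0\in\mathbb{R}^{3\times 1}$ (stabilizable, since $A$ is stable) and $x_0\neq 0$. Then $x(t)=e^{-t}x_0$ for every admissible input, so the $n+m=4$ scalar components of $(x,u)$ span a function space of dimension at most $2$, and every $Z\in\mathcal{V}$ has rank at most $2<n$; no ``generic'' choice of $u$ can make the components of $x$ linearly independent, because the input never enters the state. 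A certificate that always works is the following: every $Z\in\mathcal{V}_\mathrm{static}$ satisfies $Z=\bigl[\begin{smallmatrix}I\\K\end{smallmatrix}\bigr]Z_{11}\bigl[\begin{smallmatrix}I\\K\end{smallmatrix}\bigr]^\tr$ and hence $\mathrm{rank}(Z)=\mathrm{rank}(Z_{11})$, so it suffices to exhibit a trajectory whose Gramian satisfies $\mathrm{rank}(Z)>\mathrm{rank}(Z_{11})$, i.e., one in which $u$ is not almost everywhere a static linear function of $x$. In the example above, $u(t)=e^{-2t}$ gives $\mathrm{rank}(Z_{11})=1$ and $\mathrm{rank}(Z)=2$, so $Z\in\mathcal{V}\setminus\mathcal{V}_\mathrm{static}$; the same recipe (give $u$ a mode that the state does not contain) produces a witness in general, whereas the threshold $\mathrm{rank}(Z)>n$ is attainable only when the input and the excited state subspace are together rich enough.
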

\begin{proof}
    It is obvious that $\mathcal{V}_\mathrm{static}\subset \mathcal{V}$ since $\mathcal{V}_\mathrm{static}$ just corresponds to the case of stabilizing static state-feedback policies. For any ${Z}\in\mathcal{V}$, by definition, we have
    \begin{equation*}
        Z_{11}= \int_0^\infty x(t)x(t)^\tr dt,
        \quad
        Z_{12}= \int_0^\infty x(t)u(t)^\tr dt,\quad
        Z_{22}= \int_0^\infty u(t)u(t)^\tr dt.
    \end{equation*}
    From the dynamics in \cref{eq:LQR-w/o-state-fb}, we have
    \begin{equation*}
    AZ_{11}+BZ_{12}^\tr + Z_{11}A^\tr + Z_{12}B^\tr 
    = \int_{0}^\infty
    \frac{d}{dt}\left(x(t)x(t)^\tr\right)dt
    = x(\infty)x(\infty)^\tr - x_0x_0^\tr = 
    - x_0x_0^\tr,
    \end{equation*}
    where we have applied the fact that $x(\infty) = \lim_{t \to \infty} x(t) = 0$. This proves 
    $Z\in\mathcal{V}_\mathrm{sdp}$.
\end{proof}

While the proof of \Cref{lemma:V_static-V-V_sdp} is almost immediate, it gives us useful inner and outer approximations for \cref{eq:LQR-w/o-state-fb-gram}: replacing $\mathcal{V}$ with $\mathcal{V}_\mathrm{static}$ gives an inner approximation while replacing  $\mathcal{V}$ with $\mathcal{V}_\mathrm{sdp}$ gives an outer approximation. It is clear that replacing $\mathcal{V}$ with $\mathcal{V}_\mathrm{sdp}$ in \cref{eq:LQR-w/o-state-fb-gram} leads to a familiar SDP formulation. We also note that replacing $\mathcal{V}$ with $\mathcal{V}_\mathrm{static}$ in \cref{eq:LQR-w/o-state-fb-gram} becomes the same as the LQR problem \cref{eq:LQR-H2} if $W = x_0x_0^\tr$. We 
 also have the following proposition, and the proof is straightforward from the property of Lyapunov equations. 

\begin{proposition}
    The set $\mathcal{V}_\mathrm{static}$ in \cref{eq:setV-inner} can be parameterized as 
    \begin{equation*}
    \mathcal{V}_\mathrm{static}
    = \left\{
    Z=\begin{bmatrix}Z_{11} & Z_{12} \\ Z_{12}^\tr & Z_{22} \end{bmatrix}\in \mathbb{S}_+^{n+m} \middle|
    \substack{
        \displaystyle 
         Z_{12} = Z_{11}K^\tr, Z_{22} = K Z_{11}K^\tr,  K \in \mathcal{K}, \\
        \displaystyle 
        AZ_{11}+BZ_{12}^\tr + Z_{11}A^\tr + Z_{12}B^\tr + x_0x_0^\tr = 0}
    \right\}.
    \end{equation*}
Furthermore, the problem $\min_{Z\in\mathcal{V}_\mathrm{static}} \,
    \langle \mathrm{diag}(Q,R), Z\rangle$ is the same as  \cref{eq:LQR-H2} if $W = x_0x_0^\tr$. 
\end{proposition}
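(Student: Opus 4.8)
The plan is to establish the set identity by double inclusion and then reduce the optimization over $\mathcal{V}_\mathrm{static}$ to the policy-optimization form \cref{eq:LQR-H2}; throughout, stability of $A+BK$ for $K\in\mathcal{K}$ guarantees that every integral converges. For the forward inclusion I would take an arbitrary $Z\in\mathcal{V}_\mathrm{static}$ as in \cref{eq:setV-inner}, with $x(t)=e^{(A+BK)t}x_0$ and $u(t)=Kx(t)$, and set $Z_{11}=\int_0^\infty x(t)x(t)^\tr dt$. Since $K$ is constant, the remaining blocks factor immediately as $Z_{12}=\int_0^\infty x(t)x(t)^\tr K^\tr dt = Z_{11}K^\tr$ and $Z_{22}=KZ_{11}K^\tr$. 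The algebraic constraint then follows exactly as in the proof of \cref{lemma:V_static-V-V_sdp}: differentiating $x(t)x(t)^\tr$ along the closed loop and integrating yields $(A+BK)Z_{11}+Z_{11}(A+BK)^\tr+x_0x_0^\tr=0$, which after substituting $Z_{12}=Z_{11}K^\tr$ becomes $AZ_{11}+BZ_{12}^\tr+Z_{11}A^\tr+Z_{12}B^\tr+x_0x_0^\tr=0$. Hence $Z$ lies in the parameterized set.

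The reverse inclusion carries the only genuine subtlety. Given $Z$ in the parameterized set, i.e.\ $Z_{12}=Z_{11}K^\tr$ and $Z_{22}=KZ_{11}K^\tr$ for some $K\in\mathcal{K}$ obeying the algebraic constraint, I would rewrite that constraint back into closed-loop form $(A+BK)Z_{11}+Z_{11}(A+BK)^\tr+x_0x_0^\tr=0$. Because $A+BK$ is stable, this Lyapunov equation admits a \emph{unique} solution, which must therefore coincide with the convergent integral $\int_0^\infty e^{(A+BK)t}x_0x_0^\tr e^{(A+BK)^\tr t}dt=\int_0^\infty x(t)x(t)^\tr dt$ for $x(t)=e^{(A+BK)t}x_0$. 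Thus $Z_{11}$ is precisely the state Gramian of this trajectory, and $Z_{12}=Z_{11}K^\tr$, $Z_{22}=KZ_{11}K^\tr$ reconstruct the off-diagonal and lower-right blocks as $\int_0^\infty x u^\tr dt$ and $\int_0^\infty u u^\tr dt$ with $u=Kx$, so $Z$ admits the integral representation of \cref{eq:setV-inner} and lies in $\mathcal{V}_\mathrm{static}$. I expect this uniqueness step to be the main (though modest) obstacle: one must use stability of $A+BK$ both to guarantee a unique solution and to identify it with the Gramian integral, instead of treating $Z_{11}$ as a free matrix satisfying a linear equation.

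For the optimization equivalence, I would substitute the block structure into the cost, obtaining $\langle\mathrm{diag}(Q,R),Z\rangle=\langle Q,Z_{11}\rangle+\langle R,Z_{22}\rangle=\mathrm{tr}(QZ_{11})+\mathrm{tr}(RKZ_{11}K^\tr)=\mathrm{tr}((Q+K^\tr RK)Z_{11})$ by the cyclic property of the trace. Under $W=x_0x_0^\tr$ the algebraic constraint is exactly the Lyapunov equation \cref{eq:Lyapunov-H2} with $X=Z_{11}$, so the cost equals $J_\LQR(K)$ through \cref{eq:LQR-Lyapunov}. Minimizing over the parameterization then reduces to $\min_{K\in\mathcal{K}}J_\LQR(K)$, which is \cref{eq:LQR-H2}, completing the claim.
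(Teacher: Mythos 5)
Your proof is correct and follows exactly the route the paper intends: the paper omits the details, remarking only that the result is ``straightforward from the property of Lyapunov equations,'' and your argument supplies precisely those details (the factorization $Z_{12}=Z_{11}K^\tr$, $Z_{22}=KZ_{11}K^\tr$ from constancy of $K$, and the uniqueness of the solution to the stable closed-loop Lyapunov equation to recover the Gramian integral in the reverse inclusion). The final reduction of the cost to $\mathrm{tr}\left((Q+K^\tr RK)Z_{11}\right)$ with $Z_{11}$ playing the role of $X$ in \cref{eq:Lyapunov-H2} is likewise the intended identification with \cref{eq:LQR-H2}.
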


When $x_0=0$,  
we further have 
$\mathcal{V}_\mathrm{sdp} =\mathrm{cl}\,\mathcal{V}$ (cf. \cite[Lemma 2]{you2015primal}), where $\mathrm{cl}$ denotes the closure of a set. However, for $x_0 \neq 0$, we are unclear about the relationship between  $\mathcal{V}_\mathrm{sdp}$ and $\mathrm{cl}\,\mathcal{V}$.  
\begin{figure}
\setlength{\abovecaptionskip}{0pt}
    \centering    \includegraphics[width=0.38\linewidth]{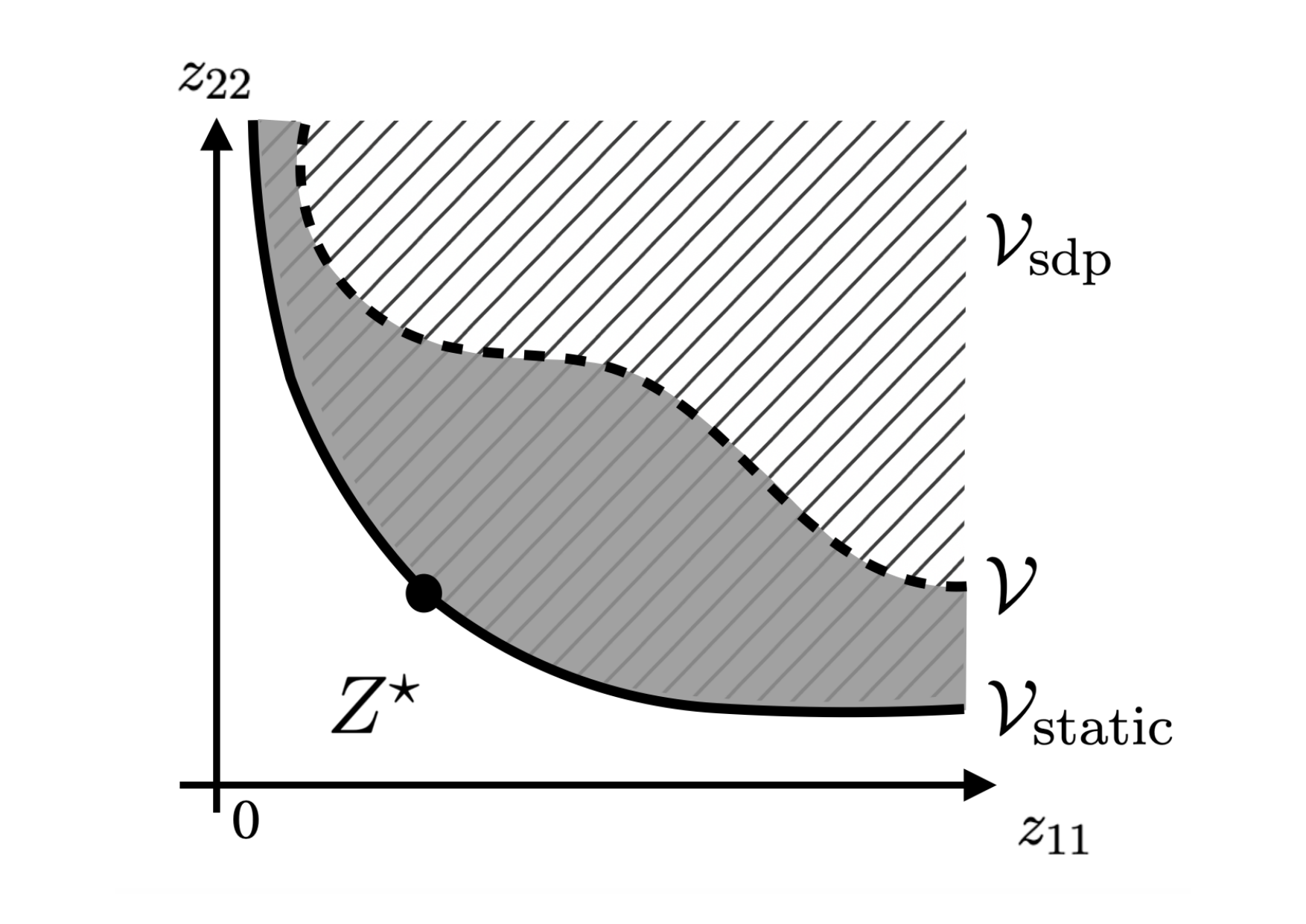}
    \caption{Illustration of the inclusion $\mathcal{V}_\mathrm{static} \subset \mathcal{V} \subseteq \mathcal{V}_\mathrm{sdp}$ in \cref{example:V-inclusion}.
    The shaded area denotes the convex set $\mathcal{V}_\mathrm{sdp}$, and the grey area sketches $\mathcal{V}$, which might be nonconvex.
    The solid black line denotes the boundary of $\mathcal{V}_\mathrm{sdp}$, which is the same as $\mathcal{V}_\mathrm{static}$. 
    From \cref{theorem:linear-optimality}, the optimal $Z^\star\in\mathcal{V}_\mathrm{sdp}$ to \cref{eq:SDP-primal-w/x_0} is attained on $\mathcal{V}_\mathrm{static}$. 
    }
    \label{fig:inclusion-V}
\end{figure}
We here present a simple instance to illustrate the inclusion in \Cref{lemma:V_static-V-V_sdp}.

\begin{example}[Inner and outer approximations]\label{example:V-inclusion}
    Consider a single integrator system $\dot{x}(t) = u(t)$ (i.e., $A = 0, B = 1$) with initial state $x_0 = 1$. 
    In this case, it is easy to see that the outer approximation \cref{eq:setV-outer} becomes  
$$
\mathcal{V}_\mathrm{sdp}
        =\left\{Z=
        \begin{bmatrix}
            z_{11} &-\frac{1}{2}\\
            -\frac{1}{2} & z_{22}
        \end{bmatrix}
        \middle|
        z_{11}z_{22}-\frac{1}{4}\geq 0,\,
        z_{11} > 0,z_{22}>0
        \right\}. 
$$
In this instance, we have 
$$
\int_{0}^\infty x(t)u(t)dt = \int_{0}^\infty x(t)\dot{x}(t)dt = \left.\frac{1}{2}x^2(t) \right|_0^\infty = -\frac{1}{2}x^2(0) = -0.5,
$$
since the input $u(t)$ stabilizes the state $x(t)$. Then, the Gramian representation in \cref{eq:setV} becomes  
    \begin{align*}
   \mathcal{V}
    = &\left\{
    Z=
        \begin{bmatrix}
            z_{11} &-\frac{1}{2}\\
            -\frac{1}{2}& z_{22}
        \end{bmatrix}
        \middle|        
    \substack{\displaystyle
    z_{11}z_{22}-\frac{1}{4}\geq 0,\,
    z_{11}=\int_0^\infty x^2(t)dt,\,
    \displaystyle
    z_{22}=\int_0^\infty u^2(t)dt\\
    \displaystyle
    \text{ for some $u \in \mathcal{L}_2^1[0,\infty)$}  \text{ such that $x\in\mathcal{L}_2^1[0,\infty)
    $}
    }
    \right\}. 
    \end{align*}
In this stance, it is clear we have $\mathcal{V} \subset \mathcal{V}_{\mathrm{sdp}}$. Moreover, the set of stabilizing gains is $\mathcal{K}=(-\infty,0)$, and we can further compute that 
    \begin{align*}
           \mathcal{V}_\mathrm{static}
    =& \left\{
    Z
    =
    \begin{bmatrix}
        z_{11} & kz_{11}\\
         kz_{11}& k^2z_{11}
    \end{bmatrix}
    \in \mathbb{S}_+^{2} \middle|
    2kz_{11}=-1,\, k <0
    \right\}
    = \left\{
    Z=
    \begin{bmatrix}
        z_{11} & -\frac{1}{2} \\
        -\frac{1}{2}& \frac{1}{4z_{11}}
    \end{bmatrix}
    \middle| z_{11}> 0
    \right\}.
    \end{align*}
    Thus, the set $\mathcal{V}_\mathrm{static}$ corresponds to the boundary $z_{11}z_{22}=\frac{1}{4}$. It is obvious that we have $\mathcal{V}_\mathrm{static} \subset \mathcal{V} \subseteq \mathcal{V}_\mathrm{sdp}$; 
    see \cref{fig:inclusion-V} for illustration.
    It seems difficult to derive an explicit form of the Gramian representation  $\mathcal{V}$ even in this simple example.
    \hfill $\square$
\end{example}

\subsection{Global optimality via primal and dual analysis} \label{subsection:global-optimality}

By leveraging the sets $\mathcal{V}_\mathrm{sdp}$ and $\mathcal{V}_\mathrm{static}$ and \cref{lemma:V_static-V-V_sdp}, we can get lower and upper bounds of $J^*$ in \cref{eq:LQR-w/o-state-fb}.
By \cref{lemma:V_static-V-V_sdp}, we can relax \eqref{eq:LQR-w/o-state-fb} into $\min_{Z\in\mathcal{V}_\mathrm{sdp}} \langle Q, Z_{11} \rangle +\langle R,Z_{22}\rangle$,
which is the same as the convex SDP below
\begin{equation} \label{eq:SDP-primal-w/x_0}
    \begin{aligned}
   J_1^* = \min_{Z} \quad & \langle Q, Z_{11} \rangle +\langle R,Z_{22}\rangle \\
    \mathrm{subject~to} \quad & AZ_{11}+BZ_{12}^\tr +Z_{11}A^\tr +Z_{12}B^\tr + x_0x_0^\tr= 0, \\
    &Z=\begin{bmatrix}
        Z_{11} & Z_{12} \\ Z_{12}^\tr & Z_{22}\end{bmatrix}\succeq  0,
\end{aligned}
\end{equation}
which takes the same form as \cref{eq:SDP-primal}.
We can
 restrict \eqref{eq:LQR-w/o-state-fb} into a non-convex problem below
\begin{equation} \label{eq:SDP-primal-restriction}
    \begin{aligned}
    J_2^* = \min_{Z \in \mathcal{V}_{\mathrm{static}}} \quad & \langle Q, Z_{11} \rangle +\langle R,Z_{22}\rangle.  
\end{aligned}
\end{equation}
It is clear that we have $J_1^* \leq J^* \leq J_2^*$. One expected (yet still surprising) result is that they are all equal under \cref{assumption:controllability}. 

\begin{theorem} \label{theorem:linear-optimality}
    Suppose \cref{assumption:controllability} holds, and let $x_0 \neq 0$. We have $J_2^* = J^* = J_1^*$. Furthermore, 
    an optimal solution to \eqref{eq:LQR-w/o-state-fb}
    is given by $u = K^\star x$ with $K^\star = -R^{-1}B^\tr P^\star$, where $P^\star$ is the unique positive semidefinite solution to \cref{eq:Riccati-eq}. 
\end{theorem}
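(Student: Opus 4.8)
The plan is to establish the two-sided sandwich $J_1^* \le J^* \le J_2^*$ and then close the gap by proving the reverse inequality $J_2^* \le J_1^*$. The sandwich is immediate from \cref{lemma:V_static-V-V_sdp}: since $\mathcal{V}_\mathrm{static} \subset \mathcal{V} \subseteq \mathcal{V}_\mathrm{sdp}$ and all three problems minimize the same linear objective (note $\langle \mathrm{diag}(Q,R), Z\rangle = \langle Q, Z_{11}\rangle + \langle R, Z_{22}\rangle$), restricting to $\mathcal{V}_\mathrm{static}$ can only raise the optimum while relaxing to $\mathcal{V}_\mathrm{sdp}$ can only lower it. Hence $J_1^* \le J^* \le J_2^*$, and everything reduces to showing $J_2^* \le J_1^*$.

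The key observation is that the outer SDP \cref{eq:SDP-primal-w/x_0} is \emph{literally} the Shor relaxation \cref{eq:SDP-primal} with the data choice $W = x_0 x_0^\tr$. Since $x_0 \neq 0$, this $W = x_0 x_0^\tr \in \mathbb{S}^n_+$ is positive semidefinite (of rank one), so \cref{assumption:controllability} together with $W \in \mathbb{S}^n_+$ is precisely the hypothesis of \cref{lemma:primal-dual-SDPs-LQR,theorem:strong-duality-2,theorem:strict-complementarity}. I would therefore invoke the KKT analysis of \cref{subsubsection:proofs-strong-duality-2} verbatim: the unique positive semidefinite solution $P^\star$ to the ARE \cref{eq:Riccati-eq} is optimal to the dual \cref{eq:Lagrange-dual}, and the primal SDP \cref{eq:SDP-primal-w/x_0} admits a \emph{unique} optimal $Z^\star$ of the low-rank form \cref{eq:Z_star},
\[
Z^\star = \begin{bmatrix} Z_{11}^\star & -Z_{11}^\star P^\star B R^{-1} \\ -R^{-1}B^\tr P^\star Z_{11}^\star & R^{-1}B^\tr P^\star Z_{11}^\star P^\star B R^{-1} \end{bmatrix},
\]
where $Z_{11}^\star \succeq 0$ is the unique solution of the closed-loop Lyapunov equation $(A-BR^{-1}B^\tr P^\star)Z_{11}^\star + Z_{11}^\star (A-BR^{-1}B^\tr P^\star)^\tr + x_0 x_0^\tr = 0$, which exists because $A - BR^{-1}B^\tr P^\star$ is stable by \cref{lemma:Riccati-equation}.

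This structure is exactly what certifies that $Z^\star$ lies in the inner set $\mathcal{V}_\mathrm{static}$. Writing $K^\star = -R^{-1}B^\tr P^\star$, the blocks of $Z^\star$ satisfy $Z_{12}^\star = Z_{11}^\star (K^\star)^\tr$ and $Z_{22}^\star = K^\star Z_{11}^\star (K^\star)^\tr$, together with the Lyapunov constraint appearing in \cref{eq:setV-inner} and $K^\star \in \mathcal{K}$. Thus $Z^\star$ is feasible for the restricted problem \cref{eq:SDP-primal-restriction}, which gives $J_2^* \le \langle \mathrm{diag}(Q,R), Z^\star\rangle = J_1^*$. Chaining with the sandwich yields $J_1^* = J^* = J_2^*$. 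For the optimality of $u = K^\star x$, I would note that since $A + BK^\star$ is stable the closed-loop trajectory $x(t) = e^{(A+BK^\star)t}x_0$ decays exponentially, so $x \in \mathcal{L}_2^n[0,\infty)$ and $u = K^\star x \in \mathcal{L}_2^m[0,\infty)$; its Gramian is precisely $Z^\star$, so this input attains $J^*$.

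The main obstacle I would flag is that $W = x_0 x_0^\tr$ is only \emph{positive semidefinite} (indeed singular whenever $n > 1$), so this is genuinely the $W \succeq 0$ regime of \cref{section:strong-duality} rather than the easier $W \succ 0$ case. Consequently the optimal feedback gain need not be unique (cf.\ \cref{example:W-singular}); the crucial point, however, is that the \emph{optimal SDP solution} $Z^\star$ produced by the KKT argument is still unique and corresponds to one bona fide static gain $K^\star$. Since the theorem only asserts that $u = K^\star x$ \emph{is} an optimal policy, this single static certificate suffices, and one must simply avoid overclaiming uniqueness of the minimizing policy.
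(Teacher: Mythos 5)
Your proposal is correct and follows essentially the same route as the paper's proof: the sandwich $J_1^* \le J^* \le J_2^*$ from the set inclusions in \cref{lemma:V_static-V-V_sdp}, followed by invoking the KKT analysis of \cref{subsubsection:proofs-strong-duality-2} with $W = x_0 x_0^\tr$ to conclude that the unique optimal $Z^\star$ of the SDP \cref{eq:SDP-primal-w/x_0} has the static-feedback structure $Z_{12}^\star = Z_{11}^\star (K^\star)^\tr$, $Z_{22}^\star = K^\star Z_{11}^\star (K^\star)^\tr$ and hence lies in $\mathcal{V}_\mathrm{static}$, closing the gap $J_2^* \le J_1^*$. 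Your added care—explicitly checking $K^\star \in \mathcal{K}$ via \cref{lemma:Riccati-equation}, identifying the closed-loop Gramian of $u = K^\star x$ with $Z^\star$, and flagging that $W = x_0 x_0^\tr \succeq 0$ puts this in the semidefinite regime where only the SDP solution $Z^\star$ (not the feedback gain) is unique—matches the paper's treatment, which handles the same points via its footnote on $x_0 \neq 0$ and the surrounding remarks.
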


While the search spaces are different in \cref{eq:SDP-primal-restriction,eq:SDP-primal-w/x_0} compared to that of \cref{eq:LQR-w/o-state-fb},  \Cref{theorem:linear-optimality} confirms that the inner and outer approximations \cref{eq:SDP-primal-restriction,eq:SDP-primal-w/x_0} do not compromise the optimality for \cref{eq:LQR-w/o-state-fb}, as they all contain the same optimal solution; see \Cref{fig:inclusion-V} for an illustration. 
This result also confirms that the class of linear policies is rich enough for the original LQR formulation \cref{eq:LQR-w/o-state-fb}. 
\textbf{Proof of \cref{theorem:linear-optimality}}: The proof idea is almost identical to that in \Cref{section:strong-duality}. We rely on the primal and dual analysis, especially the KKT condition. In particular, the Lagrange dual of \eqref{eq:SDP-primal-w/x_0} is
\begin{equation} \label{eq:Lagrange-dual-w/x_0}
    \begin{aligned}
        \tilde{J}^* = \max_{P}  &\quad\langle x_0x_0^\tr, P\rangle \\
        \text{subject to} &\quad \begin{bmatrix}
    A^\tr P + PA + Q & PB \\ B^\tr P & R
\end{bmatrix}\succeq 0. 
    \end{aligned}
\end{equation}
With \Cref{assumption:controllability}, \Cref{lemma:primal-dual-SDPs-LQR} also guarantees the strong duality between both \cref{eq:SDP-primal-w/x_0,eq:Lagrange-dual-w/x_0}, i.e., $J^*_1 = \tilde{J}^*$. 
 From \cref{lemma:Riccati-equation}, 
 the unique positive semidefinite solution $P^\star$ to the ARE \cref{eq:Riccati-eq} is one optimal solution to the dual problem \cref{eq:Lagrange-dual-w/x_0}. This dual problem \cref{eq:Lagrange-dual-w/x_0} is strictly feasible from \cref{lemma:Riccati-inequality}.

Then, from the KKT optimality analysis of \cref{eq:SDP-primal-w/x_0,eq:Lagrange-dual-w/x_0}, listed in \cref{eq:KKT} by setting  $W=x_0x_0^\tr$, 
 we can now apply the argument as
 \cref{eq:KKT}--\cref{eq:primal-feasibility-w/Zstar} and observe
that the optimal solution $Z^\star$ to \cref{eq:SDP-primal-w/x_0} must be non-zero and unique\footnote{Here, we  have used the assumption $x_0 \neq 0$; otherwise $Z^\star=0$ and any $K\in\mathcal{K}$ will be optimal.},
which is further in the form of 
$$
Z^\star = \begin{bmatrix}
    Z_{11}^\star & Z_{11}^\star K^{\star\tr} \\
    K^\star Z_{11}^\star & K^\star Z_{11}K^{\star\tr}
\end{bmatrix}, \quad \text{with } K^\star = R^{-1}B^\tr P^\star.
$$
Thus, the optimal solution $Z^\star$ to \cref{eq:SDP-primal-w/x_0} also belongs to $\mathcal{V}_{\mathrm{static}}$. This proves that $J^*_2 \leq J_1^*$.
Combing it with $J^*_1 \leq J^* \leq J_2^*$, we have established $J_2^* = J^* = J_1^*$.

\begin{remark}
The Gramian representation in \cref{eq:setV,eq:LQR-w/o-state-fb-gram} is one key ingredient to our primal and dual analysis. This idea was pioneered in \cite{hotz1987covariance}, and has recently been used in 
linear quadratic control \cite{gattami2009generalized,bamieh2024linear} and $\mathcal{H}_\infty$ analysis \cite{gattami2015simple}. It is interesting to observe that the SDP relaxation \cref{eq:SDP-primal-w/x_0} from $\mathcal{V} \subseteq \mathcal{V}_{\mathrm{sdp}}$ is the same as the rank relaxation in \cref{eq:SDP-primal}. This leads to an interesting interpretation: dropping the rank constraint in \cref{eq:SDP-primal} essentially allows us to search over a space bigger than the space of all stabilizing (potentially nonlinear) stabilizing policies. The KKT analysis from the inherent linear and quadratic structure in LQR allows us to certify that a linear policy can indeed achieve the globally optimal performance among all possible nonlinear stabilizing policies.  \hfill$\square$.
\end{remark}

\section{Conclusion} \label{section:conclusion}

In this paper, we have revisited three fundamental properties of policy optimization in LQR: strong duality in the nonconvex policy optimization formulation, the gradient dominance property, and the global optimality of linear static policies. Through primal-dual analysis and the \texttt{ECL} framework, we have provided a clearer derivation of strong duality and established a broader and more precise characterization of gradient dominance. We hope these findings contribute to a deeper understanding of optimization landscapes in the classical LQR problem. More broadly, we believe that the primal-dual perspective can offer valuable insights into other fundamental control problems, including robust control and beyond. We are also interested in the principled design of first-order algorithms for both smooth and nonsmooth policy optimization in control problems. 

\paragraph*{Acknowledgement}
We thank Mr. Chih-Fan Pai at UC San Diego for discussions on the primal and dual formulation of the LQR problem. 

\small 
\addcontentsline{toc}{section}{References}
{
\bibliographystyle{ieeetr}
\bibliography{main.bib}

\begin{thebibliography}{10}

\bibitem{zhou1996robust}
K.~Zhou, J.~C. Doyle, and K.~Glover, {\em Robust and Optimal Control}.
\newblock Prentice Hall, 1996.

\bibitem{kalman1960contributions}
R.~E. Kalman, ``Contributions to the theory of optimal control,'' {\em Boletín de la Sociedad Matemática Mexicana}, vol.~5, no.~2, pp.~102--119, 1960.

\bibitem{doyle1988state}
J.~C. Doyle, K.~Glover, P.~P. Khargonekar, and B.~Francis, ``State-space solutions to standard $\mathcal{H}_2$ and $\mathcal{H}_\infty$ control problems,'' {\em IEEE Transactions on Automatic Control}, vol.~34, no.~8, pp.~831--847, 1989.

\bibitem{green2012linear}
M.~Green and D.~J. Limebeer, {\em Linear Robust Control}.
\newblock Courier Corporation, 2012.

\bibitem{bemporad2002explicit}
A.~Bemporad, M.~Morari, V.~Dua, and E.~N. Pistikopoulos, ``The explicit linear quadratic regulator for constrained systems,'' {\em Automatica}, vol.~38, no.~1, pp.~3--20, 2002.

\bibitem{recht2019tour}
B.~Recht, ``A tour of reinforcement learning: The view from continuous control,'' {\em Annual Review of Control, Robotics, and Autonomous Systems}, vol.~2, no.~1, pp.~253--279, 2019.

\bibitem{liberzon2011calculus}
D.~Liberzon, {\em Calculus of Variations and Optimal Control Theory: A Concise Introduction}.
\newblock Princeton university press, 2011.

\bibitem{bellman1966dynamic}
R.~Bellman, ``Dynamic programming,'' {\em Science}, vol.~153, no.~3731, pp.~34--37, 1966.

\bibitem{willems1971least}
J.~C. Willems, ``Least squares stationary optimal control and the algebraic {R}iccati equation,'' {\em IEEE Transactions on Automatic Control}, vol.~16, no.~6, pp.~621--634, 1971.

\bibitem{boyd1994linear}
S.~Boyd, L.~E. Ghaoui, E.~Feron, and V.~Balakrishnan, {\em Linear Matrix Inequalities in System and Control Theory}.
\newblock SIAM, 1994.

\bibitem{rawlings2017model}
J.~B. Rawlings, D.~Q. Mayne, and M.~Diehl, {\em Model Predictive Control: theory, computation, and design}, vol.~2.
\newblock Nob Hill Publishing Madison, WI, 2017.

\bibitem{dean2020sample}
S.~Dean, H.~Mania, N.~Matni, B.~Recht, and S.~Tu, ``On the sample complexity of the linear quadratic regulator,'' {\em Foundations of Computational Mathematics}, vol.~20, no.~4, pp.~633--679, 2020.

\bibitem{zheng2021sample}
Y.~Zheng, L.~Furieri, M.~Kamgarpour, and N.~Li, ``Sample complexity of linear quadratic gaussian {(LQG)} control for output feedback systems,'' in {\em Learning for Dynamics and Control (L4DC)}, pp.~559--570, PMLR, 2021.

\bibitem{fazel2018global}
M.~Fazel, R.~Ge, S.~Kakade, and M.~Mesbahi, ``Global convergence of policy gradient methods for the linear quadratic regulator,'' in {\em International Conference on Machine Learning}, pp.~1467--1476, PMLR, 2018.

\bibitem{hu2023toward}
B.~Hu, K.~Zhang, N.~Li, M.~Mesbahi, M.~Fazel, and T.~Ba{\c{s}}ar, ``Toward a theoretical foundation of policy optimization for learning control policies,'' {\em Annual Review of Control, Robotics, and Autonomous Systems}, vol.~6, no.~1, pp.~123--158, 2023.

\bibitem{zheng2023benign}
Y.~Zheng, C.-F. Pai, and Y.~Tang, ``Benign nonconvex landscapes in optimal and robust control, {Part} {I}: Global optimality,'' {\em arXiv preprint arXiv:2312.15332}, 2023.

\bibitem{polyak1963gradient}
B.~Polyak, ``Gradient methods for minimizing functionals,'' {\em Zhurnal vychislitel'noi matematiki i matematicheskoi fiziki}, vol.~3, no.~4, pp.~643--653, 1963.

\bibitem{mohammadi2019global}
H.~Mohammadi, A.~Zare, M.~Soltanolkotabi, and M.~R. Jovanovi{\'c}, ``Global exponential convergence of gradient methods over the nonconvex landscape of the linear quadratic regulator,'' in {\em IEEE 2019 58th Conference on Decision and Control (CDC)}, pp.~7474--7479, IEEE, 2019.

\bibitem{tu2019gap}
S.~Tu and B.~Recht, ``The gap between model-based and model-free methods on the linear quadratic regulator: An asymptotic viewpoint,'' in {\em Conference on Learning Theory}, pp.~3036--3083, PMLR, 2019.

\bibitem{malik2020derivative}
D.~Malik, A.~Pananjady, K.~Bhatia, K.~Khamaru, P.~L. Bartlett, and M.~J. Wainwright, ``Derivative-free methods for policy optimization: Guarantees for linear quadratic systems,'' {\em Journal of Machine Learning Research}, vol.~21, no.~21, pp.~1--51, 2020.

\bibitem{talebi2024policy}
S.~Talebi, Y.~Zheng, S.~Kraisler, N.~Li, and M.~Mesbahi, ``Policy optimization in control: Geometry and algorithmic implications,'' {\em arXiv preprint arXiv:2406.04243}, 2024.

\bibitem{tang2023analysis}
Y.~Tang, Y.~Zheng, and N.~Li, ``Analysis of the optimization landscape of linear quadratic gaussian {(LQG)} control,'' {\em Mathematical Programming}, vol.~202, no.~1, pp.~399--444, 2023.

\bibitem{bu2020policy}
J.~Bu, A.~Mesbahi, and M.~Mesbahi, ``Policy gradient-based algorithms for continuous-time linear quadratic control,'' {\em arXiv preprint arXiv:2006.09178}, 2020.

\bibitem{fatkhullin2021optimizing}
I.~Fatkhullin and B.~Polyak, ``Optimizing static linear feedback: Gradient method,'' {\em SIAM Journal on Control and Optimization}, vol.~59, no.~5, pp.~3887--3911, 2021.

\bibitem{balakrishnan2003semidefinite}
V.~Balakrishnan and L.~Vandenberghe, ``Semidefinite programming duality and linear time-invariant systems,'' {\em IEEE Transactions on Automatic Control}, vol.~48, no.~1, pp.~30--41, 2003.

\bibitem{bamieh2024linear}
B.~Bamieh, ``Linear-quadratic problems in systems and controls via covariance representations and linear-conic duality: Finite-horizon case,'' {\em arXiv preprint arXiv:2401.01422}, 2024.

\bibitem{alizadeh1997complementarity}
F.~Alizadeh, J.-P.~A. Haeberly, and M.~L. Overton, ``Complementarity and nondegeneracy in semidefinite programming,'' {\em Mathematical Programming}, vol.~77, no.~1, pp.~111--128, 1997.

\bibitem{ding2023strict}
L.~Ding and M.~Udell, ``A strict complementarity approach to error bound and sensitivity of solution of conic programs,'' {\em Optimization Letters}, vol.~17, no.~7, pp.~1551--1574, 2023.

\bibitem{liao2024inexact}
F.-Y. Liao, L.~Ding, and Y.~Zheng, ``Inexact augmented lagrangian methods for conic optimization: Quadratic growth and linear convergence,'' {\em Advances in Neural Information Processing Systems}, vol.~37, pp.~41013--41050, 2024.

\bibitem{zheng2024benign}
Y.~Zheng, C.-F. Pai, and Y.~Tang, ``Benign nonconvex landscapes in optimal and robust control, {Part} {II}: Extended convex lifting,'' {\em arXiv preprint arXiv:2406.04001}, 2024.

\bibitem{bu2019lqr}
J.~Bu, A.~Mesbahi, M.~Fazel, and M.~Mesbahi, ``{LQR} through the lens of first order methods: Discrete-time case,'' {\em arXiv preprint arXiv:1907.08921}, 2019.

\bibitem{lemarechal2012cauchy}
C.~Lemar{\'e}chal, ``Cauchy and the gradient method,'' {\em Doc Math Extra}, vol.~251, no.~254, p.~10, 2012.

\bibitem{karimi2016linear}
H.~Karimi, J.~Nutini, and M.~Schmidt, ``Linear convergence of gradient and proximal-gradient methods under the {Polyak-Łojasiewicz} condition,'' in {\em Machine Learning and Knowledge Discovery in Databases: European Conference, ECML PKDD 2016, Riva del Garda, Italy, September 19-23, 2016, Proceedings, Part I 16}, pp.~795--811, Springer, 2016.

\bibitem{liao2024error}
F.-Y. Liao, L.~Ding, and Y.~Zheng, ``{Error bounds, PL condition, and quadratic growth for weakly convex functions, and linear convergences of proximal point methods},'' in {\em Learning for Dynamics and Control Conference (L4DC)}, pp.~993--1005, PMLR, 2024.

\bibitem{hotz1987covariance}
A.~Hotz and R.~E. Skelton, ``Covariance control theory,'' {\em International Journal of Control}, vol.~46, no.~1, pp.~13--32, 1987.

\bibitem{rockafellar1970conjugate}
R.~T. Rockafellar, ``Conjugate convex functions in optimal control and the calculus of variations,'' {\em Journal of Mathematical Analysis and Applications}, vol.~32, no.~1, pp.~174--222, 1970.

\bibitem{dahleh19871}
M.~A. Dahleh and J.~B. Pearson, ``$\ell_1$-optimal feedback controllers for {MIMO} discrete-time systems,'' {\em IEEE Transactions on Automatic Control}, vol.~32, no.~4, pp.~314--322, 1987.

\bibitem{rantzer1996kalman}
A.~Rantzer, ``On the {Kalman--Yakubovich--Popov} lemma,'' {\em Systems \& Control Letters}, vol.~28, no.~1, pp.~7--10, 1996.

\bibitem{scherer2006lmi}
C.~W. Scherer, ``{LMI} relaxations in robust control,'' {\em European Journal of Control}, vol.~12, no.~1, pp.~3--29, 2006.

\bibitem{ebihara2009robust}
Y.~Ebihara, Y.~Onishi, and T.~Hagiwara, ``Robust performance analysis of uncertain {LTI} systems: Dual {LMI} approach and verifications for exactness,'' {\em IEEE Transactions on Automatic Control}, vol.~54, no.~5, pp.~938--951, 2009.

\bibitem{gattami2015simple}
A.~Gattami and B.~Bamieh, ``Simple covariance approach to $\mathcal{H}_\infty$ analysis,'' {\em IEEE Transactions on Automatic Control}, vol.~61, no.~3, pp.~789--794, 2015.

\bibitem{you2014h}
S.~You and A.~Gattami, ``{$H_\infty$} analysis revisited,'' {\em arXiv preprint arXiv:1412.6160}, 2014.

\bibitem{you2015primal}
S.~You, A.~Gattami, and J.~C. Doyle, ``Primal robustness and semidefinite cones,'' in {\em 2015 IEEE 54th Conference on Decision and Control (CDC)}, pp.~6227--6232, 2015.

\bibitem{rockafellar1970convex}
R.~T. Rockafellar, ``Convex analysis,'' 1970.

\bibitem{hambly2021policy}
B.~Hambly, R.~Xu, and H.~Yang, ``Policy gradient methods for the noisy linear quadratic regulator over a finite horizon,'' {\em SIAM Journal on Control and Optimization}, vol.~59, no.~5, pp.~3359--3391, 2021.

\bibitem{moghaddam2024sample}
A.~N. Moghaddam, A.~Olshevsky, and B.~Gharesifard, ``Sample complexity of the linear quadratic regulator: A reinforcement learning lens,'' {\em arXiv preprint arXiv:2404.10851}, 2024.

\bibitem{zhang2025convergence}
X.~Zhang and G.~Jia, ``Convergence of policy gradient for stochastic linear quadratic optimal control problems in infinite horizon,'' {\em Journal of Mathematical Analysis and Applications}, p.~129264, 2025.

\bibitem{umenberger2022globally}
J.~Umenberger, M.~Simchowitz, J.~Perdomo, K.~Zhang, and R.~Tedrake, ``Globally convergent policy search for output estimation,'' {\em Advances in Neural Information Processing Systems}, vol.~35, pp.~22778--22790, 2022.

\bibitem{zhang2019policy}
K.~Zhang, Z.~Yang, and T.~Ba{\c{s}}ar, ``Policy optimization provably converges to nash equilibria in zero-sum linear quadratic games,'' {\em Advances in Neural Information Processing Systems}, vol.~32, 2019.

\bibitem{guo2022global}
X.~Guo and B.~Hu, ``Global convergence of direct policy search for state-feedback $\mathcal{H}_\infty$ robust control: A revisit of nonsmooth synthesis with {G}oldstein subdifferential,'' {\em Advances in Neural Information Processing Systems}, vol.~35, pp.~32801--32815, 2022.

\bibitem{tang2023global}
Y.~Tang and Y.~Zheng, ``On the global optimality of direct policy search for nonsmooth $\mathcal{H}_\infty$ output-feedback control,'' in {\em 2023 IEEE 62nd Conference on Decision and Control (CDC)}, pp.~6148--6153, 2023.

\bibitem{furieri2020learning}
L.~Furieri, Y.~Zheng, and M.~Kamgarpour, ``Learning the globally optimal distributed {LQ} regulator,'' in {\em Learning for Dynamics and Control (L4DC)}, pp.~287--297, PMLR, 2020.

\bibitem{sun2021learning}
Y.~Sun and M.~Fazel, ``Learning optimal controllers by policy gradient: Global optimality via convex parameterization,'' in {\em 2021 IEEE 60th Conference on Decision and Control (CDC)}, pp.~4576--4581, IEEE, 2021.

\bibitem{skelton1997unified}
R.~E. Skelton, T.~Iwasaki, and D.~E. Grigoriadis, {\em A Unified Algebraic Approach to Control Design}.
\newblock CRC Press, 1997.

\bibitem{rantzer2006linear}
A.~Rantzer, ``Linear quadratic team theory revisited,'' in {\em 2006 American Control Conference (ACC)}, pp.~5--pp, IEEE, 2006.

\bibitem{gattami2009generalized}
A.~Gattami, ``Generalized linear quadratic control,'' {\em IEEE Transactions on Automatic Control}, vol.~55, no.~1, pp.~131--136, 2009.

\bibitem{cohen2018online}
A.~Cohen, A.~Hasidim, T.~Koren, N.~Lazic, Y.~Mansour, and K.~Talwar, ``Online linear quadratic control,'' in {\em International Conference on Machine Learning}, pp.~1029--1038, PMLR, 2018.

\bibitem{polik2007survey}
I.~P{\'o}lik and T.~Terlaky, ``A survey of the {S}-lemma,'' {\em SIAM Review}, vol.~49, no.~3, pp.~371--418, 2007.

\bibitem{shor1987quadratic}
N.~Z. Shor, ``Quadratic optimization problems,'' {\em Soviet Journal of Computer and Systems Sciences}, vol.~25, pp.~1--11, 1987.

\bibitem{scherer1990riccati}
C.~W. Scherer, {\em The {R}iccati inequality and state-space {$H_\infty$}-optimal control.}
\newblock PhD thesis, University of Wurzburg, Germany, 1990.

\bibitem{vandenberghe1996semidefinite}
L.~Vandenberghe and S.~Boyd, ``Semidefinite programming,'' {\em SIAM Review}, vol.~38, no.~1, pp.~49--95, 1996.

\bibitem{hespanha2018linear}
J.~P. Hespanha, {\em Linear Systems Theory}.
\newblock Princeton university press, 2018.

\bibitem{zare2019proximal}
A.~Zare, H.~Mohammadi, N.~K. Dhingra, T.~T. Georgiou, and M.~R. Jovanovi{\'c}, ``Proximal algorithms for large-scale statistical modeling and sensor/actuator selection,'' {\em IEEE Transactions on Automatic Control}, vol.~65, no.~8, pp.~3441--3456, 2019.

\bibitem{cui2024small}
L.~Cui, Z.-P. Jiang, and E.~D. Sontag, ``Small-disturbance input-to-state stability of perturbed gradient flows: Applications to {LQR} problem,'' {\em Systems \& Control Letters}, vol.~188, p.~105804, 2024.

\bibitem{levine1970determination}
W.~Levine and M.~Athans, ``On the determination of the optimal constant output feedback gains for linear multivariable systems,'' {\em IEEE Transactions on Automatic Control}, vol.~15, no.~1, pp.~44--48, 1970.

\end{thebibliography}
}

\newpage
\tableofcontents
\newpage

\appendix

\numberwithin{equation}{section}
\numberwithin{example}{section}
\numberwithin{remark}{section}
\numberwithin{assumption}{section}
\numberwithin{lemma}{section}
\numberwithin{proposition}{section}

\vspace{10mm}
\noindent\textbf{\Large Appendix}
\vspace{5mm}

In this appendix, we review extra background, provide further discussions, and complete all the proofs. We organize the appendix as follows
\begin{itemize}
    \item \cref{appendix:strong-duality} reviews some technical background of this paper.
    \item \cref{appendix:details-of-section-3} presents technical details and proofs in \cref{section:strong-duality}.
    \item \cref{appendix:details-GD} presents technical proofs omitted in \cref{section:GD}.
    \item \cref{appendix:non-uniqueness} presents some discussions on the non-uniqueness of optimal gains in LQR.
    \item \cref{appendix:LQR} reviews a classical proof of the global optimality of linear static policies in LQR.
\end{itemize}


\section{Background} \label{appendix:strong-duality}

In this section, we review some background on strong duality in SDPs, gradient dominance and its related regularity conditions,  the S-lemma and its connection to LQR. 

\subsection{Strong duality in SDPs}\label{appendix:SDP-review}
We here review the primal and dual SDPs and their strong duality results (see \cite{vandenberghe1996semidefinite} for more detail). 
The standard primal SDP is a problem of the form:
\begin{align}\label{eq:sdp-primal-standard}
    \begin{aligned}
p^*=\min_{X} &\quad \langle C,X\rangle \\
\text {subject to }& \quad  
\mathcal{A}(X)=b,\\
& \quad X \in\mathbb{S}^r_+,
\end{aligned}
\end{align}
where $b\in\mathbb{R}^m,\,C\in\mathbb{S}^r$ and
$\mathcal{A}:\mathbb{S}^r\to\mathbb{R}^m$ is a linear map.
Its Lagrange dual problem is 
\begin{align}\label{eq:sdp-dual-standard}
    \begin{aligned}
d^*=
\max_{P\in\mathbb{S}^n}& \quad b^\tr y \\
 \text {subject to }& \quad 
 C - \mathcal{A}^*(y) \in\mathbb{S}^r_+, 
\end{aligned}
\end{align}
where $A^*:\mathbb{R}^m\to\mathbb{S}^r$ is the adjoint of $\mathcal{A}(\cdot)$ identified by
$
 \left\langle
    y,\mathcal{A}(X)
    \right\rangle=
    \left\langle
    \mathcal{A}^*(y),X
    \right\rangle, \forall y \in \mathbb{R}^m, X \in \mathbb{S}^r.  
$ 
For this primal-dual pair, the KKT optimality condition can be written as
\begin{itemize}
    \item Primal feasibility: 
    $X \in \mathbb{S}_{+}^r$ and
    $\mathcal{A}(X)=b$. 
    \item Dual feasibility: $C - \mathcal{A}^*(y)\in\mathbb{S}^r_+$.
    \item Complementary slackness: $\left\langle 
    X
    ,C-\mathcal{A}^*(y)\right\rangle=0$.
\end{itemize}

Here, \cref{eq:sdp-primal-standard} is said to be \textit{strictly feasible} if there exists $X \in \mathbb{S}^r_{++}$ such that $\mathcal{A}(X) = b$, and  \cref{eq:sdp-dual-standard} is \textit{strictly feasible} if there exists $y\in\mathbb{R}^m$ such that $C - \mathcal{A}^*(y)\in\mathbb{S}^r_{++}$. We have the following result \cite[Theorem 3.1]{vandenberghe1996semidefinite}. 

\begin{lemma}\label{theorem:SDP-duality}
    Consider the primal-dual pair \cref{eq:sdp-primal-standard} and \cref{eq:sdp-dual-standard}.
    The following statements hold.
    \begin{enumerate}
        \item Weak duality: $d^*\leq p^*$.
        \item Strong duality-1: if $d^*$ is finite and the dual problem \cref{eq:sdp-dual-standard} is strictly feasible, 
        then the primal problem \cref{eq:sdp-primal-standard} is solvable, i.e., its minimum is attained.
        Moreover, the duality gap is zero, i.e., $d^*=p^*$.
        \item Strong duality-2: if $p^*$ is finite and the primal problem \cref{eq:sdp-primal-standard} is strictly feasible, 
        then the dual problem \cref{eq:sdp-dual-standard} is solvable, i.e., its maximum is attained.
        Moreover, the duality gap is zero, i.e., $d^*=p^*$.
        \item Complementary slackness:
        Assume the strong duality holds and both problems are solvable.
        Then, 
        $\left\langle 
    X^\star
    ,C-\mathcal{A}^*(y^\star)\right\rangle=0$ holds for any primal-dual pair of solutions 
    $(X^\star,y^\star)$.
    \end{enumerate}
\end{lemma}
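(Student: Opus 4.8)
The plan is to handle the four claims in order, extracting \textbf{strong duality-2} and \textbf{complementary slackness} from a single separating-hyperplane argument, and then obtaining \textbf{strong duality-1} from the symmetry of the primal--dual pair.

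I would first dispatch \textbf{weak duality} by direct computation. For any primal-feasible $X\in\mathbb{S}^r_+$ (so $\mathcal{A}(X)=b$) and any dual-feasible $y$ (so $S:=C-\mathcal{A}^*(y)\in\mathbb{S}^r_+$), the adjoint identity gives
\[
\langle C,X\rangle - b^\tr y = \langle C,X\rangle - \langle \mathcal{A}^*(y),X\rangle = \langle S,X\rangle \ge 0,
\]
using only that $\langle S,X\rangle\ge 0$ whenever $S,X\in\mathbb{S}^r_+$. Taking the infimum over $X$ and the supremum over $y$ yields $d^*\le p^*$; consequently the duality-gap assertions in the remaining parts reduce to establishing the reverse inequality together with attainment.

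For \textbf{strong duality-2}, assume the primal is strictly feasible, i.e.\ there is $\bar X\in\mathbb{S}^r_{++}$ with $\mathcal{A}(\bar X)=b$, and that $p^*$ is finite. I would form the convex set
\[
\mathcal{S} = \bigl\{(\mathcal{A}(X)-b,\ \langle C,X\rangle - s)\ :\ X\in\mathbb{S}^r_+,\ s<p^*\bigr\}\subseteq \mathbb{R}^m\times\mathbb{R},
\]
and note $(0,0)\notin\mathcal{S}$, since otherwise some $X$ would be primal feasible with cost below $p^*$. A separating hyperplane supplies $(y,\mu)\ne 0$ with $\langle y,\mathcal{A}(X)-b\rangle+\mu(\langle C,X\rangle-s)\ge 0$ for all admissible $(X,s)$. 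Letting $s\downarrow-\infty$ forces $\mu\ge 0$; in the normalized case $\mu=1$, letting $s\uparrow p^*$ and then scaling $X$ to infinity gives $C+\mathcal{A}^*(y)\succeq 0$ and $\langle y,b\rangle+p^*\le 0$. Setting $\tilde y=-y$ then produces a dual-feasible point with $b^\tr\tilde y\ge p^*$, which combined with weak duality yields $d^*=p^*$ and dual attainment at $\tilde y$.

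The main obstacle is ruling out the degenerate separator $\mu=0$, and this is exactly where strict feasibility is indispensable. If $\mu=0$ then the same scaling shows $\mathcal{A}^*(y)\succeq 0$, while $\langle\mathcal{A}^*(y),\bar X\rangle=\langle y,b\rangle\le 0$, so positive definiteness of $\bar X$ forces $\mathcal{A}^*(y)=0$; under the standing reduction that the constraint maps are linearly independent (i.e.\ $\mathcal{A}$ onto, achievable after deleting redundant constraints), $\mathcal{A}^*$ is injective and hence $y=0$, contradicting $(y,\mu)\ne 0$. A related subtlety is that the image of $\mathbb{S}^r_+$ under a linear map need not be closed, so one cannot simply separate $(b,p^*)$ from a naive image cone; working with $\mathcal{S}$ and the strict inequality $s<p^*$ sidesteps this. \textbf{Strong duality-1} then follows by applying the same argument to the dual, whose own Lagrange dual is the primal for this conic pair. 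Finally, \textbf{complementary slackness} is immediate: when both problems are solvable with zero gap, $0=\langle C,X^\star\rangle-b^\tr y^\star=\langle X^\star,\,C-\mathcal{A}^*(y^\star)\rangle$, using $b=\mathcal{A}(X^\star)$ and the adjoint identity.
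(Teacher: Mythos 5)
The paper never proves this lemma: it appears in \cref{appendix:SDP-review} purely as a review statement, cited to Vandenberghe and Boyd, so there is no in-paper argument to compare against, and your proposal amounts to supplying the self-contained proof that the paper omits. Your route is the classical separating-hyperplane proof of conic duality, and it is essentially correct: weak duality and complementary slackness are exactly the two-line adjoint computations, and your handling of strong duality-2 --- separating $(0,0)$ from the convex set $\mathcal{S}$, using the strict inequality $s<p^*$ so that the non-closedness of linear images of $\mathbb{S}^r_+$ never enters, normalizing $\mu=1$, and reading off dual feasibility of $\tilde y=-y$ together with $b^\tr\tilde y\ge p^*$ --- is the standard argument, and each step checks out. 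Two places deserve more care before this is a complete proof. First, ruling out the degenerate separator $\mu=0$ genuinely requires the surjectivity reduction you invoke in passing; to make it airtight you should note that consistency of $\mathcal{A}(X)=b$ (guaranteed here by strict primal feasibility) lets you delete redundant constraints, and that doing so changes neither $d^*$ nor dual attainment, because $\ker(\mathcal{A}^*)=\operatorname{range}(\mathcal{A})^{\perp}$, so the component of $y$ orthogonal to $\operatorname{range}(\mathcal{A})$ contributes nothing to either $\mathcal{A}^*(y)$ or $b^\tr y$. Second, ``apply the same argument to the dual'' is not literally immediate: the dual \cref{eq:sdp-dual-standard} has a free variable $y$ and is not in the standard form your separation argument addresses, so you must either split $y$ into nonnegative parts (giving a conic program over a product cone, for which the same separation argument goes through verbatim) or rerun the argument allowing free variables, and then verify both that strict feasibility of \cref{eq:sdp-dual-standard} transfers to the reformulation and that the Lagrange dual of that reformulation is indeed \cref{eq:sdp-primal-standard}. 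Both gaps are routine to fill, and neither undermines the structure of your argument.
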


The complementary slackness is one key condition in the strong duality. 
One sufficient condition for strong duality is 
the existence of a strictly feasible solution.

\subsection{Gradient dominance}\label{appendix:GD-review}

Here, we give a quick review of gradient dominance and its relevant properties. Consider an $L$-smooth function $f:\mathcal{D}\to\mathbb{R}$ with 
an open domain $\mathcal{D}\subset\mathbb{R}^n$. 
Our discussion follows from \cite{karimi2016linear} that considers the case of $\mathcal{D}=\mathbb{R}^n$ and \cite{liao2024error} that addresses nonsmooth functions.

Let 
$x^\star \in S:={\arg\min}_{x\in\mathcal{D}} f(x) \neq \emptyset$, and consider a sublevel set $\mathcal{D}_\nu:=\{
x\in\mathcal{D}|
f(x)\leq \nu\},\,
\nu>f^*:=\min_{x\in\mathcal{D}}f(x)$. We here define four regularity conditions.  
\begin{enumerate}
    \item \textbf{Strong convexity} ($\mu$-SC):
    For all $x,y\in\mathcal{D}_\nu$, we have
    \begin{equation*}
        f(y) \geq f(x)+\langle\nabla f(x), y-x\rangle+\frac{\mu}{2}\|y-x\|^2. 
    \end{equation*}
    \item \textbf{Error Bound} ($\mu$-EB):
    For all $x\in\mathcal{D}_\nu$, we have
    $$ 
        \|\nabla f(x)\| \geq \mu\times
        \mathrm{dist}(x,S).
    $$
    \item \textbf{Gradient Dominance} ($\mu$-GD) (or \textbf{Polyak-Łojasiewicz} (PL)):
    For all $x\in\mathcal{D}_\nu$, we have
    \begin{equation*}
        \frac{1}{2}\|\nabla f(x)\|^2 \geq \mu\left(f(x)-f^*\right).
    \end{equation*}
    \item \textbf{Quadratic Growth} ($\mu$-QG):
    For all $x\in\mathcal{D}_\nu$, we have
    \begin{equation*}
        f(x)-f^* \geq \frac{\mu}{2}\times
        \mathrm{dist}^2(x,S).
    \end{equation*}
\end{enumerate}

For convex functions, these properties are all equivalent, i.e., $\text{(EB)}
        \equiv \text{(GD)} \equiv \text{(PL)}
        \equiv \text{(QG)}$ \cite[Theorem 3.1]{liao2024error}. For $L$-smooth functions, we have the following results. 
\begin{proposition}
    Assume that $f:\mathcal{D}\to\mathbb{R}$ is an $L$-smooth function with 
    $L>0$,
    $f^* = \min_{x\in\mathcal{D}}f(x)$, and $S:={\arg\min}_{x\in\mathcal{D}}f(x)\neq \emptyset$.
    Then, over the sublevel set $\mathcal{D}_\nu$,
    we have
    \begin{equation*}
        \text{($\mu_1$-SC)}\to\text{($\mu_2$-EB)}
        \to \text{$(\mu_3\text{-GD/PL})$}
        \to\text{($\mu_2^\prime$-EB)}
        \to 
        \text{$\left(\mu_4\text{-QG}\right)$}
    \end{equation*}
    with 
    $\mu_1>0$,
    $\mu_2\in(0,\mu_1]$, $\mu_3\in \left(0,
    \frac{\mu_2^2}{L}
    \right]$, 
    $\mu_2^\prime\in(0,\mu_3]$,
    and $\mu_4\in \left(0,
    \frac{\mu_2^\prime}{2}
    \right]$.
\end{proposition}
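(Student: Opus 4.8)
The plan is to verify each of the four arrows in the chain separately; together they establish the stated ordering and, when $f$ is in addition convex, the full equivalence referenced from \cite{liao2024error}. The arrows split naturally into two \emph{forward} implications, which are pointwise inequalities, and two \emph{reverse} implications, which recover a distance or value bound from the size of the gradient and therefore require a trajectory argument. The precise constant bookkeeping ($\mu_2\le\mu_1$, $\mu_3\le\mu_2^2/L$, $\mu_2'\le\mu_3$, $\mu_4\le\mu_2'/2$) is routine once the structural inequalities are in place, so I will not belabor it.

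For the forward arrows I would argue pointwise. For $(\mu_1\text{-SC})\to(\mu_2\text{-EB})$, fix $x\in\mathcal{D}_\nu$ and let $\bar{x}$ be its projection onto the closed set $S$ (which lies in $\mathcal{D}_\nu$ since $f(\bar{x})=f^\ast\le\nu$). Substituting $y=\bar{x}$ into the strong-convexity inequality and using $f(\bar{x})=f^\ast\le f(x)$ gives $\langle\nabla f(x),\,x-\bar{x}\rangle\ge\tfrac{\mu_1}{2}\|x-\bar{x}\|^2$, and Cauchy--Schwarz then yields $\|\nabla f(x)\|\ge\tfrac{\mu_1}{2}\,\mathrm{dist}(x,S)$, i.e. EB with $\mu_2\le\mu_1$. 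For $(\mu_2\text{-EB})\to(\mu_3\text{-PL})$ I would invoke $L$-smoothness in the form of the quadratic upper bound around a projection point, $f(x)-f^\ast\le\tfrac{L}{2}\,\mathrm{dist}^2(x,S)$ (valid because $\nabla f(\bar{x})=0$ at a minimizer), and substitute the error bound $\mathrm{dist}(x,S)\le\|\nabla f(x)\|/\mu_2$. Rearranging produces $\tfrac12\|\nabla f(x)\|^2\ge\tfrac{\mu_2^2}{L}\bigl(f(x)-f^\ast\bigr)$, which is PL with $\mu_3=\mu_2^2/L$.

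The reverse arrows $(\mu_3\text{-PL})\to(\mu_2'\text{-EB})$ and $(\mu_2'\text{-EB})\to(\mu_4\text{-QG})$ are where the substance lies, and my plan is to treat both through the gradient flow $\dot{x}(t)=-\nabla f(x(t))$ with $x(0)=x$. I would first show that this flow remains inside the open set $\mathcal{D}$ and inside $\mathcal{D}_\nu$ (as $f$ decreases along it) and converges to some $\bar{x}\in S$; this gives the path-length bound $\mathrm{dist}(x,S)\le\int_0^\infty\|\nabla f(x(t))\|\,dt$. Writing $D(t)=f(x(t))-f^\ast$, the PL inequality supplies $\|\nabla f\|\ge\sqrt{2\mu_3\,D}$, so that $\tfrac{d}{dt}\sqrt{D}=-\|\nabla f\|^2/(2\sqrt{D})\le-\sqrt{\mu_3/2}\,\|\nabla f\|$; integrating bounds the path length by $\sqrt{2/\mu_3}\,\sqrt{f(x)-f^\ast}$ and delivers the quadratic-growth estimate, after which the error bound is recovered by combining PL's gradient lower bound with this distance--value relation. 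The same flow, now controlled through the EB inequality together with the smooth upper bound, yields the final $(\mu_2'\text{-EB})\to(\mu_4\text{-QG})$ step with a constant of the stated order.

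The \textbf{main obstacle} is precisely this trajectory argument: one must justify global existence of the gradient flow, its forward invariance in the \emph{merely open} sublevel set $\mathcal{D}_\nu$, genuine convergence to the closed solution set $S$ (rather than mere accumulation), and the validity of the path-length estimate --- none of which is automatic once $\mathcal{D}\neq\mathbb{R}^n$ and $S$ is not a singleton. A related technical point I would need to check is that the $L$-smooth quadratic bound used in $(\mu_2\text{-EB})\to(\mu_3\text{-PL})$ requires smoothness along the segment joining $x$ to its projection, which is immediate if $\mathcal{D}$ is convex but needs care on a nonconvex sublevel set. The two forward arrows, by contrast, are essentially one-line consequences of strong convexity and the descent lemma, so the effort concentrates entirely on the reverse direction.
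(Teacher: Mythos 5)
Your forward arrows are essentially fine, but note first that the paper offers no argument to compare against: its entire proof of this proposition is a citation to \cite[Theorem 3.1]{liao2024error} (after observing that $L$-smooth functions are $L$-weakly convex), plus \cite[Theorem 2]{karimi2016linear} for the admissible range of $\mu_3$. The real issue is your reverse arrows. What you label the ``main obstacle'' --- global existence of the gradient flow, its convergence \emph{into} $S$ rather than mere accumulation, and the path-length bound $\mathrm{dist}(x,S)\le\int_0^\infty\|\nabla f(x(t))\|\,dt$ --- is not a deferrable technicality: it is the entire content of those implications, and under the hypotheses as stated (an $L$-smooth $f$ on a merely \emph{open} $\mathcal{D}$, with no compactness, coercivity, or convexity of the domain) it cannot be discharged, because the implications themselves can fail. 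Concretely, take $\mathcal{D}=(-1,1)\cup(5,6)\subset\mathbb{R}$, with $f(x)=x^2$ on $(-1,1)$ and $f(x)=\epsilon(x-5)$ on $(5,6)$ for small $\epsilon>0$. This $f$ is $L$-smooth on $\mathcal{D}$ with $L=2$, $f^*=0$, $S=\{0\}$; on any sublevel set $\mathcal{D}_\nu$ the PL inequality holds with $\mu_3=\min\{2,\epsilon/2\}$ and the error bound holds with constant $\min\{2,\epsilon/6\}$, yet quadratic growth fails for \emph{every} $\mu_4>0$, since $f(x)-f^*=\epsilon(x-5)\to 0$ while $\mathrm{dist}(x,S)\to 5$ as $x\downarrow 5$. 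The mechanism is exactly the one you worried about: the gradient flow started in $(5,6)$ exits through the boundary point $5\notin\mathcal{D}$ in finite time with $f\to f^*$, so it never reaches $S$ and the path-length estimate is false.

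Consequently, your sketch can only be completed by adding a hypothesis that confines the flow --- compactness of sublevel sets (equivalently, coercivity of $f$ on $\mathcal{D}$), or $\mathcal{D}=\mathbb{R}^n$ --- which is precisely the setting of the references the paper leans on (\cite{karimi2016linear} works on all of $\mathbb{R}^n$; \cite{liao2024error} treats weakly convex functions on $\mathbb{R}^d$), and which is available in the paper's actual use of this proposition for LQR, where \cref{assumption:compactness} provides compact sublevel sets. With such a hypothesis your flow argument does go through (forward invariance holds since $f$ decreases along the flow; finite path length from PL gives convergence to a limit in the compact set $\mathcal{D}_\nu\subset\mathcal{D}$, which lies in $S$ because $f(x(t))\to f^*$), and your constant bookkeeping matches the stated ranges. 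A smaller instance of the same disease sits in your step (EB)$\to$(PL): the descent-lemma bound $f(x)-f^*\le\frac{L}{2}\,\mathrm{dist}^2(x,S)$ requires the segment $[x,\bar x]$ to lie in $\mathcal{D}$, which the example above violates, so that step also uses more than the stated hypotheses. Either restrict the proposition accordingly and then your self-contained proof works, or do what the paper does and inherit the statement wholesale from the cited theorems; as written, the proposal does not close.
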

\begin{proof}
The proof directly follows from \cite[Theorem 3.1]{liao2024error}
since $L$-smooth functions are $L$-weakly convex.
The range of $\mu_3$ is based on \cite[Theorem 2]{karimi2016linear}.
\end{proof}

It is known that the LQR problem is $L$-smooth over a sublevel set, thus quadratic growth is a weaker property than gradient dominance. If quadratic growth fails, then gradient dominance cannot hold. This fact is used in \Cref{example:unbounded-sublevel-set}.

\subsection{S-lemma and its connections with LQR} \label{appendix:S-lemma}

Here, we discuss a similarity between LQR and celebrated \textit{S-lemma} \cite{polik2007survey}
from the perspective of quadratically constrained quadratic programs (QCQPs): they both enjoy zero duality gap.

We observe that
the LQR problem \cref{eq:LQR-H2-SDP-1} has a QCQP-like structure (see \cref{eq:S-lemma-1}).
Both its Lagrange dual relaxation \cref{{eq:Lagrange-dual}} and primal Shor's relaxation \cref{eq:SDP-primal} achieve zero duality gaps: all their optimal values are the same under \cref{assumption:controllability} (\cref{theorem:strong-duality-1,theorem:strong-duality-2}). 
This tightness of the SDP relaxation (i.e., zero duality gap) might be viewed as an extension of the \textit{zero duality gap} in a QCQP with one single quadratic constraint that the S-lemma guarantees. 

A standard version of the S-lemma is as follows.
\begin{lemma}[S-lemma \cite{polik2007survey}]\label{proposition:S-lemma}
    Consider two quadratic functions $f(x)=x^\tr Q_0 x+2q_0^\tr x+c_0$ and $g(x)=x^\tr Q_1 x+2q_1^\tr x+c_1$. Suppose $\exists x_0 \in \mathbb{R}^n$ such that $g\left(x_0\right)>0$. Then, we have 
\begin{equation*}
\left\{x \in \mathbb{R}^n \mid g(x) \geq 0\right\} \subseteq\left\{x \in \mathbb{R}^n \mid f(x) \geq 0\right\}
\end{equation*}
if and only if there exist $\lambda \geq 0$, such that $f(x) \geq \lambda g(x), \forall x \in \mathbb{R}^n$.
\end{lemma}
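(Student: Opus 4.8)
The plan is to separate the two directions, with essentially all the work concentrated in necessity. Sufficiency is a one-liner: if some $\lambda \geq 0$ satisfies $f(x) \geq \lambda g(x)$ for all $x$, then any $x$ with $g(x) \geq 0$ yields $f(x) \geq \lambda g(x) \geq 0$, which is the claimed inclusion. For necessity I would first recast the hypothesis as a statement about optimal values. The inclusion $\{g \geq 0\} \subseteq \{f \geq 0\}$ says exactly that the QCQP
\[
p^\star = \min_{x} f(x) \quad \text{subject to} \quad g(x) \geq 0
\]
has optimal value $p^\star \geq 0$, while the sought multiplier $\lambda \geq 0$ with $f - \lambda g \geq 0$ everywhere is precisely the assertion that the Lagrangian dual value $d^\star = \max_{\lambda \geq 0}\min_x (f(x) - \lambda g(x))$ is nonnegative and attained. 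Since weak duality gives $d^\star \leq p^\star$ for free, the entire content of necessity is the \emph{zero duality gap} $d^\star = p^\star$ for a QCQP with a \emph{single} quadratic constraint --- the same phenomenon the paper highlights for LQR in \cref{theorem:strong-duality-1}.

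To obtain the multiplier concretely, I would homogenize. Writing $\tilde x = (x,t) \in \mathbb{R}^{n+1}$ and
\[
M_f = \begin{bmatrix} Q_0 & q_0 \\ q_0^\tr & c_0 \end{bmatrix}, \qquad M_g = \begin{bmatrix} Q_1 & q_1 \\ q_1^\tr & c_1 \end{bmatrix},
\]
one has $f(x) = \tilde x^\tr M_f \tilde x$ and $g(x) = \tilde x^\tr M_g \tilde x$ at $t=1$. Lifting $X = \tilde x \tilde x^\tr$ turns the QCQP into the SDP $\min_{X \succeq 0} \langle M_f, X\rangle$ subject to $\langle M_g, X\rangle \geq 0$ and $X_{n+1,n+1} = 1$, whose Lagrange dual (with multipliers $\lambda \geq 0$ and $\mu$ for the two constraints) is $\max \mu$ subject to $M_f - \lambda M_g - \mu E \succeq 0$, where $E$ is the matrix with a single $1$ in the bottom-right corner. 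The Slater point $g(x_0) > 0$ makes this relaxation strictly feasible, so the SDP duality in \cref{theorem:SDP-duality} gives a zero gap and an attained dual optimizer $(\lambda,\mu)$. The payoff of the $\mu E$ term is that it dissolves the $t=0$ subtlety: evaluating $\tilde x^\tr(M_f - \lambda M_g)\tilde x \geq \mu t^2$ at $t=1$ yields $f(x) - \lambda g(x) \geq \mu$ for all $x$, so as soon as $\mu = d^\star \geq 0$ we have produced the desired $\lambda \geq 0$.

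The main obstacle, and the genuine heart of the S-lemma, is upgrading this SDP strong duality to QCQP tightness, i.e.\ establishing that the SDP value equals $p^\star$ and hence is $\geq 0$. This is not automatic --- the relaxation only guarantees SDP value $\leq p^\star$, so a priori $d^\star$ could dip below zero. Here the single-constraint structure is indispensable, and I would route it through Dines's convexity theorem: the joint numerical range $\{(\tilde x^\tr M_f \tilde x,\ \tilde x^\tr M_g \tilde x) : \tilde x \in \mathbb{R}^{n+1}\}$ is a convex subset of $\mathbb{R}^2$. With this in hand, the hypothesis $p^\star \geq 0$ places the open region $\{(u,v) : u \geq 0,\ v < 0\}$ outside this convex set (a slightly careful translation is needed because we work with the affine $t=1$ slice rather than the full cone), and a separating hyperplane with a nonnegative normal delivers exactly $\lambda \geq 0$ and the inequality $f - \lambda g \geq 0$. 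The delicate bookkeeping is the transition between the homogeneous forms and the affine functions --- controlling the directions at infinity ($t \to 0$), where the Slater condition is invoked a second time to exclude degenerate separators supported entirely on the $t=0$ subspace. In the write-up I would isolate Dines's convexity as the key cited lemma and keep the homogenization computations terse.
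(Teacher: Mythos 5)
The paper contains no proof for you to match: this lemma is imported verbatim as a classical result from \cite{polik2007survey} and used as a black box to derive the zero-duality-gap statement for single-constraint QCQPs (\cref{proposition:zero-duality-gap}). What you have written is a reconstruction of the classical Yakubovich-style proof that lives in that cited survey: trivial sufficiency, homogenization to two quadratic forms on $\mathbb{R}^{n+1}$, Dines's theorem on the convexity of the joint range $\{(\tilde x^\tr M_f \tilde x,\,\tilde x^\tr M_g \tilde x)\}\subseteq\mathbb{R}^2$, and a separating-hyperplane argument, with the Slater point $g(x_0)>0$ used both to kill degenerate separators and to control directions at infinity. This is the right route, and you correctly isolate the irreducible core: SDP duality alone cannot close the gap, and some two-form convexity result must enter. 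There is also a pleasant inversion worth noting: the paper's appendix derives QCQP tightness \emph{from} the S-lemma, whereas you derive the S-lemma from tightness, which you then must prove via Dines anyway. For that reason your SDP lifting is logically harmless but redundant --- once Dines plus separation are in hand, the nonnegative normal of the separating hyperplane already produces $\lambda\geq 0$ with $f-\lambda g\geq 0$ pointwise, with no need to route through the dual SDP or \cref{theorem:SDP-duality}.

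Three repairs are needed in the write-up, none fatal. First, with your own coordinate ordering ($f$-form first, $g$-form second), the region the hypothesis excludes from the joint range is $\{(u,v):u<0,\ v\geq 0\}$, not $\{(u,v):u\geq 0,\ v<0\}$; as written you would separate the wrong quadrant. Second, the rank-one lift $\tilde x_0\tilde x_0^\tr$ of the Slater point is \emph{not} strictly feasible for the primal SDP (it sits on the boundary of $\mathbb{S}^{n+1}_+$); invoking \cref{theorem:SDP-duality} requires a perturbation such as $\tilde x_0\tilde x_0^\tr+\epsilon\,\mathrm{diag}(I_n,0)$, which is positive definite and, for small $\epsilon>0$, still satisfies $\langle M_g,X\rangle>0$ and the normalization $X_{n+1,n+1}=1$. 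Third, the $t=0$ bookkeeping you defer is genuine work and your description slightly conflates two distinct uses of Slater: one use rules out a separator with zero weight on the $f$-coordinate (else $\tilde g\leq 0$ everywhere, contradicting $g(x_0)>0$); the other, separate, step is to show the homogeneous inclusion holds on the $t=0$ slice, i.e., $x^\tr Q_1x\geq 0\Rightarrow x^\tr Q_0x\geq 0$, via the standard large-$s$ argument along rays $x_0+sx$. Dines applies only to the full homogeneous range, so this step cannot be skipped.
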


\paragraph*{A QCQP with one single quadratic constraint.} 
Consider the following QCQP
\begin{equation}\label{eq:QCQP-f-g}
    \begin{array}{rl}
q_\mathrm{opt}=\displaystyle\min _x & f(x) \\
\text { subject to } & g(x) \leq 0,
\end{array}
\end{equation}
where $f(x)=x^\tr Q_0 x 
 +2q_0^\tr x + c_0
$ and $g(x)=x^\tr Q_1 x
 +2q_1^\tr x
+c_1$.
Since $Q_0$ or $Q_1$ may not be positive semidefinite, this problem can be nonconvex. Still, it is well-known that \cref{eq:QCQP-f-g} has an exact SDP relaxation and zero duality gap when it is strictly feasible.   

\vspace{3pt}

\noindent \textit{Lagrange dual SDP}. It is not difficult to derive that the Lagrange dual of \cref{eq:QCQP-f-g} is an SDP of the form 
\begin{equation}\label{eq:dual-QCQP}
\begin{aligned}
    d_\mathrm{opt}=\max_{\gamma \in \mathbb{R}, \lambda \geq 0}& \quad \gamma \\
    \text{subject to}&\quad
    \begin{bmatrix}
       c_0 + \lambda c_1-\gamma & (q_0 + \lambda q_1)^\tr\\
      q_0 + \lambda q_1&Q_0+\lambda Q_1
    \end{bmatrix}\succeq 0.
\end{aligned}
\end{equation}
By weak duality, we always have  $q_\mathrm{opt} \geq d_\mathrm{opt}$. 

\vspace{3pt}
\noindent  \textit{Primal SDP from Shor's relaxation}. Meanwhile, we can derive a primal SDP from Shor's relaxation \cite{shor1987quadratic} by introducing $X = xx^\tr$: 
\begin{equation} \label{eq:primal-SDP-S-lemma}
    \begin{aligned}
p_\mathrm{opt} = \min_{X, x} \quad &
\left\langle Q_0,X\right\rangle + 2q_0^\tr x + c_0
\\
\text { subject to } \quad & 
\left\langle Q_1,X \right\rangle +2q_1^\tr x +c_1 \leq 0,
\\
& \begin{bmatrix}
    1 & x^\tr \\ x & X
\end{bmatrix} \succeq 0,
\end{aligned}
\end{equation}
where we have relaxed $ X =xx^\tr$ into $X \succeq xx^\tr$. It is straightforward to verify that \cref{eq:dual-QCQP,eq:primal-SDP-S-lemma} are primal and dual to each other. Therefore, we have 
$
q_\mathrm{opt} \geq p_\mathrm{opt} \geq d_\mathrm{opt}. 
$
From S-lemma in \cref{proposition:S-lemma}, it is well-known that the zero duality gap holds for \cref{eq:QCQP-f-g,eq:dual-QCQP,eq:primal-SDP-S-lemma}
\begin{proposition}[Zero duality gap] \label{proposition:zero-duality-gap}
    Consider the QCQP \cref{eq:QCQP-f-g} and its Lagrange dual and Shor's relaxations \cref{eq:dual-QCQP,eq:primal-SDP-S-lemma}. Suppose there exists $x_0 \in \mathbb{R}^n$ such that $g(x_0) < 0$. Then we have $q_\mathrm{opt} = p_\mathrm{opt} = d_\mathrm{opt}$.
\end{proposition}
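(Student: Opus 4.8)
The plan is to close the sandwich $q_\mathrm{opt} \geq p_\mathrm{opt} \geq d_\mathrm{opt}$, already obtained by weak duality and the Shor relaxation, by proving the single reverse inequality $d_\mathrm{opt} \geq q_\mathrm{opt}$. Once this is established, all three values are squeezed to a common number and the proposition follows. The entire weight of the argument rests on the S-lemma (\cref{proposition:S-lemma}), which is exactly the device that converts the optimal primal value into a certifiably feasible dual multiplier.

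First I would dispose of the degenerate case. If $q_\mathrm{opt} = -\infty$, the chain $q_\mathrm{opt}\geq p_\mathrm{opt}\geq d_\mathrm{opt}$ already forces $p_\mathrm{opt} = d_\mathrm{opt} = -\infty$, so there is nothing to prove; hence I may assume $q_\mathrm{opt}$ is finite (the Slater point $x_0$ also makes the primal feasible, so $q_\mathrm{opt}<+\infty$). By the definition of the infimum, every feasible point satisfies $f(x)\geq q_\mathrm{opt}$, i.e.,
\[
g(x) \leq 0 \;\Longrightarrow\; f(x) - q_\mathrm{opt} \geq 0.
\]
Rewriting the constraint as $-g(x)\geq 0$, this is precisely the inclusion $\{x \mid -g(x)\geq 0\}\subseteq\{x \mid f(x)-q_\mathrm{opt}\geq 0\}$, and the Slater hypothesis $g(x_0)<0$ supplies the strict-feasibility condition $-g(x_0)>0$ required by \cref{proposition:S-lemma}. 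Applying the S-lemma to the pair $\bigl(f-q_\mathrm{opt},\,-g\bigr)$ then yields a multiplier $\lambda\geq 0$ with
\[
f(x) - q_\mathrm{opt} \geq -\lambda\, g(x), \qquad \forall x \in \mathbb{R}^n,
\]
that is, $f(x)+\lambda g(x)\geq q_\mathrm{opt}$ for every $x$.

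Next I would translate this uniform quadratic lower bound into the matrix inequality defining dual feasibility. The quadratic $f+\lambda g$ has Hessian $Q_0+\lambda Q_1$, linear part $q_0+\lambda q_1$, and constant $c_0+\lambda c_1$; a standard homogenization shows that $x^\tr M x + 2b^\tr x + c \geq \gamma$ holds on all of $\mathbb{R}^n$ if and only if $\begin{bmatrix} c-\gamma & b^\tr \\ b & M \end{bmatrix}\succeq 0$ (evaluate on $(1,x)$ for one direction, and rescale $x\mapsto sx$ with $s\to\infty$ for the other). Taking $\gamma=q_\mathrm{opt}$ produces exactly the constraint of \cref{eq:dual-QCQP}, so $(\gamma,\lambda)=(q_\mathrm{opt},\lambda)$ is dual-feasible and therefore $d_\mathrm{opt}\geq q_\mathrm{opt}$. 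Combined with $q_\mathrm{opt}\geq p_\mathrm{opt}\geq d_\mathrm{opt}$, this forces $q_\mathrm{opt}=p_\mathrm{opt}=d_\mathrm{opt}$.

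The genuinely hard content is packaged entirely inside the S-lemma, which I am permitted to assume; within this proof the only delicate point is the bookkeeping when invoking it, namely matching the sign convention (the QCQP constraint $g\leq 0$ versus the S-lemma's $g\geq 0$), shifting $f$ by the optimal value $q_\mathrm{opt}$, and checking that the Slater point translates into the strict feasibility needed. The ensuing homogenization is routine but worth stating explicitly, since it is exactly what links the scalar bound $f+\lambda g\geq q_\mathrm{opt}$ to the semidefinite constraint appearing in \cref{eq:dual-QCQP}.
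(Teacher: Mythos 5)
Your proof is correct and follows essentially the same route as the paper's: establish the sandwich $q_\mathrm{opt} \geq p_\mathrm{opt} \geq d_\mathrm{opt}$, then apply the S-lemma to the QCQP to produce a multiplier $\lambda \geq 0$ certifying $f + \lambda g \geq q_\mathrm{opt}$ on $\mathbb{R}^n$, which via homogenization is exactly dual feasibility in \cref{eq:dual-QCQP} and closes the gap. The only difference is that you spell out details the paper leaves implicit (the sign bookkeeping, the degenerate case $q_\mathrm{opt}=-\infty$, and the homogenization equivalence), which is a faithful elaboration rather than a different argument.
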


We remark that the LQR problem \cref{eq:LQR-H2-SDP-1} can be viewed as a QCQP with a single quadratic matrix equation; see \cref{eq:S-lemma-1}. Its Lagrange dual and Shor's relaxations are given in \cref{eq:Lagrange-dual,eq:SDP-primal} respectively. The process from \cref{eq:LQR-H2-SDP-1} to \cref{eq:Lagrange-dual,eq:SDP-primal} are analogous that from \cref{eq:QCQP-f-g} to \cref{eq:dual-QCQP,eq:primal-SDP-S-lemma}, with one main difference being that \cref{eq:LQR-H2-SDP-1} involves a quadratic matrix equation. As a result, the zero duality gap established in \cref{theorem:strong-duality-1,theorem:strong-duality-2} mirrors the result in \cref{proposition:zero-duality-gap}. The proofs for all these results are based on a primal and dual perspective, with one main technical difference: the proof of \cref{proposition:zero-duality-gap} relies on the S-lemma, while the proofs of \cref{theorem:strong-duality-1,theorem:strong-duality-2} depends on results in ARE and KKT analysis.   

\textit{Proof of \Cref{proposition:zero-duality-gap}}.  
 Similar to the LQR problem, the proof takes the following two steps:
\begin{enumerate}
    \item The Lagrange dual problem of \cref{eq:QCQP-f-g}, given in \Cref{eq:dual-QCQP}, is also the dual of the Shor's relaxation \cref{eq:primal-SDP-S-lemma}. We thus have  
    $$
q_\mathrm{opt} \geq p_\mathrm{opt} \geq d_\mathrm{opt}. 
$$
    \item We next apply S-lemma to \cref{eq:QCQP-f-g} to show that
    strong duality holds for the nonconvex QCQP \cref{eq:QCQP-f-g} and its Lagrange dual \Cref{eq:dual-QCQP}, i.e., $q_\mathrm{opt} = d_\mathrm{opt}$. We thus confirm that dropping a rank constraint in \cref{eq:primal-SDP-S-lemma} preserves the optimality.
\end{enumerate}
The first step is standard. For the second step, we apply S-lemma to \cref{eq:QCQP-f-g}. 
Applying the S-lemma, we obtain the following equivalent formulation:
\begin{equation*}
    \begin{aligned}
p_\mathrm{opt}=\max _{\gamma, \lambda} &\quad \gamma \\
\text { subject to } & f(x)-\gamma \geq-\lambda g(x), \forall x \in \mathbb{R}^n \\
& \lambda \geq 0, 
\end{aligned}
\end{equation*}
which is equivalent to \cref{eq:dual-QCQP}. We thus have $q_\mathrm{opt} = d_\mathrm{opt}$. This concludes the proof.

\section{Technical details in \Cref{section:strong-duality}}\label{appendix:details-of-section-3}

In this section, we provide technical details of \cref{section:strong-duality}, including a proof of \cref{lemma:Lagrangian-explicit} (the dual problem of \cref{eq:LQR-H2-SDP-1}), 
equivalence between \eqref{eq:LQR-H2-SDP-1} and \eqref{eq:SDP-primal-rank}, and a proof of \cref{lemma:primal-dual-SDPs-LQR} (the strong duality between \cref{eq:Lagrange-dual} and \cref{eq:SDP-primal}).

\subsection{Proof of \cref{lemma:Lagrangian-explicit}} \label{appendix:proof-of-dual-function}

   Note that the Lagrangian can be written as 
$$
L(K,X,P) = \left\langle \begin{bmatrix}
    A^\tr P + PA + Q & PB \\ B^\tr P & R
\end{bmatrix}, \begin{bmatrix}
    X & KX\\ XK^\tr  & KXK^\tr
\end{bmatrix} \right\rangle + \langle W,P \rangle. 
$$
Since $R \succ 0$, the Schur complement confirms the following equivalence 
\begin{equation} \label{eq:equivalence-Schur}
\begin{bmatrix}
    A^\tr P + PA + Q & PB \\ B^\tr P & R
\end{bmatrix} \succeq 0 \quad \Leftrightarrow \quad A^\tr P + PA + Q - PBR^{-1}BP \succeq 0. 
\end{equation}
If the dual variable $P$ does not satisfy the left LMI in \cref{eq:equivalence-Schur}, then $A^\tr P + PA + Q - PBR^{-1}BP$ has at least one negative eigenvalue $\lambda < 0$, with a non-zero eigenvector $v \in \mathbb{R}^n$. In this case, we choose 
$$
K = -R^{-1}BP, \qquad X=\eta vv^\tr,\,\eta>0.
$$
Then, it is straightforward to see that 
$$
\begin{aligned}
\left\langle \begin{bmatrix}
    A^\tr P + PA + Q & PB \\ B^\tr P & R
\end{bmatrix}, \begin{bmatrix}
    X & KX\\ XK^\tr  & KXK^\tr
\end{bmatrix} \right\rangle &= \left\langle \begin{bmatrix}
    I \\K 
\end{bmatrix}^\tr \begin{bmatrix}
    A^\tr P + PA + Q & PB \\ B^\tr P & R
\end{bmatrix} \begin{bmatrix}
    I \\ K
\end{bmatrix}, X\right\rangle  \\
&= \left\langle A^\tr P + PA + Q - PBR^{-1}BP, X \right\rangle\\
&= \eta \lambda \|v\|^2 < 0.
\end{aligned} 
$$
This indicates that the dual function results in $g(P) = -\infty$ (as we choose $\eta \to \infty$).  

If the dual variable $P$ satisfies the left LMI in \cref{eq:equivalence-Schur}, then the minimum in \cref{eq:dual-function} over $X\succeq 0$ is clearly $\langle W, P\rangle$ by choosing $X = 0, K = 0$. This gives the explicit form in \cref{eq:dual-function-explicit-form}.

\subsection{The equivalence between \eqref{eq:LQR-H2-SDP-1} and \eqref{eq:SDP-primal-rank}}\label{section:equivalence}

Here, we show the equivalence between \eqref{eq:LQR-H2-SDP-1} and \eqref{eq:SDP-primal-rank}, which follows from a well-known procedure to relax QCQP problems to SDPs.

Consider the LQR problem \eqref{eq:LQR-H2-SDP-1}. Since $X\succeq 0$, we can write $X = VV^\tr$ for some $V \in \mathbb{R}^{n \times n}$. 
This problem can be equivalently rewritten as
\begin{equation}\label{eq:S-lemma-1}
\begin{aligned}
\min_{K, V}\ \ & \mathrm{tr}\left(QVV^\tr+V^\tr K^\tr RKV\right)   \\
\text{subject to}\ \ & \left(AV+BKV\right)V^\tr + V(AV+BKV)^\tr + W = 0. 
\end{aligned}
\end{equation}
This problem is quadratic in terms of the matrix variable $V \in \mathbb{R}^{n \times n}$ and $KV \in \mathbb{R}^{m \times n}$. For the Shor's relaxation process,  by defining 
\begin{equation*}
    Z = 
    \begin{bmatrix}
        Z_{11} &Z_{12}\\
        Z_{12}^\tr & Z_{22} 
    \end{bmatrix}
    :=
    \begin{bmatrix}
        V\\
        KV
    \end{bmatrix}
    \begin{bmatrix}
        V\\
        KV
    \end{bmatrix}^{\tr},
\end{equation*}
we obtain
\begin{equation} \label{eq:S-lemma-2}
\begin{aligned}
\min_{Z}\ \ & \mathrm{tr}\left(QZ_{11}+RZ_{22}\right)   \\
\text{subject to}\ \ & 
AZ_{11}+BZ_{12}^\tr+Z_{11}A^\tr +Z_{12}B^\tr + W =0,\\
&Z\succeq 0,\, \mathrm{rank}(Z)\leq n,
\end{aligned}
\end{equation}
where the rank constraint comes from the definition of $Z$. 
Notice that this problem coincides with \eqref{eq:SDP-primal-rank}.
Removing the rank constraint gives
the SDP relaxation, which coincides with \eqref{eq:SDP-primal}.

\subsection{Proof of \Cref{lemma:primal-dual-SDPs-LQR}} \label{appendix:primal-dual-SDPs-LQR}

We here present a proof of \cref{lemma:primal-dual-SDPs-LQR} by leveraging the standard results in SDPs (see \cref{theorem:SDP-duality}).
We first transform \eqref{eq:SDP-primal} into the primal problem \cref{eq:sdp-primal-standard}, from which \eqref{eq:Lagrange-dual} naturally arises as its dual.
Then, the desired results follow from \cref{theorem:SDP-duality}. 

The problem \cref{eq:SDP-primal} can be rewritten as
\begin{equation} \label{eq:sdp-primal-2}
\begin{aligned}
   p_1^* = \min_{Z} \quad & \left\langle \begin{bmatrix}
       Q & 0\\
       0& R
   \end{bmatrix}, Z\right\rangle \\
    \mathrm{subject~to} \quad & -\left(AZ_{11}+BZ_{12}^\tr +Z_{11}A^\tr +Z_{12}B^\tr \right) = W, \\ & Z=\begin{bmatrix}
        Z_{11} & Z_{12} \\ Z_{12}^\tr & Z_{22}
    \end{bmatrix}\in \mathbb{S}^{n+m}_+.
\end{aligned}
\end{equation}
Thus, by defining 
$
    \mathcal{A}(Z):=-\left(AZ_{11}+BZ_{12}^\tr +Z_{11}A^\tr +Z_{12}B^\tr\right),\,
 C:=\begin{bmatrix}
    Q &0\\
    0& R
\end{bmatrix}, 
$
we see that \eqref{eq:sdp-primal-2} is of the form \eqref{eq:sdp-primal-standard}.
Now,
for $P\in\mathbb{S}^n$,
we can compute the adjoint of $\mathcal{A}(\cdot)$ as
\begin{align*}
\langle\mathcal{A}(Z),P
 \rangle
 = -\langle 
 AZ_{11}+BZ_{12}^\tr +Z_{11}A^\tr +Z_{12}B^\tr,P
 \rangle
=&- \mathrm{tr}\left(
 \begin{bmatrix}
     Z_{11}& Z_{12}\\
     Z_{12}^\tr & Z_{22}
 \end{bmatrix} \begin{bmatrix}
     A^\tr P+PA & PB \\
     B^\tr P & 0
\end{bmatrix}
 \right).
\end{align*}
This confirms that 
$
\mathcal{A}^*(P) = -\begin{bmatrix}
     A^\tr P+PA & PB \\
     B^\tr P & 0
\end{bmatrix}. 
$ 
Therefore, the dual problem of \cref{eq:sdp-primal-2} becomes 
\begin{align}\label{eq:Lagrange-dual-2}
\begin{aligned}
    d^* = \max_{P\in \mathbb{S}^n} &\quad \langle W,P\rangle \\
    \text{subject to}& \quad 
    C-\mathcal{A}^*(P)=
    \begin{bmatrix}
      A^\tr P+PA+Q & PB \\
     B^\tr P & R\end{bmatrix}\in\mathbb{S}_+^{n+m},
    \end{aligned}
\end{align}
which implies that the SDP \cref{eq:Lagrange-dual} is the dual problem of \cref{eq:SDP-primal}.

Thanks to \cref{lemma:Riccati-equation}, we know that \cref{eq:Lagrange-dual-2} has an optimal solution $P^\star$ with a finite cost value. Furthermore, \Cref{lemma:Riccati-inequality} guarantees that  \cref{eq:Lagrange-dual-2} is  strictly feasible. 
Hence, 
the primal problem \cref{eq:SDP-primal} (or \cref{eq:sdp-primal-2}) is solvable, and we also obtain
strong duality. This completes the proof of \Cref{lemma:primal-dual-SDPs-LQR}.

Finally, we note that for any solution pair $(Z^\star,P^\star)$, the following complementary slackness is also fulfilled:
\begin{equation*}
    \left\langle Z^\star,
    C-\mathcal{A}^*(P^\star ) 
    \right\rangle
    =
    \left\langle
    Z^\star ,\begin{bmatrix}
     A^\tr P^\star+P^\star A+Q & P^\star B \\
     B^\tr P^\star & R
\end{bmatrix}
    \right\rangle
    =0.
\end{equation*}


\section{Technical details for \cref{section:GD}}\label{appendix:details-GD}

In this section,
we provide the technical details in \cref{section:GD},
    including \cref{lemma:convergence-gradient-descent-GD} (linear convergence and feasibility of iterates in the gradient descent), \cref{lemma:gradient-Cauchy_direction} (connection between gradients and Cauchy directions), 
    \cref{theorem:J_LQR-gradient_dominance-general} (quadratic growth of $f_\cvx$ in LQR and some details), 
    and \cref{proposition:sufficient-conditions} (three sufficient conditions for \cref{assumption:compactness}).

\subsection{Proof of \cref{lemma:convergence-gradient-descent-GD}}\label{appendix:proof-GD-2}

For self-completeness, we provide some arguments for \cref{lemma:convergence-gradient-descent-GD}, consisting of three main steps. This proof is adapted from \cite[Theorem 4.2]{fatkhullin2021optimizing}. 

        \begin{itemize}
            \item 
        First,
        we show the smoothness of $f$. The coerciveness of $f$  ensures the compactness of  $\mathcal{D}_\nu$. 
        Since $f$ is continuously differentiable, its gradient $\nabla f$ is clearly Lipschitz continuous 
        over the compact set $\mathcal{D}_\nu$ with a constant $L>0$ (we can choose $L:= \sup_{x \in \mathcal{D}_\nu} \|\nabla f(x)\|$). This implies the $L$-smoothness of $f$ on $\mathcal{D}_\nu$.
        
    \item     Next, we show all the gradient iterates are feasible, i.e., $x_k\in\mathcal{D}_v$ for $k\geq 1$, when we choose the step size $ 0< \alpha < 2/L$. 
        It suffices to show $x-\alpha \nabla f(x) \in \mathcal{D}_\nu, \forall x\in\mathcal{D}_\nu$ if $ 0< \alpha < 2/L$.
        Let $x \in \mathcal{D}_\nu$ be arbitrary. Without loss of generality, we suppose $\nabla f(x)\neq 0$.
        Let us define 
        \begin{equation*}
            \varphi(\tau) = f(x-\tau \nabla f(x) ), \quad \text{with}\;\; \tau \in [0,T)
        \end{equation*}
        where 
         $   T= \max\{s| [x, x-s\nabla f(x)] \subset \mathcal{D}_\nu,\,s\geq 0\}$\footnote{For two points $x$ and $y$, $[x,y]$ represents the line segment connecting them.}. This constant $T$ exists since $\mathcal{D}_\nu$ is compact. 
Then, via the chain rule, we have 
$$
\varphi'(\tau) = \langle -\nabla f(x), \nabla f(x-\tau \nabla f(x) )\rangle, \qquad \forall \tau \in [0,T]. 
$$
From this, we can infer that 1) $      \varphi'(0)
            = - \|\nabla f(x)\|^2 <0, $ and 2) $\phi(\tau)$ is $L_1$-smooth over the segment $[0,T]$ with $L_1 = L \|\nabla f(x)\|^2$ since 
\begin{align*}
            |\varphi'(\tau_1)-\varphi'(\tau_2)|
            &\leq
            \|\nabla f(x)\|\times
            \|\nabla f(x-\tau_1\nabla f(x))
            -\nabla f(x-\tau_2\nabla f(x))\|\\
            &\leq \|\nabla f(x)\|\times L\|(\tau_1-\tau_2)\nabla f(x)\| \\
            &\leq L \|\nabla f(x)\|^2 |\tau_1-\tau_2|, \qquad \forall \tau_1, \tau_2 \in [0,T].
        \end{align*}
        
     We then have the following quadratic upper bound:
        \begin{equation*}
            \varphi(\alpha)
            \leq \varphi(0) + \varphi'(0)\alpha + \frac{L_1\alpha^2}{2}
            =\varphi(0) -\|f(x)\|^2\alpha\left(1-\frac{L}{2}\alpha\right), \qquad \forall \alpha \in [0, T].
        \end{equation*}
        As long as $\alpha \leq 2/L$, we have $\varphi(\alpha)\leq \varphi(0)$, which indicates
         $  x-\alpha\nabla f(x)\in \mathcal{D}_\nu.$ 
        { This also shows that the constant $T$ is at least as large as $2/L$.}
        {We further see that the line segment connecting two iterates $x_k$ and $x_{k+1}=x_k-\alpha \nabla f(x_k)$ also lies in $\mathcal{D}_\nu$.}
        
  \item       Finally, 
        we establish the linear convergence of gradient descent $x_{k+1}=x_k-\alpha \nabla f(x_k)$. 
        The $L$-smoothness of $f(x)$ over $\mathcal{D}_\nu$ yields 
        \begin{equation} \label{eq:linear-rate-GD-upper-bound}
        \begin{aligned}
            f(x_{k+1})-f(x_k)
            \leq & \langle \nabla f(x_k), x_{k+1}-x_k\rangle + \frac{L}{2}\|x_{k+1}-x_k\|^2
            \\
            \leq& -\alpha\|\nabla f(x_k)\|^2+\frac{L}{2}\|\alpha \nabla f(x_k)\|^2
            = - \frac{2\alpha-L\alpha^2}{2} \|\nabla f(x_k)\|^2.
        \end{aligned}
        \end{equation}
        Note that we have $2\alpha-L\alpha^2>0$ for any $\alpha\in(0,2/L)$.
        Substituting the gradient dominance \cref{eq:PL-f} into \cref{eq:linear-rate-GD-upper-bound}, we obtain
        \begin{equation*}
            f(x_{k+1})-f(x_k)
            \leq -\mu(2\alpha-L\alpha^2)\left(f(x_k)-f(x^\star)\right).
        \end{equation*}
        Hence, by subtracting $f(x^\star)$ from both side and rearranging $f(x_k)$, we arrive at
        \begin{equation} \label{eq:Linear-rate-GD}
            f(x_{k+1})-f(x^\star)
            \leq \gamma \times \left(f(x_k)-f(x^\star)\right) \qquad \text{with}\;\; \gamma = 1-\mu(2\alpha-L\alpha^2). 
        \end{equation}
        Since $\alpha \in(0,2/L)$, we have
    \begin{equation*}
            \gamma = 1-\mu(2\alpha-L\alpha^2)
            =
            \mu L\left(\alpha^2 - \frac{2\alpha}{L}\right)+1
            =\mu L \left(\alpha-\frac{1}{L}\right)^2 + 1 -  \frac{\mu}{L}
            < 1.
        \end{equation*}
        \end{itemize}

We re-emphasize that it is important to establish the feasibility of all iterates from the gradient descent before applying \cref{eq:Linear-rate-GD,eq:linear-rate-GD-upper-bound} (since the gradient dominance \cref{eq:PL-f} may only hold on the sublevel set $\mathcal{D}_\nu$ and iterates $x_{k+1}$ may become infeasible).   

\subsection{Proof of \cref{lemma:gradient-Cauchy_direction}
}\label{appendix:proof-lemma:gradient-Cauchy_direction}

We here prove the first statement of \cref{lemma:gradient-Cauchy_direction} utilizing the $\ECL$ framework.
The second statement can be shown in the same way (by using \cref{eq:phi_1-upperbound-2} below instead of \cref{eq:phi_1-upperbound}). 
We define $f_\lft:\pi_{x,\xi}(\mathcal{L}_\lft)\to\mathbb{R}$
by
\begin{equation}\label{eq:lifted-function-ECL}
f_\lft(x,\xi) := 
\min_{(x,\gamma,\xi)\in\mathcal{L_\lft}} \ \gamma.
\end{equation}

For $x\in\mathcal{D}_\nu$,
we define $(x,f(x),\xi(x)) \in \mathcal{L}_\lft$,
where $\xi(x) \in \argmin_{\xi\in\pi_\xi(\mathcal{L}_\lft)}f_{\lft}(x,\xi)\neq \emptyset$ is guaranteed to exist by the definition of the $\ECL$ and $f_\lft$ in \cref{eq:lifted-function-ECL}.
Using this point,
we also define two points on the convex set $\mathcal{F}_\cvx$ as
\begin{equation}
\label{eq:zeta-zeta_star}
\zeta := (\pi_\zeta\circ\Phi)(x,f(x),\xi(x))
, \quad \zeta^\star:= (\pi_\zeta\circ\Phi)(x^\star,f(x^\star),\xi^\star),
\end{equation}
where $(x^\star,\xi^\star)$ is an minimizer of $f_\lft$.
This point $\zeta$ satisfies
\begin{equation}\label{eq:J_LQR-f_cvx-equality}
    f_\mathrm{cvx}(\zeta)=f_\mathrm{lft}(x,\xi(x))
    = \min_{\xi\in\{
    \xi|(x,\xi)\in \pi_{x,\xi}(\mathcal{L}_\lft)
    \}} f_\lft(x,\xi)
    =f(x),
\end{equation}
which implies $\zeta,\,\zeta^\star\in\mathcal{X}_\nu$.
For the convex function $f_\mathrm{cvx}$
 in \cref{eq:ECL-convex-function}, we can also define the Cauchy direction $g_{\mathrm{c}}$ of $f_{\mathrm{cvx}}$ at $\zeta$ as \eqref{eq:Cauchy-direction-construction}.

Before establishing \cref{lemma:gradient-Cauchy_direction}, we provide a supporting result using the Cauchy direction $g_\mathrm{c}$.
This result 
presents a differentiable curve starting from $(x,f(x),\xi(x))$ that is contained by
the nonconvex lifted set around $(x,f(x),\xi(x))$.
This curve allows us to 
connect the Cauchy direction $g_\mathrm{c}$ to the gradient $\nabla f(x)$.
The proof is provided in \cref{appendix:proof-curve}.

\begin{proposition}\label{proposition:curve}
With the same assumptions as \cref{theorem:gradient-dominance-ECL} (except for the quadratic growth of $f_\cvx$),
consider $(x,f(x),\xi(x))\in\mathcal{L}_{\lft}$
with $x\in\mathcal{D}_\nu$
and the Cauchy direction $g_\mathrm{c}$ in \cref{eq:Cauchy-direction-construction} at $\zeta$ 
in \cref{eq:zeta-zeta_star}, 
the following statements hold.
\begin{itemize}
    \item[a)]
    (Linear upper bound of $f_\mathrm{cvx}$).
    For arbitrary $\varepsilon>0$,
    there exists a $\bar{\tau}>0$ such that
    \begin{equation}\label{eq:upperbound-f_cvx}
    f_\mathrm{cvx}\left(\zeta^+(\tau)\right)
    <
    f_\mathrm{cvx}(\zeta) + \tau\varepsilon - \tau  \|g_{\mathrm{c}}\|^2, \qquad \forall \tau\in(0,\bar{\tau}),
\end{equation}
where $\zeta^+(\tau) := \zeta-\tau g_{\mathrm{c}}$.
    \item[b)]
    (A smooth curve on the nonconvex lifted set).
    For sufficiently small $\hat{\tau}>0$,
    the function $\phi:\mathbb{R}\to \mathbb{R}^{d+d_\xi }$
    \begin{align} \label{eq:curve-nonconvex-set}
          \phi(\tau)
      := \left(\phi_1(\tau),\phi_2(\tau)\right)
        = (\pi_{x,\xi}\circ \Phi^{-1})(
         f_\mathrm{cvx}(\zeta) + \tau\varepsilon - \tau  \|g_{\mathrm{c}}\|^2,\zeta_1-\tau g_\mathrm{c}
        ) 
\end{align}
    satisfies $\phi(0)=(x,\xi(x))$ and
    lies in
    $ \pi_{x,\xi}(\mathcal{L}_\lft)$ for any $\tau\in [0,\hat{\tau}).$
    Moreover,
    \begin{equation*}
        \partial_+ \phi(0) :=
        \lim_{\tau\downarrow 0} \frac{\phi(\tau)-\phi(0)}{\tau}
    \end{equation*}
    exists.
 
\item[c)] 
(An upper bound of $\|\partial_+ \phi_1(0)\| $ with $\|g_\mathrm{c}\|$). For the smooth curve in \cref{eq:curve-nonconvex-set}, 
we have
\begin{equation}\label{eq:phi_1-upperbound}
    2c_\Phi
    \|\partial_+ \phi_1(0)\| 
    \leq 
\varepsilon' + \|g_{\mathrm{c}}\|
\quad\text{with}\quad
\varepsilon' = 2c_\Phi\|\nabla   \Phi^{-1}(f_\cvx(\xi),\xi) \|_2 \varepsilon
\end{equation}
    for some $c_\Phi>0$.
Moreover, if the diffeomorphism $\Phi$ is of the form
    $\Phi(x,\gamma,\xi) = (\Psi(x,\xi),\gamma)$ with
    a diffeomorphism $\Psi:\pi_{x,\xi}(\mathcal{\mathcal{L}_\lft})\to\pi_{\zeta}(\mathcal{F}_\cvx)$,
    we have
\begin{equation}\label{eq:phi_1-upperbound-2}
    2c_\Phi
    \|\partial_+ \phi_1(0)\| 
    \leq 
    \|g_{\mathrm{c}}\|,
\end{equation}
and $c_\Phi>0$ can be chosen as \cref{eq:C_Phi}.
\end{itemize}
\end{proposition}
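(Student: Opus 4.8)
The plan is to prove the three parts in order, relying only on the \texttt{ECL} structure (the $C^2$ diffeomorphism $\Phi$, the epigraph property of $\mathcal{F}_\cvx$, and compactness of $\mathcal{X}_\nu$), so that no quadratic growth of $f_\cvx$ is invoked. Part a) is immediate from the definition of the Cauchy direction $g_\mathrm{c}$ of $f_\cvx$ at $\zeta$: since $\lim_{\tau\downarrow 0}\tau^{-1}\bigl(f_\cvx(\zeta-\tau g_\mathrm{c})-f_\cvx(\zeta)\bigr)\le -\|g_\mathrm{c}\|^2$, for every $\varepsilon>0$ there is a $\bar\tau>0$ on which the difference quotient is strictly below $-\|g_\mathrm{c}\|^2+\varepsilon$, and multiplying by $\tau>0$ yields \cref{eq:upperbound-f_cvx}.

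For part b), \cref{eq:upperbound-f_cvx} says $\gamma^+(\tau):=f_\cvx(\zeta)+\tau\varepsilon-\tau\|g_\mathrm{c}\|^2$ strictly exceeds $f_\cvx(\zeta^+(\tau))$, so the epigraph property in \cref{assumption:convex-set} places $(\gamma^+(\tau),\zeta^+(\tau))\in\mathcal{F}_\cvx$ for all small $\tau$. Since $\Phi^{-1}$ is defined and $C^2$ on $\mathcal{F}_\cvx$, it carries this point into $\mathcal{L}_\lft$, and projecting onto $(x,\xi)$ gives a curve valued in $\pi_{x,\xi}(\mathcal{L}_\lft)$. I would identify the base point by combining $f_\cvx(\zeta)=f(x)$ from \cref{eq:J_LQR-f_cvx-equality} with the fact that $\Phi$ preserves the $\gamma$-coordinate, so $\Phi^{-1}(f_\cvx(\zeta),\zeta)=\Phi^{-1}(f(x),\zeta)=(x,f(x),\xi(x))$ and hence $\phi(0)=(x,\xi(x))$. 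Because $\tau\mapsto(\gamma^+(\tau),\zeta^+(\tau))$ is affine and $\Phi^{-1}$ is $C^2$, the composition is right-differentiable at $0$, so $\partial_+\phi(0)$ exists and is the $(x,\xi)$-block of $\nabla\Phi^{-1}(f_\cvx(\zeta),\zeta)$ acting on the direction $(\varepsilon-\|g_\mathrm{c}\|^2,-g_\mathrm{c})$.

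Part c) carries the real content. For the general bound \cref{eq:phi_1-upperbound} I would extract the $x$-block of that Jacobian action, control it through the operator norm $\|\nabla\Phi^{-1}(f_\cvx(\zeta),\zeta)\|_2$, and isolate the $\varepsilon$-proportional part as $\varepsilon'$. The sharp estimate \cref{eq:phi_1-upperbound-2} is where the block structure of the special \texttt{ECL} map $\Phi(x,\gamma,\xi)=(\gamma,\Psi(x,\xi))$ from \cref{theorem:gradient-dominance-ECL} does the work: there $\pi_{x,\xi}\circ\Phi^{-1}=\Psi^{-1}$ is \emph{independent} of the $\gamma$-coordinate, so the $\gamma$-perturbation drops out and $\partial_+\phi_1(0)$ reduces to the $x$-part of $-\nabla\Psi^{-1}(\zeta)\,g_\mathrm{c}$, whence $\|\partial_+\phi_1(0)\|\le\|\nabla\Psi^{-1}(\zeta)\|_2\|g_\mathrm{c}\|\le\tfrac{1}{2c_\Phi}\|g_\mathrm{c}\|$ with $c_\Phi$ as in \cref{eq:C_Phi}; compactness of $\mathcal{X}_\nu$ guarantees $c_\Phi>0$. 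This is exactly the situation realized by the LQR map \cref{eq:LQR-map}.

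The main obstacle — and the reason this result is isolated — is forcing the bound on $\|\partial_+\phi_1(0)\|$ to be \emph{linear} in $\|g_\mathrm{c}\|$ rather than quadratic. The perturbation direction carries the term $\varepsilon-\|g_\mathrm{c}\|^2$ in its $\gamma$-slot, and in a generic \texttt{ECL} this $\|g_\mathrm{c}\|^2$ couples into $\partial_+\phi_1(0)$ through $\partial_\gamma(\pi_{x,\xi}\circ\Phi^{-1})$; only the $\gamma$-independence of $\pi_{x,\xi}\circ\Phi^{-1}$ in the special form removes it, which is precisely why \cref{eq:LQR-map} is built to have that form. Linearity is indispensable downstream: one pairs $\langle\nabla f(x),\partial_+\phi_1(0)\rangle\le\varepsilon-\|g_\mathrm{c}\|^2$ — obtained by sending $\tau\downarrow 0$ in $f(\phi_1(\tau))\le\gamma^+(\tau)$, which holds because $(\phi_1(\tau),\gamma^+(\tau))\in\operatorname{epi}_{\geq}(f)$ — with the linear upper bound on $\|\partial_+\phi_1(0)\|$ and lets $\varepsilon\downarrow 0$ to reach $\|\nabla f(x)\|\ge 2c_\Phi\|g_\mathrm{c}\|$, i.e.\ \cref{eq:gradient-Cauchy-direction}; a quadratic bound would render this division impossible.
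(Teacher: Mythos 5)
Your proposal follows essentially the same route as the paper's proof: part a) from the limit definition of the Cauchy direction, part b) from the epigraph property of $\mathcal{F}_\cvx$ together with $\Phi^{-1}$ applied along the affine path $\tau \mapsto (f_\cvx(\zeta)+\tau\varepsilon-\tau\|g_\mathrm{c}\|^2,\,\zeta-\tau g_\mathrm{c})$ (including the same identification $\Phi^{-1}(f(x),\zeta)=(x,f(x),\xi(x))$ of the base point), and part c) from the Jacobian of $\Phi^{-1}$ acting on the direction $(\varepsilon-\|g_\mathrm{c}\|^2,-g_\mathrm{c})$, with the block form $\Phi(x,\gamma,\xi)=(\gamma,\Psi(x,\xi))$ eliminating the $\gamma$-coupling and compactness of $\mathcal{X}_\nu$ giving $c_\Phi>0$ as in \cref{eq:C_Phi}. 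The only loose end is the general bound \cref{eq:phi_1-upperbound}: the $\|g_\mathrm{c}\|^2$ coupling you flag does not invalidate it, since the paper absorbs the factor $(1+\|g_\mathrm{c}\|)$ into the (point-dependent) constant $c_\Phi$, which is admissible because that part of the statement only asserts the existence of \emph{some} $c_\Phi>0$ rather than the uniform constant \cref{eq:C_Phi}.
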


We illustrate the role of this proposition in \cref{fig:proof_figs}.
This proposition and function $\phi(\tau)$ bridge the gap between the nonconvex sets $\mathcal{D},\,\mathcal{L}_\lft$ and convex set $\mathcal{F}_\cvx$.
Since $\mathcal{L}_\lft$ is nonconvex and $(f_\cvx(\zeta),\zeta)=(f(x),\zeta)\in\mathcal{F}_\cvx$ may be on its boundary, it is  nontrivial to evaluate the relation between $\|\nabla f(x)\|$ and $\partial f_\cvx(\zeta)$.
To resolve this issue, we leverage the Cauchy direction $g_\mathrm{c}$ in $\mathcal{F}_\cvx$ (instead of $\partial f_\cvx(\zeta)$) and build a (semi-)differentiable curve $\phi(\tau)$ that is not on the boundary (as \cref{fig:proof-1}) but always in the domain (as \cref{fig:proof-2}).
This allows us to evaluate 
$\|\nabla f(x)\|$ through 
the directional derivative along $\partial_+ \phi(0)$.

Leveraging this curve $\phi(\tau)$ in \cref{eq:curve-nonconvex-set}, we establish
the first statement \cref{lemma:gradient-Cauchy_direction} as follows.
Let $\varepsilon>0$ be an arbitrary positive number.
Then, it follows from \cref{proposition:curve} (a), (b) that
for any sufficiently small $\tau>0$,
\begin{equation*}
    f(\phi_1(\tau)) \leq f_\mathrm{lft}(\phi(\tau))=f_\mathrm{cvx}\left(\zeta^+(\tau)\right)
<f_\mathrm{cvx}(\zeta)+\tau\varepsilon -\tau\|g_\mathrm{c}\|^2.
\end{equation*}
Using this inequality and $f(x) =f_\mathrm{cvx}(\zeta)$ in \cref{eq:J_LQR-f_cvx-equality},
we have
\begin{align*}
    \lim_{\tau\to+0}\frac{f\left(\phi_1(\tau)\right) - f(x)}{\tau} 
    <
    \lim_{\tau\to+0}
    \frac{f_\mathrm{cvx}(\zeta)+\tau\varepsilon -\tau \|g_{\mathrm{c}}\|^2
    -f_\mathrm{cvx}(\zeta)
    }{\tau}
    =\varepsilon -\|g_{\mathrm{c}}\|^2.
\end{align*}
Thanks to the existence of $\partial^+\phi_1(0)$, the left-hand side can be rewritten as
\begin{equation}\label{eq:J_LQR-phi-directional-derivative}
\lim_{\tau\to+0}\frac{f(\phi_1(\tau)) - f(x)}{\tau} 
= \left\langle \nabla f(\phi_1(0)),\partial^+\phi_1(0) \right\rangle
= \left\langle \nabla f(x),\partial^+\phi_1(0) \right\rangle.
\end{equation}
We thus obtain
$
    \left\langle \nabla f(x),\partial^+\phi_1(0) \right\rangle < \varepsilon - \|g_{\mathrm{c}}\|^2,
$
Together with the chain rule and \eqref{eq:phi_1-upperbound}, we have 
\begin{align*}
    \|g_{\mathrm{c}}\|^2 <
    -\left\langle \nabla f(x),\partial^+\phi_1(0) \right\rangle + \varepsilon
    \leq&
    \left\| \nabla f(x)\|\times\|\partial^+\phi_1(0) \right\| + \varepsilon\\
    \leq& \|\nabla f(x)\|\times
\frac{1}{2c_\Phi}
    (\varepsilon' +
    \|g_\mathrm{c}\|)
    + \varepsilon\\
    =&
    \frac{1}{2c_\Phi}
    \times
    \|\nabla f(x)\|\|g_\mathrm{c}\|+ 
    \left(\varepsilon+ \frac{\|\nabla f(x)\|\varepsilon'}{2c_\Phi}
    \right).
\end{align*}
Therefore, recalling that $\varepsilon$ can be chosen arbitrarily
and $\varepsilon'\to0$ as $\varepsilon\to0$,
we arrive at
\begin{equation}\label{eq:Cauchy_direction-gradient-K-X}
    2c_\Phi\|g_{\mathrm{c}}\| \leq
    \|\nabla f(x)\|.
\end{equation}
\begin{figure}
\setlength{\abovecaptionskip}{2pt}
    \centering
  \begin{subfigure}{0.48\linewidth}
           \centering
        \includegraphics[width=\linewidth]{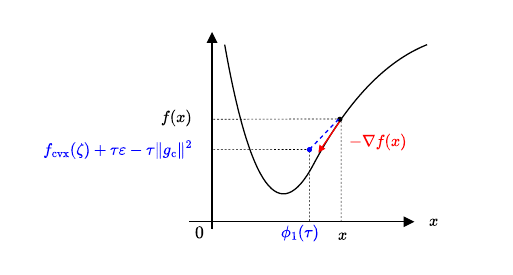}
            \vspace{-5mm}\caption{}
            \label{fig:proof-1}
  \end{subfigure}%
 \begin{subfigure}{0.48\linewidth}
           \centering
        \includegraphics[width=\linewidth]{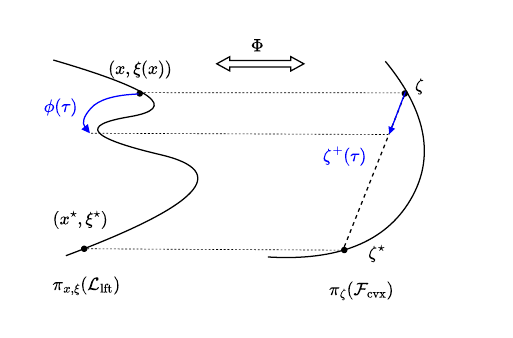}
            \vspace{-5mm}\caption{}
            \label{fig:proof-2}
  \end{subfigure}%
  \caption{Geometric intuition of our proof strategy for \cref{lemma:gradient-Cauchy_direction}:
  (a) an interpretation of
  the upper bound in \cref{proposition:curve} (a).
  The value of $f_\cvx(\zeta)+\tau\varepsilon -\tau\|g_\mathrm{c}\|^2$ provides a linear and strict upper bound of $f$ around $x$;
   (b) an illustration of function $\phi(\tau)$ in \cref{proposition:curve}.
  The function $\phi(\tau)$ is contained by $\pi_{x,\xi}(\mathcal{L}_\lft)$ thanks to the convexity of $\pi_\zeta(\mathcal{F}_\cvx)$ and Cauchy direction $g_\mathrm{c}$
  }
    \label{fig:proof_figs}
\end{figure}
\vspace{-5mm}
\begin{remark}
    We provide a few comments on this proof.
    First,
    we use the differentiability of $f$ only in \cref{eq:J_LQR-phi-directional-derivative}.
    Thus, our proof strategy may be extended to nonsmooth functions using Clarke subdifferential.
    Second, the linear upper bound in \cref{eq:upperbound-f_cvx} is convenient because it is strictly bigger than $f_\mathrm{cvx}\left(\zeta^+(\tau)\right)$ in 
    $\tau\in(0,\bar\tau)$.
    This fact allows us to circumvent the boundary of $\mathcal{L}_\lft$ that may require subtle discussion in control problems (see \cite[Section 3.2]{zheng2024benign}).
    Our proof strategy is inspired by \cite[Appendix H]{umenberger2022globally}, but it differs in two key aspects: (i) we utilize the lifted set $\mathcal{L}_\lft$ which eliminates the need for generalized Cauchy directions
    and
    (ii) we also establish gradient dominance by using the notion of quadratic growth.
    \hfill $\square$
\end{remark}

\subsubsection{Proof of \cref{proposition:curve}}\label{appendix:proof-curve}

We here prove the three statements (a)--(c) in \cref{proposition:curve} based on the $\ECL$ framework.

a) Since $f_{\mathrm{cvx}}$ is convex, we know that the function 
$$
l(\tau):= \frac{f_\mathrm{cvx}(\zeta-\tau g_{\mathrm{c}}) - f_\mathrm{cvx}(\zeta)}{\tau}
=
 \frac{f_\mathrm{cvx}\left(\zeta^+(\tau)\right) - f_\mathrm{cvx}(\zeta)}{\tau},
$$
for $\tau$ such that $\zeta-\tau g_{\mathrm{c}} \in \mathrm{dom}(f_\mathrm{cvx})$, is monotonically non-decreasing for $\tau\in\mathbb{R}_+$ \cite[Theorem 23.1]{rockafellar1970convex}.
Here, $\zeta^+(\tau)=\zeta-\tau g_{\mathrm{c}} \in \mathrm{dom}(f_\mathrm{cvx})$ is satisfied for sufficiently small $\tau>0$
 from the convexity of $\mathcal{F}_\mathrm{cvx}$ and \cref{eq:Cauchy-direction-construction}.
By the definition of Cauchy directions, we have $\lim_{\tau \downarrow 0} l(\tau) \leq -\|g_{\mathrm{c}}\|^2 $. Then, for arbitrary positive $\varepsilon>0$, there exists a $\bar{\tau}>0$ such that
\begin{equation*}
    \frac{f_\mathrm{cvx}(\zeta^+(\tau)) - f_\mathrm{cvx}(\zeta)}{\tau} <
     -\|g_{\mathrm{c}}\|^2 + \varepsilon, \qquad \forall \tau\in(0,\bar{\tau}),
\end{equation*} 
which is equivalent to \cref{eq:upperbound-f_cvx}.

b) 
By the definition of $\phi(\tau)$, it is clear that
$\phi(0) = \left(x,\xi(x)\right) $.
Moreover,
{for 
$\tau\in[0,\tau)$ with
sufficiently small $\hat \tau>0$, 
we have
$(\zeta^+(\tau), f_\cvx+\tau\epsilon-\tau \|g_\mathrm{c}\|^2) \in \mathcal{F}_\cvx$}
from $(\zeta^+(\tau),f_\cvx(\zeta^+(\tau)))\in\mathcal{F}_\cvx$,
which implies\footnote{
The set $ \mathrm{epi}_>(f)$
for $f:\mathcal{D}\to\mathbb{R}$ represents
the strict epigraph 
$\operatorname{epi}_{>}(f) : = \{(x,\gamma) \mid x \in \mathcal{D}, f(x) < \gamma\}.$
}
\begin{equation*}
    (\pi_{x,\gamma}\circ\Phi^{-1})
    \left(\zeta^+(\tau),
    f_\mathrm{cvx}(\zeta) + \tau\varepsilon - \tau  \|g_{\mathrm{c}}\|^2\right)
    \in \mathrm{epi}_>(f)\subset
    \mathrm{epi}_\geq(f)
    = \pi_{x,\gamma} (\mathcal{L}_\lft) ,\quad
    \tau \in [0,\hat \tau)
\end{equation*}
from \cref{eq:upperbound-f_cvx} and the definition of the $\ECL$.
This implies that
\begin{align*}
    &\Phi^{-1}\left(f_\mathrm{cvx}(\zeta) + \tau\varepsilon - \tau  \|g_{\mathrm{c}}\|^2,
    \zeta^+(\tau)\right)
    \in \mathcal{L}_\lft,\quad
    \forall \tau\in [0,\hat\tau).
\end{align*}
Thus, for any $\tau\in[0,\hat\tau)$,
the curve $\phi(\tau)$ satisfies
\begin{align*}
      \phi(\tau)
        = \left(\pi_{x,\xi}\circ\Phi
        ^{-1}\right)
        \left(
        f_\mathrm{cvx}(\zeta) + \tau\varepsilon - \tau  \|g_{\mathrm{c}}\|^2,\zeta^+(\tau)
        \right).
\end{align*}

The existence of semi-derivative $\partial_+ \phi(0)$ follows from the differentiability of $\Phi$ in the definition of the $\ECL$ because $(f_\cvx(\zeta)+\tau\varepsilon - \tau\|g_\mathrm{c}\|^2)$ is linear in terms of $\tau$\footnote{Note that for $\tau<0$, $\zeta^+(\tau)$ may be out of the domain $\pi_\zeta(\mathcal{F}_\mathrm{cvx})$ in which $\Phi$ may not be defined.}.


c)
Finally,
for $x\in\mathcal{D}$ with the open set $\mathcal{D},$
we can compute
the semi-derivative of the first element $\phi_1$ at the origin as
\begin{align*}
        \partial_+ \phi_1(0) :=
\nabla(\pi_x\circ\Phi^{-1})\left(f_\cvx(\zeta),\zeta\right)
        \begin{bmatrix}
        \varepsilon - \|g_\mathrm{c}\|^2\\
            -g_\mathrm{c}
        \end{bmatrix}
\end{align*}
where we use $\pi_x(\cdot)$ in the sense that we extract the first $d$ entries corresponding to $x$.
Note that we have
\begin{equation*}
    \nabla(\pi_x\circ\Phi^{-1})\left(f_\cvx(\zeta),\zeta\right)
    \begin{bmatrix}
        \varepsilon - \|g_\mathrm{c}\|^2\\
            -g_\mathrm{c}
        \end{bmatrix}
    =
    \pi_x\left( 
\nabla\Phi^{-1}\left(f_\cvx(\zeta),\zeta\right)
        \begin{bmatrix}
        \varepsilon - \|g_\mathrm{c}\|^2\\
            -g_\mathrm{c}
        \end{bmatrix}
        \right). 
\end{equation*}
Therefore, 
the norm of $\phi_1'(0)$ can be upper-bounded as 
\begin{align*}
    \partial_+ \phi_1(0) & \leq 
\|\nabla(\pi_x\circ\Phi^{-1})\left(f_\cvx(\zeta),\zeta\right)\|_2
    \left(\varepsilon +\left(1+\|g_\mathrm{c}\|\right)
    \|g_\mathrm{c}\|
    \right)\\
   &\leq \|\nabla \Phi^{-1}\left(f_\cvx(\zeta),\zeta\right)\|_2
    \left(\varepsilon +\left(1+\|g_\mathrm{c}\|\right)
    \|g_\mathrm{c}\|
    \right)\\
    &\leq
    \frac{1}{2c_\Phi}(\varepsilon' + \|g_\mathrm{c}\|)
\end{align*}
with $  c_\Phi:=     \min_{\zeta 
        \in 
       \mathcal{X}_\nu
        }
        \left\| \nabla\Phi^{-1}(f_\cvx(\zeta),\zeta) \right\|_2^{-1}
    (1+\|g_\mathrm{c}\|)^{-1} /2>0$.

The second statement \cref{eq:phi_1-upperbound-2} can also be obtained in the same way because 
the semi-derivative of $\phi_1(\tau)$ at $\tau=0$ is reduced to $\partial_+ \phi_1(0) :=
\nabla(\pi_x\circ\Psi^{-1})\left(\zeta\right)
\times (-g_\mathrm{c})$, and we have
\begin{align*}
    \|\partial_+ \phi_1(0)\|\leq
    \|
\nabla\Psi^{-1}\left(\zeta\right)\|
\times \|g_\mathrm{c}\|.
\end{align*}

\subsection{Technical details of \cref{theorem:J_LQR-gradient_dominance-general}}
\label{appendix:GD-LQR-details}

Here, we mainly provide 
a proof of the quadratic growth in \cref{lemma:f_cvx-QG} with the construction of the $\ECL$ in \cref{subsubsection:LQR-PL-proof}.
We first discuss the derivation of the $\ECL$ and show the lemma using the nice properties of the $\ECL$.
Finally, we also present a derivation of the constant $c_\LQR$ in \cref{theorem:J_LQR-gradient_dominance-general}.

\subsubsection{Derivation of the $\ECL$ for LQR}\label{appendix:ECL-construction-LQR}

Here, we provide the derivation of the $\ECL$ in \cref{subsubsection:LQR-PL-proof}.
Consider the mapping
\begin{equation*}
    \Phi(K,\gamma,X) = (\gamma,Y,X)=
    \left(\gamma,(K-K^\star)X,X
    \right).
\end{equation*}
Then, 
based on this transformation, 
the Lyapunov equation \cref{eq:Lyapunov-H2} in $\mathcal{L}_\lft$ 
becomes
\begin{align*}
   0=&(A+BK)X + X(A+BK)^\tr
    + W\\
   =&(A+B(K^\star+K-K^\star))X + X(A+B(K^\star+K-K^\star))^\tr
    + W\\
    =&
    (A+BK^\star)X + X(A+BK^\star)^\tr
    + BY+Y^\tr B^\tr + W.
\end{align*}
Notice that the mapping $\mathcal{A}_\star(X)
=(A+BK^\star)X + X(A+BK^\star)^\tr
$ is invertible since $A+BK^\star$ is stable. 

Moreover, we have 
\begin{align*}
    \gamma\geq&\mathrm{tr}(Q+K^\tr RK)X
    \\
    =&\mathrm{tr}\left(Q
    +(K-K^\star+K^\star)^\tr R(K-K^\star+K^\star)
    \right)X\\
    = &\mathrm{tr}\left(QX
    +
    (Y+K^\star X)^\tr R(Y+K^\star X) X^{-1}
    \right) \\
    =&
   \mathrm{tr}\left(
   Q^\star
   X
    +
    Y^\tr RYX^{-1}
    + (K^\star)^\tr RY + Y^\tr RK^\star
    \right)\\
    =& f_\mathrm{cvx}(Y,X)
\end{align*}
with $Q^\star = 
    Q+(K^\star)^\tr RK^\star$.
    Hence, the set $\mathcal{F}_\cvx$ is indeed of the form \cref{eq:ECL-LQR-constructions}.
Since the mapping $\Phi$ becomes a diffeomorphism for $X\succ 0$ under \cref{assumption:compactness}, we can define the $\ECL\,(\mathcal{L}_\lft,\mathcal{F}_\cvx,\Phi)$
for $J_\LQR(K)$ under \cref{assumption:controllability,assumption:compactness}.

From the construction,
the optimal $Y^\star$ is clearly $0$, and we have a compact form of the optimal value as
\begin{equation*}
   f_\mathrm{cvx} (Y^\star,X^\star)
   =f_\mathrm{cvx} (0,X^\star)
   = \mathrm{tr}(Q^\star X^\star).
\end{equation*}
These properties are utilized in the proof of \cref{lemma:f_cvx-QG} below.

\subsubsection{Proof of \cref{lemma:f_cvx-QG}}\label{appendix:subsubsec-proof-f_cvx-QG}

First, note that the gradient of $f_\mathrm{cvx}$ is 
\begin{equation*}
    \nabla f_\mathrm{cvx}(Y,X)
    = \begin{bmatrix}
2RYX^{-1}
        + 2RK^\star\\
    Q^\star - X^{-1}Y^\tr RYX^{-1}\end{bmatrix}.
\end{equation*}
Then,
it is not difficult to see that
\begin{align*}
     f_\mathrm{cvx}(Y,X)
    -f_\mathrm{cvx}(Y^\star,X^\star)
    =&
    f_\mathrm{cvx}(Y,X)
    -f_\mathrm{cvx}
    (0,X^\star)\\
    =&
   \mathrm{tr}\left(
    Y^\tr RYX^{-1}
    \right)
    + \mathrm{tr}Q^\star(X-X^\star) 
    +\mathrm{tr}
    (2RK^\star )^\tr Y\\
    \geq &  \mathrm{tr}\left(
    Y^\tr RYX^{-1}
    \right) \\
    \geq &
    \lambda_\mathrm{min}(R)
    \lambda_\mathrm{min}(X^{-1})
    \|Y\|_F^2\\
    \geq&
    \lambda_\mathrm{min}(R)\overline{\kappa}^{-1}
    \|Y-Y^\star\|_F^2
\end{align*}
from the optimality inequality at $(0, X^\star)$
\begin{equation*}
     \mathrm{tr}Q^\star(X-X^\star) 
    +\mathrm{tr}
    (2RK^\star )^\tr Y
    = \left\langle 
\underbrace{\begin{bmatrix}
0+2RK^\star\\
    Q^\star -0 \end{bmatrix}}_{\nabla f_\mathrm{cvx}(0,X^\star)},
    \begin{bmatrix}
        Y-0\\
    X-X^\star\end{bmatrix}
    \right\rangle
    \geq 0.
\end{equation*}
Now, 
by comparing
\begin{align*}
    &(A+BK^\star) X+X(A+BK^\star)^\tr
    +BY+Y^\tr B^\tr+W = 0\\
    &(A+BK^\star) X^\star+X^\star (A+BK^\star)^\tr
    +BY^\star + (Y^\star)^\tr B^\tr +W = 0
\end{align*}
we also have $
    X-X^\star = -\left(\mathcal{A}_\star ^{-1}\circ\mathcal{B}\right)
    (Y-Y^\star)
$ 
with $\mathcal{B}(Y)=BY+Y^\tr B^\tr $. This further  implies
\begin{equation*}
    \|X-X^\star\|_F
    \leq \|
    \mathcal{A}_\star ^{-1}\circ\mathcal{B}
    \|_2
    \|Y-Y^\star\|_F.
\end{equation*}
Since $\|
    \mathcal{A}_\star ^{-1}\circ\mathcal{B}
    \|_2 >0$,
we have the quadratic growth as
\begin{equation*}
    f_\mathrm{cvx}(Y,X)
    -f_\mathrm{cvx}(Y^\star,X^\star)
    \geq \frac{\mu_\mathrm{qg}}{2}
    \left(
    \|Y-Y^\star\|_F^2+
    \|X-X^\star\|_F^2\right)
\end{equation*}
with $\mu_{\mathrm{qg}}=
        \min\left\{\lambda_\mathrm{min}(R)
        \overline{\kappa}^{-1}
        ,
\lambda_\mathrm{min}(R)
        \overline{\kappa}^{-1}
    {
     \|
    \mathcal{A}_\star ^{-1}\circ\mathcal{B}
    \|_2^{-2}
    }
    \right\}$.

\subsubsection{Derivation of constant 
$c_\LQR$ in \cref{theorem:J_LQR-gradient_dominance-general} 
}\label{appendix:derivation-c_LQR}

Here, we derive the constant $c_\LQR$ in \cref{theorem:J_LQR-gradient_dominance-general},
which
corresponds to $c_\Phi$ in \cref{theorem:gradient-dominance-ECL}.
In the proof, this constant arises in \cref{proposition:curve} (c), and thus we present a tailored statement to LQR with $c_\LQR$
instead of $c_\Phi$.
The rest of the proof 
of \cref{theorem:J_LQR-gradient_dominance-general}
can be done in the same way as \cref{theorem:gradient-dominance-ECL}.

\begin{proposition}
Consider the LQR cost $J_\LQR:\mathcal{K}\to \mathbb{R}$
and the associated $\ECL$ in \cref{subsubsection:LQR-PL-proof}.
With \cref{assumption:controllability,assumption:compactness}, the function $\phi(\tau)$ in \cref{proposition:curve}
is differentiable on $\tau\in(-\epsilon,\epsilon)$ for some $\epsilon>0$
and satisfies
\begin{equation*}
    2 c_\LQR
    \|\phi_1'(0)\|_F
    \leq 
    \|g_\mathrm{c}\|_F.
\end{equation*}
for arbitrary $\varepsilon>0$.
\end{proposition}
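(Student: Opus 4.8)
The plan is to specialize the second statement of \cref{proposition:curve}(c) to the LQR \ECL\ of \cref{subsubsection:LQR-PL-proof}, exploiting the fact that the diffeomorphism \cref{eq:LQR-map} has the \emph{separated} form $\Phi(K,\gamma,X) = (\gamma,\Psi(K,X))$ with $\Psi(K,X) = ((K-K^\star)X,\,X)$ and smooth inverse $\Psi^{-1}(Y,X) = (K^\star + YX^{-1},\,X)$. Because the cost coordinate $\gamma$ passes through $\Phi$ untouched, the map $\pi_{K,X}\circ\Phi^{-1}$ is independent of $\gamma$, so the curve \cref{eq:curve-nonconvex-set} collapses to $\phi(\tau) = \Psi^{-1}(\zeta - \tau g_\mathrm{c})$, i.e.\ the composition of the smooth map $\Psi^{-1}$ with an affine function of $\tau$. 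This already removes the $\varepsilon$-dependence present in the general statement \cref{eq:phi_1-upperbound} and points toward genuine two-sided differentiability.

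First I would establish that $\phi$ is differentiable on an interval $(-\epsilon,\epsilon)$. The only obstruction is that $\zeta - \tau g_\mathrm{c}$ must remain in $\dom(f_\cvx)$. The key observation is that $\dom(f_\cvx)$ is the intersection of the \emph{open} cone $\{X \succ 0\}$ with the affine subspace cut out by the Lyapunov equation $\mathcal{A}_\star(X) + \mathcal{B}(Y) + W = 0$; since the Cauchy direction in \cref{eq:Cauchy-direction-construction} is proportional to $\zeta - \hat\zeta^\star$ with both endpoints in this affine subspace, $g_\mathrm{c}$ is tangent to it, and hence $\zeta - \tau g_\mathrm{c}$ stays on the subspace for \emph{all} $\tau$, while $X\succ 0$ persists for $|\tau|$ small by openness. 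Therefore $\Psi^{-1}$ is $C^1$ along the whole segment and $\phi'(0)$ exists in the ordinary two-sided sense.

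Next, the chain rule gives $\phi_1'(0) = -\pi_K\!\big(\nabla\Psi^{-1}(\zeta)[g_\mathrm{c}]\big)$, and submultiplicativity of the induced operator $2$-norm yields $\|\phi_1'(0)\|_F \le \|\nabla\Psi^{-1}(\zeta)\|_2\,\|g_\mathrm{c}\|_F$. Setting $c_\LQR = \tfrac12\min_{\zeta\in\mathcal{X}_\nu}\|\nabla\Psi^{-1}(\zeta)\|_2^{-1}$, which is attained and strictly positive by the compactness of $\mathcal{X}_\nu$ (\cref{lemma:compact-LQR-convex-set}) and continuity of $\zeta\mapsto\|\nabla\Psi^{-1}(\zeta)\|_2$, immediately gives the claimed inequality $2c_\LQR\|\phi_1'(0)\|_F \le \|g_\mathrm{c}\|_F$, valid for any $\varepsilon>0$ precisely because $\varepsilon$ no longer enters.

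The remaining and most laborious step is to turn this abstract constant into the explicit expression in \cref{eq:PL-constant}. Differentiating $\Psi^{-1}$ gives $\nabla\Psi^{-1}(Y,X)[\delta Y,\delta X] = (\delta Y X^{-1} - (K-K^\star)\delta X X^{-1},\ \delta X)$, so I would bound its operator norm uniformly over $\mathcal{X}_\nu$ using $\lambda_\mathrm{min}(X)\ge\underline{\kappa}$ together with a bound on $\|K-K^\star\|_F$; the latter follows from the quadratic lower bound $J_\LQR(K)-J_\LQR(K^\star)\ge \lambda_\mathrm{min}(R)\,\lambda_\mathrm{min}(X^{-1})\,\|(K-K^\star)X\|_F^2$ established inside the proof of \cref{lemma:f_cvx-QG}, which controls the $Y$-norm (equivalently $\|K-K^\star\|_F$) in terms of $\nu - J_\LQR(K^\star)$. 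Collecting the factors $\underline{\kappa}$, $\lambda_\mathrm{min}(R)$, $\sqrt n$, and $\nu - J_\LQR(K^\star)$ then reproduces the stated $c_\LQR$. I expect this explicit norm bound — rather than the differentiability or chain-rule steps — to be the main obstacle, since it requires carefully tracking how the tangency constraint $\delta X = -\mathcal{A}_\star^{-1}\mathcal{B}(\delta Y)$ couples the two blocks of $\nabla\Psi^{-1}$ and how the Frobenius/spectral conversions introduce the $\sqrt n$ factor.
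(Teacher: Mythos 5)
Your proposal follows essentially the same route as the paper's own proof: exploit the separated form $\Phi(K,\gamma,X)=(\gamma,\Psi(K,X))$ so that the curve collapses to $\phi(\tau)=\Psi^{-1}\left(\zeta-\tau g_{\mathrm{c}}\right)$ with $\Psi^{-1}(Y,X)=(K^\star+YX^{-1},X)$, differentiate explicitly, and bound the $K$-component by $\left(1+\|K-K^\star\|_F\right)\|X^{-1}\|_F\,\|g_{\mathrm{c}}\|_F$, controlling $\|K-K^\star\|_F$ over the sublevel set exactly as the paper does. The only differences are presentational: your tangency argument for two-sided differentiability is somewhat more detailed than the paper's one-line appeal to $X\succ 0$, and in the final step you should bound only the $\pi_K$-block of $\nabla\Psi^{-1}$ (as the paper does) rather than its full operator norm, whose identity block in the $X$-component would otherwise perturb the constant away from the exact $c_\LQR$ in \cref{eq:PL-constant}.
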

\begin{proof}   
In the LQR problem, 
since $x=K$ and $\xi(x)=X$ (uniquely determined by \cref{eq:Lyapunov-H2} for $K\in\mathcal{K}$),
the function $\Phi(\cdot)$ can be written as
$\Phi(K,\gamma,X)= (\gamma,\Psi(K,X))$ with
\begin{equation*}
    \Psi(K,X)
    =\zeta=
    \begin{bmatrix}
        Y\\
        X
    \end{bmatrix}
    = \begin{bmatrix}
      (K-K^\star)X\\
      X
    \end{bmatrix}
\end{equation*}
with $(K,X)\in \pi_{K,X}(\mathcal{L}_{\lft})$.
We can also define the Cauchy direction $g_\mathrm{c}$ of $f_\mathrm{cvx}$ at $\zeta=[Y^\tr,X^\tr]^\tr$.

Now,
we can explicitly write
\begin{equation*}
    \begin{bmatrix}
        \phi_1(\tau)\\
        \phi_2(\tau)
    \end{bmatrix}
    =
    \begin{bmatrix}
    Y^+(\tau)
    (X^+(\tau))^{-1}
    +K^\star\\
    X^+(\tau)
    \end{bmatrix},
\end{equation*}
where
\begin{equation*}
    \begin{bmatrix}
        Y^+(\tau)\\
        X^+(\tau)
    \end{bmatrix}
    = \zeta^+(\tau)
    = \begin{bmatrix}
        Y\\
        X
    \end{bmatrix}
    -\tau \underbrace{\begin{bmatrix}
        [g_\mathrm{c}]_Y\\
        [g_\mathrm{c}]_X
    \end{bmatrix}}_{=g_\mathrm{c}}.
\end{equation*}
Then, since $X\succ 0$, the function $\phi(\tau)$
is differentiable around $\tau = 0$, and thus we can compute
\begin{align*}
    \phi_1'(0) 
    &= \frac{\partial Y^+}{\partial \tau}(0)X^{-1}
    + Y \frac{\partial \left(X^+(\tau)\right)^{-1}}{\partial \tau}(0) \\
    &= -[g_{\mathrm{c}}]_Y X^{-1}
    -Y \times
    \left(X^{-1}\left(-[g_{\mathrm{c}}]_X\right)X^{-1}
    \right)\\
    &=
    -\left([g_{\mathrm{c}}]_Y 
    -Y
    X^{-1}[g_{\mathrm{c}}]_X\right)X^{-1}. 
\end{align*}
Therefore, we obtain the upper bound as $
    \|\phi'(0)\|_F
    \leq (1+\|K-K^\star \|_F)\|X^{-1}\|_F
    \times \|g_\mathrm{c}\|_F.
$ 

Since $\mathcal{K}_\nu$ is compact, it is clear that the coefficient $\left(1 + \|K-K^\star\|_F\right)\|X^{-1}\|_F$ in \cref{eq:Cauchy_direction-gradient-K-X} is bounded. Indeed, we show it is bounded as \cref{eq:PL-constant}, which is
\begin{equation*}
    \left(1 + \|K -K^\star \|_F\right)\|X^{-1}\|_F\leq \frac{1}{2c_\LQR}.
\end{equation*}
First, we know that
    \begin{equation*}
        \|X^{-1}\|_F
        = \sqrt{\|X^{-1}\|_F^2}
        \leq \frac{1}{\sqrt{n\lambda_\mathrm{min}(X)^2}}
        \leq \frac{1}{\underline{\kappa}\sqrt{n}}.
    \end{equation*}
    To upper bound $\|K-K^\star \|_F$ over $\mathcal{K}_\nu$,
    it is not difficult to see that 
    \begin{align*}
        &\nu -J_\LQR(K^\star)\geq \mathrm{tr}\left(
        (K-K^\star )^\tr R(K-K^\star)X
        \right)
        \geq \lambda_\mathrm{min}(R)\lambda_\mathrm{min}(X)\|K-K^\star \|_F^2
        \geq \lambda_\mathrm{min}(R)\underline{\kappa}\|K-K^\star\|_F^2 \\
        \Rightarrow \quad& 
        \|K-K^\star\|_F \leq \sqrt{\frac{\nu-J_\LQR(K^\star)}{\lambda_\mathrm{min}(R)\underline{\kappa}}},
    \end{align*}
    where the first inequality is from the proof of \cref{lemma:f_cvx-QG} (or \cite[Corollary 3.5.1]{bu2020policy}).
    Therefore, we obtain
    \begin{equation*}
       \|\phi_1'(0)\|_F  \leq
        \underbrace{
        \left(1 + \sqrt{\frac{\nu-J_\LQR(K^\star)}{\lambda_\mathrm{min}(R)\underline{\kappa}}}\right)
        \times
        \frac{1}{\underline{\kappa}\sqrt{n}}}_{=\frac{1}{2c_\LQR}>0\text{ in \cref{eq:PL-constant}}}
    \times
    \|g_{\mathrm{c}}\|_F.
    \end{equation*}
This completes the proof. 
\end{proof}

\subsection{Proof of \cref{proposition:sufficient-conditions}}\label{appedix:proof-proposition-sufficient-conditions-compactness}

Here, we present the proof of \cref{proposition:sufficient-conditions} that gives three sufficient conditions for \cref{assumption:compactness}. For convenience, we restate the proposition as follows.
\begin{proposition}[Restatement of \cref{proposition:sufficient-conditions}]\label{proposition:sufficient-conditions-restate}
    Suppose \cref{assumption:controllability} holds, and one of the following conditions is true.
    \begin{enumerate}
        \item[a)] The covariance matrix $W$ is positive definite;
        \item[b)] 
        The weight matrix $Q$ is positive definite, $(A, B)$ is controllable, $W = B_1 B_1^\tr$, and $\operatorname{Im} B \subseteq \operatorname{Im} B_1$;
        \item[c)] We have $(A, B)$ is controllable, $W = B_1 B_1^\tr$, and
        $\operatorname{Im} B \subseteq \operatorname{Im} B_1$. Further,  we have 
        $$\lim_{K\in \mathcal{K}, \|K\|_F\to\infty}\left\|P^{1/2}B_1\right\|_F{\|K\|_F^{-1/2}}>0$$
        for 
        $P$ satisfying \cref{{eq:dual-Lyapunov-eq}}.
    \end{enumerate}
    Then, we have that 1) the sublevel set $\mathcal{K}_\nu := \{K \in \mathcal{K} \mid J_\LQR(K) \leq \nu\}$ with any finite $\nu \geq J_\LQR(K^\star)$ is compact and 2) $X$ from  \cref{eq:Lyapunov-H2} is positive definite if and only if $K \in \mathcal{K}$. 
\end{proposition}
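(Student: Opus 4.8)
The plan is to prove the two conclusions of \cref{proposition:sufficient-conditions-restate} separately, treating case a) as the degenerate instance $B_1 = W^{1/2}$ (of full column rank) of the structural hypothesis shared by b) and c), namely $W = B_1B_1^\tr$, $\operatorname{Im}B\subseteq\operatorname{Im}B_1$, and $(A,B)$ controllable. The single structural fact I would extract first is a feedback-invariance statement: under this hypothesis the pair $(A+BK, B_1)$ is controllable for \emph{every} $K\in\mathbb{R}^{m\times n}$ (not only stabilizing ones), and dually $((Q+K^\tr RK)^{1/2}, A+BK)$ is detectable for every $K$. Both are short PBH arguments: writing $B = B_1 G$ (possible since $\operatorname{Im}B\subseteq\operatorname{Im}B_1$), a left eigenvector $v$ of $A+BK$ with $v^\her B_1 = 0$ satisfies $v^\her B = 0$, hence $v^\her A = \lambda v^\her$ with $v^\her B_1 = 0$, contradicting controllability of $(A, B_1)$ (which follows from that of $(A,B)$ and $\operatorname{Im}B\subseteq\operatorname{Im}B_1$); the detectability claim uses $R\succ0$ and \cref{assumption:controllability} analogously, since a right eigenvector $v$ with $(Q+K^\tr RK)v=0$ forces $Qv=0$ and $Kv=0$, so $Av=\lambda v$ and $Q^{1/2}v=0$.

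For conclusion 2 (the ``if and only if''), I would use the Gramian representation $X = \int_0^\infty e^{(A+BK)t}\,B_1B_1^\tr\,e^{(A+BK)^\tr t}\,dt$. The forward direction is then immediate: for $K\in\mathcal{K}$ this integral converges and equals the controllability Gramian of $(A+BK,B_1)$, which is positive definite by the controllability just established. For the converse, suppose some $X\succ0$ solves \cref{eq:Lyapunov-H2}, i.e.\ $(A+BK)X + X(A+BK)^\tr = -B_1B_1^\tr\preceq 0$. Pairing with a left eigenvector $v$ of $A+BK$ (eigenvalue $\lambda$) gives $2\operatorname{Re}(\lambda)\,v^\her X v = -\|B_1^\tr v\|^2 \le 0$; since $X\succ0$ this yields $\operatorname{Re}(\lambda)\le 0$, and equality would force $B_1^\tr v = 0$ with $v$ a marginal eigenvector, contradicting controllability of $(A+BK,B_1)$. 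Hence $A+BK$ is Hurwitz, i.e.\ $K\in\mathcal{K}$. Case a) is covered verbatim with $B_1B_1^\tr = W\succ0$, for which controllability is automatic and controllability of $(A,B)$ is not needed.

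For conclusion 1, I would show that $\mathcal{K}_\nu$ is closed and bounded in $\mathbb{R}^{m\times n}$, i.e.\ that $J_\LQR$ is coercive on $\mathcal{K}$. Closedness (no escape to the finite stability boundary): take $K_j\in\mathcal{K}_\nu$ with $K_j\to\bar K$ where $A+B\bar K$ has an imaginary-axis eigenvalue. If the Gramians $X_j$ stayed bounded, a convergent subsequence would produce $X_\infty\succeq0$ solving the limiting Lyapunov equation, which by the converse argument of conclusion 2 would force $A+B\bar K$ Hurwitz, a contradiction; hence $\mathrm{tr}(X_j)\to\infty$, with blow-up along the controllable marginal mode $v$. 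Preserved detectability then guarantees $(Q+\bar K^\tr R\bar K)^{1/2}v\neq0$, so $J_\LQR(K_j) = \mathrm{tr}((Q+K_j^\tr RK_j)X_j)\to\infty$, contradicting $J_\LQR(K_j)\le\nu$. For boundedness as $\|K\|_F\to\infty$ I would argue by case: a) is the known coercivity under $W\succ0$ (as in \cite{fatkhullin2021optimizing,bu2020policy}); c) is immediate from the identity $J_\LQR(K) = \mathrm{tr}(PW) = \|P^{1/2}B_1\|_F^2$ with $P$ from \cref{eq:dual-Lyapunov-eq}, since the assumed limit gives $J_\LQR(K)\ge c\,\|K\|_F$ for large $\|K\|_F$.

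I expect the boundedness in case b) to be the main obstacle: under only $Q\succ0$ (with possibly singular $W$) one must show $J_\LQR(K)\to\infty$ as $\|K\|_F\to\infty$. The plan is to reduce b) to the growth mechanism of c) by verifying $J_\LQR(K)\gtrsim\|K\|_F$. Concretely, $Q\succ0$ bounds $\mathrm{tr}(X)\le \nu/\lambda_{\min}(Q)$ on $\mathcal{K}_\nu$ and the input term bounds $\|KX^{1/2}\|_F$; passing to a subsequence along which $\|K_j\|_F\to\infty$ while $X_j\to X_\infty$ and $K_jX_j\to S_\infty$, a min-eigenvector analysis of the Lyapunov equation forces the degenerating direction of $X_j$ to be annihilated by $B_1^\tr$, which I would contradict using the feedback-invariant controllability of $(A+BK,B_1)$. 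Equivalently, one can work on the dual side with $J_\LQR(K)=\|P^{1/2}B_1\|_F^2$ and $P\succeq\lambda_{\min}(R)\int_0^\infty e^{(A+BK)^\tr t}K^\tr K\,e^{(A+BK)t}\,dt$ together with $B=B_1G$ to extract the linear-in-$\|K\|$ lower bound. Making this limiting argument fully rigorous, without an a priori lower bound on $\lambda_{\min}(X_j)$, is the delicate step; everything else reduces to the PBH and Lyapunov-pairing computations above.
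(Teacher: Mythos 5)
You correctly establish the two feedback-invariance facts (controllability of $(A+BK,B_1)$ for every $K$, detectability of $((Q+K^\tr RK)^{1/2},A+BK)$ for every $K$), conclusion 2, case (c), and case (a) boundedness essentially as the paper does in \cref{appedix:proof-proposition-sufficient-conditions-compactness}. Two genuine gaps remain.

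The decisive one is case (b) boundedness, which you yourself flag as unresolved. Your plan is to prove coercivity directly in the $K$-variable ($J_\LQR(K)\gtrsim\|K\|_F$) by a subsequence analysis without an a priori lower bound on $\lambda_\mathrm{min}(X)$; the paper never attempts this. Instead it changes variables to the lifted convex parameterization of the \texttt{ECL}: writing $Y$ for the lifted variable and $X_Y\succ0$ for the solution of the associated affine Lyapunov equation, $Q\succ0$ gives $\operatorname{tr}(X_Y)\le\nu/\lambda_\mathrm{min}(Q)$, hence $\lambda_\mathrm{min}(X_Y^{-1})\ge\lambda_\mathrm{min}(Q)/\nu$, and then
\begin{equation*}
\nu-J_\LQR(K^\star)\;\ge\;\operatorname{tr}\left(X_Y^{-1}Y^\tr RY\right)\;\ge\;\frac{\lambda_\mathrm{min}(R)\lambda_\mathrm{min}(Q)}{\nu}\,\|Y\|_F^2
\end{equation*}
bounds $\|Y\|_F$ outright. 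Combined with blow-up of $f_\cvx$ at the boundary (which under (b) needs only controllability of $(A+BK,B_1)$ and $f_\cvx\ge\lambda_\mathrm{min}(Q)\operatorname{tr}(X_Y)$), the $Y$-sublevel set is compact, and $\mathcal{K}_\nu$ is then compact as the image of a compact set under the continuous map $Y\mapsto YX_Y^{-1}$ (shifted by $K^\star$). This sidesteps exactly the obstruction you identify; without such a change of variables your program for (b) has no evident completion.

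The second gap is in your closedness step. Your contradiction argument (a bounded subsequence of Gramians $X_j$ would yield $X_\infty\succeq0$ solving the limit Lyapunov equation and certifying that $A+B\bar K$ is Hurwitz) is rigorous --- indeed cleaner than the paper's explicit eigenvector computation --- but it only yields $\operatorname{tr}(X_j)\to\infty$ with no directional information, so ``blow-up along the controllable marginal mode $v$'' is not something it delivers. Worse, even granting directional blow-up along $v$, the appeal to ``preserved detectability'' misapplies PBH: detectability of $((Q+\bar K^\tr R\bar K)^{1/2},A+B\bar K)$ constrains \emph{right} eigenvectors of $A+B\bar K$, whereas the controllable marginal mode $v$ (the one with $B_1^\tr v\neq0$) is a \emph{left} eigenvector, so nothing forces $(Q+\bar K^\tr R\bar K)v\neq0$. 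And in general $\operatorname{tr}(X_j)\to\infty$ together with $v^\her M_jv\ge\delta>0$ does not imply $\operatorname{tr}(M_jX_j)\to\infty$: the blow-up direction of $X_j$ can lie in the kernel of $M_j$. The situations where this link is cheap are precisely those the paper exploits: under (b), $Q\succ0$ gives $J_\LQR(K_j)\ge\lambda_\mathrm{min}(Q)\operatorname{tr}(X_j)\to\infty$ with no detectability needed; under (a), $W\succ0$ lets you run your own compactness argument on the observability Gramian $P_j$ from \cref{eq:dual-Lyapunov-eq} instead, where $J_\LQR(K_j)=\operatorname{tr}(P_jW)\ge\lambda_\mathrm{min}(W)\operatorname{tr}(P_j)$ and detectability is then the correctly oriented (right-eigenvector) certificate for the limit equation. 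For case (c), where neither $Q\succ0$ nor $W\succ0$ holds, a fully rigorous boundary argument needs the left-eigenvector (controllability) and right-eigenvector (detectability) conditions simultaneously, as in the argument behind \cite[Lemma D.1]{zheng2023benign}; your sketch, like the paper's terse reduction of (c) to (a), glosses over this, but your version additionally lacks the directional blow-up that the paper's eigenvector computation at least provides.
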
 

In the following, we present the proof for each condition (a)--(c). 

\subsubsection{Proof with the sufficient condition (a)}\label{appendix:proof-sufficient-condition-a}

The condition ($W\succ 0$) in \cref{proposition:sufficient-conditions} (a) is standard and has widely been used in \cite{mohammadi2019global,bu2020policy,fatkhullin2021optimizing}. 
For self-completeness,
we prove the first statement
(the compactness of $\mathcal{K}_\nu$) with a more careful treatment of matrices $Q$ and $W$ than \cite{bu2020policy,fatkhullin2021optimizing}.
Note that
the second statement ($X$ from  \cref{eq:Lyapunov-H2} is positive definite if and only if $K \in \mathcal{K}$) is  standard when $W\succ 0$ (see \cite[Chapter 3.8]{zhou1996robust} for detail). 

To prove the compactness, we establish two facts 
\begin{enumerate}
    \item $J_\LQR(K)\to\infty$ as $K\to\partial\mathcal{K}$; \item the set $K\in\mathcal{K}_\nu$ is bounded.
\end{enumerate}
Essentially, these two facts also establish the function $J_\LQR$ is coercive. Together with the continuity of $J_\LQR$, we know that its sublevel set $\mathcal{K}_\nu$ is compact. 

(1) We first show $J_\LQR(K)\to\infty$ as $K\to\partial\mathcal{K}$. This result can be considered as a special case in \cite[Lemma D.1]{zheng2023benign}, once we establish that $(A+BK,W^{1/2})$ is stabilizable and 
$
\left(\begin{bmatrix}
    Q^{1/2} \\ RK
\end{bmatrix}, A + BK\right)
$
is detectable.
(The detectability follows from \cref{assumption:controllability}.)
In the following, we give a direct argument. 

Consider a sequence of stabilizing controllers $\{K_i\}\subset \mathcal{K}$ such that $K_i\to K\in\partial\mathcal{K}$.
Since $K\in\partial\mathcal{K}$, we know $(A+BK)^\tr$ has at least one pure imaginary eigenvalue. Then, there must exist a pair $(\lambda,v)$ with $\|v\|=1$ such that $\lambda=\beta j$ with $\beta\in\mathbb R$ and $(A+BK)^\tr v=\lambda v$.
Now, consider a sequence of eigensystem $(\lambda_i,v_i)$ 
with $\|v_i\|=1$
such that 
$(\lambda_i,v_i)\to (\lambda,v)$ and
each $(\lambda_i,v_i)$ is
associated with the eigenvalue of the stable matrix $(A+BK_i)^\tr $ that has the largest real part.
Then, 
for the sequence $\{X_i\}$ from the Lyapunov equation \cref{eq:Lyapunov-H2} associated with $\{K_i\}$,
we have
\begin{align*}
v_i^\her X_i v_i 
=&v_i^\her\left( \int_{0}^\infty 
e^{(A+BK_i)t}We^{(A+BK_i)^\tr t}dt
\right)v_i\\
=& \int_0^\infty e^{2\mathrm{Re}(\lambda_i) t}
v_i^\her Wv_idt
 = \frac{v_i^\her W v_i}{-2\mathrm{Re}(\lambda_i)}.
\end{align*}
Since
$v^\her Wv> 0$ by
the stabilizability of $(A+BK,W^{1/2})$,
we have $v_i^\her X_i v_i\to \infty$ as $i\to \infty$ from $-\mathrm{Re}(\lambda_i)>0$ and $-\mathrm{Re}(\lambda_i)\to -\mathrm{Re}(\lambda)=0$.
For the same vector $v$, we also have $v^\her (Q+K^\tr RK)v> 0$
thanks to the detectability of $\left(\begin{bmatrix}
    Q^{1/2} \\ RK
\end{bmatrix}, A + BK\right)$.
Therefore, we have
\begin{align*}
    J_\LQR(K_i)
    = \mathrm{tr}(Q+K_i^\tr RK_i)X_i
    \geq \;&\mathrm{tr}\left( (Q+K_i^\tr RK_i)vv^\her v_iv_i^\her X_i v_iv_i^\her vv^\her\right)\\
    =\; & v_i^\her X_iv_i \times 
 |v^\her v_i|^2\times 
    v^\her (Q+K_i^\tr RK_i)v
    \to \infty, \qquad \text{as}\;\; i \to \infty.
\end{align*}

(2) Next, we show any $K\in\mathcal{K}_\nu$ is bounded.
We first see that 
\begin{equation*}
    \nu \geq J_\LQR(K)=\mathrm{tr}(Q+K^\tr RK)X
    \geq 
    \lambda_\mathrm{min}(X)\lambda_\mathrm{min}(R)
    \|K\|_F^2
\end{equation*}
for all $K\in\mathcal{K}_\nu$. The boundedness of $K$ will follow if $\lambda_\mathrm{min}(X)$ is lower bounded by $\mathcal{O}(1/\|K\|_F)$. This fact would also establish $J_{\LQR}(K) \to \infty$ as $K \to \infty$.

Let $w$ be a normalized eigenvector of $X$ associated with $\lambda_\mathrm{min}(X)$.
Then, it is not difficult to see that
\begin{equation*}
w^\tr \left((A+BK)X+X(A+BK)^\tr +W\right)w
= \lambda_\mathrm{min}(X)w^\tr\left((A+BK)+(A+BK)^\tr \right)w+w^\tr Ww=0,
\end{equation*}
which gives
\begin{equation*}
\lambda_\mathrm{min}(X)
= \frac{w^\tr Ww}{-w^\tr\left((A+BK)+(A+BK)^\tr\right)w}
\geq
\frac{\lambda_\mathrm{min}(W)}{
2\|A+BK\|_F
}
\geq 
\frac{\lambda_\mathrm{min}(W)}{
2\|A\|_F+2\|B\|_F\|K\|_F
}>0
\end{equation*}
from
\begin{equation*}
-w^\tr\left((A+BK)+(A+BK)^\tr\right)w
\leq -\lambda_\mathrm{min}\left((A+BK)+(A+BK)^\tr\right)
\leq
\left\|
(A+BK)+(A+BK)^\tr
\right\|_F.
\end{equation*}
Thus, we arrive at
\begin{equation*}
 \nu \geq J_\LQR(K)
    \geq \frac{\lambda_\mathrm{min}(W)\lambda_\mathrm{min}(R)\|K\|_F^2}{
    2\|A\|_F+2\|B\|_F\|K\|_F
    }. 
\end{equation*}
Therefore, $K$ must be bounded.  

\subsubsection{Proof with the sufficient condition (b)}\label{appendix:PL-W>=0-Q>0}

First, we check the second statement ($K\in\mathcal{K}$ iff $X\succ 0$).
By the assumption of $\operatorname{Im}B\subseteq \operatorname{Im}B_1$, for all $K\in\mathbb{R}^{m\times n}$ (including non-stabilizing gains), we have the controllability of $(A+BK,B_1)$ from the controllability of $(A,B)$ (recall the PBH test \cite{zhou1996robust}). 
Thus, the Lyapunov equation \cref{eq:Lyapunov-H2} has a positive definite solution $X\succ0$ if and only if 
$(A+BK)$ is stable, i.e., $K$ is stabilizing
(see \cite[Lemmas 3.18]{zhou1996robust}). 

We now establish the first statement: the compactness of $\mathcal{K}_\nu$
for any $\nu\geq J_\LQR(K^\star)$.
Here, since \cref{assumption:compactness} (2) is true from the discussion above, we can similarly construct the $\ECL$ in \cref{subsection:ECL-GD}.
Recalling the 
invertibility of $\mathcal{A}_\star(\cdot)$,
we have
\begin{equation}\label{eq:K-C}
\mathcal{K}_\nu = 
\Xi\left(
\mathcal{C}_\nu
\right)
\end{equation}
with the continuous map $\Xi:\pi_Y(\mathcal{F}_\cvx)
\to\mathcal{K}$, given by 
$
    \Xi (Y) = YX_Y^{-1}
$ 
with $X_Y =- \left( \mathcal{A}_\star^{-1}(\mathcal{B}(Y)+W) \right)\succ 0$,
and
$
    \mathcal{C}_\nu := \left\{
    Y
    \in \pi_Y(\mathcal{F}_\cvx)
    | 
    f_\cvx(Y,X_Y) \leq \nu
    \right\}.
$ 
Thus,
we show the compactness of $\mathcal{C}_\nu$ instead.

First, we show that the set ${\mathcal{C}}_\nu$ is bounded. 
By $Q\succ0$, we see that
\begin{equation*}
    \nu\geq {f}_\mathrm{cvx}(Y)
    \geq \mathrm{tr}(QX_Y) \geq \lambda_{\min}(Q)
    \mathrm{tr}(X_Y)
    {\geq \lambda_\mathrm{min}(Q)
    \lambda_\mathrm{max}(X_Y),} 
\end{equation*}
which gives
\begin{equation*}
    \lambda_\mathrm{min}(X_Y^{-1})
    =\frac{1}{
    \lambda_\mathrm{max}(X_Y)}
    \geq \frac{\lambda_\mathrm{min}(Q)}{\nu}>0.
\end{equation*}
Thus, we observe that 
from the proof of \cref{appendix:subsubsec-proof-f_cvx-QG},
$Y$ is bounded as
\begin{align*}
    &\nu -J_\LQR(K^\star) \geq
    \mathrm{tr}(X_Y^{-1}Y^\tr RY)
    \geq \lambda_\mathrm{min}(X_Y^{-1})
    \lambda_\mathrm{min}(R)\|Y\|_F^2
    \geq \frac{\lambda_\mathrm{min}(R)\lambda_\mathrm{min}(Q)\|Y\|_F^2}{\nu} \\
\Rightarrow \quad &\|Y\|_F \leq \frac{\nu -J_\LQR(K^\star)}{\sqrt{\lambda_\mathrm{min}(R)\lambda_\mathrm{min}(Q)}}.
\end{align*}

On the other hand, 
over the entire feasible set $\pi_Y({\mathcal{F}}_\LQR)$ of $Y$,
consider the sequence $\{Y_i\}$ such that
\begin{equation*}
    Y_i\to Y ^*\in\partial \pi_Y({\mathcal{F}}_\LQR).
\end{equation*}
Then, the matrix $A+BY_iX_{Y_i}^{-1}$ converges to an unstable matrix $A+BK^*$. 
For the normalized eigenvector $v\in\mathbb{C}^n$ satisfying $(A+BK^*)^\tr v = \beta j v$ with some $\beta \in\mathbb{R}$, we have $B_1^\tr v\neq 0$ by the controllability of $(A+BK^*,B_1)$.
Consequently, we can show
$\|X_{Y_i}\|_F\to \infty$ in the same way as the sufficient condition (a).
Thus, recalling $Q\succ 0$, we conclude that ${f}_\mathrm{cvx}(Y_i)
\geq \mathrm{tr}(QX_{Y_i})\geq \lambda_\mathrm{min}(Q)\mathrm{tr}(X_{Y_i})
\to\infty$ as $Y_i\to \partial\pi_Y({\mathcal{F}}_\LQR)$.

Therefore, ${\mathcal{C}}_\nu$ is compact, and thus
we have the compactness of $\mathcal{K}_\nu$ from \cref{eq:K-C}.

\subsubsection{Proof with the sufficient condition (c)}\label{appendix:PL-QG-W>=0}

Since $(A+BK,B_1)$ for any $K\in\mathbb{R}^{m\times n}$ is controllable
by the controllability of $(A,B)$ and $\operatorname{Im}B\subseteq \operatorname{Im}B_1$, we have the second statement
from the same argument as \cref{proposition:sufficient-conditions} (b).

Thus, we here prove the first statement, the compactness of $\mathcal{K}_\nu$.
Now, it suffices to see the coercivity of $J_\LQR(K)$ over $\mathcal{K}$: 
$J_\LQR(K)\to\infty$ as $K\to\partial\mathcal{K}$ and
$J_\LQR(K)\to\infty$ as $\|K\|_F\to\infty$.
The first property follows from the proof of \cref{proposition:sufficient-conditions} (a)
because $(A+BK,B_1) =(A+BK,W^{1/2})$ is controllable.
The second property can also be verified as follows.
With $P$ in \cref{eq:dual-Lyapunov-eq},
we can transform $J_\LQR(K)$ as
\begin{equation*}
    J_\LQR(K)
    = \mathrm{tr}(Q+K^\tr RK)X
    = \mathrm{tr}(P W)
    = \mathrm{tr}(P B_1B_1^\tr).
\end{equation*}
Thus, it is not difficult to see that
when $\|K\|_F\to\infty$,
\begin{equation*}
    J_\LQR(K)
    = \left\|P ^{1/2}
    B_1
    \right\|_F^2
    = \|K\|_F\times\frac{\left\|P ^{1/2}
    B_1
    \right\|_F^2}{\|K\|_F}
    =
    \|K\|_F\times\left(\frac{\left\|P ^{1/2}
    B_1
    \right\|_F}{\|K\|_F^{1/2}}\right)^2
    \to \infty
\end{equation*}
from $\lim_{\|K\|_F\to\infty}\frac{\left\|P ^{1/2}
    B_1
    \right\|_F}{\|K\|_F^{1/2}}>0$.
This completes the proof.

\section{Discussions on (non)-unique optimal LQR feedback gains}\label{appendix:non-uniqueness}

Here, we provide some details regarding the (non)-uniqueness of optimal control gain $K^\star$ for the LQR problem.
While the optimal gain is unique under \cref{assumption:controllability,assumption:compactness}, we may have an infinite number of solutions in some other cases as \cref{example:W-singular}.

In our discussions below, the set
\begin{align*}
   \mathcal{K}_{\mathrm{ctrb}} :=&\{
    K\in\mathcal{K}|
    (A+BK,W^{1/2}) \text{ is controllable}
    \}
\end{align*}
plays a pivotal role.
As seen in our main results in \cref{section:GD}, 
if $\mathcal{K}=\mathcal{K}_{\mathrm{ctrb}}$
(as \cref{proposition:sufficient-conditions} (a,b)),
we can formulate $\ECL$s and have the gradient dominance
beyond the uniqueness.
Having $\mathcal{K}\setminus \mathcal{K}_\mathrm{ctrb}\neq \emptyset$, however, can cause the non-uniqueness.
Recall that 
the matrix $W$ is the covariance matrix of the initial point $x_0$, i.e., $W = \mathbb{E}[x_0x_0^\tr]$ (see \cref{section:LQR-formulation}).

First, we summarize
two important properties of
the set $\mathcal{K}_\mathrm{ctrb}$.
Our main results in \cref{section:GD} 
and \cref{assumption:compactness}
heavily rely on the following properties.
\begin{proposition}[Properties of matrix $X$ and $J_\LQR$ on $\mathcal{K}_\mathrm{ctrb}$]\label{proposition:uniqueness-X>0}
With \cref{assumption:controllability}, the following statements hold
for any $K\in\mathcal{K}_\mathrm{ctrb}$.
\begin{enumerate}
    \item (Positive definiteness of $X$).
    The solution $X$ to \cref{eq:Lyapunov-H2} is positive definite if and only if
    $K\in\mathcal{K}_\mathrm{ctrb}$.
    \item (Uniqueness of the stationary point and its optimality). If
    $\nabla J_\LQR(K)=0$ holds for $K\in\mathcal{K}_\mathrm{ctrb}$,
    we uniquely have $K= - R^{-1}B^\tr P^\star $
    with the unique solution $P^\star$ to the ARE in \cref{eq:Riccati-eq}, and $K$ is an optimal solution.
\end{enumerate}
\end{proposition}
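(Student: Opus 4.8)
The plan is to treat the two statements separately, relying throughout on the integral (Gramian) representation of Lyapunov solutions together with the standard gradient formula for $J_\LQR$ and the uniqueness results of \cref{lemma:Riccati-equation,lemma:Riccati-inequality}. For the first statement, I would start from the observation that for any stabilizing $K\in\mathcal{K}$ the matrix $A+BK$ is Hurwitz, so the unique solution to the Lyapunov equation \cref{eq:Lyapunov-H2} admits the closed form
\[
X=\int_0^\infty e^{(A+BK)t}\,W\,e^{(A+BK)^\tr t}\,dt,
\]
which is precisely the controllability Gramian of the pair $(A+BK,W^{1/2})$ once we factor $W=W^{1/2}(W^{1/2})^\tr$. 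The equivalence ``$X\succ 0$ if and only if $(A+BK,W^{1/2})$ is controllable'' is then a direct application of the classical Gramian characterization of controllability \cite[Lemma 3.18]{zhou1996robust}, and the right-hand condition is exactly the membership $K\in\mathcal{K}_\mathrm{ctrb}$. This step is essentially bookkeeping and I do not expect difficulties.

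For the second statement, I would use the well-known gradient expression $\nabla J_\LQR(K)=2\,(RK+B^\tr P)\,X$, where $X$ solves \cref{eq:Lyapunov-H2} and $P$ solves the costate Lyapunov equation \cref{eq:dual-Lyapunov-eq} \cite{levine1970determination}. Given $K\in\mathcal{K}_\mathrm{ctrb}$ with $\nabla J_\LQR(K)=0$, the first statement guarantees $X\succ 0$, hence $X$ is invertible and stationarity forces $RK+B^\tr P=0$, i.e.\ $K=-R^{-1}B^\tr P$. Substituting this expression for $K$ back into \cref{eq:dual-Lyapunov-eq} and canceling the cross terms collapses the costate equation exactly into the algebraic Riccati equation $\mathcal{R}(P)=0$ of \cref{eq:Riccati-eq}.

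The crux---and the step I expect to require the most care---is to certify that the $P$ produced in this way is precisely the distinguished positive semidefinite solution $P^\star$, since the ARE may in principle admit several symmetric solutions. Here I would argue that, because $A+BK$ is stable and $Q+K^\tr RK\succeq 0$, the costate equation \cref{eq:dual-Lyapunov-eq} has the Gramian representation
\[
P=\int_0^\infty e^{(A+BK)^\tr t}\,(Q+K^\tr RK)\,e^{(A+BK)t}\,dt\succeq 0,
\]
so $P$ is positive semidefinite; equivalently, $A-BR^{-1}B^\tr P=A+BK$ is Hurwitz, so $P$ is a \emph{stabilizing} solution to the ARE. Uniqueness of the positive semidefinite (stabilizing) solution from \cref{lemma:Riccati-equation} then pins down $P=P^\star$, whence $K=-R^{-1}B^\tr P^\star$ is uniquely determined. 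Finally, this $K$ coincides with the gain identified in \cref{theorem:strong-duality-2}, which certifies its global optimality and completes the proof.
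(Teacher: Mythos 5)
Your proposal is correct and follows essentially the same route as the paper's proof: the Gramian characterization of controllability (citing \cite[Lemma 3.18]{zhou1996robust}) for the first statement, and for the second the gradient formula $\nabla J_\LQR(K)=2(RK+B^\tr P)X$ with $X\succ 0$, substitution of $K=-R^{-1}B^\tr P$ into \cref{eq:dual-Lyapunov-eq} to recover the ARE, and the uniqueness of $P^\star$ from \cref{lemma:Riccati-equation}. In fact you are slightly more explicit than the paper at the one delicate point---verifying that the costate solution $P$ is positive semidefinite (equivalently, stabilizing) so that \cref{lemma:Riccati-equation} indeed pins it down as $P^\star$---a step the paper's proof leaves implicit.
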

\begin{proof}
    The first statement is straightforward from \cite[Lemma 3.18]{zhou1996robust}.
    The second statement follows from $X\succ 0$. Indeed,
    when $X\succ 0$ for $K$, 
    \begin{equation*}
    \nabla J_\LQR(K)
    = (RK+B^\tr P)X =0
    \quad \Leftrightarrow
    \quad
    RK+B^\tr P =0
    \quad \Leftrightarrow
    \quad K=-R^{-1}B^\tr P
    \end{equation*}
    where $P$ is from \cref{eq:dual-Lyapunov-eq}.
    For the derivation of $\nabla J_\LQR(K)$, see for example
    \cite[Section IV]{levine1970determination}.
    Now, by substituting $K=-R^{-1}B^\tr P$ into \cref{eq:dual-Lyapunov-eq}, we obtain the ARE \cref{eq:Riccati-eq}.
    The uniqueness of $K$ is guaranteed by the uniqueness of $P^\star$ in \cref{lemma:Riccati-equation}.
    Thus, we have $P=P^\star$, and $K^\star$ is clearly optimal (see e.g., \cref{appendix:LQR}).
\end{proof}

This proposition implies that the first part of \cref{assumption:compactness} is crucial for our main results based on
the $\ECL$ (i.e., the exact convex reformulation).
For $K\in \mathcal{K}\setminus \mathcal{K}_\mathrm{ctr}$, we have $X\succeq 0$, under which we cannot formulate $\ECL$s.
Moreover, we also observe that
the non-uniqueness of optimal gains cannot happen on
$\mathcal{K}_\mathrm{ctrb}$
under \cref{assumption:controllability}.
Indeed, the following proposition shows that
if we see the non-uniqueness,
the solutions lie in $\mathcal{K}\setminus\mathcal{K}_\mathrm{ctrb}$.
This is consistent with \cref{example:W-singular} because $(A+BK,W^{1/2})$ is uncontrollable and $X$ becomes singular for $K=[k,k]\in\mathcal{K}$ where we have the uncountable optimal  solutions.
\begin{proposition}[Properties of non-unique optimal gains]
Suppose \cref{assumption:controllability}.
Let $K^\star= -RB^\tr P^\star $
with $P^\star$ from the ARE \cref{eq:Riccati-eq}.
Then,
the following statements hold.
\begin{enumerate}
    \item 
    If we have another optimal solution $K^\star_1\in\mathcal{K}$ satisfying $K_1^\star \neq K^\star $,
    we have $K^\star_1 \in \mathcal{K}\setminus
    \mathcal{K}_\mathrm{ctrb}$.
    \item 
    If we have two optimal solutions $K^\star_1,\,K^\star_2 \in\mathcal{K}$, then
    \begin{equation*}
        K^\star_\eta
        := K^\star_1 + \eta (K^\star_1 - K^\star_2)
    \end{equation*}
    is also optimal
    for any $\eta\in\mathbb{R}$
    as long as $K^\star_\eta\in\mathcal{K}$.
\end{enumerate}
\end{proposition}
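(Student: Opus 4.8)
The plan is to reduce both statements to a single \emph{linear} description of the optimal set, after which each part becomes almost immediate. First I would record the completion-of-squares identity, valid for every $K\in\mathcal{K}$,
\begin{equation*}
J_\LQR(K)-J_\LQR(K^\star) = \operatorname{tr}\!\big((K-K^\star)^\tr R\,(K-K^\star)X_K\big),
\end{equation*}
where $X_K\succeq 0$ is the solution of the Lyapunov equation \cref{eq:Lyapunov-H2} associated with $K$, and $K^\star=-R^{-1}B^\tr P^\star$. This identity follows from the ARE \cref{eq:Riccati-eq} together with $B^\tr P^\star=-RK^\star$, and is exactly the relation whose one-sided version is invoked in \cref{appendix:subsubsec-proof-f_cvx-QG}. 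Since $R\succ 0$ and $X_K\succeq 0$, the right-hand side is nonnegative and vanishes precisely when $(K-K^\star)X_K=0$; hence $K\in\mathcal{K}$ is optimal if and only if $(K-K^\star)X_K=0$.

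The crucial step I would then carry out is to show that the Lyapunov factor $X_K$ coincides with the \emph{fixed} matrix $X^\star:=X_{K^\star}$ for every optimal $K$, which eliminates the $K$-dependence from the optimality condition. Indeed, if $(K-K^\star)X_K=0$ then $BKX_K=BK^\star X_K$, so $(A+BK)X_K=(A+BK^\star)X_K$; substituting this into the Lyapunov equation for $K$ shows that $X_K$ solves $(A+BK^\star)X+X(A+BK^\star)^\tr+W=0$. Because $A+BK^\star=A-BR^{-1}B^\tr P^\star$ is stable by \cref{lemma:Riccati-equation}, this equation has a unique solution, forcing $X_K=X^\star$. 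I would thus obtain the clean characterization that the optimal set is exactly $\{K\in\mathcal{K}\mid (K-K^\star)X^\star=0\}$, i.e. a single linear constraint on $K$ intersected with the stabilizing region $\mathcal{K}$.

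With this in hand, both claims drop out. For the first statement, an optimal $K_1^\star$ is in particular a stationary point of $J_\LQR$ on the open set $\mathcal{K}$; if $K_1^\star\in\mathcal{K}_\mathrm{ctrb}$, then \cref{proposition:uniqueness-X>0} would force $K_1^\star=-R^{-1}B^\tr P^\star=K^\star$, contradicting $K_1^\star\neq K^\star$, so $K_1^\star\in\mathcal{K}\setminus\mathcal{K}_\mathrm{ctrb}$ (equivalently, $X^\star=X_{K_1^\star}$ is singular, since $(K_1^\star-K^\star)X^\star=0$ with $K_1^\star\neq K^\star$, so again $K_1^\star\notin\mathcal{K}_\mathrm{ctrb}$ by \cref{proposition:uniqueness-X>0}). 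For the second statement, if $K_1^\star,K_2^\star$ are optimal then $(K_1^\star-K^\star)X^\star=0$ and $(K_2^\star-K^\star)X^\star=0$, whence
\begin{equation*}
(K_\eta^\star-K^\star)X^\star=(K_1^\star-K^\star)X^\star+\eta\big((K_1^\star-K^\star)-(K_2^\star-K^\star)\big)X^\star=0 .
\end{equation*}
Thus $K_\eta^\star$ satisfies the linear optimality condition, and by the characterization above it is optimal whenever $K_\eta^\star\in\mathcal{K}$.

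The hard part will be the reduction in the second paragraph. A priori, optimality of $K$ is a \emph{nonlinear} condition, since $X_K$ itself depends on $K$, so there is no reason for the optimal set to be convex or affine. The key observation that every optimal gain shares the single Lyapunov solution $X^\star$ — established through uniqueness of the stable Lyapunov equation — is precisely what collapses the condition to the fixed linear equation $(K-K^\star)X^\star=0$, making both the membership claim of part 1 and the affine-closure claim of part 2 transparent. Once that reduction is secured, the remainder is routine bookkeeping with the completion-of-squares identity and direct appeals to \cref{proposition:uniqueness-X>0,lemma:Riccati-equation}.
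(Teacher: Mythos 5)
Your proof is correct, but it takes a genuinely different route from the paper's on the second statement, so the comparison is worth recording. For part 1 you and the paper do essentially the same thing: the cost-difference identity $J_\LQR(K)-J_\LQR(K^\star)=\mathrm{tr}\left((K-K^\star)^\tr R(K-K^\star)X_K\right)$ (the paper quotes it from \cite[Corollary 3.5.1]{bu2020policy}) together with $R\succ 0$ forces $(K_1^\star-K^\star)X_{K_1^\star}=0$, so $X_{K_1^\star}$ is singular and \cref{proposition:uniqueness-X>0} places $K_1^\star$ outside $\mathcal{K}_\mathrm{ctrb}$. The real divergence is part 2: the paper argues at the trajectory level, invoking the uniqueness of the optimal input $u(t)$ from the completion-of-squares argument of \cref{appendix:LQR} to get $(K_1^\star-K_2^\star)x(t)=0$ along the optimal trajectory, so that $K_\eta^\star$ reproduces the same closed-loop input; your argument stays entirely at the matrix level. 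Your key step --- substituting $BKX_K=BK^\star X_K$ back into \cref{eq:Lyapunov-H2} and using uniqueness of the Lyapunov solution for the stable matrix $A+BK^\star$ (\cref{lemma:Riccati-equation}) to conclude $X_K=X^\star$ for every optimal $K$ --- yields the exact description of the optimal set as $\{K\in\mathcal{K}\mid (K-K^\star)X^\star=0\}$, an affine subspace intersected with $\mathcal{K}$, from which both parts are immediate. This is strictly more informative than the proposition itself, and it avoids the paper's appeal to the per-realization uniqueness of the optimal input signal, which is stated somewhat informally there (the expectation over $x_0$ with $\mathbb{E}[x_0x_0^\tr]=W$ requires an almost-sure argument). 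One step you should spell out, since part 2 relies on it: the converse direction of your characterization. If $K\in\mathcal{K}$ satisfies $(K-K^\star)X^\star=0$, then $X^\star$ solves the Lyapunov equation \cref{eq:Lyapunov-H2} for $K$, hence $X_K=X^\star$ by stability of $A+BK$, and the identity then gives $J_\LQR(K)-J_\LQR(K^\star)=\mathrm{tr}\left((K-K^\star)^\tr R(K-K^\star)X^\star\right)=0$; this is the same one-line uniqueness argument run in the opposite direction, so it is an omission of exposition rather than a gap in the method.
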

\begin{proof}
The first part is shown as follows.
Leveraging the proof of the lower bound of $J_\LQR(K)$ in \cite[Corollary 3.5.1]{bu2020policy},
we have
\begin{equation*}
0=J_\LQR(K^\star_1)
-J_\LQR(K^\star)
= \mathrm{tr}(K_1^\star -K^\star)R(K_1^\star -K^\star)X^\star_1,
\end{equation*}
where $X^\star_1$ is the solution to the Lyapunov equation \cref{eq:Lyapunov-H2} associated with $K_1^\star$.
Then, by $R>0$, we observe that $\left\|(K^\star_1-K^\star)(X_1^\star)^{1/2}\right\|=0$.
Since $K^\star_ 1-K^\star\neq0$, $X_1^\star$ must be singular and thus $K^\star_1\notin \mathcal{K}_\mathrm{ctrb}$.

Next, let us show the second statement.
Recall that \cref{assumption:controllability} confirms the uniqueness of optimal $u(t)$ to the problem \cref{eq:LQR} (for the details, see \cref{appendix:LQR}).
Thus, we have
\begin{equation*}
    K^\star_1 x(t) = K^\star_2 x(t),\quad \forall t\geq 0
\end{equation*}
for the initialization $x(0)=x_0$ satisfying $\mathbb{E}[x_0x_0^\tr]=W$,
which implies $(K^\star_1-K^\star_2)x(t)=0$.
Therefore, we have
\begin{equation*}
    K_\eta^\star x(t)
    = K^\star_1 x(t)
    +\eta (K^\star_1-K_2^\star)x(t)
    = K^\star_1 x(t),\quad \forall t\geq 0,
\end{equation*}
and thus $K_\eta^\star$ is also optimal if $K_\eta ^\star$ is stabilizing, i.e., $K^\star_\eta\in\mathcal{K}$.
\end{proof}

Without controllability,
we may have many optimal LQR gains.
On the other hand, if we only have stabilizability, then the coercivity of $J_\LQR(K)$ still holds.
This shows that
the stabilizability of $(A+BK,W^{1/2})$ for $K\in\partial \mathcal{K}$ is sufficient for confirming $J_\LQR(K_j)\to \infty$ as $K_j\to K\in \partial\mathcal{K}$.

\begin{proposition}
With \cref{assumption:controllability},
for a sequence of stabilizing gains  
$\{K_j\}\subset \mathcal{K}$ satisfying
$K_j\to K\in \partial \mathcal{K}_\mathrm{stab}$ where
$
    \partial\mathcal{K}_{\mathrm{stab}} :=\{
    K\in\partial\mathcal{K}|
    (A+BK,W^{1/2}) \text{ is stabilizable}
    \},
$ 
we have $J_\LQR(K_j)\to \infty$ as $K_j\to K$. 
\end{proposition}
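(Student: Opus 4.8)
The plan is to show that the proposition follows almost verbatim from part~(1) of the proof of \cref{proposition:sufficient-conditions}~(a) in \cref{appendix:proof-sufficient-condition-a}: the only property of $W$ actually used there is that $v^\her W v>0$ for the relevant boundary eigenvector $v$, and this is precisely what stabilizability of $(A+BK,W^{1/2})$ guarantees even when $W$ is merely positive semidefinite. So the strategy is to re-run that argument and isolate the one step where the weakened hypothesis enters.

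First I would fix $K\in\partial\mathcal{K}_{\mathrm{stab}}$. Since $K\in\partial\mathcal{K}$, the matrix $(A+BK)^\tr$ has an eigenvalue $\lambda=\beta j$ on the imaginary axis; let $v$ with $\|v\|=1$ satisfy $(A+BK)^\tr v=\lambda v$, so that $v^\her(A+BK)=\bar\lambda v^\her$ with $\mathrm{Re}(\bar\lambda)=0$. The key step is the Popov--Belevitch--Hautus test: because $(A+BK,W^{1/2})$ is stabilizable and $v^\her\big((A+BK)-\bar\lambda I\big)=0$ with $\mathrm{Re}(\bar\lambda)\geq 0$, the row $v^\her$ cannot annihilate $W^{1/2}$, i.e. $v^\her W^{1/2}\neq 0$, and hence $v^\her W v=\|W^{1/2}v\|^2>0$. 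This single inequality replaces the role played by $W\succ 0$ in \cref{appendix:proof-sufficient-condition-a}.

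Next I would reproduce the limiting Gramian computation. Taking a sequence $\{(\lambda_j,v_j)\}$ of eigenpairs of $(A+BK_j)^\tr$ with $\|v_j\|=1$ converging to $(\lambda,v)$, where $\lambda_j$ is the eigenvalue of the stable matrix $(A+BK_j)^\tr$ of largest real part (so $\mathrm{Re}(\lambda_j)\to 0^-$), the solution $X_j$ of the Lyapunov equation \cref{eq:Lyapunov-H2} associated with $K_j$ satisfies $v_j^\her X_j v_j=v_j^\her W v_j/(-2\mathrm{Re}(\lambda_j))$. Since $v_j^\her W v_j\to v^\her W v>0$ while $-\mathrm{Re}(\lambda_j)\to 0^+$, we obtain $v_j^\her X_j v_j\to\infty$. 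Combining this blow-up with the detectability of $\left(\begin{bmatrix}Q^{1/2}\\ R^{1/2}K\end{bmatrix},A+BK\right)$ inherited from \cref{assumption:controllability}, together with the lower bound on $\mathrm{tr}\big((Q+K_j^\tr RK_j)X_j\big)$ employed in \cref{appendix:proof-sufficient-condition-a}, yields $J_\LQR(K_j)\to\infty$.

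The main obstacle is exactly the step just isolated: when $W$ is only positive semidefinite, $v^\her W v$ could a priori vanish, which would destroy the Gramian blow-up. The resolution, and the one genuinely new ingredient relative to \cref{proposition:sufficient-conditions}~(a), is the PBH characterization of stabilizability, which converts the hypothesis $K\in\partial\mathcal{K}_{\mathrm{stab}}$ into the strict inequality $v^\her W v>0$ needed to drive $v_j^\her X_j v_j$ to infinity. A secondary technical point is transferring the single-direction blow-up of $X_j$ into the full trace $\mathrm{tr}\big((Q+K_j^\tr RK_j)X_j\big)$, which requires the cost weight to be positive in the blow-up direction; this is handled exactly as in \cref{appendix:proof-sufficient-condition-a}, and full rigor may alternatively be obtained by specializing the coercivity result \cite[Lemma D.1]{zheng2023benign} once stabilizability of $(A+BK,W^{1/2})$ and detectability of the cost output are in hand.
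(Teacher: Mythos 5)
Your proposal is faithful to the paper's own proof, which is nothing more than a pointer to the first part of \cref{appendix:proof-sufficient-condition-a}: you re-run that argument and correctly identify the one place where the weakened hypothesis enters, namely the PBH step that stabilizability of $(A+BK,W^{1/2})$ prevents the left eigenvector $v$ (i.e., $(A+BK)^\tr v=\lambda v$ with $\mathrm{Re}(\lambda)=0$) from annihilating $W^{1/2}$, whence $v^\her Wv>0$ and $v_j^\her X_jv_j=v_j^\her Wv_j/(-2\mathrm{Re}(\lambda_j))\to\infty$. The paper's text in \cref{appendix:proof-sufficient-condition-a} already invokes stabilizability at exactly this point, so up to here your reconstruction and the paper's intended proof coincide.

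The problem is the step you label a ``secondary technical point'' and defer back to the appendix: it is not actually handled there, and as written it fails, both in your proposal and in the paper. Detectability of $\bigl(\bigl[\begin{smallmatrix}Q^{1/2}\\ RK\end{smallmatrix}\bigr],\,A+BK\bigr)$ constrains \emph{right} eigenvectors of $A+BK$, while the blow-up direction $v$ is a \emph{left} eigenvector, so the claim $v^\her(Q+K^\tr RK)v>0$ has no justification and can be false. Concretely, take
\begin{equation*}
A=\begin{bmatrix}0&-1\\ 1&-1\end{bmatrix},\quad B=\begin{bmatrix}1\\ 0\end{bmatrix},\quad Q=\begin{bmatrix}0&0\\ 0&1\end{bmatrix},\quad R=1,\quad W=I,
\end{equation*}
which satisfies \cref{assumption:controllability} ($A$ is Hurwitz) and makes $(A+BK,W^{1/2})$ stabilizable for every $K$. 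The gain $K=[0,\;1]\in\partial\mathcal{K}_{\mathrm{stab}}$ is approached by $K_\epsilon=[-\epsilon,\;1]\in\mathcal{K}$; the closed loop is $A+BK=\bigl[\begin{smallmatrix}0&0\\ 1&-1\end{smallmatrix}\bigr]$ with left eigenvector $v=[1,\,0]^\tr$ for the eigenvalue $0$, and $v^\tr(Q+K^\tr RK)v=0$ even though the output pair is detectable. Along this sequence the lower bound used in the appendix, $v_\epsilon^\her X_\epsilon v_\epsilon\cdot|v^\her v_\epsilon|^2\cdot v^\her(Q+K_\epsilon^\tr RK_\epsilon)v$, equals $\epsilon/2\to 0$, while the true cost $J_\LQR(K_\epsilon)=\tfrac{1}{\epsilon(1+\epsilon)}+O(1)$ does diverge; moreover, the trace inequality $\mathrm{tr}(MX)\geq\mathrm{tr}\left(Mvv^\her v_\epsilon v_\epsilon^\her Xv_\epsilon v_\epsilon^\her vv^\her\right)$ invoked to produce that bound is false for general $M,X\succeq 0$. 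The correct pairing is stabilizability--left eigenvector--$W$ and detectability--right eigenvector--$(Q+K^\tr RK)$, and welding these two facts into divergence of the trace requires an argument present in neither your proposal nor the appendix. Your closing remark that full rigor can instead be obtained from \cite[Lemma D.1]{zheng2023benign} once stabilizability and detectability are verified is therefore not optional polish: it is the step that actually carries the proof, for you and for the paper alike.
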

The proof of this result follows the first part of \cref{appendix:proof-sufficient-condition-a}.

\section{Completion of squares in the classical LQR proof} \label{appendix:LQR}

Here, we review a classical proof of the global optimality of a static state feedback policy in LQR using a completion-of-squares technique. This approach can be found 
in classical textbooks such as 
\cite[Theorem 14.2]{zhou1996robust} and \cite[Theorem 21.1]{hespanha2018linear}.
In the following proof, we assume \cref{assumption:controllability} holds.

Consider the LQR problem \cref{eq:LQR-w/o-state-fb}, and let $P$ be a $n\times n$ symmetric matrix.
Then, for any input $u\in\mathcal{L}_2^m[0,\infty)$ that stabilizes the system, we have that 
\begin{equation} \label{eq:LQR-squares-step-1}
\begin{aligned}
\int_0^\infty \frac{d}{dt} (x(t)^\tr Px(t))dt = &\lim_{T \to \infty}\int_0^T \frac{d}{dt} (x(t)^\tr Px(t))dt \\
= & \lim_{T \to \infty }x(T)^\tr Px(T) - x(0)^\tr Px(0) = - x(0)^\tr Px(0). 
\end{aligned}
\end{equation}
We also have 
\begin{equation} \label{eq:LQR-squares-step-2}
\begin{aligned}
    \frac{d}{dt} (x(t)^\tr Px(t)) &= \dot{x}(t)^\tr Px(t) + x(t)^\tr P \dot{x}(t) \\
    &= \left(Ax(t)+Bu(t)\right)^\tr Px(t)
    +x(t)^\tr P \left(Ax(t)+Bu(t)\right),
\end{aligned}
\end{equation}
where we have applied the dynamics $\dot{x} = Ax + Bu$. 
Then, we can reformulate the LQR cost function as 
\begin{equation} \label{eq:LQR-squares-step-3}
\begin{aligned}
    &\int_{0}^\infty \left(x(t)^\tr Qx(t) + u(t)^\tr Ru(t)\right)dt \\
    =& \int_{0}^\infty \left(x(t)^\tr Qx(t) + u(t)^\tr Ru(t)\right)dt
    + \int_0^\infty \frac{d}{dt} (x(t)^\tr Px(t))dt
    - \int_0^\infty \frac{d}{dt} (x(t)^\tr Px(t))dt \\
   =&\int_{0}^\infty \left(x(t)^\tr Qx(t) + u(t)^\tr Ru(t)
    +\left(Ax(t)+Bu(t)\right)^\tr Px(t)
    +x(t)^\tr P \left(Ax(t)+Bu(t)\right)
    \right)dt
    + x_0^\tr P x_0
    \\
    =&
    \int_{0}^\infty
    \left(x(t)^\tr \left(A^\tr P+PA+Q\right)x(t) + u(t)^\tr Ru(t)
    + 2u(t)^\tr B^\tr Px(t)  \right)dt  
    + x_0^\tr P x_0, 
\end{aligned}
\end{equation}
where the second equality applies \Cref{eq:LQR-squares-step-1,eq:LQR-squares-step-2}. We observe that the last equality is a quadratic functional of $x(t)$ and $u(t)$, and moreover, we can perform a \textit{completion of squares} as follows
\begin{equation} \label{eq:LQR-squares-step-4}
\begin{aligned}
    &\int_{0}^\infty
    \left(x(t)^\tr \left(A^\tr P+PA+Q\right)x(t) + u(t)^\tr Ru(t)
    + 2u(t)^\tr B^\tr Px(t)  \right)dt  
    + x_0^\tr P x_0
    \\
    =& \int_{0}^\infty
    (u(t)+R^{-1}B^\tr Px(t))^\tr R(u(t)+R^{-1}B^\tr Px(t))dt \\
    &
    +\int_{0}^\infty
    x(t)^\tr \left(A^\tr P+PA+Q - PBR^{-1}B^\tr P \right)x(t) dt
    +x_0^\tr P x_0.
\end{aligned}
\end{equation}
By completing the square in \cref{eq:LQR-squares-step-4}, we clearly see a nonnegative square term $(u(t)+R^{-1}B^\tr Px(t))^\tr R(u(t)+R^{-1}B^\tr Px(t))$ and the familiar Riccati equation. 
We then select $P$ as the unique stabilizing solution $P^\star$ to the ARE \eqref{eq:Riccati-eq}, which is guaranteed to exist by \Cref{lemma:Riccati-equation}. From \cref{eq:LQR-squares-step-3,eq:LQR-squares-step-4}, it is clear that we have  
\begin{equation} \label{eq:LQR-squares-step-5}
\int_{0}^\infty\! \left(x(t)^\tr Qx(t) \!+\! u(t)^\tr Ru(t)\right)dt = \int_{0}^\infty\!
    \left(u(t)+R^{-1}B^\tr P^\star x(t))^\tr R(u(t)+R^{-1}B^\tr P^\star x(t)\right)dt + x_0^\tr P^\star x_0
\end{equation}
for any input $u\in\mathcal{L}_2^m[0,\infty)$ that stabilizes the system. 

From \cref{eq:LQR-squares-step-4,eq:LQR-squares-step-5}, we can see that the optimal control input to the LQR problem \cref{eq:LQR-w/o-state-fb} is a linear static feedback $u(t)= -R^{-1}B^\tr P^\star x(t)$. Indeed, if we use any stabilizing input $\tilde{u}(t)= -R^{-1}B^\tr P^\star x(t) + v(t)$ in \Cref{eq:LQR-squares-step-4}, we have 
$$
\int_{0}^\infty \left(x(t)^\tr Qx(t) + u(t)^\tr Ru(t)\right)dt 
    = v^\tr (t) R v(t) + x_0^\tr P^\star x_0 \geq x_0^\tr P^\star x_0.
$$
Since $R \succ 0$, the optimal LQR cost value is achieved only when $v(t) \equiv 0$. 
Therefore, ${u}(t) = -R^{-1}B^\tr P^\star x(t)$ is the \textit{unique} optimal control input to the LQR problem.

\begin{remark}[Unique optimal control input vs unique optimal feedback gain]
The completion of squares in \Cref{eq:LQR-squares-step-4} plays a key role in the proof above. It confirms that the optimal control input is in the form of \textit{linear static state feedback}, and this optimal input signal $u(t)$ is \textit{unique}. However, it does not guarantee the uniqueness of the optimal feedback gain $K$. 
In fact,  as demonstrated in \cref{example:W-singular}, the LQR instance  under \cref{assumption:controllability}  admits non-unique optimal feedback gains, even though the optimal input signal $u(t)$ remains unique. If the covariance matrix $W$ is strictly positive definite, the optimal LQR feedback gain is then unique (\cref{theorem:strong-duality-2}). 
While the completion-of-squares proof is relatively straightforward, the technique of adding and subtracting $\int_0^\infty \frac{d}{dt}x^\tr Px dt$ in \Cref{eq:LQR-squares-step-3} is non-trivial.
Our alternative proof strategy, based on primal-dual analysis (\cref{theorem:linear-optimality}), provides additional insights --- specifically, it shows that the linear static feedback policy naturally emerges from the KKT conditions of a standard convex optimization problem. \hfill $\square$ 
\end{remark}

\end{document}